\newtheorem{thm}{Theorem}[section]
\newtheorem{prop}[thm]{Proposition}
\newtheorem*{defi*}{Definition}
\newtheorem{defi}[thm]{Definition}
\newtheorem{rem}[thm]{Remark}
\newtheorem{coro}[thm]{Corollary}
\newtheorem{ex}{Example}[section]
\newtheorem{Thm}{Theorem}
\newcommand{\mylabel}[2]{#2\def\@currentlabel{#2}\label{#1}}
\numberwithin{equation}{section}
    \let\old@@@nomenclature=\@@@nomenclature        
        \newcounter{@nomcount} \setcounter{@nomcount}{0}%
        \renewcommand\the@nomcount{\two@digits{\value{@nomcount}}}
        \def\@@@nomenclature[#1]#2#3{
          \addtocounter{@nomcount}{1}%
        \def\@tempa{#2}\def\@tempb{#3}%
          \protected@write\@nomenclaturefile{}%
          {\string\nomenclatureentry{\the@nomcount\nom@verb\@tempa @[{\nom@verb\@tempa}]%
          \begingroup\nom@verb\@tempb\protect\nomeqref{\theequation}%
          |nompageref}{\thepage}}%
          \endgroup
          \@esphack}%
\def\div{\mathop{\operatorname{div}}\nolimits}
\Crefname{coro}{Corollary}{Corollaries}
\Crefname{thm}{Theorem}{Theorems}
\title{A probabilistic approach to spectral analysis of Cauchy-type inverse problems: Convergence and stability analysis \\[0.5 cm]
}
\author{
Iulian C\^{i}mpean$^{1,3,}$\thanks{Corresponding author. E-mail: \texttt{iulian.cimpean@unibuc.ro;} \href{https://orcid.org/0000-0002-3239-6834}{ORCID ID: 0000-0002-3239-6834}},
Andreea Grecu$^{2,}$\thanks{E-mail: \texttt{andreea.grecu@ismma.ro;} \href{https://orcid.org/0000-0001-5829-7765}{ORCID ID: 0000-0001-5829-7765}},
Liviu Marin$^{1,2,}$\thanks{E-mail: \texttt{liviu.marin@fmi.unibuc.ro;} \href{https://orcid.org/0000-0003-4009-1181}{ORCID ID: 0000-0003-4009-1181}}
}
\date{\small
$^1$Department of Mathematics, Faculty of Mathematics and Computer Science, University of Bucharest,
14~Academiei, 010014~Bucharest, Romania\\
$^2$``Gheorghe Mihoc~--~Caius Iacob" Institute of Mathematical Statistics and Applied Mathematics of the Romanian Academy, 13~Calea 13 Septembrie, 050711~Bucharest, Romania\\
$^3$``Simion Stoilow" Institute of Mathematics of the Romanian Academy, 21~Calea Grivi\c{t}ei,  010702~Bucharest, Romania
}
\begin{document}

\maketitle
\begin{abstract}
A comprehensive convergence and stability analysis of some probabilistic numerical methods designed to solve Cauchy-type inverse problems is performed in this study. 
Such inverse problems aim at solving an elliptic partial differential equation (PDE) or a system of elliptic PDEs in a bounded Euclidean domain, subject to incomplete boundary and/or internal conditions, and are usually severely ill-posed. 
In a very recent paper \cite{CiGrMaI}, a probabilistic numerical framework has been developed by the authors, wherein such inverse problems could be analysed thoroughly by simulating the spectrum of some corresponding direct problem and its singular value decomposition based on stochastic representations and Monte Carlo simulations. 
Herein a full probabilistic error analysis of the aforementioned methods is provided, whereas the convergence of the corresponding approximations is proved and explicit error bounds are provided. 
This is achieved by employing tools from several areas such as spectral theory, regularity theory for elliptic measures, stochastic representations, and concentration inequalities.
\end{abstract}

\noindent \textbf{Keywords:} inverse boundary value problem; Cauchy data; elliptic operator; elliptic measure; harmonic density; H\"older elliptic regularity; probabilistic representation; Monte Carlo methods; walk-on-spheres.

\medskip
\noindent \textbf{Mathematics Subject Classification:} 65N12, 65N15, 65N21, 65N25, 65N75, 35J25, 65C05, 60J65, 65C40.



\section{Introduction and brief presentation of the main results} 
\label{section:intro}
\subsection{Formulation of the inverse problem}
\label{ss:formulation}
Consider a bounded domain (open and connected) $D \subset \mathbb{R}^d$, $d \geq 2$,   which is occupied by a material characterised by an inhomogeneous and anisotropic conductivity tensor $\mathbf{K} \coloneqq \big( K_{ij}(\mathbf{x}) \big)_{i,j=\overline{1,d}} \in \mathbb{R}^{d \times d}$, $\mathbf{x} \in \overline{D}$, such that
\begin{enumerate}[label=({\bf H.\arabic*})]
\item \label{Hyp_D} $\partial{D} = \Gamma_0 \cup \Gamma_1$ with $\Gamma_1 \cap \Gamma_0 = \varnothing$, where $\Gamma_0$ and $\Gamma_1$ are regarded as the accessible and inaccessible parts of the boundary $\partial{D}$, respectively;
\item \label{Hyp_K} $\mathbf{K} \in \mathbb{R}^{d \times d}$ is a symmetric and strictly elliptic matrix of bounded and measurable coefficients; as a matter of fact, this work is mostly devoted to the case of homogeneous coefficients, namely $\mathbf{K}$ is independent of $x\in D$.
\end{enumerate}
If $\Gamma_0 \subset \partial D$ is an {\it open Lipschitz portion} of $\partial D$ in the sense of \cite{Al09}, then the classical {\it inverse Cauchy problem} ({\it ICP}), see \Cref{Fig00a}, reads as:
\begin{eqnarray}  \label{eq:ICP}
\left|
\begin{array}{l}
\textrm{Given } u_0 \in H^{1/2}(\Gamma_0) \textrm{ and } q_0 \in H^{-1/2}(\Gamma_0), \textrm{ find } u \in H^1(D) \textrm{ the weak solution to }\\[5pt]
\begin{array}{lll}
\qquad
\div\big(\mathbf{K} \nabla{u}\big) = 0~\textrm{ in } D, \quad &
u = u_0~\textrm{ on } \Gamma_0, \quad &
\mathbf{n} \cdot \big(\mathbf{K} \nabla{u}\big) = q_0~\textrm{ on } \Gamma_0,
\end{array}\\[5pt]
\textrm{where } \mathbf{n} \textrm{ denotes the outward unit normal at } \Gamma_0.
\end{array}
\right.
\end{eqnarray}
Note that recovering $u$ is essentially equivalent to finding the unknown Dirichlet data $u_1 \coloneqq u\big|_{\Gamma_1}$, assuming that the trace of $u$ on $\Gamma_1$ has a consistent meaning. 
The inverse problem \eqref{eq:ICP} only stands as a particular continuous version of a discrete inverse problem of interest that is actually addressed herein, see problem \eqref{eq:insidemeasurements}--\eqref{eq:u_0=0} and \Cref{Fig00b}.

The literature devoted to inverse problems for partial differential equations and, in particular, related to \eqref{eq:ICP}, is vast. To name a few monographs on inverse problems related to the present work, we only refer the reader to, e.g., \cite{gr93,sn99,go02,ta05,is06,st10,ki11,ne12,al12,Le21}. 
The applications of such inverse problems are also various, see, for example, \cite{al12,bo98,andrieux2005data,alessandrini1993stable,AnAb96,BrHaPi01,liu2008modified,We15,al19,fr05,co08,ma20,clerc2007cortical,koshev2020fem,colli1985mathematical,FrMa79}.

\subsection{Instability of the inverse problem} 
\label{ss:instability}
Firstly, note that based on the unique continuation property for elliptic operators, or, more precisely, by assuming $\mathbf{K}$ is Lipschitz for $d \geq 3$ and using \cite[Theorem 1.7]{Al09}, it follows there exists at most one weak solution $u$ to problem \eqref{eq:ICP} for the prescribed pair of data $\big( u_0, q_0 \big) \in H^{1/2}(\Gamma_0) \times H^{-1/2}(\Gamma_0)$.
Moreover, it is well-known that reconstructing this unique solution, provided that such a solution exists, from the prescribed measurements on $\Gamma_0$ is a severely unstable problem, see \cite{Ha23}.
Consequently, solving numerically problem \eqref{eq:ICP} is typically very challenging, whilst various regularising techniques have been proposed, explored, and compared in the literature.
There is a very high number of papers providing numerical evidence that one could, in principle, tame the instability issue for the inverse Cauchy problem for several benchmark examples. 
However, only very few studies contain a comprehensive convergence analysis that provides a complete understanding of this issue and/or reliable quantitative error estimates.

A classical illustration of the curse of instability that affects the inverse Cauchy problem \eqref{eq:ICP} was given by Hadamard \cite{Ha23} through the following ubiquitous example. Let $u^{(n)} \colon (0, \infty) \times \mathbb{R} \to \mathbb{R}$, $n \geq 1$, be the solution to the following problem
\begin{equation*}
\Delta u^{(n)} = 0 \mbox{ in } (0, \infty) \times \mathbb{R}, \quad 
u^{(n)}(0,y) = u_0^{(n)}(0,y) \coloneqq \dfrac{1}{n} \sin(ny), \quad 
u^{(n)}_x(0,y) = q_0^{(n)}(0,y) \coloneqq 0, \quad 
y \in \mathbb{R},
\end{equation*}
that is explicitly given by 
\begin{equation*}
u^{(n)}(x,y) = \dfrac{1}{n} \sin(ny)\left(\mathrm{e}^{nx} + \mathrm{e}^{-nx}\right), \quad 
(x,y) \in (0, \infty) \times \mathbb{R}, \quad 
n \geq 1. 
\end{equation*}
Clearly, it follows that, as $n \to \infty$, the Cauchy data $\big( u_0^{(n)}, q_0^{(n)} \big)$ converge to the boundary data $\big( u_0, q_0 \big) \equiv \big( 0, 0 \big)$, which are uniquely compatible to the solution $u^{(0)} \equiv 0$ in $(0, \infty) \times \mathbb{R}$. 
However, $u^{(n)}$ diverges exponentially fast as $n \to \infty$, hence the severe instability of the inverse problem.

Since Hadamard's example, the severe instability of the inverse Cauchy problem \eqref{eq:ICP} has been widely accepted as a folklore result and has been encountered in most of the numerical experiments or practical situations. 
To the best of our knowledge, a first theoretical proof of the severe instability has been given by Ben Belgacem \cite{Be07}. 
More precisely, it was shown that solving \eqref{eq:ICP} reduces to inverting an associated Steklov-Poincar\'{e} operator $\mathcal{S}$ acting on $L^2(\Gamma_1)$, provided that $D$ is $C^\infty$, $\mathbf{K} = \mathbf{I}_d$ and $\overline{\Gamma}_0 \cap \overline{\Gamma}_1 = \varnothing$, see also \cite{Be05}. 
It was further proved that the operator $\mathcal{S}$ is Hilbert-Schmidt and positive definite on $L^2(\Gamma_1)$ and, moreover, its eigenvalues $\lambda_k$, $k\geq 1$, decay to zero faster than any negative power of $k$.

A further analysis has been developed in \cite{Al09}, where $D$ is assumed to be a general bounded domain, such that $\Gamma_0$ is a Lipschitz portion of $\partial D$, $\mathbf{K}$ satisfies \ref{Hyp_K} together with the Lipschitz condition if $d \geq 3$, the elliptic operator is allowed to contain a bounded zeroth-order term, and the source term $f$ is not necessarily zero but from $L^2(D)$. 
In this setting, it is shown that a global reconstruction of the solution $u$ can be achieved, provided that some \textit{a priori} bound for $\Vert u \Vert_{H^1(D)}$ is given, such that
\begin{equation*}
\Vert u \Vert_{L^2(D)} \lesssim 
\big \vert \log \big( 
\Vert f \Vert_{L^2(D)} 
+ \Vert u_0 \Vert_{H^{1/2}(\Gamma_0)} 
+ \Vert q_0 \Vert_{H^{-1/2}(\Gamma_0)} 
\big) \big \vert^{-\mu}, \quad 
\mbox{ for some } \mu \in (0,1),
\end{equation*}
where the tacitly understood constant depends on $\Vert u \Vert_{H^1(D)}$ among other quantities; for more details, see \cite[Theorem 1.9]{Al09}. 
Thus, a theoretically stable global reconstruction can be achieved, however the above poor logarithmic stability emphasises once again that, in general, problem \eqref{eq:ICP} is difficult to be numerically solved globally in $D$ even if some \textit{a priori} bounds on the solution are available.
If the reconstruction of $u$ is considered only locally or strictly away from the inaccessible boundary $\Gamma_1$, then the stability can be enhanced. 
More precisely, by \cite[Theorem 1.7]{Al09}, if $G$ is an open subset of $D$ such that ${\rm dist}(G, \partial D) > 0$, then $u$ can be stably reconstructed in $D$ in the sense that
\begin{equation}\label{eq:local_stability}
\Vert u \Vert_{L^2(G)} \lesssim 
\left(\frac{|D|}{{\rm dist}(G,\partial D)^d}\right)^{1/2} \big( 
\Vert f \Vert_{L^2(D)} 
+ \Vert u_0 \Vert_{H^{1/2}(\Gamma_0)}
+ \Vert q_0 \Vert_{H^{-1/2}(\Gamma_0)} 
\big)^{\delta},
\end{equation}
where $\delta \coloneqq \alpha^{C |D|/{\rm dist}(G, \partial D)^d}$ for some $\alpha \in (0,1)$, $C = C(\mathbf{K}) > 0$, and the tacitly understood constant depends on the \textit{a priori} given $\Vert u \Vert_{L^2(D)}$ among other quantities.
Therefore, in $D$, one can hope for at least a H\"{o}lder $L^2-$stability for the locally reconstructed solution and, in general situations, this seems to be optimal. 
It should be noted that even for the local reconstruction problem subject to \textit{a priori} bounds for the unknown solution, the factor $\left(\dfrac{|D|}{{\rm dist}(G, \partial D)^d}\right)^{1/2}$ from \eqref{eq:local_stability} may be quite large, whilst the stability exponent $\delta$ may be close to zero. In such cases, there is only little hope that relation \eqref{eq:local_stability} can provide practical error bounds for a locally reconstructed numerical solution.

In spite of the aforementioned severe instability, there are numerous papers available in the literature and devoted to numerical methods aiming at solving stably the inverse problem \eqref{eq:ICP} - however, surprisingly successfully - at least for certain benchmark examples, see \cite[Introduction]{CiGrMaI} for a detailed discussion on these topics. 
This gap between the severe instability of the inverse problem or the relatively weak theoretical guarantees for its stability available, on the one hand, and the specific success of the various numerical algorithms proposed in the literature, on the other hand, has remained poorly understood, at least to us, and, at the same time, represents a real challenge.

\subsection{Description of the approach by C\^{i}mpean et al.~\texorpdfstring{\cite{CiGrMaI}}{CiGrMaI}} 
\label{ss:approach}
Filling the aforementioned gap by analysing and simulating efficiently the spectrum of some associated direct operator, using probabilistic representations of the elliptic densities and fast Monte Carlo simulations, has been one of the main aims of the approach by C\^{i}mpean et al.~\cite{CiGrMaI}. 
In this way, one can thoroughly understand and quantify the impact of the geometry of the domain and the structure of the conductivity coefficients on the (in)stability of the inverse problem under investigation. 
To clarify this perspective, we start off by considering the informal statement that the knowledge of the pair of Cauchy data $\big( u_0, q_0 \big) \in H^{1/2}(\Gamma_0) \times H^{-1/2}(\Gamma_0)$ is essentially equivalent to that of $u_0 \coloneqq u\big|_{\Gamma_0}$ together with the solution $u$ on the boundary $\Gamma_D \subset D$ of some thin shell of $\Gamma_0$.
More precisely, instead of studying \eqref{eq:ICP}, in \cite{CiGrMaI} it is considered the essentially equivalent inverse problem of finding $u$ which solves
\begin{eqnarray} \label{eq:insidemeasurementsf0fh}
\left|
\begin{array}{l}
\textrm{Given } u_0 \in H^{1/2}(\Gamma_0) \textrm{ and } u_D \in H^{1/2}(\Gamma_D), \textrm{ find } u \in H^1(D) \textrm{ the weak solution to }\\[5pt]
\begin{array}{lll}
\qquad
\div\big(\mathbf{K} \nabla{u}\big) = 0~\textrm{ in } D, \quad &
u = u_0~\textrm{ on } \Gamma_0 \subset \partial{D}, \quad &
u = u_D~\textrm{ on } \Gamma_D \subset D.
\end{array}
\end{array}
\right.
\end{eqnarray}
For example, $\Gamma_D \coloneqq \partial{\big\{ \mathbf{x} \in D \: \big| \: {\rm dist}(\mathbf{x}, \Gamma_0) \leq \varepsilon \big\}} \cap D$ for some $0 < \varepsilon \ll 1$.
Transforming the pair of Cauchy data $\big( u_0, q_0 \big)$ on $\Gamma_0$ into $\big( u_0, u\big|_{\Gamma_D} \big)$ can be achieved by, e.g., a first-order Taylor expansion inside the domain of the Dirichlet data on $\Gamma_0$ as explained in detail in \cite{CiGrMaI}. 
Moreover, the local stability \eqref{eq:local_stability} provides one with a guarantee that given both the Dirichlet and the Neumann data on $\Gamma_0$, at least in the proximity of $\Gamma_0$, one can stably reconstruct the unknown solution $u$. Furthermore, once this is done, one can discard the Neumann data and regard the problem of reconstructing the solution in the rest of the domain or on $\Gamma_1$ as a particular case of \eqref{eq:insidemeasurements}.

Before proceeding with the details on the approach by C\^{i}mpean et al.~\cite{CiGrMaI}, some important aspects on the notion of the solution to problem \eqref{eq:insidemeasurementsf0fh} need to be clarified. 

\begin{defi}\label{def:local solution}
A function $u \in H^1_{loc}(D)$ is called a local solution of $\div \big(\mathbf{K} \nabla \big)$ in $D$ if
\begin{equation*}
\int_D \nabla{u}(\mathbf{x}) \cdot \big(\mathbf{K}(\mathbf{x}) \nabla{\varphi}(\mathbf{x})\big) \, \mathrm{d}\mathbf{x} = 0, \quad \forall~\varphi \in C^1_c(D),
\end{equation*}
where $C^1_c(D)$ denotes the set of continuously differentiable functions with a compact support in $D$.
\end{defi}

\begin{defi}\label{def:local solution bvp}
A function $u \in H^1_{loc}(D)$ is called a solution of the Dirichlet problem
\begin{equation}\label{eq:dirichlet bvp}
\div \big(\mathbf{K} \nabla{u}\big) = 0  \mbox{ in } D, \quad
u = f  \mbox{ on } \partial D,
\end{equation}
where $f \in C(\partial D)$, if $u$ is a local solution of $\div \big(\mathbf{K} \nabla \big)$ in $D$ and
\begin{equation}\label{eq:limit regular point}
\lim_{\substack{\mathbf{x} \to \mathbf{y} \\ \mathbf{x} \in D}} u (\mathbf{x}) = f(\mathbf{y}),
\quad \forall\; \mathbf{y}\in \partial D \mbox{ regular point}.
\end{equation}
\end{defi}

\begin{defi}\label{def:solution IP}
On assuming that $u_0 \in C(\Gamma_0)$ and $u_D \in C(\Gamma_D)$ are prescribed, a function $u \in H^1_{loc}(D)$ is called a solution to problem \eqref{eq:insidemeasurementsf0fh} if $u$ is a local solution of $\div \big(\mathbf{K} \nabla \big)$ in $D$, $u\big|_{\Gamma_D} = u_D$, and the prescribed Dirichlet condition on $\Gamma_0$ is understood in the sense of relation \eqref{eq:limit regular point}.
\end{defi}

For a precise definition and characterisations of a regular boundary point, we refer the reader to \cite{Wiener24}, \cite[Definition 3.2 and Lemma 3.2]{Stampacchia63}, \cite[Corollary 9.1]{Stampacchia63} or \cite[Sections 2.8--2.9]{GiTr01}.

According to De Giorgi \cite{DeGiorgi57} and Nash \cite{Nash58}, any local solution of $\div \big( \mathbf{K} \nabla \big)$ in $D$ admits a H\"older continuous version in any compact subdomain of $D$ and, in particular, it has a continuous version which will always be further referred to herein.

It is well known that if $D \subset \mathbb{R}^d$ is a bounded domain and $f \in C(\partial D)$, then there exists a unique solution to the Dirichlet problem \eqref{eq:dirichlet bvp}, see \cite[Theorem 1.1]{ChenZhao95}. 
Moreover, for any $\mathbf{x} \in D$, there exists a unique probability measure $\mu_\mathbf{x}$ on $\partial D$ such that 
\begin{equation}\label{eq:representation_C}
u(\mathbf{x}) = 
\int_{\partial D} f(\mathbf{y}) \, \mathrm{d} \mu_{\mathbf{x}}(\mathbf{y}), 
\quad \forall~f \in C(\partial D),
\end{equation}
where $u \in H^1_{loc}(D) \cap C(D)$ is the corresponding solution to the Dirichlet problem \eqref{eq:dirichlet bvp}.
The probability measure $\mu_\mathbf{x}$ is called the elliptic measure associated with $\div \big(\mathbf{K} \nabla \big)$ in $D$ with the pole $\mathbf{x} \in D$. 
When $\mathbf{K} = \mathbf{I}_d$, $\mu_\mathbf{x}$ is referred to as the harmonic measure.

Furthermore, it is also well known that any two measures $\mu_\mathbf{x}$ and $\mu_\mathbf{y}$ are mutually absolutely continuous and if there exists $\mathbf{x}_0 \in D$ such that $f \in L^1(\mu_{\mathbf{x}_0})$, then $f \in L^1(\mu_\mathbf{x})$ for any $\mathbf{x} \in D$. Moreover, if one sets
\begin{equation}\label{eq:L1-representation}
u(\mathbf{x}) \coloneqq \int_{\partial D} f(\mathbf{y}) \, \mathrm{d} \mu_\mathbf{x}(\mathbf{y}), 
\quad \mathbf{x} \in D,
\end{equation}
then $u$ is a local solution of $\div \big(\mathbf{K} \nabla{u}\big) = 0$ in $D$ and, at the same time, is locally H\"older continuous, see \cite{CiGrMaI} and the references therein.

The following uniqueness of solutions to the inverse problem \eqref{eq:insidemeasurementsf0fh} holds. 

\begin{prop}[C\^{i}mpean et al. \cite{CiGrMaI}] \label{prop:uniqueness IP} 
Suppose that $D \subset \mathbb{R}^d$ is a bounded domain and $\mathbf{K} : D \longrightarrow \mathbb{R}^{d \times d}$ is a bounded measurable, symmetric and strictly elliptic (positive definite) matrix-valued function. In addition, if $d \geq 3$, assume that $\mathbf{K}$ is Lipschitz. 
Let $\Gamma_D \subset D$ be a compact set, $\varnothing \neq I \subset D$ be an open set such that $\partial I \subset \Gamma_D \cup \Gamma_0$, $u_0 \in C(\Gamma_0)$ and $u_D \in C(\Gamma_D)$.\\
Then there exists at most one solution $u \in H^1_{loc}(D) \cap C(D)$ to the inverse problem \eqref{eq:insidemeasurementsf0fh}.
\end{prop}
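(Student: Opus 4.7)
The plan is to reduce the uniqueness claim to two classical principles for the elliptic operator $L u := \div(\mathbf{K}\nabla u)$, namely uniqueness of the Dirichlet problem on a bounded domain with continuous boundary data, applied on $I$, followed by the strong unique continuation property (UCP), applied on $D$. Assume $u_1, u_2 \in H^1_{loc}(D) \cap C(D)$ are two solutions of \eqref{eq:insidemeasurementsf0fh} in the sense of Definition \ref{def:solution IP}, and set $w := u_1 - u_2$. Then $w$ is a local solution of $L = 0$ in $D$, $w \in C(D)$, $w|_{\Gamma_D} = 0$ (since $u_1|_{\Gamma_D} = u_2|_{\Gamma_D} = u_D$), and $\lim_{x \to y,\,x \in D} w(x) = 0$ at every $y \in \Gamma_0$ that is a regular point of $\partial D$.

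The first, and main, step is to prove $w \equiv 0$ on $I$ by interpreting $w|_I$ as the solution, in the sense of Definition \ref{def:local solution bvp}, of the Dirichlet problem on the bounded open set $I$ with the continuous boundary datum $g \equiv 0$ on $\partial I$; uniqueness of this Dirichlet problem, as granted by \cite[Theorem 1.1]{ChenZhao95}, then forces $w \equiv 0$ in $I$. The hypothesis $\partial I \subset \Gamma_D \cup \Gamma_0$ splits $\partial I$ into an interior piece $\partial I \cap \Gamma_D$, on which $w$ is continuous with value zero thanks to $w \in C(D)$ and $w|_{\Gamma_D} = 0$, and an exterior piece $\partial I \cap \Gamma_0 \subset \partial D$. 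On the latter, a Wiener-type capacity comparison based on the inclusion $I^c \supset D^c$ shows that every point regular for $\partial D$ is also regular for $\partial I$, so at such points $w \to 0$ is supplied directly by Definition \ref{def:solution IP}. The remaining potentially troublesome set consists of points $y \in \Gamma_0 \cap \partial I$ that are regular for $\partial I$ but irregular for $\partial D$; this set is contained in the irregular set of $\partial D$, which is polar for the diffusion associated with $L$, and therefore has zero elliptic measure $\mu^I_x$ for every $x \in I$. Combining this with the stochastic representation \eqref{eq:L1-representation} applied on $I$ closes the argument.

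Once $w \equiv 0$ is established on the non-empty open set $I \subset D$, the second step is immediate: $w$ is a local solution of $L = 0$ on the connected open set $D$, and under the hypotheses that $\mathbf{K}$ is bounded measurable for $d = 2$ and Lipschitz for $d \geq 3$, the strong UCP for $L$ (see, e.g., \cite[Theorem 1.7]{Al09} and the references discussed in Section \ref{ss:instability}) forces $w \equiv 0$ throughout $D$, hence $u_1 \equiv u_2$. The main obstacle is concentrated in the first step, specifically in the exterior part of $\partial I$: verifying the decay $w \to 0$ at regular points of $\partial I$ lying on $\Gamma_0$ that might fail to be regular for $\partial D$ is the delicate technical point, and its resolution hinges precisely on the polarity of the irregular set of $\partial D$ together with the elliptic-measure representation of local solutions on the bounded domain $I$.
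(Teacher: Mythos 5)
The paper does not reproduce a proof of \Cref{prop:uniqueness IP} (it is imported from \cite{CiGrMaI}), so I can only judge your argument on its own merits. Your overall strategy --- first force $w:=u_1-u_2$ to vanish on $I$ by a Dirichlet-problem/exit-distribution argument, then propagate $w\equiv 0$ to all of $D$ by unique continuation (which is indeed available for bounded measurable $\mathbf{K}$ in $d=2$ and Lipschitz $\mathbf{K}$ in $d\geq 3$) --- is the natural one, and the monotonicity observation that a point of $\partial I\cap\partial D$ regular for $\partial D$ is regular for $\partial I$ is correct.

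However, there is a genuine gap exactly at the point you flag as delicate. At boundary points of $I$ lying on $\Gamma_0$ that are regular for $\partial I$ but irregular for $\partial D$, you have no information on $w$ at all: \Cref{def:solution IP} imposes the limit \eqref{eq:limit regular point} only at regular points of $\partial D$, and $w$ is merely continuous on the open set $D$, so it may a priori be unbounded near this exceptional set. Consequently $w|_I$ is not known to be a solution of the Dirichlet problem on $I$ in the sense of \Cref{def:local solution bvp}, and the uniqueness statement of \cite[Theorem~1.1]{ChenZhao95} does not apply to it; invoking the representation \eqref{eq:L1-representation} on $I$ for $w$ is circular, since the validity of such a representation for an arbitrary local solution is essentially what has to be proved. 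Polarity of the exceptional set (hence zero $\mu^I_{\mathbf{x}}$-measure) only yields $w\equiv 0$ on $I$ if one additionally knows that $w$ is bounded on $I$, or that the stopped martingale $w\big(Y^{\mathbf{x}}_{t\wedge\tau_I}\big)$ is uniformly integrable, so that optional stopping gives $w(\mathbf{x})=\mathbb{E}\big[w\big(Y^{\mathbf{x}}_{\tau_I}\big)\big]=0$; without such control the implication fails (a Poisson-kernel-type solution tends to $0$ at every boundary point off a single --- polar --- point yet is not identically zero). Your proposal never establishes this boundedness or uniform integrability, and it does not follow from the stated hypotheses, so the first (main) step is incomplete as written; you need either to prove a bound for $w$ on $I$ near the irregular part of $\Gamma_0\cap\partial I$, or to replace the appeal to \eqref{eq:L1-representation} by an argument that handles possibly unbounded local solutions at a polar exceptional set.
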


If $u \in C(\overline{D})$ is a solution to \eqref{eq:insidemeasurementsf0fh} and $u_1 := u\big|_{\Gamma_1}$ is the unknown Dirichlet data on $\Gamma_1$, then it follows that
\begin{align}
u(\mathbf{x}) 
&\coloneqq \int_{\Gamma_0} u_0(\mathbf{y}) \, \mathrm{d} \mu_\mathbf{x}(\mathbf{y}) 
+ \int_{\Gamma_1} u_1(\mathbf{y}) \, \mathrm{d} \mu_\mathbf{x}(\mathbf{y}), 
\quad \mathbf{x} \in \Gamma_D.\nonumber 
\end{align}
Without any loss of the generality, if we further assume that $u_0 \equiv 0$, then
\begin{align}
u(\mathbf{x}) 
&= \int_{\Gamma_1} u_1(\mathbf{y}) \, \mathrm{d} \mu_\mathbf{x}(\mathbf{y}) 
\eqqcolon \big( \mathcal{T} u_1 \big)(\mathbf{x}), 
\quad \mathbf{x} \in \Gamma_D.
\label{eq:T_rond}
\end{align}
The operator $\mathcal{T}$ can be uniquely extended to a bounded linear operator, also denoted by $\mathcal{T}$, as
\begin{equation}\label{eq:compact operator}
\mathcal{T} \colon L^1(\Gamma_1;\mu_{\mathbf{z}}) \longrightarrow C(\Gamma_D), \qquad 
\big( \mathcal{T} u_1 \big)(\mathbf{x}) 
= \int_{\Gamma_1} u_1(\mathbf{y}) \, \mathrm{d} \mu_\mathbf{x}(\mathbf{y}), 
\quad \forall~\mathbf{x} \in \Gamma_D, 
\quad \forall~u_1 \in L^1(\Gamma_1;\mu_{\mathbf{z}}).
\end{equation}
Note that definition \eqref{eq:compact operator} is independent of $\mathbf{z} \in D$. 

\begin{prop}[C\^{i}mpean et al. \cite{CiGrMaI}] \label{coro:compact} 
The linear operator $\mathcal{T}$ defined by \eqref{eq:compact operator} is injective and compact. 
\end{prop}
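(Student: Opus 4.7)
The plan is to establish the two properties separately, both relying on the Radon--Nikodym representation $\mathrm{d} \mu_{\mathbf{x}} = k(\mathbf{x}, \cdot) \, \mathrm{d} \mu_{\mathbf{z}}$ stemming from the mutual absolute continuity of the elliptic measures, so that
\begin{equation*}
(\mathcal{T} u_1)(\mathbf{x}) = \int_{\Gamma_1} u_1(\mathbf{y}) \, k(\mathbf{x}, \mathbf{y}) \, \mathrm{d} \mu_{\mathbf{z}}(\mathbf{y}), \qquad \mathbf{x} \in \Gamma_D.
\end{equation*}
For the compactness, I would first observe that since $\Gamma_D \subset D$ is compact and $D$ is open, ${\rm dist}(\Gamma_D, \partial D) > 0$, so in particular $\Gamma_D$ and $\Gamma_1$ are strictly separated. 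A Harnack-chain argument connecting any $\mathbf{x} \in \Gamma_D$ to the pole $\mathbf{z}$ produces a constant $C_H > 0$ with $k(\mathbf{x}, \cdot) \leq C_H$, $\mu_{\mathbf{z}}$-a.e., uniformly in $\mathbf{x} \in \Gamma_D$. Moreover, for each fixed $\mathbf{y} \in \Gamma_1$, the map $\mathbf{x} \mapsto k(\mathbf{x}, \mathbf{y})$ is a positive local solution of $\div(\mathbf{K} \nabla \cdot) = 0$ in $D$, so by interior De Giorgi--Nash--Moser regularity it is H\"older continuous on $\Gamma_D$ with an exponent $\alpha \in (0,1)$ and a constant depending only on ${\rm dist}(\Gamma_D, \partial D)$, the ellipticity bounds of $\mathbf{K}$ and $C_H$, uniformly in $\mathbf{y}$. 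These two bounds translate into the pointwise estimates
\begin{equation*}
\Vert \mathcal{T} u_1 \Vert_{L^\infty(\Gamma_D)} \leq C_H \, \Vert u_1 \Vert_{L^1(\mu_{\mathbf{z}})}, \qquad |(\mathcal{T} u_1)(\mathbf{x}) - (\mathcal{T} u_1)(\mathbf{x}')| \leq C \, |\mathbf{x} - \mathbf{x}'|^{\alpha} \, \Vert u_1 \Vert_{L^1(\mu_{\mathbf{z}})},
\end{equation*}
so the image under $\mathcal{T}$ of any bounded set of $L^1(\Gamma_1; \mu_{\mathbf{z}})$ is uniformly bounded and equicontinuous on $\Gamma_D$; Arzel\`a--Ascoli then yields the compactness.

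For the injectivity, given $u_1 \in L^1(\Gamma_1; \mu_{\mathbf{z}})$ with $\mathcal{T} u_1 \equiv 0$, I would extend the definition to all $\mathbf{x} \in D$ by $u(\mathbf{x}) \coloneqq \int_{\Gamma_1} u_1 \, \mathrm{d} \mu_{\mathbf{x}}$; the $L^1$-representation property \eqref{eq:L1-representation} ensures that $u \in H^1_{loc}(D)$ is a locally H\"older continuous local solution of $\div(\mathbf{K} \nabla u) = 0$. Since $\Gamma_1 = \partial D \setminus \Gamma_0$ is closed in $\partial D$ and disjoint from $\Gamma_0$, every regular point $\mathbf{y} \in \Gamma_0$ admits a boundary neighbourhood disjoint from $\Gamma_1$; a truncation argument (bounded truncation of $u_1$ handled via continuous-boundary-data theory, and an $L^1(\mu_{\mathbf{z}})$-small tail controlled by $\mu_{\mathbf{x}}(\Gamma_1) \to 0$ as $\mathbf{x} \to \mathbf{y}$) then yields $u(\mathbf{x}) \to 0$. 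Together with $u\big|_{\Gamma_D} \equiv 0$, this shows that $u$ is a solution of \eqref{eq:insidemeasurementsf0fh} with $u_0 \equiv u_D \equiv 0$, so \Cref{prop:uniqueness IP} forces $u \equiv 0$ in $D$. Finally, the Fatou-type theorem for elliptic measures recovers $u_1$ as a nontangential boundary limit of $u$ at $\mu_{\mathbf{z}}$-a.e.\ point of $\Gamma_1$; since $u$ vanishes identically, $u_1 = 0$ in $L^1(\Gamma_1; \mu_{\mathbf{z}})$.

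The principal obstacle is the last step of the injectivity argument, namely invoking the nontangential recovery of the $L^1$-boundary trace in the generality of the paper, which rests on the classical boundary Fatou theory for elliptic measures (available under the regularity assumptions maintained throughout \cite{CiGrMaI}). A secondary delicate point is verifying that $u$ vanishes at every regular point of $\Gamma_0$ when the boundary data only belongs to $L^1(\Gamma_1; \mu_{\mathbf{z}})$, which is not immediate from the continuous-boundary-data theory and hinges on the separation $\Gamma_0 \cap \overline{\Gamma_1} = \varnothing$ together with the control of $\mu_{\mathbf{x}}$ near the accessible boundary.
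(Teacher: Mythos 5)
First, note that this paper does not actually reproduce a proof of \Cref{coro:compact}: it is imported from \cite{CiGrMaI}, so your proposal can only be judged on its own merits. Your compactness half is sound and in the spirit of the framework: the Harnack-chain bound $\mu_{\mathbf{x}}(A)\leq C_H\,\mu_{\mathbf{z}}(A)$ uniformly for $\mathbf{x}$ in the compact set $\Gamma_D$, combined with interior De Giorgi--Nash H\"older estimates and Arzel\`a--Ascoli, does give compactness. One small repair: the kernel $k(\mathbf{x},\cdot)=\mathrm{d}\mu_{\mathbf{x}}/\mathrm{d}\mu_{\mathbf{z}}$ is only defined $\mu_{\mathbf{z}}$-a.e.\ for each $\mathbf{x}$, so "for fixed $\mathbf{y}$, $\mathbf{x}\mapsto k(\mathbf{x},\mathbf{y})$ is a local solution" needs a choice of version; it is cleaner (and sufficient) to apply the sup bound and the interior H\"older estimate directly to $u=\mathcal{T}u_1$, which is a local solution by the $L^1$-representation \eqref{eq:L1-representation}.

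The injectivity half, however, has a genuine gap in the two steps you yourself flag, and your sketched justifications do not close it. (i) To show $u(\mathbf{x})\to 0$ at regular points of $\Gamma_0$ you split $u_1$ into a bounded truncation and an $L^1$-small tail; but $\mu_{\mathbf{x}}(\Gamma_1)\to 0$ only controls the \emph{bounded} part, while the tail $\int_{\Gamma_1}|u_1|\,1_{\{|u_1|>M\}}\,\mathrm{d}\mu_{\mathbf{x}}$ requires a bound on $\mathrm{d}\mu_{\mathbf{x}}/\mathrm{d}\mu_{\mathbf{z}}$ that stays uniform as $\mathbf{x}$ approaches $\partial D$ — a Carleson/boundary-Harnack type estimate, not part of the standing hypotheses (the Harnack constant of your compactness step blows up near the boundary). (ii) The final recovery of $u_1$ from $u\equiv 0$ via a nontangential Fatou theorem likewise needs Lipschitz/NTA-type boundary geometry that the proposition, stated for a general bounded domain with regular boundary points, does not assume. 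Both steps can be repaired purely probabilistically, which is almost certainly the intended route in \cite{CiGrMaI}: on the shell $I$ with $\partial I\subset\Gamma_0\cup\Gamma_D$, write $u(\mathbf{x})=\mathbb{E}\big[u_1 1_{\Gamma_1}\big(Y^{\mathbf{x}}_{\tau_{\partial D}}\big)\big]$ and condition on the exit of $Y^{\mathbf{x}}$ from $I$: paths exiting through $\Gamma_D$ contribute $0$ because $u$ is continuous in $D$ and $\mathcal{T}u_1=0$ there, and paths exiting through $\Gamma_0\subset\partial D$ contribute $0$ because $u_1$ is supported on $\Gamma_1$; optional stopping for the uniformly integrable martingale $\mathbb{E}\big[u_1 1_{\Gamma_1}(Y_{\tau_{\partial D}})\,\big|\,\mathcal{F}_t\big]$ then gives $u\equiv 0$ on $I$, and unique continuation extends this to $D$. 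Finally, taking stopping times $\tau_n\uparrow\tau_{\partial D}$ (e.g.\ the hitting times of the $1/n$-shells), the same closed martingale evaluated at $\tau_n$ equals $u\big(Y^{\mathbf{z}}_{\tau_n}\big)=0$ and converges a.s.\ to $u_1 1_{\Gamma_1}\big(Y^{\mathbf{z}}_{\tau_{\partial D}}\big)$, so $u_1=0$ $\mu_{\mathbf{z}}$-a.e.\ — no Fatou theory and no boundary geometry required.
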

Consequently, $\mathcal{T}$ does not have a bounded inverse even if its domain is chosen to be maximal, in the sense that the integral representation \eqref{eq:T_rond} makes sense. 
However, the representation \eqref{eq:compact operator} is the key ingredient in \cite{CiGrMaI} since it reveals that {\it the (in)stability of the inverse problem \eqref{eq:ICP} is fundamentally related to how much $\Gamma_1$ is charged by the elliptic measures $\mu_{\mathbf{x}}$ with the poles in the proximity of $\Gamma_0$.} 
In other words, the stability issue is strongly determined by the distribution on $\Gamma_1$ of the underlying diffusion process started in the proximity of $\Gamma_0$ at its first exit time from $D$.

Proceeding with the approach of C\^{i}mpean et al. \cite{CiGrMaI}, it is further assumed that only a finite number of measurements are available on both $\Gamma_0$ and $\Gamma_D$, namely the following discrete version of the continuous inverse problem \eqref{eq:ICP} is tackled and addressed, see also \Cref{Fig00b}:
\begin{eqnarray} \label{eq:insidemeasurements}
\left|
\begin{array}{l}
\textrm{Assuming that any boundary point of } D \textrm{ is regular and given } M_0 \textrm{ boundary points }
\big( \mathbf{x}_i^0 \big)_{i= \overline{1,M_0}} \subset \Gamma_0 \\ 
\textrm{and } M_D \textrm{ internal points } \big( \mathbf{x}_i^D \big)_{i= \overline{1,M_D}} \subset D, \textrm{ find } u \in C(\overline{D}) \cap H^1_{\rm loc}(D) \textrm{ such that}\\[5pt]
\begin{array}{lll}
\qquad
\div\big(\mathbf{K} \nabla{u}\big) = 0~\textrm{ in } D, \quad &
u\big( \mathbf{x}^0_i \big) = u^0_i,~~i = \overline{1, M_0}, \quad &
u\big( \mathbf{x}^D_i \big) = u^D_i,~~i = \overline{1, M_D}.
\end{array}
\end{array}
\right.
\end{eqnarray}
Here the partial differential equation above is satisfied in a distributional sense (see \Cref{def:local solution}), $\mathbf{u}^0 \coloneqq \big( u^0_i \big)_{i = \overline{1, M_0}} \in \mathbb{R}^{M_0}$ are boundary measurements in the sense of \Cref{def:local solution bvp}, relation \eqref{eq:limit regular point}, and $\mathbf{u}^D \coloneqq \big( u^D_i \big)_{i = \overline{1, M_D}} \in \mathbb{R}^{M_D}$ are internal measurements.
\begin{figure}[H]
\centering
\subfigure[]{
\includegraphics[scale=0.80]{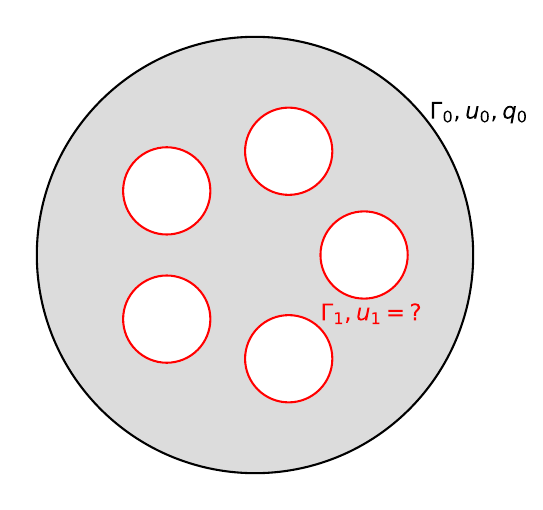}
\label{Fig00a}}
\subfigure[]{
\includegraphics[scale=0.70]{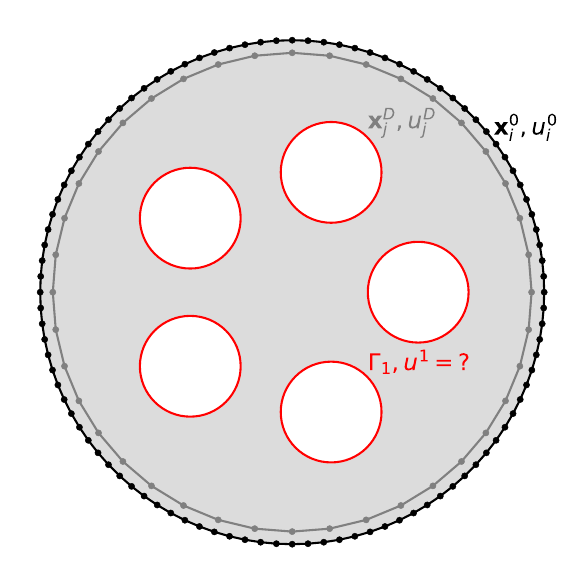}
\label{Fig00b}}
\caption[Figure]{\footnotesize Schematic diagram of \subref{Fig00a}~the continuous inverse Cauchy problem, and \subref{Fig00b}~the discrete version of its generalisation.}
\label{Fig00}
\end{figure}

Although the numerical simulations in \cite{CiGrMaI} have been performed for non-homogeneous Dirichlet data on $\Gamma_0$, for simplicity and without any loss of the generality, the following additional assumption related to the inverse problem \eqref{eq:insidemeasurements} is made 
herein, namely
\begin{equation} \label{eq:u_0=0}
u\big|_{\Gamma_0} = 0~(\textrm{hence}~u^0_i = 0,~i = \overline{1, M_0}).    
\end{equation}
To be more precise wee seek for a solution $u$ to the inverse problem \eqref{eq:insidemeasurements}--\eqref{eq:u_0=0} which admits the representation \eqref{eq:T_rond}. 
It is further assumed that for a point $\mathbf{x} \in D$ (and hence for any point $\mathbf{x} \in D$), the elliptic measure $\mu_\mathbf{x}$ admits a Radon-Nikodym derivative with respect to the surface measure $\sigma$ on $\partial D$
\begin{enumerate}[label=({\bf H.3})]
\item \label{eq:Hq} $\rho_{\mathbf{x}} \coloneqq \dfrac{\mathrm{d} \mu_\mathbf{x}}{\mathrm{d} \sigma} \in L^2(\partial D) 
\textrm{ or merely } 
\rho_{\mathbf{x}} \coloneqq \dfrac{\mathrm{d} \mu_\mathbf{x}}{\mathrm{d} \sigma} \in L^2(\Gamma_1)$,
\end{enumerate}
so that the inverse problem \eqref{eq:insidemeasurements}--\eqref{eq:u_0=0} reduces to solving, for $u_1 \in L^2(\Gamma_1)$, the linear operator equation
\begin{equation}\label{eq:inverse:T0}
Tu_1 = \mathbf{u}^D,
\end{equation}
where
\begin{equation}\label{eq:inverse:T0_def}
T \colon L^2(\Gamma_1) \longrightarrow \mathbb{R}^{M_D}, \qquad 
L^2(\Gamma_1)\ni u_1 \longmapsto Tu_1 = 
\left( 
\mathcal{T}u_1\big( \mathbf{x}_i^D \big) 
\right)_{i = \overline{1,M_D}} \in \mathbb{R}^{M_D}.
\end{equation}

\begin{rem} Condition {\rm \ref{eq:Hq}} is satisfied for a large class of domains $D \subset \mathbb{R}^d$ and conductivity tensors $\mathbf{K} \in \mathbb{R}^{d \times d}$, see, e.g., 
\cite[Theorem 1]{FaJeKe84}, \cite{Dahlberg86}, \cite{Fefferman89}, \cite[Theorem 3.5~(ii)]{GrWi82}), \cite[Theorem 3.1]{JeKe81}. 
\end{rem}
Recall that the internal measurements $\big( u_i^D \big)_{i = \overline{1,M_D}}$ do not simply form a vector in $\mathbb{R}^{M_D}$, but are discrete values of $u \in C(D)$ at $\left(\mathbf{x}_i^D\right)_{i = \overline{1,M_D}}$. 
Consequently, the inner product on $\mathbb{R}^{M_D}$ should be properly scaled as $M_D \to \infty$. 
In fact, $\mathbb{R}^{M_D}$ will be endowed with the Euclidean inner product and the operator $T$ will be rescaled as follows. 
Consider a vector $\boldsymbol{\nu} \coloneqq \big( \nu_i \big)_{i = \overline{1,M_D}} \in (0,1)^{M_D}$ such that 
$\displaystyle \sum_{i=1}^{M_D} \nu_i^2 = 1$. 
The direct operator associated with the inverse problem \eqref{eq:insidemeasurements} is then given by
\begin{equation*}
T_{\boldsymbol{\nu}} \colon \big( L^2(\Gamma_1), \Vert \cdot \Vert_{L^2(\Gamma_1)} \big) \longrightarrow 
\big( \mathbb{R}^{M_D}, \Vert \cdot \Vert \big), \qquad 
u_1 \in L^2(\Gamma_1) \longmapsto T_{\boldsymbol{\nu}} u_1 \coloneqq 
\left( 
\nu_i T u_1\big( \mathbf{x}_i^D \big) 
\right)_{i = \overline{1,M_D}} \in \mathbb{R}^{M_D}.
\end{equation*}
Thus, the operator equation \eqref{eq:inverse:T0} associated with the inverse problem \eqref{eq:insidemeasurements}--\eqref{eq:u_0=0} now becomes
\begin{eqnarray} \label{eq:ip u0=0}
\left|
\begin{array}{l}
\textrm{Given the internal measurements } \mathbf{u}^D \coloneqq \left(u_i^D\right)_{i=\overline{1,M_D}} \in \mathbb{R}^{M_D}, \textrm{ find } u_1 \in L^2(\Gamma_1) \textrm{ such that}\\[5pt]
\hspace*{6.0cm}
T_{\boldsymbol{\nu}} u_1 = \rm{diag}(\boldsymbol{\nu}) \; \mathbf{u}^D.
\end{array}
\right.
\end{eqnarray}
In fact, the main achievements of the approach by C\^{i}mpean et al. \cite{CiGrMaI} consist of the representation and simulation of the spectrum of the symmetrised operator $T_{\boldsymbol{\nu}}^\ast T_{\boldsymbol{\nu}} \colon L^2(\Gamma_1) \longrightarrow L^2(\Gamma_1)$, as well as the determination of the corresponding truncated singular value decomposition (TSVD) solutions.

\begin{rem}
The SVD of operator $T_{\boldsymbol{\nu}}^\ast T_{\boldsymbol{\nu}}$ depends heavily upon the choice of the scaling vector $\boldsymbol{\nu}$. 
A natural choice of the scaling vector is $\boldsymbol{\nu} \coloneqq \big( 1/\sqrt{i} \big)_{i = \overline{1, M_D}}$ and this corresponds to a uniform relevance of the internal measurements taken at $\big( \mathbf{x}_i^D \big)_{i = \overline{1, M_D}}$.
However, $\boldsymbol{\nu}$ may be chosen in many other ways meant to capture the regularity of $u$ on $\Gamma_D$.
\end{rem}
To analyse numerically the spectrum of the symmetrised operator $T_{\boldsymbol{\nu}}^\ast T_{\boldsymbol{\nu}}$, several approximation steps have been employed in \cite{CiGrMaI} and these will be recalled with more details in the following sections. 
At this point, to present concisely the main results, we list below the quantities of main interest and their corresponding estimators. 
The indices $\omega^1$, $\varepsilon$ and $N$ occurring below correspond to the discretisation of $\Gamma_1$ (see \Cref{ss:discret}), the $\varepsilon-$shell of the boundary used to stop the walk-on-elipsoids chain (see \Cref{s:3}), and the number of Monte Carlo samples (see \Cref{s:3}), respectively.
\begin{enumerate}[label={\rm (}\roman*{\rm )}]
\item The elliptic density $\rho_{\mathbf{x}_i^D}$ on the inaccessible boundary $\Gamma_1 \subset \partial D$ is approximated by the Monte Carlo estimator $\rho_{\mathbf{x}_i^D, \omega^1, \varepsilon, N}$ defined by \eqref{eq:rho omega1 MC}.

\item The matrix operator $\Lambda^{\boldsymbol{\nu}} \coloneqq T_{\boldsymbol{\nu}} T_{\boldsymbol{\nu}}^\ast$ is approximated by the Monte Carlo estimator $\boldsymbol{\Lambda}^{\boldsymbol{\nu}}_{\omega^1, \varepsilon, N}$ given by \eqref{eq:Lambda_tilde_nu}.

\item The eigenvalues of the operator $T_{\boldsymbol{\nu}}^\ast T_{\boldsymbol{\nu}}$ defined by \eqref{eq:B Th*Th} are approximated by the estimators $\widetilde{\lambda}_1 \geq \widetilde{\lambda}_2 \geq \ldots \geq \widetilde{\lambda}_{M_D}$ which are the eigenvalues of $\boldsymbol{\Lambda}^{\boldsymbol{\nu}}_{\omega^1, \varepsilon, N}$.

\item The eigenvectors of $T_{\boldsymbol{\nu}}^\ast T_{\boldsymbol{\nu}}$ are estimated by
\begin{equation} \label{eq:eigenfunctions_MC}
\widetilde{u}_j 
\coloneqq \widetilde{\lambda}_j^{-1/2} 
\displaystyle \sum \limits_{i=1}^{M_D} 
\nu_i \big[ \widetilde{\mathbf{u}}_j \big]_i \rho_{\mathbf{x}_i^D, \omega^1, \varepsilon, N}, 
\quad j = \overline{1, k},
\end{equation}
where $k \coloneqq {\rm dim}\left( {\rm Ker}\big( T_{\boldsymbol{\nu}}^\ast T_{\boldsymbol{\nu}} \big) \right)^{\perp} \leq M_D$ and $\big( \widetilde{\mathbf{u}}_j \big)_{j=\overline{1,M_D}}$ is an orthonormal eigenbasis of $\left( {\rm Ker} \big( \boldsymbol{\Lambda}^{\boldsymbol{\nu}}_{\omega^1, \varepsilon, N} \big) \right)^{\perp}$.

\item The $r-$TSVD solution to the equation $T_{\boldsymbol{\nu}} u_1 = {\rm diag}(\boldsymbol{\nu}) \; \mathbf{b}$, for some $\mathbf{b} \in \mathbb{R}^{M_D}$, defined by \eqref{eq:u_r}, is approximated by the Monte Carlo estimator $u_{\omega^1, \varepsilon, N}^{(r)}$ defined by \eqref{eq:u_omega_M1_r_MC}--\eqref{eq:u_omega_M1_r_MC A}, for various levels of the truncation parameter $r$.
\end{enumerate}

\subsection{Brief description of the main results} 
\label{ss:presentation}
The main aim of this study is to provide a comprehensive convergence and stability analysis of the estimators (i)--(v) employed in the approach of \cite{CiGrMaI}. To do so, the following {\it main assumptions} are made:
\begin{enumerate}[label={\rm (}\alph*{\rm )}]
\item Assumptions \ref{Hyp_D}--\ref{eq:Hq} hold together with the additional one, namely
    \begin{enumerate}[label=({\bf H.4})]
    \item \label{eq:H4} $\mathbf{K}$ is homogeneous, i.e. independent of the space variable $\mathbf{x}$.
    \end{enumerate} 
Analogous to \cite{CiGrMaI}, assumption \ref{eq:H4} is considered for two reasons, namely ({\it i})~the presentation of the Monte Carlo estimators and the corresponding algorithm are simpler, and ({\it ii})~for homogeneous conductivity coefficients, one can directly use the existing and very fast walk-on-spheres or, in the present case, walk-on-ellipsoids numerical algorithm. 
\Cref{thm:main1} is nevertheless valid for a general conductivity tensor $\mathbf{K}$. 
Using fundamentally similar steps, one could extend the present analysis to the case of non-homogeneous and sufficiently smooth conductivity coefficients, whilst in the case of piecewise constant conductivity coefficients, one could rely on the walk-on-spheres algorithm derived in \cite{Ta10}. 

\item The assumptions from \Cref{coro:MCmain} and \Cref{coro:tail_lambda} concerning the regularity of the domain $D$ and the conductivity tensor $\mathbf{K}$ are satisfied.

\item The assumptions in \Cref{coro: error estimates} or \Cref{coro:mu - mu eepsilon-Voronoi} regarding the regularity of the interpolation weights $\omega^1$ defined on $\Gamma_1$ are fulfilled.

\item The elliptic densities $\rho_{\mathbf{x}_i^D}$, $i = \overline{1,M_D}$, are continuous on $\overline{\Gamma_1}$. This condition could be, in principle, relaxed as indicated in \Cref{rem:osc regularity}~(iii), however it requires some additional work.  
\end{enumerate}

In the aforementioned setting (a)--(d), the main aim of the present study is achieved in the form of the following four qualitative results. 
Their quantitative versions are more involved and presented in detail in the main body of the paper.
\begin{Thm}[Elliptic densities approximation]
\label{thm:A}
\begin{equation}
\lim_{{\rm diam}(\omega^1) \to 0} 
\lim_{\varepsilon \to 0} 
\lim_{N \to \infty} 
\sup \limits_{\mathbf{x} \in \Gamma_1} 
\Big\vert 
\rho_{\mathbf{x}_i^D}(\mathbf{x})
- \rho_{\mathbf{x}_i^D,\omega^1,\varepsilon, N}(\mathbf{x}) 
\Big\vert = 0~\mathbb{P}\mbox{--a.s. and in } L^1(\mathbb{P}).
\end{equation}
\end{Thm}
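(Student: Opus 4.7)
The plan is to decompose the error via a three-term triangle inequality that isolates each source of approximation, and then send the indices to their limits in the prescribed order so that each sub-error can be controlled independently. Let $P_{\omega^1}$ denote the interpolation/projection operator associated with the discretization $\omega^1$ of $\Gamma_1$, and introduce the intermediate quantities $\rho_{\mathbf{x}_i^D,\omega^1} \coloneqq P_{\omega^1}\rho_{\mathbf{x}_i^D}$ (the idealized interpolant of the exact density) and $\rho_{\mathbf{x}_i^D,\omega^1,\varepsilon} \coloneqq \mathbb{E}\bigl[\rho_{\mathbf{x}_i^D,\omega^1,\varepsilon,N}\bigr]$ (the $N$-independent expectation of the MC estimator, which represents the $\varepsilon$-stopped exit law filtered through the interpolation basis). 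Then
\begin{equation*}
\sup_{\mathbf{x}\in\Gamma_1}\bigl|\rho_{\mathbf{x}_i^D}(\mathbf{x})-\rho_{\mathbf{x}_i^D,\omega^1,\varepsilon,N}(\mathbf{x})\bigr|\leq A_{\omega^1}+B_{\omega^1,\varepsilon}+C_{\omega^1,\varepsilon,N},
\end{equation*}
with $A$, $B$, $C$ the interpolation, stopping-shell, and sampling errors respectively.

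Taking $N\to\infty$ first, for fixed $(\omega^1,\varepsilon)$ the estimator $\rho_{\mathbf{x}_i^D,\omega^1,\varepsilon,N}$ is piecewise constant on the finite family of cells (respectively basis elements) of $\omega^1$, and each of its values is an empirical average of i.i.d.\ Bernoulli-type random variables (indicators that an $\varepsilon$-stopped walk-on-ellipsoids sample started at $\mathbf{x}_i^D$ lands in a prescribed cell, normalized by the cell's surface area). The supremum over the continuum $\Gamma_1$ therefore collapses to a finite maximum; the strong law of large numbers combined with a finite union bound yields $C_{\omega^1,\varepsilon,N}\to 0$ $\mathbb{P}$-almost surely, and the uniform pointwise bound $\rho_{\mathbf{x}_i^D,\omega^1,\varepsilon,N}\leq 1/\min_C\sigma(C)$ together with dominated convergence upgrades this to convergence in $L^1(\mathbb{P})$.

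Next, with $\omega^1$ still fixed, I send $\varepsilon\to 0$. The walk-on-ellipsoids chain stopped inside the shell $\{\mathrm{dist}(\cdot,\partial D)\leq\varepsilon\}$ produces an exit distribution $\mu_{\mathbf{x}_i^D}^{\varepsilon}$ that converges weakly to the elliptic measure $\mu_{\mathbf{x}_i^D}$, because the underlying diffusion associated with $\operatorname{div}(\mathbf{K}\nabla\cdot)$ exits $D$ continuously through $\partial D$ at regular boundary points, of which all of $\partial D$ consists by the standing assumption in \eqref{eq:insidemeasurements}. Arranging for the cells of $\omega^1$ to have $\sigma$-null boundaries (automatic for the Voronoi construction covered by \Cref{coro:mu - mu eepsilon-Voronoi}), the absolute continuity hypothesis \ref{eq:Hq} makes them $\mu_{\mathbf{x}_i^D}$-null, the portmanteau theorem delivers $\mu_{\mathbf{x}_i^D}^{\varepsilon}(C)\to\mu_{\mathbf{x}_i^D}(C)$ for every cell $C$, and hence $B_{\omega^1,\varepsilon}\to 0$ deterministically. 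Finally, sending $\operatorname{diam}(\omega^1)\to 0$, the interpolation error $A_{\omega^1}$ vanishes by uniform continuity of $\rho_{\mathbf{x}_i^D}$ on the compact set $\overline{\Gamma_1}$ (assumption (d)) via the standard estimate $\|\rho_{\mathbf{x}_i^D}-P_{\omega^1}\rho_{\mathbf{x}_i^D}\|_{L^\infty(\Gamma_1)}\leq \omega_{\rho}(\operatorname{diam}(\omega^1))$, where $\omega_{\rho}$ is the modulus of continuity of $\rho_{\mathbf{x}_i^D}$.

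The principal technical hurdle is obtaining \emph{uniform} (in $\mathbf{x}\in\Gamma_1$) convergence of the Monte Carlo term simultaneously in the $\mathbb{P}$-a.s.\ and $L^1(\mathbb{P})$ senses; this is only tractable because, once $(\omega^1,\varepsilon)$ are frozen, the estimator depends on only finitely many empirical frequencies and the supremum becomes a finite maximum, so no empirical-process / covering-number machinery is required. A subsidiary care-point is ensuring the cells of $\omega^1$ have $\mu_{\mathbf{x}_i^D}$-null boundaries so that the portmanteau step in the $\varepsilon$-limit is legitimate, which, as noted, is automatic for the Voronoi-type partitions of \Cref{coro:mu - mu eepsilon-Voronoi}.
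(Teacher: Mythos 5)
Your decomposition is the same as the paper's: the error is split into an interpolation term (bounded by the oscillation of $\rho_{\mathbf{x}_i^D}$ over the supports of the weights, as in Step~I of \Cref{thm:main1}), an $\varepsilon$-shell term comparing $\mathbf{A}_{\omega^1}$ with $\mathbf{A}_{\omega^1,\varepsilon}$, and a Monte Carlo sampling term, and the limits are taken in the same order. Your treatment of the $N$-limit (strong law plus a finite union bound over the $M_1$ weights, upgraded to $L^1(\mathbb{P})$ by the uniform bound $\rho_{\mathbf{x}_i^D,\omega^1,\varepsilon,N}\le \max_j\sigma(\omega^1_j)^{-1}$ and dominated convergence) and of the $\operatorname{diam}(\omega^1)$-limit (uniform continuity of the densities on $\overline{\Gamma_1}$) is valid and in fact somewhat softer than the paper's, which instead runs everything through Hoeffding tail bounds (\Cref{coro:rho_MC_est}) so as to obtain the explicit non-asymptotic estimate \eqref{eq:tail_hdensity}; your argument yields the qualitative statement of \Cref{thm:A} but none of the rates that the detailed version provides.

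The one step that does not go through as written is the $\varepsilon$-limit. The estimator evaluates the \emph{extended} weights $\omega_j^{1,\varepsilon}$ (in the binary case $1_{V_j^\varepsilon}$ with $V_j^\varepsilon\subset\overline{D}_\varepsilon$) at the stopped chain position, which lies strictly inside $D$, so you are not testing a fixed boundary set against the laws $\mu^\varepsilon_{\mathbf{x}_i^D}$: the test set itself moves with $\varepsilon$, and the portmanteau theorem for a fixed $C$ with $\mu$-null boundary does not directly apply. A correct soft argument would couple the chains as in \eqref{eq:equal_distribution}, use that the stopped positions converge a.s.\ to the exit point $Y^{\mathbf{x}}_{\tau_{\partial D}}$ (all boundary points being regular), and then argue that $1_{V_j^\varepsilon}$ of the stopped point converges a.s.\ to $1_{V_j}$ of the exit point provided the exit point a.s.\ avoids the relative boundaries of the cells; this is exactly where a hypothesis is needed. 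Moreover, $\sigma$-nullity of the Voronoi cell boundaries on $\partial D$ is \emph{not} automatic in general (a bisecting hyperplane can meet $\partial D$ in a set of positive surface measure); it is guaranteed by the paper's assumption that the extensions $\big(V_j^\varepsilon\big)_\varepsilon$ have uniformly bounded $(d-1)$-dimensional upper Minkowski content (\Cref{def:extension_A_epsilon}), which, together with \ref{LHMD} and $\mathbf{(H_{\mu/\sigma})}$, is what \Cref{coro:mu - mu eepsilon-Voronoi} (or \Cref{coro: error estimates} for H\"older weights) uses to bound the shell error by $\mathcal{O}\big(\varepsilon^{(1/2)\wedge q}\vert\log\varepsilon\vert\big)$. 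So either repair the step along the a.s.-convergence lines above, making the null-boundary condition an explicit hypothesis, or simply invoke those corollaries as the paper does; as it stands, the portmanteau sentence is the gap.
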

\noindent More details and further explicit tail bounds for the convergence error are given by \Cref{coro:rho_MC_est}.

The following two main results are concerned with the approximation of the spectrum of the symmetrised operator $T_{\boldsymbol{\nu}}^\ast T_{\boldsymbol{\nu}}$. 
We further let $k \coloneqq {\rm dim}\left({\rm Ker}\big( T_{\boldsymbol{\nu}}^\ast T_{\boldsymbol{\nu}} \big) \right)^{\perp} \leq M_D$, i.e. the number of nonzero eigenvalues $\lambda_1 \geq \lambda_2 \geq \ldots \geq \lambda_k > 0$ of operator $T_{\boldsymbol{\nu}}^\ast T_{\boldsymbol{\nu}}$, set $\boldsymbol{\lambda}^{\boldsymbol{\nu}} \coloneqq \big( \lambda_1, \lambda_2, \ldots, \lambda_k, 0, \ldots, 0 \big) \in \mathbb{R}^{M_D}$ and let $\widetilde{\boldsymbol{\lambda}}^{\boldsymbol{\nu}} \coloneqq \big( \widetilde{\lambda}_1, \widetilde{\lambda}_2, \ldots, \widetilde{\lambda}_{M_D} \big) \in \mathbb{R}^{M_D}$ be the Monte Carlo estimator of $\boldsymbol{\lambda}^{\boldsymbol{\nu}}$ defined as the vector of non-negative eigenvalues of the random matrix $\boldsymbol{\Lambda}^{\boldsymbol{\nu}}_{\omega^1, \varepsilon, N}$ given by \eqref{eq:Lambda_tilde_nu}. 
Moreover, for a symmetric matrix $\mathbf{Q} \in \mathbb{R}^{n \times n}$, ${\rm gap}_i(\mathbf{Q})$, $i = \overline{1,n}$, denotes its $i-$th gap and is defined by \eqref{eq:gap}, whilst ${\rm \bf gap}(\mathbf{Q}) \coloneqq \big( {\rm gap}_1(\mathbf{Q}), {\rm gap}_2(\mathbf{Q}), \ldots, {\rm gap}_n(\mathbf{Q}) \big) \in \mathbb{R}^n$. 

\begin{Thm}[Eigenvalues approximation]
\label{thm:B}
\begin{equation*}
\lim_{{\rm diam}(\omega^1) \to 0} 
\lim_{\varepsilon \to 0} 
\lim_{N \to \infty} 
\left(
\Vert \boldsymbol{\lambda}^{\boldsymbol{\nu}} - \widetilde{\boldsymbol{\lambda}}^{\boldsymbol{\nu}} \Vert + 
\big\Vert
{\rm \bf gap} \big( \boldsymbol{\Lambda}^{\boldsymbol{\nu}}_{\omega^1, \varepsilon, N} \big) - 
{\rm \bf gap} \big( T_{\boldsymbol{\nu}}^\ast T_{\boldsymbol{\nu}} \big)
\big\Vert
\right) = 0~\mathbb{P}\mbox{-a.s. and in } L^1(\mathbb{P}).
\end{equation*}
\end{Thm}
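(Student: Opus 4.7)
The plan is to reduce the eigenvalue/gap convergence to the convergence of a finite sequence of symmetric $M_D\times M_D$ matrices, and then invoke classical spectral perturbation theory together with the density estimates already established in Theorem~A.

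\textbf{Reduction to a finite-dimensional matrix problem.} Since $T_{\boldsymbol{\nu}}\colon L^{2}(\Gamma_1)\to\mathbb{R}^{M_D}$ has rank at most $M_D$, the nonzero eigenvalues of $T_{\boldsymbol{\nu}}^{\ast}T_{\boldsymbol{\nu}}$ coincide (with multiplicities) with those of $T_{\boldsymbol{\nu}}T_{\boldsymbol{\nu}}^{\ast}=\Lambda^{\boldsymbol{\nu}}\in\mathbb{R}^{M_D\times M_D}$, so $\boldsymbol{\lambda}^{\boldsymbol{\nu}}$ is precisely the sorted eigenvalue vector of $\Lambda^{\boldsymbol{\nu}}$, padded by zeros. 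By definition of $\widetilde{\boldsymbol{\lambda}}^{\boldsymbol{\nu}}$, Theorem~B therefore reduces to showing
\begin{equation*}
\bigl\|\Lambda^{\boldsymbol{\nu}}-\boldsymbol{\Lambda}^{\boldsymbol{\nu}}_{\omega^1,\varepsilon,N}\bigr\|\longrightarrow 0\quad\mathbb{P}\text{--a.s. and in }L^{1}(\mathbb{P})
\end{equation*}
under the iterated limit, from which everything else will follow by perturbation estimates.

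\textbf{Matrix convergence from density convergence.} The entries of $\Lambda^{\boldsymbol{\nu}}$ read $\bigl[\Lambda^{\boldsymbol{\nu}}\bigr]_{ij}=\nu_i\nu_j\int_{\Gamma_1}\rho_{\mathbf{x}_i^D}\rho_{\mathbf{x}_j^D}\,\mathrm{d}\sigma$, while the entries of $\boldsymbol{\Lambda}^{\boldsymbol{\nu}}_{\omega^1,\varepsilon,N}$ are the corresponding discretisations using the Monte Carlo density estimators and the interpolation weights $\omega^{1}$. Combining the uniform approximation $\sup_{\mathbf{x}\in\Gamma_1}\bigl|\rho_{\mathbf{x}_i^D}-\rho_{\mathbf{x}_i^D,\omega^1,\varepsilon,N}\bigr|\to 0$ provided by Theorem~A, the boundedness of $\rho_{\mathbf{x}_i^D}$ on $\overline{\Gamma_1}$ from hypothesis (d), and the quadrature accuracy of the weights $\omega^{1}$ on $\Gamma_1$ from hypothesis (c), every entry converges a.s.\ and in $L^{1}(\mathbb{P})$. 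Since the matrices are $M_D\times M_D$ with $M_D$ fixed, entry-wise convergence upgrades to convergence in operator (and Frobenius) norm without dimensional cost.

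\textbf{Perturbation step and gaps.} Both $\Lambda^{\boldsymbol{\nu}}$ and $\boldsymbol{\Lambda}^{\boldsymbol{\nu}}_{\omega^1,\varepsilon,N}$ are symmetric positive semidefinite, so Weyl's inequality (or Hoffman--Wielandt/Mirsky for the Euclidean norm on the sorted eigenvalue vector) yields
\begin{equation*}
\bigl\|\boldsymbol{\lambda}^{\boldsymbol{\nu}}-\widetilde{\boldsymbol{\lambda}}^{\boldsymbol{\nu}}\bigr\|\;\leq\;\sqrt{M_D}\,\bigl\|\Lambda^{\boldsymbol{\nu}}-\boldsymbol{\Lambda}^{\boldsymbol{\nu}}_{\omega^1,\varepsilon,N}\bigr\|.
\end{equation*}
For the gap term, each component of ${\rm\bf gap}$ is a Lipschitz function of the sorted eigenvalue vector (cf.~the definition in \eqref{eq:gap}), and the sorting map itself is $1$-Lipschitz in the Euclidean norm, so
\begin{equation*}
\bigl\|{\rm\bf gap}\bigl(\boldsymbol{\Lambda}^{\boldsymbol{\nu}}_{\omega^1,\varepsilon,N}\bigr)-{\rm\bf gap}\bigl(T_{\boldsymbol{\nu}}^{\ast}T_{\boldsymbol{\nu}}\bigr)\bigr\|\lesssim \bigl\|\boldsymbol{\lambda}^{\boldsymbol{\nu}}-\widetilde{\boldsymbol{\lambda}}^{\boldsymbol{\nu}}\bigr\|,
\end{equation*}
and both terms in Theorem~B inherit the a.s.\ and $L^{1}$ convergence from Step~2.

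\textbf{Main obstacle.} The routine piece is the a.s.\ statement; the delicate part is the $L^{1}(\mathbb{P})$ convergence, because the entries of $\boldsymbol{\Lambda}^{\boldsymbol{\nu}}_{\omega^1,\varepsilon,N}$ are random and, a priori, one only controls their mean. The resolution is to use the explicit concentration/tail bounds already produced for the density estimators (alluded to in Corollary~3.? underlying Theorem~A) to obtain uniform-in-$(\omega^{1},\varepsilon,N)$ moment bounds on $\|\boldsymbol{\Lambda}^{\boldsymbol{\nu}}_{\omega^1,\varepsilon,N}\|$ — typically a second moment suffices — and then invoke Vitali's convergence theorem to promote a.s.\ convergence to $L^{1}$. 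This uniform integrability is the only non-automatic ingredient, and it is exactly what the concentration inequalities from the preceding sections are tailored to supply.
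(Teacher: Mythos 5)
Your overall architecture for the eigenvalue part coincides with the paper's: the paper also reduces everything to controlling $\Vert \boldsymbol{\Lambda}^{\boldsymbol{\nu}}-\boldsymbol{\Lambda}^{\boldsymbol{\nu}}_{\omega^1,\varepsilon,N}\Vert_{\rm F}$ and then applies the Hoffman--Wielandt inequality (\Cref{thm:Hoffman-Wielandt}) together with the spectral identification of \Cref{coro:B-Lambda}. The difference is in how the matrix error is controlled: the paper proves a dedicated tail bound (\Cref{coro:MCmain}) by splitting the error into the discretisation part $\Vert\boldsymbol{\Lambda}^{\boldsymbol{\nu}}-\boldsymbol{\Lambda}^{\boldsymbol{\nu}}_{\omega^1}\Vert_{\rm F}$, the $\varepsilon$-shell part and the Monte Carlo part, and invoking Hoeffding on the entries of $\mathbf{A}_{\omega^1,\varepsilon,N}$, because it wants explicit rates; you instead deduce entrywise (hence Frobenius) convergence from the sup-norm density convergence of Theorem~A, which is a legitimate shortcut for the purely qualitative statement. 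Your ``main obstacle'' is, however, not really an obstacle: since each $[\mathbf{A}_{\omega^1,\varepsilon,N}]_{ij}\in[0,1]$ and the constants $\sigma(\omega^1_j)^{-1}$ are fixed once $\omega^1$ is fixed, all random quantities are uniformly bounded for fixed $(\omega^1,\varepsilon)$, so the $L^1(\mathbb{P})$ statement for the innermost limit follows from dominated convergence (the paper gets it by integrating the tail bounds, as in \Cref{coro:rho_MC_est}); no Vitali/uniform-integrability machinery is needed.

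The genuine gap is your treatment of the gap vector. With the definition \eqref{eq:gap}, ${\rm gap}_i$ is \emph{not} a Lipschitz, nor even a continuous, function of the sorted eigenvalue vector: for eigenvalues $(1,1,0)$ one has ${\rm gap}_1=1$, whereas for $(1,1-\epsilon,0)$ one has ${\rm gap}_1=\epsilon$, so an $O(\epsilon)$ perturbation of the spectrum changes ${\rm gap}_1$ by $1-\epsilon$; the culprit is the constraint $\alpha_j\neq\alpha_i$ in the minimum. Hence the inequality
\begin{equation*}
\bigl\Vert {\rm \bf gap}\bigl(\boldsymbol{\Lambda}^{\boldsymbol{\nu}}_{\omega^1,\varepsilon,N}\bigr)-{\rm \bf gap}\bigl(T_{\boldsymbol{\nu}}^{\ast}T_{\boldsymbol{\nu}}\bigr)\bigr\Vert \lesssim \bigl\Vert \boldsymbol{\lambda}^{\boldsymbol{\nu}}-\widetilde{\boldsymbol{\lambda}}^{\boldsymbol{\nu}}\bigr\Vert
\end{equation*}
cannot be obtained from a generic Lipschitz argument; it holds only away from repeated eigenvalues (e.g.\ if the nonzero eigenvalues of $T_{\boldsymbol{\nu}}^{\ast}T_{\boldsymbol{\nu}}$ are simple, a local Lipschitz bound for sufficiently small perturbations does the job, but Theorem~B does not assume simplicity). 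The paper handles this step by a different device: in \Cref{coro:tail_lambda} it introduces the minimising indices $i_\ast$ and $\widetilde{i}_\ast$ realising the two gaps and bounds the gap difference by a sum of three individual eigenvalue discrepancies, which then yields the tails $\delta_\Lambda(\gamma/\sqrt{3})$ and $\delta_\Lambda(\gamma/3)$ via Hoffman--Wielandt. To repair your proof you need either that index-comparison argument or an explicit restriction to simple (nonzero) spectrum; as written, the gap step fails precisely in the degenerate configurations that the statement does not exclude.
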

\noindent More details and further explicit tail bounds for the convergence error are provided by \Cref{coro:tail_lambda}.

\begin{Thm}[Eigenvectors approximation]
\label{thm:C}
Let $\big\{ \widetilde{\mathbf{ u}}_1, \widetilde{\mathbf{ u}}_2, \ldots, \widetilde{\mathbf{ u}}_{M_D} \big\} \subset \mathbb{R}^{M_D}$ be an orthonormal basis that consists of eigenvectors of the random matrix $\boldsymbol{\Lambda}^{\boldsymbol{\nu}}_{\omega^1, \varepsilon, N}$ given by \eqref{eq:Lambda_tilde_nu} with the corresponding eigenvalues $\big( \widetilde{\lambda}_i \big)_{i = \overline{1,M_D}}$ defined above, and set
\begin{equation*}
\widetilde{u}_j(\mathbf{x}) \coloneqq  
\displaystyle \big( \widetilde{\lambda}_j \big)^{-1/2} 
\sum \limits_{i=1}^{M_D} \nu_i 
\big[ \widetilde{\mathbf{u}}_j \big]_{i} \rho_{\mathbf{x}_i^D, \omega^1, \varepsilon, N}(\mathbf{x}), \quad j = \overline{1, {\rm rank}\big( \boldsymbol{\Lambda}^{\boldsymbol{\nu}}_{\omega^1, \varepsilon, N} \big)}, 
\quad \mathbf{x} \in \Gamma_1.    
\end{equation*} 
Then there exists an orthonormal basis $\big\{ u_1, u_2, \ldots, u_k \big \} \subset {\rm Ker}(T_\nu^\ast T_\nu)^{\perp} \subset L^2(\Gamma_1)$ that consists of eigenfunctions of operator $T_{\boldsymbol{\nu}}^\ast T_{\boldsymbol{\nu}}$ such that, for all $j = \overline{1, k}$, where $k \coloneqq {\rm rank}\big( T_{\boldsymbol{\nu}}^\ast T_{\boldsymbol{\nu}} \big)$, the following relation hols
\begin{equation*}
\lim_{{\rm diam}(\omega^1) \to 0} 
\lim_{\varepsilon \to 0} 
\lim_{N \to \infty} 
\Big(
{\rm gap}_j\big( \boldsymbol{\Lambda}^{\boldsymbol{\nu}}_{\omega^1, \varepsilon, N} \big) \widetilde{\lambda}_j^{1/2} 
\sup_{\mathbf{x} \in \Gamma_1} 
\big \vert u_j(\mathbf{x}) - \widetilde{u}_j(\mathbf{x}) \big\vert 
\Big) = 0~\mathbb{P}\mbox{--a.s. and in } L^1(\mathbb{P}).
\end{equation*}
\end{Thm}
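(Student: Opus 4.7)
The plan is to leverage the singular value decomposition link between $T_{\boldsymbol{\nu}}^\ast T_{\boldsymbol{\nu}}$ and $\Lambda^{\boldsymbol{\nu}} = T_{\boldsymbol{\nu}} T_{\boldsymbol{\nu}}^\ast$. The key observation is that any unit eigenvector $\mathbf{u}_j \in \mathbb{R}^{M_D}$ of $\Lambda^{\boldsymbol{\nu}}$ associated to an eigenvalue $\lambda_j > 0$ yields, through the classical singular-vector identity,
\[
u_j \coloneqq \lambda_j^{-1/2}\, T_{\boldsymbol{\nu}}^\ast \mathbf{u}_j = \lambda_j^{-1/2} \sum_{i=1}^{M_D} \nu_i\, [\mathbf{u}_j]_i\, \rho_{\mathbf{x}_i^D},
\]
a unit eigenfunction of $T_{\boldsymbol{\nu}}^\ast T_{\boldsymbol{\nu}}$ in $\mathrm{Ker}(T_{\boldsymbol{\nu}}^\ast T_{\boldsymbol{\nu}})^{\perp}$. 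Under assumption (d), this expression is pointwise meaningful on $\Gamma_1$ and structurally identical to the definition of $\widetilde{u}_j$, with $\lambda_j$, $\mathbf{u}_j$, $\rho_{\mathbf{x}_i^D}$ playing the roles of $\widetilde{\lambda}_j$, $\widetilde{\mathbf{u}}_j$, $\rho_{\mathbf{x}_i^D,\omega^1,\varepsilon,N}$. These will be the candidate limiting eigenfunctions $u_j$ in the statement.

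Next, I would decompose $u_j - \widetilde{u}_j$ by inserting two intermediate quantities, producing three summands that carry respectively the eigenvalue error $|\lambda_j^{-1/2} - \widetilde{\lambda}_j^{-1/2}|$, the matrix-eigenvector error $\|\mathbf{u}_j - \widetilde{\mathbf{u}}_j\|$, and the uniform density error $\max_i \sup_{\mathbf{x} \in \Gamma_1}|\rho_{\mathbf{x}_i^D}(\mathbf{x}) - \rho_{\mathbf{x}_i^D,\omega^1,\varepsilon,N}(\mathbf{x})|$. Taking $\sup_{\mathbf{x} \in \Gamma_1}$ and applying the Cauchy--Schwarz inequality together with $\sum_i \nu_i^2 = 1$, $\|\mathbf{u}_j\| = \|\widetilde{\mathbf{u}}_j\| = 1$, and the uniform bound on the densities from (d), each summand reduces to either $\widetilde{\lambda}_j^{-1/2}$ or $|\lambda_j^{-1/2} - \widetilde{\lambda}_j^{-1/2}|$ multiplied by an elementary error that is directly controlled by Theorems~A and~B.

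The main obstacle is the matrix eigenvector term, since eigenvectors are intrinsically unstable whenever eigenvalues cluster. I would handle this via the Davis--Kahan $\sin\Theta$ theorem applied to the symmetric matrices $\Lambda^{\boldsymbol{\nu}}$ and $\boldsymbol{\Lambda}^{\boldsymbol{\nu}}_{\omega^1,\varepsilon,N}$. After a realization-dependent orthonormal alignment of $\{u_1,\dots,u_k\}$ inside the (possibly degenerate) eigenspaces of $T_{\boldsymbol{\nu}}^\ast T_{\boldsymbol{\nu}}$, this yields an estimate of the form
\[
\|\mathbf{u}_j - \widetilde{\mathbf{u}}_j\| \leq \frac{C\, \bigl\|\Lambda^{\boldsymbol{\nu}} - \boldsymbol{\Lambda}^{\boldsymbol{\nu}}_{\omega^1,\varepsilon,N}\bigr\|}{\mathrm{gap}_j\bigl(\boldsymbol{\Lambda}^{\boldsymbol{\nu}}_{\omega^1,\varepsilon,N}\bigr)}.
\]
The freedom to select the limiting basis after observing the Monte Carlo draw, which the wording \emph{there exists an orthonormal basis} permits, is essential whenever $T_{\boldsymbol{\nu}}^\ast T_{\boldsymbol{\nu}}$ has eigenvalues of multiplicity greater than one; this bookkeeping is the most delicate point of the argument.

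Multiplying the resulting bound on $\sup_{\mathbf{x} \in \Gamma_1}|u_j - \widetilde{u}_j|$ by $\mathrm{gap}_j(\boldsymbol{\Lambda}^{\boldsymbol{\nu}}_{\omega^1,\varepsilon,N})\, \widetilde{\lambda}_j^{1/2}$ then reveals the purpose of the prefactor in the statement: the $\mathrm{gap}_j$ factor cancels the Davis--Kahan denominator, while $\widetilde{\lambda}_j^{1/2}$ cancels the $\widetilde{\lambda}_j^{-1/2}$ factors issued by the SVD formula, leaving each summand as a bounded prefactor multiplied by either the uniform density error (controlled by Theorem~A), the operator-norm perturbation $\|\Lambda^{\boldsymbol{\nu}} - \boldsymbol{\Lambda}^{\boldsymbol{\nu}}_{\omega^1,\varepsilon,N}\|$ (controlled by the estimates underlying Theorem~B), or $|\lambda_j^{-1/2} - \widetilde{\lambda}_j^{-1/2}|$ (also controlled by Theorem~B, since $\lambda_j > 0$ forces $\widetilde{\lambda}_j$ to stay bounded away from zero in the triple limit). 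Passing to the iterated limits $N \to \infty$, $\varepsilon \to 0$, $\mathrm{diam}(\omega^1) \to 0$ then gives the claimed $\mathbb{P}$-a.s.\ convergence, and $L^1(\mathbb{P})$ convergence follows by dominated convergence, using the uniform a priori bounds on the elliptic densities guaranteed by (d) and the unit normalization of the eigenvectors as an integrable majorant.
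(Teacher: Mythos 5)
Your proposal is correct and follows essentially the same route as the paper's proof of its detailed version (\Cref{coro:tail_eigenvectors}): the candidate eigenfunctions $u_j=\lambda_j^{-1/2}T_{\boldsymbol{\nu}}^\ast\mathbf{u}_j$ from \Cref{coro:B-Lambda}, the three-term decomposition into eigenvalue, matrix-eigenvector and uniform density errors, and the realization-dependent Davis--Kahan alignment (\Cref{thm:Davis-Kahan}) with ${\rm gap}_j\big(\boldsymbol{\Lambda}^{\boldsymbol{\nu}}_{\omega^1,\varepsilon,N}\big)$ in the denominator, cancelled by the prefactor in the statement, are exactly the paper's ingredients, with the remaining errors controlled by \Cref{coro:rho_MC_est}, \Cref{coro:MCmain} and \Cref{coro:tail_lambda}. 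The only minor difference is that the paper deduces the $L^1(\mathbb{P})$ convergence by integrating explicit tail bounds rather than by dominated convergence, but both are valid since the innermost limit is taken for fixed $\omega^1$ and $\varepsilon$, where deterministic bounds hold.
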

\noindent More details and further explicit tail bounds for the convergence error are given by \Cref{coro:tail_eigenvectors}.

The final main result is related to the TSVD of the following problem 
\begin{equation} \label{eq:op_eq}
T_{\boldsymbol{\nu}} u = \mathbf{b}^{\boldsymbol{\nu}} \coloneqq {\rm diag}(\boldsymbol{\nu}) \: \mathbf{b} 
\quad \textrm{or, equivalently,} \quad 
T_{\boldsymbol{\nu}}^\ast T_{\boldsymbol{\nu}} u = T_{\boldsymbol{\nu}}^\ast \mathbf{b}^{\boldsymbol{\nu}},
\end{equation}
for a given $\mathbf{b} \in \mathbb{R}^{M_D}$. 
To this end, consider $r \in \overline{1, {\rm rank}\big( T_{\boldsymbol{\nu}}^\ast T_{\boldsymbol{\nu}} \big)}$ and let $\mathbf{b}\in \mathbb{R}^{M_D}$ be given, $u^{(r)}$ be the {\it $r-$TSVD} solution to problem \eqref{eq:op_eq}, see \eqref{eq:u_r} for a rigorous definition, and $u^{(r)}_{\omega^1, \varepsilon, N}$ be its Monte Carlo estimator defined explicitly by \eqref{eq:u_omega_M1_r_MC} or \eqref{eq:u_omega_M1_r_MC A}. 

\begin{Thm}[Approximation of truncated SVD solutions]
\label{thm:D}
Consider $r \in \overline{1, {\rm rank}\big( T_{\boldsymbol{\nu}}^\ast T_{\boldsymbol{\nu}} \big)}$, whilst $u^{(r)}$ and $u^{(r)}_{\omega^1, \varepsilon, N}$ are given as above. 
If $\lambda_r > \lambda_{r+1}$, then
\begin{equation*}
\lim_{{\rm diam}(\omega^1) \to 0} 
\lim_{\varepsilon \to 0} 
\lim_{N \to \infty} 
\sup \limits_{\mathbf{x} \in \Gamma_1} 
\big\vert u^{(r)}(\mathbf{x})
- u^{(r)}_{\omega^1, \varepsilon, N}(\mathbf{x}) \big\vert = 0~\mathbb{P}\mbox{--a.s. and in } L^1(\mathbb{P}).
\end{equation*}
\end{Thm}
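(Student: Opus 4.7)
The plan is to reduce the sup-norm error $\sup_{\mathbf{x} \in \Gamma_1} |u^{(r)}(\mathbf{x}) - u^{(r)}_{\omega^1,\varepsilon,N}(\mathbf{x})|$ to quantities already controlled by Theorems~A, B and C, exploiting crucially the spectral gap hypothesis $\lambda_r > \lambda_{r+1}$. Using the SVD duality $u_j = \lambda_j^{-1/2} T_{\boldsymbol{\nu}}^\ast \mathbf{u}_j$, where $\mathbf{u}_j$ denotes an eigenvector of $\Lambda^{\boldsymbol{\nu}} = T_{\boldsymbol{\nu}} T_{\boldsymbol{\nu}}^\ast$, together with the explicit form of $T_{\boldsymbol{\nu}}^\ast$ induced by \eqref{eq:T_rond}, the $r$-TSVD solution reads
\[
u^{(r)}(\mathbf{x}) = \sum_{j=1}^{r} \frac{\langle \mathbf{b}^{\boldsymbol{\nu}}, \mathbf{u}_j \rangle}{\lambda_j} \sum_{i=1}^{M_D} \nu_i [\mathbf{u}_j]_i \rho_{\mathbf{x}_i^D}(\mathbf{x}), \qquad \mathbf{x} \in \Gamma_1,
\]
and its Monte Carlo counterpart $u^{(r)}_{\omega^1,\varepsilon,N}$, as in \eqref{eq:u_omega_M1_r_MC}, has the same structure with $\lambda_j$, $\mathbf{u}_j$ and $\rho_{\mathbf{x}_i^D}$ replaced throughout by $\widetilde{\lambda}_j$, $\widetilde{\mathbf{u}}_j$ and $\rho_{\mathbf{x}_i^D,\omega^1,\varepsilon,N}$. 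Subtracting the two and applying a telescoping triangle inequality splits the pointwise error into three kinds of contributions: an eigenvalue factor $\lambda_j^{-1} - \widetilde{\lambda}_j^{-1}$, an eigenvector factor $\mathbf{u}_j - \widetilde{\mathbf{u}}_j$, and a density factor $\rho_{\mathbf{x}_i^D} - \rho_{\mathbf{x}_i^D,\omega^1,\varepsilon,N}$.

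I would then dispatch the three contributions separately. The density factor is handled directly by Theorem~A, which yields uniform sup-norm convergence on $\Gamma_1$ for each of the finitely many $i \in \{1, \ldots, M_D\}$. The eigenvalue factor is innocuous because the gap hypothesis forces $\lambda_j > 0$ for $j \leq r$, and Theorem~B ensures $\widetilde{\lambda}_j \to \lambda_j$, so $\widetilde{\lambda}_j$ is eventually bounded below by $\lambda_j/2$ and the division is stable. For the eigenvector factor, the same hypothesis combined with Theorem~B implies that ${\rm gap}_r(\boldsymbol{\Lambda}^{\boldsymbol{\nu}}_{\omega^1,\varepsilon,N})$ is eventually bounded below by $(\lambda_r - \lambda_{r+1})/2 > 0$; then Theorem~C delivers sup-norm convergence $\widetilde{u}_j \to u_j$ on $\Gamma_1$, and, via representation \eqref{eq:eigenfunctions_MC}, of the coefficient vectors $\widetilde{\mathbf{u}}_j \to \mathbf{u}_j$ (up to a sign choice) in $\mathbb{R}^{M_D}$. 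Taking successively the iterated limits $N \to \infty$, then $\varepsilon \to 0$, then ${\rm diam}(\omega^1) \to 0$ respects the order in which each theorem was established and yields the claimed convergence $\mathbb{P}$-a.s.

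The main obstacle I anticipate is the possible presence of multiplicities inside the top-$r$ block, where individual eigenvectors are not uniquely determined and thus cannot converge in isolation even after a sign correction. The clean remedy is to recast the whole argument intrinsically in terms of the spectral projector $P_r$ onto the top-$r$ eigenspace of $\Lambda^{\boldsymbol{\nu}}$, which is well-defined and stable purely under the assumption $\lambda_r > \lambda_{r+1}$; a Davis--Kahan $\sin\Theta$-type bound then gives $\|\widetilde{P}_r - P_r\| \lesssim \|\boldsymbol{\Lambda}^{\boldsymbol{\nu}}_{\omega^1,\varepsilon,N} - \Lambda^{\boldsymbol{\nu}}\|/(\lambda_r - \lambda_{r+1})$, with the numerator controlled by Theorem~B. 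Finally, the upgrade from $\mathbb{P}$-almost sure to $L^1(\mathbb{P})$ convergence follows from a dominated convergence argument: the walk-on-ellipsoids estimators $\rho_{\mathbf{x}_i^D,\omega^1,\varepsilon,N}$ are deterministically $L^\infty$-bounded on $\Gamma_1$ uniformly in $N$ and $\varepsilon$, and the tail bounds stated in the quantitative versions of Theorems~A--C (\Cref{coro:rho_MC_est}, \Cref{coro:tail_lambda} and \Cref{coro:tail_eigenvectors}) provide the uniform integrability required.
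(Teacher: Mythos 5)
Your route is genuinely different from the paper's. The paper's proof of \Cref{coro:tail_solutions} (the quantitative version of \Cref{thm:D}) never decomposes $u^{(r)}$ eigenvector by eigenvector and never invokes the eigenvector approximation (\Cref{thm:C}/\Cref{coro:tail_eigenvectors}). It starts from the representation $u^{(r)}=\sum_{i}\nu_i\big[\mathbf{u}^{(r)}\big]_i\rho_{\mathbf{x}_i^D}$ of \Cref{coro:representation_u(r)} and its Monte Carlo analogue \eqref{eq:u_omega_M1_r_MC}, splits the sup-norm error into $\big\Vert \mathbf{u}^{(r)}-\mathbf{u}^{(r)}_{\omega^1,\varepsilon,N}\big\Vert\,\Vert{\rm diag}(\boldsymbol{\nu})\overline{\rho}_{\Gamma_1}\Vert$ plus $\big\Vert\mathbf{u}^{(r)}_{\omega^1,\varepsilon,N}\big\Vert$ times the uniform density error, and controls the first factor by Hansen's perturbation theorem for truncated pseudoinverses (\Cref{thm:AHansen}) on the event $\Vert\boldsymbol{\Lambda}^{\boldsymbol{\nu}}-\boldsymbol{\Lambda}^{\boldsymbol{\nu}}_{\omega^1,\varepsilon,N}\Vert_{\rm F}\leq\tfrac12\big(\lambda_r-\lambda_{r+1}\big)$ (this is exactly where $\lambda_r>\lambda_{r+1}$ enters), the second by a lower bound on $\widetilde{\lambda}_r$ from \Cref{coro:tail_lambda} combined with \Cref{coro:rho_MC_est}. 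Your per-eigenvector splitting, once you patch the multiplicity problem by passing to the spectral projector and a Davis--Kahan bound, effectively re-derives the stability of $\big(\boldsymbol{\Lambda}^{\boldsymbol{\nu}}\big)^\dag_r\mathbf{b}^{\boldsymbol{\nu}}$ that the paper imports wholesale from \cite{Ha87}; what your version buys is independence from that black-box TSVD perturbation result, at the cost of assembling projector stability and a Weyl-type lower bound on $\widetilde{\lambda}_r$ by hand. One imprecision: the quantity your Davis--Kahan step needs, $\Vert\boldsymbol{\Lambda}^{\boldsymbol{\nu}}_{\omega^1,\varepsilon,N}-\boldsymbol{\Lambda}^{\boldsymbol{\nu}}\Vert_{\rm F}\to 0$, is not what \Cref{thm:B} asserts (eigenvalues and gaps only); you must appeal to the matrix-level result \Cref{coro:MCmain}, which is what both routes ultimately rest on.

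The genuinely flawed step is your $L^1(\mathbb{P})$ upgrade. The density estimators $\rho_{\mathbf{x}_i^D,\omega^1,\varepsilon,N}$ are indeed deterministically bounded for fixed $\omega^1$, but $u^{(r)}_{\omega^1,\varepsilon,N}$ also contains the factor $\mathbf{u}^{(r)}_{\omega^1,\varepsilon,N}=\big(\boldsymbol{\Lambda}^{\boldsymbol{\nu}}_{\omega^1,\varepsilon,N}\big)^\dag_r\mathbf{b}^{\boldsymbol{\nu}}$, whose norm is only bounded by $\Vert\mathbf{b}^{\boldsymbol{\nu}}\Vert/\widetilde{\lambda}_r$; on rare events $\widetilde{\lambda}_r$ can be arbitrarily small (or the rank of the random matrix can drop below $r$), so there is no deterministic dominating constant uniform in $N$ and dominated convergence as you state it does not apply. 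The viable repair is the one you only gesture at and the one the paper uses: integrate the explicit tail bounds, as in the $L^1$ argument of \Cref{coro:rho_MC_est}, noting that the thresholds $\gamma_2$ and $\gamma_3$ of \Cref{coro:tail_solutions} stay strictly positive precisely because $\lambda_r>\lambda_{r+1}$, and treating separately the small-probability event on which $\widetilde{\lambda}_r$ degenerates. With that replacement, and with \Cref{coro:MCmain} supplying the Frobenius-norm convergence, your argument for the almost sure statement goes through.
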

\noindent More details and further explicit tail bounds for the convergence error are provided by \Cref{coro:tail_solutions}.

It should be stressed out that  the four simplified statements listed in Theorems~\ref{thm:A}--\ref{thm:D} claim the convergence of the corresponding numerical schemes only. However, their corresponding detailed versions, namely Theorems~\ref{coro:rho_MC_est}--\ref{coro:tail_solutions}, actually provide one with explicit bounds of the errors which, in particular, show that the limits in Theorems~\ref{thm:A}--\ref{thm:D} are not interchangeable. 
These bounds may seem quite involved, but they are meant to be general enough in order to be applicable in various scenarios. 
To give a glance, a careful inspection of these bounds 
when the domain $D$ is of class $C^{1,\alpha}$ and the conductivity tensor $\mathbf{K}$ is constant, shows that there exists $\beta > 0$ depending on $\alpha$ such that the convergences in Theorems~\ref{thm:A}--\ref{thm:D} are of order
\begin{equation*}
\mathcal{O}\left( 
\big[ {\rm diam}(\omega^1) \big]^\beta + 
\mathcal{O}\left(
\varepsilon^{1/2} \big\vert \log(\varepsilon) \big\vert + 
\mathcal{O}\left( \dfrac{1}{\sqrt{N}} \right) 
\right)
\right),
\end{equation*}
with the mention that one has to ensure that truncation is made such that the $r-$th eigenvalue of $T_{\boldsymbol{\nu}}^\ast T_{\boldsymbol{\nu}}$ is strictly larger then its $(r+1)-$th eigenvalue in case of \Cref{thm:D}. 

Finally, for a merged/unified version of \cite{CiGrMaI} and the present manuscript, we refer the reader to \cite{CiGrMaExtended}.



\section{From spectral representations to Monte Carlo estimators} 
\label{section:formulation}

In this section we describe and  perform a first analysis of the quantities involved in the main results listed briefly in Theorems~\ref{thm:A}--\ref{thm:D}.
\subsection{Spectral analysis}
\label{ss:convergence_spectrum}

Recall that if we consider $M_D \geq 1$ internal points
$\big(\mathbf{x}^D_i\big)_{i = \overline{1,M_D}} \subset D$ and the vector $\boldsymbol{\nu} \coloneqq \big( \nu_i \big)_{i = \overline{1,M_D}} \in (0,1)^{M_D}$ such that 
$\displaystyle \sum_{i=1}^{M_D} \nu_i^2 = 1$, then the operator $T_{\mathbf{\nu}}$ is defined by
\begin{eqnarray} \label{eq:Th}
\left.
\begin{array}{l}
T_{\boldsymbol{\nu}} \colon \big(L^2(\Gamma_1), \Vert \cdot \Vert_{L^2(\Gamma_1)}\big) \longrightarrow \big(\mathbb{R}^{M_D}, \Vert \cdot \Vert\big), \\[4pt]
L^2(\Gamma_1)\ni u \longmapsto T_{\boldsymbol{\nu}} u
= \big(\nu_i \mu_{\mathbf{x}^D_i}(u) \big)_{i = \overline{1,M_D}}
= \displaystyle \bigg( \nu_i \int_{\Gamma_1} u \rho_{\mathbf{x}^D_i} \, \mathrm{d}\sigma \bigg)_{i = \overline{1,M_D}} \in \mathbb{R}^{M_D}.
\end{array}
\right.
\end{eqnarray}
Consequently, it is easy to see that the adjoint operator to operator $T_{\boldsymbol{\nu}}$, denoted by $T_{\boldsymbol{\nu}}^\ast$, is given by
\begin{eqnarray} \label{eq:Th*}
\left.
\begin{array}{l}
T_{\boldsymbol{\nu}}^\ast \colon \big(\mathbb{R}^{M_D}, \Vert \cdot \Vert \big) \longrightarrow \big(L^2(\Gamma_1), \Vert \cdot \Vert_{L^2(\Gamma_1)}\big), \\[4pt]
\mathbb{R}^{M_D}\ni \mathbf{v} = \big( v_i \big)_{i = \overline{1,M_D}} \longmapsto T_{\boldsymbol{\nu}}^\ast \mathbf{v} 
= \displaystyle \sum_{i=1}^{M_D} \nu_i v_i \rho_{\mathbf{x}^D_i} \in L^2(\Gamma_1).
\end{array}
\right.
\end{eqnarray}
Consider now the bounded linear operator
\begin{equation}\label{eq:B Th*Th}
T_{\boldsymbol{\nu}}^\ast T_{\boldsymbol{\nu}} \colon L^2(\Gamma_1) \longrightarrow L^2(\Gamma_1) 
\end{equation}
and introduce the following symmetric and non-negative definite matrices
\begin{equation}
\left.
\begin{aligned} 
\label{eq:Lambda}
&\boldsymbol{\Lambda} = \big( \Lambda_{ij} \big)_{i,j = \overline{1,M_D}} \in \mathbb{R}^{M_D\times M_D}, \qquad 
\Lambda_{ij} \coloneqq \displaystyle \int_{\Gamma_1} \rho_{\mathbf{x}_i^D} \rho_{\mathbf{x}_j^D} \, \mathrm{d}\sigma, \quad i,j = \overline{1,M_D},\\[4pt]
&\boldsymbol{\Lambda}^{\boldsymbol{\nu}} \coloneqq {\rm diag}(\boldsymbol{\nu}) \; \boldsymbol{\Lambda} \; {\rm diag}(\boldsymbol{\nu}) \in \mathbb{R}^{M_D\times M_D}.
\end{aligned}
\right.
\end{equation}

The next result shows the correspondence between the SVD of operator $T_{\boldsymbol{\nu}}^\ast T_{\boldsymbol{\nu}}$ and that of matrix $\boldsymbol{\Lambda}^{\boldsymbol{\nu}}$. 
\begin{prop}[C\^{i}mpean et al.~\cite{CiGrMaI}]
\label{coro:B-Lambda}
The operators $T_{\boldsymbol{\nu}}$ and $T_{\boldsymbol{\nu}}^\ast$, and the matrix $\boldsymbol{\Lambda}^{\boldsymbol{\nu}}$ given by \eqref{eq:Th}, \eqref{eq:Th*} and \eqref{eq:Lambda}, respectively, satisfy the following relation
\begin{equation*}
T_{\boldsymbol{\nu}} T_{\boldsymbol{\nu}}^\ast = \boldsymbol{\Lambda}^{\boldsymbol{\nu}}.
\end{equation*}
In particular, the following spectral correspondence between $T_{\boldsymbol{\nu}}^\ast T_{\boldsymbol{\nu}}$ and $\boldsymbol{\Lambda}^{\boldsymbol{\nu}}$ holds:
\begin{enumerate}[label={\rm (}\roman*{\rm )}]
\item $\boldsymbol{\Lambda}^{\boldsymbol{\nu}}$ and $T_{\boldsymbol{\nu}}^\ast T_{\boldsymbol{\nu}}$ have the same eigenvalues.
\item $u \in L^2(\Gamma_1)$ is an eigenfunction of operator $T_{\boldsymbol{\nu}}^\ast T_{\boldsymbol{\nu}}$ corresponding to the eigenvalue $\lambda \neq 0$ if and only if $\mathbf{u} \in \mathbb{R}^{M_D}$ is an eigenvector of matrix $\boldsymbol{\Lambda}^{\boldsymbol{\nu}}$ corresponding to the eigenvalue $\lambda \neq 0$ and the following relation holds
\begin{equation*}
u = T_{\boldsymbol{\nu}}^\ast \mathbf{u} 
= \displaystyle \sum \limits_{i=1}^{M_D} \nu_i [\mathbf{u}]_i \rho_{\mathbf{x}_i^D}.
\end{equation*}
\item Let $k \coloneqq {\rm dim}~{\rm Ker}~(\boldsymbol{\Lambda}^{\boldsymbol{\nu}})^{\perp} \leq M_D$ and $\big\{ \mathbf{u}_1, \mathbf{u}_2, \ldots, \mathbf{u}_k \big\} \subset \mathbb{R}^{M_D}$ be an orthonormal basis of ${\rm Ker}~(\boldsymbol{\Lambda}^{\boldsymbol{\nu}})^{\perp} \subset \mathbb{R}^{M_D}$ that consists of eigenvectors of $\boldsymbol{\Lambda}^{\boldsymbol{\nu}}$ with the corresponding eigenvalues $\lambda_1 \geq \lambda_2 \geq \ldots \geq \lambda_k > 0$. Define 
\begin{equation}\label{eq:basisB}
u_j 
\coloneqq \lambda_j^{-1/2} T_{\boldsymbol{\nu}}^\ast \mathbf{u}_j 
= \lambda_j^{-1/2} \displaystyle \sum \limits_{i=1}^{M_D} \nu_i \big[ \mathbf{u}_j \big]_{i} \rho_{\mathbf{x}_i^D}, 
\quad j = \overline{1, k},
\end{equation}
Then $\big\{ u_1, u_2, \ldots, u_k \big\}$ is an orthonormal basis of ${\rm Ker}~(T_{\boldsymbol{\nu}}^\ast T_{\boldsymbol{\nu}})^{\perp} \subset L^2(\Gamma_1)$ that consists of eigenfunctions of the operator $T_{\boldsymbol{\nu}}^\ast T_{\boldsymbol{\nu}}$. In particular, ${\rm dim}~{\rm Ker}~(\boldsymbol{\Lambda}^{\boldsymbol{\nu}})^{\perp} = {\rm dim}~{\rm Ker}~(T_{\boldsymbol{\nu}}^\ast T_{\boldsymbol{\nu}})^{\perp}$.
\end{enumerate}
\end{prop}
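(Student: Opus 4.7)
The plan is to derive the entire proposition from the single identity $T_{\boldsymbol{\nu}} T_{\boldsymbol{\nu}}^\ast = \boldsymbol{\Lambda}^{\boldsymbol{\nu}}$, which, once established, reduces everything to the classical observation that $A^\ast A$ and $A A^\ast$ share the same nonzero spectrum, with eigenvectors swapped by $A$ and $A^\ast$. To prove the identity itself, I would pick an arbitrary $\mathbf{v} = (v_i)_i \in \mathbb{R}^{M_D}$, substitute the explicit formula for $T_{\boldsymbol{\nu}}^\ast \mathbf{v}$ given by \eqref{eq:Th*} into the definition \eqref{eq:Th} of $T_{\boldsymbol{\nu}}$, and exchange the finite sum with the integral. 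The $j$-th component of $T_{\boldsymbol{\nu}} T_{\boldsymbol{\nu}}^\ast \mathbf{v}$ then becomes $\nu_j \sum_{i} \nu_i v_i \int_{\Gamma_1} \rho_{\mathbf{x}_i^D} \rho_{\mathbf{x}_j^D}\, \mathrm{d}\sigma = \sum_i \nu_j \Lambda_{ij} \nu_i v_i$, which is exactly $(\boldsymbol{\Lambda}^{\boldsymbol{\nu}} \mathbf{v})_j$ by \eqref{eq:Lambda}.

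Given this identity, statement (i) follows at once, as finite-rank operators $A^\ast A$ and $A A^\ast$ share the same nonzero eigenvalues together with their multiplicities. For the $(\Leftarrow)$ direction of (ii), if $\mathbf{u}$ is an eigenvector of $\boldsymbol{\Lambda}^{\boldsymbol{\nu}} = T_{\boldsymbol{\nu}} T_{\boldsymbol{\nu}}^\ast$ with eigenvalue $\lambda \neq 0$, applying $T_{\boldsymbol{\nu}}^\ast$ yields $T_{\boldsymbol{\nu}}^\ast T_{\boldsymbol{\nu}}(T_{\boldsymbol{\nu}}^\ast \mathbf{u}) = \lambda (T_{\boldsymbol{\nu}}^\ast \mathbf{u})$, while the non-vanishing $T_{\boldsymbol{\nu}}^\ast \mathbf{u} \neq 0$ is guaranteed by $T_{\boldsymbol{\nu}}(T_{\boldsymbol{\nu}}^\ast \mathbf{u}) = \lambda \mathbf{u} \neq 0$. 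For $(\Rightarrow)$, if $T_{\boldsymbol{\nu}}^\ast T_{\boldsymbol{\nu}} u = \lambda u$ with $\lambda \neq 0$, setting $\mathbf{u} \coloneqq \lambda^{-1} T_{\boldsymbol{\nu}} u$ produces an eigenvector of $\boldsymbol{\Lambda}^{\boldsymbol{\nu}}$ with eigenvalue $\lambda$ such that $T_{\boldsymbol{\nu}}^\ast \mathbf{u} = \lambda^{-1} T_{\boldsymbol{\nu}}^\ast T_{\boldsymbol{\nu}} u = u$.

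For (iii), I would define the $u_j$ as in \eqref{eq:basisB} and verify orthonormality via
\begin{equation*}
\langle u_j, u_\ell \rangle_{L^2(\Gamma_1)} = (\lambda_j \lambda_\ell)^{-1/2} \langle T_{\boldsymbol{\nu}}^\ast \mathbf{u}_j, T_{\boldsymbol{\nu}}^\ast \mathbf{u}_\ell \rangle_{L^2(\Gamma_1)} = (\lambda_j \lambda_\ell)^{-1/2} \langle \boldsymbol{\Lambda}^{\boldsymbol{\nu}} \mathbf{u}_j, \mathbf{u}_\ell \rangle_{\mathbb{R}^{M_D}} = \delta_{j\ell},
\end{equation*}
and the eigenfunction property $T_{\boldsymbol{\nu}}^\ast T_{\boldsymbol{\nu}} u_j = \lambda_j u_j$ is immediate from (ii). Each $u_j$ lies in $\mathrm{Ker}(T_{\boldsymbol{\nu}}^\ast T_{\boldsymbol{\nu}})^\perp$ because it lies in $\mathrm{Ran}(T_{\boldsymbol{\nu}}^\ast) = \mathrm{Ker}(T_{\boldsymbol{\nu}})^\perp = \mathrm{Ker}(T_{\boldsymbol{\nu}}^\ast T_{\boldsymbol{\nu}})^\perp$, where the first equality exploits that $T_{\boldsymbol{\nu}}^\ast$ has finite rank (so its range is automatically closed) and the second uses the elementary identity $\mathrm{Ker}(T_{\boldsymbol{\nu}}^\ast T_{\boldsymbol{\nu}}) = \mathrm{Ker}(T_{\boldsymbol{\nu}})$.

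The one step requiring genuine care is completeness: one must show $\{u_j\}_{j=1}^k$ actually spans the full orthogonal complement $\mathrm{Ker}(T_{\boldsymbol{\nu}}^\ast T_{\boldsymbol{\nu}})^\perp$ and not merely a proper subspace of it. I do not anticipate a conceptual obstacle — only a dimension count in the infinite-dimensional ambient space $L^2(\Gamma_1)$. Concretely, one verifies $\mathrm{Ran}(T_{\boldsymbol{\nu}}) = \mathrm{Ran}(T_{\boldsymbol{\nu}} T_{\boldsymbol{\nu}}^\ast)$ (decomposing any preimage against $\mathrm{Ker}(T_{\boldsymbol{\nu}}) \oplus \mathrm{Ran}(T_{\boldsymbol{\nu}}^\ast)$), which gives $\mathrm{rank}(T_{\boldsymbol{\nu}}) = \mathrm{rank}(T_{\boldsymbol{\nu}} T_{\boldsymbol{\nu}}^\ast) = \mathrm{rank}(\boldsymbol{\Lambda}^{\boldsymbol{\nu}}) = k$, hence $\dim\,\mathrm{Ker}(T_{\boldsymbol{\nu}}^\ast T_{\boldsymbol{\nu}})^\perp = \dim\,\mathrm{Ran}(T_{\boldsymbol{\nu}}^\ast) = k$. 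Since $k$ orthonormal vectors in a $k$-dimensional space form a basis, the proof concludes, and the announced equality of dimensions follows for free.
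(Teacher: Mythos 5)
Your proposal is correct and complete: the identity $T_{\boldsymbol{\nu}}T_{\boldsymbol{\nu}}^\ast=\boldsymbol{\Lambda}^{\boldsymbol{\nu}}$ is verified by direct computation, the two directions of (ii) are handled with the right non-vanishing checks, and in (iii) you supply the one step that genuinely needs care, namely the dimension count $\dim\mathrm{Ker}(T_{\boldsymbol{\nu}}^\ast T_{\boldsymbol{\nu}})^{\perp}=\mathrm{rank}(T_{\boldsymbol{\nu}})=\mathrm{rank}(\boldsymbol{\Lambda}^{\boldsymbol{\nu}})=k$, using that finite rank makes $\mathrm{Ran}(T_{\boldsymbol{\nu}}^\ast)$ closed. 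The present paper states this proposition without proof, attributing it to the companion work \cite{CiGrMaI}, so there is no in-paper argument to compare against; your route is the standard $AA^\ast$ versus $A^\ast A$ spectral correspondence that the cited proof is built on, with the only caveat being that claim (i) should be read as equality of the \emph{nonzero} spectra (which is exactly what your argument establishes, and all that is used later), since $T_{\boldsymbol{\nu}}^\ast T_{\boldsymbol{\nu}}$ always has the eigenvalue $0$ on the infinite-dimensional space $L^2(\Gamma_1)$ while $\boldsymbol{\Lambda}^{\boldsymbol{\nu}}$ need not.
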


Given $\mathbf{b} \in \mathbb{R}^{M_D}$ and $\mathbf{b}^{\boldsymbol{\nu}} \coloneqq {\rm diag}(\boldsymbol{\nu}) \; \mathbf{b} \in \mathbb{R}^{M_D}$, we further aim at finding $u \in L^2(\Gamma_1)$ that satisfies the following operator equation
\begin{equation} \label{eq:TTh}
T_{\boldsymbol{\nu}} u = \mathbf{b}^{\boldsymbol{\nu}}
\end{equation}
by employing the SVD analysis.

Using the notations employed in \Cref{coro:B-Lambda}, we let $r \in \overline{1,k}$ be a prescribed truncation parameter and $P_r \colon L^2(\Gamma_1) \longrightarrow \mathrm{span}~\big\{v \in L^2(\Gamma_1) \, \big| \, T_{\boldsymbol{\nu}}^\ast T_{\boldsymbol{\nu}} v = \lambda_i v, \ i = \overline{1,r} \big\}$ be the projection operator onto the linear space generated by the eigenfunctions of $T_{\boldsymbol{\nu}}^\ast T_{\boldsymbol{\nu}}$ corresponding to the largest $r$ non-zero eigenvalues $\lambda_1 \geq \lambda_2 \geq \ldots \geq \lambda_r > 0$. The {\it $r-$TSVD solution} to the operator equation \eqref{eq:TTh} is defined as 
\begin{equation} \label{eq:u_r}
u^{(r)} \coloneqq P_r u \in \mathrm{span}~\big\{v \in L^2(\Gamma_1) \, \big| \, T_{\boldsymbol{\nu}}^\ast T_{\boldsymbol{\nu}} v = \lambda_i v, \ i = \overline{1,r} \big\} \subset L^2(\Gamma_1),
\end{equation} 
where $u \in  L^2(\Gamma_1)$ is any solution to the operator equation $T_{\boldsymbol{\nu}}^\ast T_{\boldsymbol{\nu}} u = T_{\boldsymbol{\nu}}^\ast \mathbf{b}^\nu$.

We further consider the finite-dimensional linear system and its corresponding $r-$TSVD solution, see, e.g. relation \eqref{eq:u1r} in \Cref{appendix},
\begin{equation} \label{eq:syst_uu_r}
\boldsymbol{\Lambda}^\nu \mathbf{u} = \mathbf{b}^\nu , \quad 
\mathbf{u}^{(r)} \coloneqq \left(\boldsymbol{\Lambda}^\nu\right)^\dag_r \mathbf{b}^\nu.
\end{equation} 
Then the following representation has been obtained in \cite{CiGrMaI}.

\begin{prop}[C\^{i}mpean et al.~\cite{CiGrMaI}]
\label{coro:representation_u(r)}
The $r-$TSVD solution to the operator equation \eqref{eq:TTh} given by \eqref{eq:u_r} and the $r-$TSVD solution to the linear system \eqref{eq:syst_uu_r} given by \eqref{eq:u_r} satisfy the following relation
\begin{equation}
\label{eq: truncated solutions relation}
u^{(r)} = \displaystyle \sum \limits_{j=1}^{M_D} \nu_j \big[ \mathbf{u}^{(r)} \big]_j \rho_{\mathbf{x}^D_j}.
\end{equation}
\end{prop}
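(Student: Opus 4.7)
The plan is to match the two $r$-TSVD solutions coefficient-wise in a common spectral basis supplied by \Cref{coro:B-Lambda}. Fix an orthonormal basis $\{\mathbf{u}_1,\ldots,\mathbf{u}_k\} \subset \mathbb{R}^{M_D}$ of $\mathrm{Ker}(\boldsymbol{\Lambda}^{\boldsymbol{\nu}})^\perp$ consisting of eigenvectors of $\boldsymbol{\Lambda}^{\boldsymbol{\nu}}$ with eigenvalues $\lambda_1\geq\cdots\geq\lambda_k>0$, and let $\{u_1,\ldots,u_k\}$ be the associated orthonormal eigenbasis of $T_{\boldsymbol{\nu}}^\ast T_{\boldsymbol{\nu}}$ on $\mathrm{Ker}(T_{\boldsymbol{\nu}}^\ast T_{\boldsymbol{\nu}})^\perp$ produced by \Cref{coro:B-Lambda}(iii), so that $u_j = \lambda_j^{-1/2}\sum_{i=1}^{M_D}\nu_i[\mathbf{u}_j]_i\rho_{\mathbf{x}_i^D}$. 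Because $T_{\boldsymbol{\nu}}T_{\boldsymbol{\nu}}^\ast=\boldsymbol{\Lambda}^{\boldsymbol{\nu}}$, the triples $(u_j,\mathbf{u}_j,\sqrt{\lambda_j})$ are precisely the right singular vectors, left singular vectors, and singular values of $T_{\boldsymbol{\nu}}$.

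Next I would expand both sides of \eqref{eq: truncated solutions relation} in closed form. Any solution $u$ of $T_{\boldsymbol{\nu}}^\ast T_{\boldsymbol{\nu}} u = T_{\boldsymbol{\nu}}^\ast \mathbf{b}^{\boldsymbol{\nu}}$ decomposes as $u = u_\perp + \sum_{j=1}^k \alpha_j u_j$ with $u_\perp\in\mathrm{Ker}(T_{\boldsymbol{\nu}}^\ast T_{\boldsymbol{\nu}})$; pairing the equation against $u_j$ and using $T_{\boldsymbol{\nu}}u_j = \sqrt{\lambda_j}\,\mathbf{u}_j$ yields $\alpha_j = \lambda_j^{-1/2}\langle\mathbf{b}^{\boldsymbol{\nu}},\mathbf{u}_j\rangle$. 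Since $P_r$ annihilates $u_\perp$ together with the tail $j>r$, one obtains
\begin{equation*}
u^{(r)} = \sum_{j=1}^{r} \lambda_j^{-1/2}\,\langle\mathbf{b}^{\boldsymbol{\nu}},\mathbf{u}_j\rangle\, u_j.
\end{equation*}
On the matrix side, the truncated Moore--Penrose pseudoinverse in \eqref{eq:syst_uu_r} (cf.~\eqref{eq:u1r} in \Cref{appendix}) gives the parallel expression $\mathbf{u}^{(r)} = \sum_{j=1}^{r} \lambda_j^{-1}\,\langle\mathbf{b}^{\boldsymbol{\nu}},\mathbf{u}_j\rangle\,\mathbf{u}_j$.

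Finally, inserting the formula $u_j = \lambda_j^{-1/2}\sum_i\nu_i[\mathbf{u}_j]_i\rho_{\mathbf{x}_i^D}$ into the above expression for $u^{(r)}$ and interchanging the two finite summations produces
\begin{equation*}
u^{(r)} \;=\; \sum_{i=1}^{M_D} \nu_i\, \rho_{\mathbf{x}_i^D} \underbrace{\sum_{j=1}^{r} \lambda_j^{-1}\,\langle\mathbf{b}^{\boldsymbol{\nu}},\mathbf{u}_j\rangle\,[\mathbf{u}_j]_i}_{=\,[\mathbf{u}^{(r)}]_i},
\end{equation*}
which is exactly \eqref{eq: truncated solutions relation}. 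The only conceptual point that requires care is the independence of $u^{(r)}$ from the chosen solution $u$ of the normal equation; but this is immediate, since $P_r$ vanishes on $\mathrm{Ker}(T_{\boldsymbol{\nu}}^\ast T_{\boldsymbol{\nu}})$ and, correspondingly, the truncated pseudoinverse suppresses the kernel contribution on the finite-dimensional side. Everything else reduces to the algebra of finite sums, so no genuine analytic obstacle is present beyond invoking the spectral identification of \Cref{coro:B-Lambda}.
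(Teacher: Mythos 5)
Your proof is correct, and since the present paper states \Cref{coro:representation_u(r)} without proof (it is imported from \cite{CiGrMaI}), your derivation is exactly the natural route consistent with the machinery the paper does use: identify the common singular system $(u_j,\mathbf{u}_j,\sqrt{\lambda_j})$ of $T_{\boldsymbol{\nu}}$ via \Cref{coro:B-Lambda}, expand the coefficients of any solution of the normal equation against the $u_j$, expand $\big(\boldsymbol{\Lambda}^{\boldsymbol{\nu}}\big)^{\dag}_r\mathbf{b}^{\boldsymbol{\nu}}$ in the $\mathbf{u}_j$, and match term by term through $u_j=\lambda_j^{-1/2}T_{\boldsymbol{\nu}}^\ast\mathbf{u}_j$. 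The only caveat, which attaches to the statement itself rather than to your argument, is the step ``$P_r$ annihilates the tail $j>r$'': if $\lambda_r=\lambda_{r+1}$ the range of $P_r$ contains the whole $\lambda_r$-eigenspace while $\big(\boldsymbol{\Lambda}^{\boldsymbol{\nu}}\big)^{\dag}_r$ keeps only $r$ terms, so one needs either $\lambda_r>\lambda_{r+1}$ (as the paper later assumes, e.g. in \Cref{thm:D}) or a consistent convention for the truncation on both sides.
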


\subsection{Discretisation of the inaccessible boundary} 
\label{ss:discret}

The main goal of this subsection is to analyse the error between the elliptic densities $\rho_{\mathbf{x}}$, as well as the operator $\Lambda^\nu$s and their approximations entailed by discretising $\Gamma_1$ with a finite number of points. 
In what follows, we consider $M_1$ boundary points $\mathbf{x}^1 = \big( \mathbf{x}^1_i \big)_{i = \overline{1,M_1}} \subset \Gamma_1$ and some generic corresponding boundary weights $\boldsymbol{\omega}^1 \coloneqq \big(\omega^1_i\big)_{i = \overline{1,M_1}}$, namely
\begin{equation}
\begin{aligned}\label{eq:boundary_weights}
&\omega^1_i \colon \Gamma_1 \longrightarrow [0,1] \mbox{ is measurable}, 
\quad \omega^1_i(\mathbf{x}_j) = \delta_{ij},~i, j = \overline{1,M_1}, 
\quad \sum_{i=1}^{M_1} \omega^1_i(\mathbf{x}) = 1,~\mathbf{x} \in \Gamma_1,\\[2pt]
&\sigma(\omega^1_i) \coloneqq \int_{\Gamma_1} \omega^1_i \, \mathrm{d}\sigma > 0,~i = \overline{1,M_1}.
\end{aligned}
\end{equation}
Further requirements will be made on these weights in \Cref{s:3}, namely that they may be extended to $\overline{D}_\varepsilon \coloneqq \big\{ \mathbf{x}\in \overline{D} : {\rm dist}(\mathbf{x}, \partial D) \leq \varepsilon \big\}$ by some $\boldsymbol{\omega}^{1, \varepsilon} \coloneqq \big( \omega^{1 ,\varepsilon}_i \big)_{i = \overline{1,M_1}}$ as
\begin{equation}
\begin{aligned}\label{eq:extended_weights_1}
&\omega_i^{1,\varepsilon} \colon \overline{D}_\varepsilon \longrightarrow [0,1], 
\quad \omega_i^{1, \varepsilon}\big|_{\Gamma_1} = \omega_i^1, \quad i=\overline{1,M_1}.
\end{aligned}
\end{equation}
Special attention is also paid to the so-called {\rm binary weights}, i.e. weights defined as above such that
\begin{equation}\label{eq:binary wieghts}
\exists~\varepsilon_0 > 0, \quad V_i \subset \Gamma_1,\quad V_i^\varepsilon \subset \overline{D}_\varepsilon, \quad 
i = \overline{1,M_1} \colon \quad 
\omega^1_i=1_{V_i}, \quad \omega^{1,\varepsilon}_i=1_{V_i^\varepsilon}, \quad 
i = \overline{1,M_1}, \quad 
\varepsilon \in (0, \varepsilon_0).   
\end{equation}

Next, the $\omega^\varepsilon-$interpolant on $\overline{D}_\varepsilon$ at points $\big( \mathbf{x}_i \big)_{i = \overline{1,n}} \subset \partial D$ of a given function $f \colon \partial D \longrightarrow \mathbb{R}$ is defined by
\begin{align}\label{eq:interpolant}
&f_{\omega^\varepsilon} \colon \overline{D}_\varepsilon \longrightarrow \mathbb{R}, 
\quad f_{\omega^\varepsilon}(\mathbf{x}) 
= \sum_{i=1}^n \omega_i^\varepsilon (\mathbf{x}) f(\mathbf{x}_i), \quad 
\mathbf{x} \in \overline{D}_\varepsilon.
\end{align}

\begin{ex}[{\bf Extrinsic Voronoi weights}]
\label{ex:A.2}
Suppose that $\big( \mathbf{x}_i \big)_{i = \overline{1,n}} \subset \partial D$ are given and consider the Voronoi diagrams on $\mathbb{R}^d$
\begin{equation}\label{eq:voronoi_ext}
\widetilde{V}_i \coloneqq \widetilde{V}(\mathbf{x}_i) 
= \big\{ \mathbf{x} \in \mathbb{R}^d \; \big| \; \Vert \mathbf{x} - \mathbf{x}_i \Vert  
= \displaystyle \inf_{j = \overline{1,n}} \Vert \mathbf{x} - \mathbf{x}_j \Vert \big\}, 
\quad i = \overline{1,n}.
\end{equation}
Since $\widetilde{V}_i$, $i = \overline{1,n}$, may be overlapping, for the sake of rigour, we consider the following partition
\begin{equation} \label{eq:disjoint voronoi_ext}
V_i \coloneqq \widetilde{V}_i \setminus \bigcup\limits_{j = 1}^{i-1} \widetilde{V}_j, 
\quad \bigcup\limits_{i = 1}^n V_i = \mathbb{R}^d.
\end{equation}
The induced (binary) weights on $\overline{D}_\varepsilon$ are given by
\begin{equation}\label{eq:voronoi weights_ext}
\omega_i^{\varepsilon} (\mathbf{x}) = \mathbf{1}_{V_i\cap \overline{D}_\varepsilon}(\mathbf{x}), 
\quad \mathbf{x}\in \overline{D}_{\varepsilon}, 
\quad i = \overline{1,n}.
\end{equation}
Hence the corresponding $\omega^\varepsilon-$interpolant on $\overline{D}_\varepsilon$ at points $(\mathbf{x}_i)_{i = \overline{1,n}} \subset \partial D$ of $f$, defined by \eqref{eq:interpolant}, is just the piecewise constant interpolant of $f$ given by
\begin{equation}\label{eq: f omega eps voronoi_ext}
f_{\omega^\varepsilon}(\mathbf{x}) = f(\mathbf{x_i}), 
\quad \mathbf{x} \in V_i^\varepsilon, 
\quad i = \overline{1,n}. 
\end{equation}
\end{ex}
\noindent For more typical examples of such weights, we refer the reader to \cite[Appendix]{CiGrMaI}.

We further define the linear operator
\begin{equation*}
\mathbf{A}^{\boldsymbol{\nu}}_{\omega^1} \colon \mathbb{R}^{M_1} \longrightarrow \mathbb{R}^{M_D}, \quad 
\Bar{\mathbf{u}} \in \mathbb{R}^{M_1} \longmapsto \mathbf{A}^{\boldsymbol{\nu}}_{\omega^1} \Bar{\mathbf{u}} = T_{\boldsymbol{\nu}} \left( \sum_{i=1}^{M_1} \Bar{u}_i \omega^1_i \right) \in \mathbb{R}^{M_D}.
\end{equation*}
More precisely, $\mathbf{A}^{\boldsymbol{\nu}}_{\omega^1}$ has the following matrix representation in the standard canonical basis
\begin{equation}\label{eq:AM matrix components}
\big[ \mathbf{A}^{\boldsymbol{\nu}}_{\omega^1} \big]_{ij} = 
\nu_i \mu_{\mathbf{x}^D_i}(\omega^1_j) \eqqcolon 
\nu_i \big[ \mathbf{A}_{\omega^1} \big]_{ij}, \quad 
i = \overline{1, M_D}, \quad 
j = \overline{1, M_1}.
\end{equation}
Next, define the approximation $\rho_{\mathbf{x}_i^D,\omega^1}$ of $\rho_{\mathbf{x}_i^D}$, $i = \overline{1, M_D}$, by
\begin{equation}\label{eq:rho omega1}
\rho_{\mathbf{x}_i^D,\omega^1}(\mathbf{x}) \coloneqq 
\displaystyle \sum \limits_{j=1}^{M_1} \sigma\big( \omega^1_j \big)^{-1} \, \mu_{\mathbf{x}_i^D}\big( \omega^1_j \big) \, \omega^1_j(\mathbf{x}), 
\quad \mathbf{x} \in \Gamma_1, 
\quad i = \overline{1, M_D}.
\end{equation}
Note that
\begin{align*}
\displaystyle \int_{\Gamma_1} \rho_{\mathbf{x}_i^D, \omega^1} \rho_{\mathbf{x}_j^D,\omega^1} \, \mathrm{d}\sigma 
= \big\{ \mathbf{A}_{\omega^1} \; 
\big[{\rm diag}\big( \sigma(\omega^1) \big) \big]^{-1} \; 
\mathbf{A}_{\omega^1}^{\sf T} \big\}_{ij}, 
\quad i, j = \overline{1,M_D},
\end{align*}
where ${\rm diag}\big( \sigma(\omega^1) \big) \coloneqq {\rm diag}\big( \sigma(\omega^1_1), \sigma(\omega^1_2), \ldots, \sigma(\omega^1_{M_1}) \big) \in \mathbb{R}^{M_1 \times M_1}$.
We further set
\begin{align}
\nonumber
&\boldsymbol{\Lambda}_{\omega^1} 
\coloneqq \mathbf{A}_{\omega^1} 
\big[ {\rm diag}\big( \sigma(\omega^1) \big) \big]^{-1} 
\mathbf{A}_{\omega^1}^{\sf T},\\[4pt]
\label{eq:Lambda_hat_nu}
&\boldsymbol{\Lambda}^{\boldsymbol{\nu}}_{\omega^1} 
\coloneqq {\rm diag}(\boldsymbol{\nu}) \; \boldsymbol{\Lambda}_{\omega^1} \; 
{\rm diag}(\boldsymbol{\nu}) 
= \mathbf{A}^{\boldsymbol{\nu}}_{\omega^1} \; 
\big[ {\rm diag}\big( \sigma(\omega^1) \big) \big]^{-1} \; 
\big( \mathbf{A}^{\boldsymbol{\nu}}_{\omega^1} \big)^{\sf T}.
\end{align}
As a discretised version of $\mathbf{u}^{(r)}$ given by \eqref{eq:u_r}, consider the $r-$TSVD solution $\mathbf{u}_{\omega^1}^{(r)}$ of the finite-dimensional linear system 
$\boldsymbol{\Lambda}^{\boldsymbol{\nu}}_{\omega_1} \mathbf{u} = \mathbf{b}^{\boldsymbol{\nu}}$
given by
\begin{equation} \label{eq:u_r Lambda omega1}
\mathbf{u}_{\omega^1}^{(r)} \coloneqq 
\big( \boldsymbol{\Lambda}^{\boldsymbol{\nu}}_{\omega^1} \big)^{\dag}_r \mathbf{b}^{\boldsymbol{\nu}} \in \mathbb{R}^{M_1}.
\end{equation}
For a given vector $\mathbf{b} \in \mathbb{R}^{M_D}$, we introduce the {\it $(\omega^1, r)-$TSVD approximate solution} $u_{\omega^1}^{(r)}\in L^2(\Gamma_1)$ of the operator equation \eqref{eq:TTh} by
\begin{equation} \label{eq:u_omega_M1_r}
u_{\omega^1}^{(r)}(\mathbf{x}) 
\coloneqq \sum \limits_{i=1}^{M_1} \nu_i \left[\mathbf{u}_{\omega^1}^{(r)} \right]_i \rho_{\mathbf{x}_i^D, \omega^1}(\mathbf{x}), 
\quad \mathbf{x} \in \Gamma_1.
\end{equation}
Moreover, one can check that relation \eqref{eq:u_omega_M1_r} may be written as
\begin{equation}\label{eq:u_omega_matrix_form}
u_{\omega^1}^{(r)}(\mathbf{x}) = 
\sum \limits_{i=1}^{M_1} 
\left( \mathbf{A}^{\boldsymbol{\nu}, \dag, r}_{\omega^1} \mathbf{b}^{\boldsymbol{\nu}} \right)_i \omega_i^1(\mathbf{x}), 
\quad \mathbf{x} \in \Gamma_1,
\end{equation}
where
\begin{equation*}\label{eq: A_r}
\mathbf{A}^{\boldsymbol{\nu}, \dag, r}_{\omega^1} 
\coloneqq 
\left[ {\rm diag}\big( \sigma(\omega^1) \big) \right]^{-1/2} 
\Big( \mathbf{A}^{\boldsymbol{\nu}}_{\omega^1}  
\left[ {\rm diag}\big( \sigma(\omega^1) \big) \right]^{-1/2} \Big)^{\dag}_r 
\in \mathbb{R}^{M_1 \times M_D}.
\end{equation*}

For $V \subset \partial D$ and $\mathbf{x} \in D$, consider the {\it oscillation} of the elliptic density $\rho_{\mathbf{x}}$ on $V$ as
\begin{align*} 
{\rm osc}_V\big( \rho_{\mathbf{x}} \big) 
\coloneqq \displaystyle \sup \limits_{\mathbf{y}, \mathbf{y}^\prime \in V} \vert \rho_{\mathbf{x}}(\mathbf{y}) - \rho_{\mathbf{x}}(\mathbf{y}^\prime) \vert.
\end{align*}
We further set
\begin{subequations} \label{eq:notation}
\begin{align}
\label{eq:notation.a}
&{\big( {\rm osc}_{M_1,\omega} \big)}_i \coloneqq 
\max \limits_{j = \overline{1,M_1}} {\rm osc}_{{\rm supp}(\omega^1_j)} \big( \rho_{\mathbf{x}_i^D} \big), 
\quad i = \overline{1,M_D}\\[4pt]
\label{eq:notation.b}
&\Upupsilon_{r}\big( \boldsymbol{\Lambda}^{\boldsymbol{\nu}} \big) \coloneqq 
\dfrac{4 \lambda_1\big( \boldsymbol{\Lambda}^{\boldsymbol{\nu}} \big) + 
4 \lambda_r\big( \boldsymbol{\Lambda}^{\boldsymbol{\nu}} \big) - 
2 \lambda_{r+1}\big( \boldsymbol{\Lambda}^{\boldsymbol{\nu}} \big)}{\lambda_r\big( \boldsymbol{\Lambda}^{\boldsymbol{\nu}} \big)^2 \, 
\big[ \lambda_r\big( \boldsymbol{\Lambda}^{\boldsymbol{\nu}} \big) - 
\lambda_{r+1}\big( \boldsymbol{\Lambda}^{\boldsymbol{\nu}} \big) \big]} \\[4pt]
\label{eq:notation.c}
&\delta^{\boldsymbol{\nu}}_{D, M_D, \omega^1} \coloneqq 
\left\langle 
{\rm diag}(\boldsymbol{\nu}) \big( \mu_{\mathbf{x}_i^D}(\Gamma_1) \big)^2_{i = \overline{1,M_D}}, \; 
{\rm diag}(\boldsymbol{\nu}) {\rm osc}^2_{M_1, \omega^1} 
\right\rangle^{1/2}\\[4pt]
\label{eq:notation.d}
&{\big( \overline{\rho}_{\Gamma_1} \big)}_i \coloneqq 
\displaystyle \sup \limits_{\mathbf{x} \in \Gamma_1} \rho_{\mathbf{x}_i^D}(\mathbf{x}), 
\quad i = \overline{1,M_D}.
\end{align}
\end{subequations}

\begin{rem} \label{rem:osc regularity} 
Let $\delta_M \coloneqq \displaystyle \max \limits_{k = \overline{1,M}}{\rm diam}\big( {\rm supp}(\omega_k) \big)$.
\begin{enumerate}[label={\rm (}\roman*{\rm )}]
\item If $D \subset \mathbb{R}^d$, $d \geq 3$, is $C^{1,\alpha}$, $\alpha\in (0,1)$, and the coefficients of the conductivity tensor $\mathbf{K}$ are bounded, strictly elliptic and $\alpha-$H\"older continuous, then
\begin{equation*}\label{eq:osc bound Holder}
{\big( {\rm osc}_{M_1,\omega} \big)}_i \leq 
C \, {\rm dist}\big( \mathbf{x}_i^D, \Gamma_1 \big)^{1 - d - \alpha} \, \delta_M^\alpha,
\end{equation*}
where $C = C(d, \alpha, D, \mathbf{K})$ according to \cite[Theorem 3.5~(ii)]{GrWi82}.
    
\item If $D \subset \mathbb{R}^d$, $d \geq 3$, is $C^{2, {\rm Dini}}$, and $\div (\mathbf{K} \nabla)$ can be represented as a second-order operator in non-divergence form $\displaystyle \sum \limits_{i,j=1}^d A_{ij} \partial_{ij}$ with the coefficients $\mathbf{A} = \big( A_{ij} \big)_{i, j = \overline{1,d}}$ strictly elliptic of Dini mean oscillation, then 
\begin{equation*}\label{eq:osc bound Dini}
{\big( {\rm osc}_{M_1,\omega} \big)}_i \leq 
C \, {\rm dist}\big( \mathbf{x}_i^D, \Gamma_1 \big)^{-d} \, \delta_M,
\end{equation*}
where $C = C(d, D, \mathbf{A})$ according to \cite[Theorem 1.9]{HwKi20}.

\item As already mentioned in \Cref{thm:A}, it would also be of interest to treat the case when the elliptic densities are discontinuous, e.g., their mean oscillation is merely vanishing. 
In this case, the errors computed in the sup-norm in Theorems~\ref{coro:rho_MC_est} and \ref{coro:tail_solutions} below should be measured in an integral sense, whilst the analysis could rely on VMO regularity results, similar to those obtained by \cite{JeKe82VMO,BoToZh23} and the references therein.
\end{enumerate}
\end{rem}

\begin{thm}\label{thm:main1}
The following estimates hold
\begin{align*}
&\displaystyle \sup \limits_{\mathbf{x} \in \Gamma_1} 
\Big\vert \rho_{\mathbf{x}_i^D}(\mathbf{x})
- \rho_{\mathbf{x}_i^D,\omega^1}(\mathbf{x}) \Big\vert \leq {\big( {\rm osc}_{M_1,\omega} \big)}_i, 
\quad i = \overline{1,M_D}, 
\quad \textrm{and} \quad 
\Vert \boldsymbol{\Lambda}^{\boldsymbol{\nu}} - \boldsymbol{\Lambda}^{\boldsymbol{\nu}}_{\omega^1} \Vert_{\rm F} \leq 
\delta^{\boldsymbol{\nu}}_{D, M_D, \omega^1}.
\end{align*}
Moreover, let $u^{(r)}$ be the $r-$TSVD solution to problem \eqref{eq:Th} given by \eqref{eq:u_r} and $u_{\omega^1}^{(r)}$ be the $(\omega^1, r)-$TSVD approximate solution to the same problem given by \eqref{eq:u_omega_M1_r}. 
If 
\begin{align*}
&\delta^{\boldsymbol{\nu}}_{D, M_D, \omega^1} \leq 
\dfrac{1}{2} \big[ 
\lambda_r\big( \boldsymbol{\Lambda}^{\boldsymbol{\nu}} \big) 
- \lambda_{r+1}\big( \boldsymbol{\Lambda}^{\boldsymbol{\nu}} \big) 
\big],
\end{align*}
then
\begin{align}
\label{eq:estimate-TSVD}
&\displaystyle \sup \limits_{\mathbf{x} \in \Gamma_1} 
\Big\vert u^{(r)}(\mathbf{x}) - 
u_{\omega^1}^{(r)}(\mathbf{x}) \Big\vert \leq 
\dfrac{\Vert \mathbf{b}^{\boldsymbol{\nu}} \Vert \; 
\Vert {\rm diag}(\boldsymbol{\nu}) \; {\rm osc}_{M_1, \omega} \Vert}{\lambda_{r}\big( \boldsymbol{\Lambda}^{\boldsymbol{\nu}} \big)} + 
\delta^{\boldsymbol{\nu}}_{D, M_D, \omega^1} \, \Upupsilon_{r}\big( \boldsymbol{\Lambda}^{\boldsymbol{\nu}} \big) \; 
\Vert \mathbf{b}^{\boldsymbol{\nu}} \Vert \; 
\Vert {\rm diag}(\boldsymbol{\nu}) \overline{\rho}_{\Gamma_1} \Vert. 
\end{align}
\end{thm}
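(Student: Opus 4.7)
The plan is to prove the three assertions in the order stated, since each subsequent bound builds on the previous. For the density bound I would exploit the partition-of-unity property $\sum_{j=1}^{M_1}\omega_j^1(\mathbf{x})=1$ to rewrite
\[
\rho_{\mathbf{x}_i^D}(\mathbf{x}) - \rho_{\mathbf{x}_i^D,\omega^1}(\mathbf{x})
= \sum_{j=1}^{M_1}\omega_j^1(\mathbf{x})\Bigl[\rho_{\mathbf{x}_i^D}(\mathbf{x}) - \sigma(\omega_j^1)^{-1}\int_{\Gamma_1}\omega_j^1\rho_{\mathbf{x}_i^D}\,\mathrm{d}\sigma\Bigr].
\]
Whenever $\omega_j^1(\mathbf{x})>0$, i.e.\ $\mathbf{x}\in\mathrm{supp}(\omega_j^1)$, the bracket is a deviation of $\rho_{\mathbf{x}_i^D}(\mathbf{x})$ from a weighted average of its values on $\mathrm{supp}(\omega_j^1)$ and is therefore bounded by $\mathrm{osc}_{\mathrm{supp}(\omega_j^1)}(\rho_{\mathbf{x}_i^D})\leq(\mathrm{osc}_{M_1,\omega})_i$; combined with $\sum_j\omega_j^1=1$, this yields the first claim.

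For the matrix error I would start from the telescoping identity
\[
\rho_{\mathbf{x}_i^D}\rho_{\mathbf{x}_j^D} - \rho_{\mathbf{x}_i^D,\omega^1}\rho_{\mathbf{x}_j^D,\omega^1}
= (\rho_{\mathbf{x}_i^D}-\rho_{\mathbf{x}_i^D,\omega^1})\rho_{\mathbf{x}_j^D} + \rho_{\mathbf{x}_i^D,\omega^1}(\rho_{\mathbf{x}_j^D}-\rho_{\mathbf{x}_j^D,\omega^1}),
\]
integrate over $\Gamma_1$, and bound one factor of each product in sup-norm via the first step and the other in $L^1$ using $\int_{\Gamma_1}\rho_{\mathbf{x}_k^D,\omega^1}\,\mathrm{d}\sigma = \mu_{\mathbf{x}_k^D}(\Gamma_1) = \int_{\Gamma_1}\rho_{\mathbf{x}_k^D}\,\mathrm{d}\sigma$, which follows from the partition-of-unity and the definition of $\rho^{\omega^1}$. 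Squaring, weighting by $\nu_i^2\nu_j^2$, summing over $(i,j)$ and regrouping the symmetric contributions should collapse the result to the coupled bilinear form that defines $\delta^{\boldsymbol{\nu}}_{D,M_D,\omega^1}$.

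For the TSVD estimate I would use the splitting
\[
u^{(r)}(\mathbf{x}) - u^{(r)}_{\omega^1}(\mathbf{x})
= \sum_{i=1}^{M_D}\nu_i[\mathbf{u}^{(r)}]_i\bigl(\rho_{\mathbf{x}_i^D}-\rho_{\mathbf{x}_i^D,\omega^1}\bigr)(\mathbf{x})
+ \sum_{i=1}^{M_D}\nu_i\bigl([\mathbf{u}^{(r)}]_i-[\mathbf{u}^{(r)}_{\omega^1}]_i\bigr)\rho_{\mathbf{x}_i^D,\omega^1}(\mathbf{x}),
\]
and apply Cauchy--Schwarz to each sum. The first is controlled by $\|\mathbf{u}^{(r)}\|\leq\|\mathbf{b}^{\boldsymbol{\nu}}\|/\lambda_r(\boldsymbol{\Lambda}^{\boldsymbol{\nu}})$ (since $\mathbf{u}^{(r)}=(\boldsymbol{\Lambda}^{\boldsymbol{\nu}})^{\dagger}_r\mathbf{b}^{\boldsymbol{\nu}}$) together with step one, producing the first summand in \eqref{eq:estimate-TSVD}. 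For the second, I would use $\|\rho_{\mathbf{x}_i^D,\omega^1}\|_\infty\leq(\overline{\rho}_{\Gamma_1})_i$ (immediate since $\rho^{\omega^1}$ is a convex combination of $\mathrm{supp}(\omega_j^1)$-averages of $\rho$) together with a Wedin-type perturbation estimate
\[
\big\|(\boldsymbol{\Lambda}^{\boldsymbol{\nu}})^{\dagger}_r - (\boldsymbol{\Lambda}^{\boldsymbol{\nu}}_{\omega^1})^{\dagger}_r\big\| \leq \Upupsilon_r\bigl(\boldsymbol{\Lambda}^{\boldsymbol{\nu}}\bigr)\,\|\boldsymbol{\Lambda}^{\boldsymbol{\nu}} - \boldsymbol{\Lambda}^{\boldsymbol{\nu}}_{\omega^1}\|_{\mathrm{F}},
\]
whose applicability is secured by the hypothesis $\delta^{\boldsymbol{\nu}}_{D,M_D,\omega^1}\leq\tfrac{1}{2}[\lambda_r(\boldsymbol{\Lambda}^{\boldsymbol{\nu}})-\lambda_{r+1}(\boldsymbol{\Lambda}^{\boldsymbol{\nu}})]$: via Weyl's inequality this prevents any eigenvalue of $\boldsymbol{\Lambda}^{\boldsymbol{\nu}}_{\omega^1}$ from crossing the truncation threshold. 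The Frobenius bound from step two then closes the argument.

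The main obstacle is producing the perturbation estimate with the precise constant $\Upupsilon_r(\boldsymbol{\Lambda}^{\boldsymbol{\nu}})$. Its shape, with numerator involving $\lambda_1$, $\lambda_r$, $\lambda_{r+1}$ and denominator $\lambda_r^2(\lambda_r-\lambda_{r+1})$, has to emerge from combining Weyl's theorem for the eigenvalue perturbation (giving the $\lambda_r^{-2}$ from inverting on the top-$r$ eigenspace) with a Davis--Kahan sine-theta bound for the top-$r$ spectral projector (giving the $(\lambda_r-\lambda_{r+1})^{-1}$), plus careful bookkeeping of the cross-terms obtained when expanding $(\boldsymbol{\Lambda}^{\boldsymbol{\nu}})^{\dagger}_r - (\boldsymbol{\Lambda}^{\boldsymbol{\nu}}_{\omega^1})^{\dagger}_r$ in the two respective SVD bases and re-expressing the mixed inner products in terms of $\lambda_1$ through the crude operator norm $\lVert\boldsymbol{\Lambda}^{\boldsymbol{\nu}}\rVert=\lambda_1$.
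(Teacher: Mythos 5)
Your Step 1 and the splitting you use for $u^{(r)}-u^{(r)}_{\omega^1}$ coincide with the paper's argument, but your Step 2 does not deliver the stated bound $\Vert \boldsymbol{\Lambda}^{\boldsymbol{\nu}}-\boldsymbol{\Lambda}^{\boldsymbol{\nu}}_{\omega^1}\Vert_{\rm F}\leq \delta^{\boldsymbol{\nu}}_{D,M_D,\omega^1}$. The two-term telescoping $(\rho_{\mathbf{x}_i^D}-\rho_{\mathbf{x}_i^D,\omega^1})\rho_{\mathbf{x}_j^D}+\rho_{\mathbf{x}_i^D,\omega^1}(\rho_{\mathbf{x}_j^D}-\rho_{\mathbf{x}_j^D,\omega^1})$ gives the entrywise bound $({\rm osc}_{M_1,\omega^1})_i\,\mu_{\mathbf{x}_j^D}(\Gamma_1)+({\rm osc}_{M_1,\omega^1})_j\,\mu_{\mathbf{x}_i^D}(\Gamma_1)$, and no regrouping collapses this to $\delta^{\boldsymbol{\nu}}_{D,M_D,\omega^1}$ as defined in \eqref{eq:notation.c}: in the homogeneous case $({\rm osc}_{M_1,\omega^1})_i\equiv c$, $\mu_{\mathbf{x}_i^D}(\Gamma_1)\equiv m$ your route produces $2cm$ per entry, hence essentially $2\,\delta^{\boldsymbol{\nu}}_{D,M_D,\omega^1}$, not $\delta^{\boldsymbol{\nu}}_{D,M_D,\omega^1}$. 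The paper avoids the telescoping altogether through the exact identity $[\boldsymbol{\Lambda}_{\omega^1}]_{ij}=\sum_k\sigma(\omega^1_k)^{-1}\mu_{\mathbf{x}_i^D}(\omega^1_k)\mu_{\mathbf{x}_j^D}(\omega^1_k)=\mu_{\mathbf{x}_i^D}\big(\rho_{\mathbf{x}_j^D,\omega^1}\big)$, so the entrywise error is the single term $\mu_{\mathbf{x}_i^D}\big(\rho_{\mathbf{x}_j^D}-\rho_{\mathbf{x}_j^D,\omega^1}\big)$, controlled directly by Step 1 (note also that your identity $[\boldsymbol{\Lambda}_{\omega^1}]_{ij}=\int_{\Gamma_1}\rho_{\mathbf{x}_i^D,\omega^1}\rho_{\mathbf{x}_j^D,\omega^1}\,\mathrm{d}\sigma$ requires, e.g., binary weights, while the single-term identity holds for all admissible weights). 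The loss of the constant is not cosmetic here, since the gap hypothesis and \eqref{eq:estimate-TSVD} are calibrated to $\delta^{\boldsymbol{\nu}}_{D,M_D,\omega^1}$; with your bound the hypothesis $\delta^{\boldsymbol{\nu}}_{D,M_D,\omega^1}\leq\frac12[\lambda_r-\lambda_{r+1}]$ would no longer guarantee $\Vert\mathbf{E}\Vert\leq\frac12[\lambda_r-\lambda_{r+1}]$ in the perturbation step.

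The second genuine gap is exactly where you locate it, but the intended remedy is different from your plan. The paper never proves (or needs) an operator-level inequality $\Vert(\boldsymbol{\Lambda}^{\boldsymbol{\nu}})^{\dag}_r-(\boldsymbol{\Lambda}^{\boldsymbol{\nu}}_{\omega^1})^{\dag}_r\Vert\leq\Upupsilon_r(\boldsymbol{\Lambda}^{\boldsymbol{\nu}})\Vert\mathbf{E}\Vert_{\rm F}$ with that precise constant; it invokes Hansen's TSVD perturbation theorem (\Cref{thm:AHansen}) for the solutions $\mathbf{u}^{(r)}$ and $\mathbf{u}^{(r)}_{\omega^1}$, whose bound contains the residual term $\Vert\boldsymbol{\Lambda}^{\boldsymbol{\nu}}\mathbf{u}^{(r)}-\mathbf{b}^{\boldsymbol{\nu}}\Vert/\Vert\mathbf{b}^{\boldsymbol{\nu}}\Vert$, and the constant $\Upupsilon_r(\boldsymbol{\Lambda}^{\boldsymbol{\nu}})$ of \eqref{eq:notation.b} emerges only after using $\Vert\mathbf{E}\Vert\leq\frac12[\lambda_r-\lambda_{r+1}]$, $\Vert\boldsymbol{\Lambda}^{\boldsymbol{\nu}}\mathbf{u}^{(r)}-\mathbf{b}^{\boldsymbol{\nu}}\Vert\leq\Vert\mathbf{b}^{\boldsymbol{\nu}}\Vert$ and $\Vert\mathbf{u}^{(r)}\Vert\leq\Vert\mathbf{b}^{\boldsymbol{\nu}}\Vert/\lambda_r$. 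Reassembling exactly $\Upupsilon_r(\boldsymbol{\Lambda}^{\boldsymbol{\nu}})$ from Weyl plus a Davis--Kahan sine-theta bound is not guaranteed: for instance when $\lambda_1$ and $\lambda_{r+1}$ are close to $\lambda_r$ one has $\Upupsilon_r\approx 6/[\lambda_r(\lambda_r-\lambda_{r+1})]$, which is smaller than what a crude truncated-pseudoinverse perturbation bound yields under the same gap condition, so your intermediate inequality may simply fail with that constant. Since you explicitly defer this step, the proof as proposed is incomplete; the efficient route is to quote a Hansen/Wedin-type result for the perturbed TSVD solutions and then perform the elementary simplifications above.
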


\begin{proof}
We proceed in several steps as follows.

\medskip

\noindent \textbf{Step I.} Note that since $\displaystyle \sum \limits_{j=1}^{M_1} \omega_j^1(\mathbf{x}) = 1$, $\mathbf{x} \in \Gamma_1$, it follows that
\begin{align*}
\sup \limits_{\mathbf{x} \in \Gamma_1} 
&\bigg\vert \rho_{\mathbf{x}_i^D}(\mathbf{x})
- \sum \limits_{j=1}^{M_1} \sigma\big( \omega^1_j \big)^{-1} \mu_{\mathbf{x}_i^D}\big( \omega^1_j \big) \omega^1_j(\mathbf{x}) \bigg\vert 
\leq \max \limits_{j = \overline{1,M_1}} 
\sup \limits_{\mathbf{x} \in {\rm supp}\big( \omega^1_j \big)} 
\big\vert \rho_{\mathbf{x}_i^D}(\mathbf{x}) - \sigma\big( \omega^1_j \big)^{-1} \mu_{\mathbf{x}_i^D}\big( \omega^1_j \big) \big\vert\\[4pt]
&= \max \limits_{j = \overline{1,M_1}} 
\sup \limits_{\mathbf{x} \in {\rm supp}(\omega^1_j)} 
\sigma\big( \omega^1_j \big)^{-1} 
\left\vert \int_{\Gamma_1} \big[ \rho_{\mathbf{x}_i^D}(\mathbf{x}) - 
\rho_{\mathbf{x}_i^D}(\mathbf{y}) \big] \, \omega^1_j(\mathbf{y}) \, \mathrm{d}\mathbf{y} \right\vert\\[4pt]
&\leq \max \limits_{j = \overline{1,M_1}}{\rm osc}_{{\rm supp}(\omega^1_j)} \big( \rho_{\mathbf{x}_i^D} \big) 
= {\big( {\rm osc}_{M_1,\omega^1} \big)}_i, 
\quad i = \overline{1,M_D}.
\end{align*}

\medskip

\noindent \textbf{Step II.} The first part of the statement can be proved since the following relations hold
\begin{align*}
&\left\vert \big[ \boldsymbol{\Lambda} \big]_{ij} 
- \big[ \boldsymbol{\Lambda}_{\omega^1} \big]_{ij} \right\vert
= \bigg\vert 
\sigma\big( \rho_{\mathbf{x}_i^D} \rho_{\mathbf{x}_j^D} \big) 
- \sum \limits_{k=1}^{M_1} \sigma\big( \omega^1_k \big)^{-1} \, 
\mu_{\mathbf{x}_i^D}\big (\omega^1_k \big) \, 
\mu_{\mathbf{x}_j^D}\big( \omega^1_k \big) 
\bigg\vert\\[4pt]
&= \bigg\vert
\mu_{\mathbf{x}_i^D} \, 
\bigg( \rho_{\mathbf{x}_j^D}
- \sum \limits_{k=1}^{M_1} \sigma\big( \omega^1_k \big)^{-1} \, 
\mu_{\mathbf{x}_j^D}\big( \omega^1_k \big) \, \omega^1_k \bigg) 
\bigg\vert 
\leq \mu_{\mathbf{x}_i^D}(\Gamma_1) \, \big( {\rm osc}_{M_1,\omega^1} \big)_i, 
\quad i, j = \overline{1,M_D}.
\end{align*}
Therefore,
\begin{equation}\label{eq:norm leq delta}
\begin{split}
\Vert \boldsymbol{\Lambda}^{\boldsymbol{\nu}} - \boldsymbol{\Lambda}^{\boldsymbol{\nu}}_{\omega^1} \Vert_{\rm F}
& = \Vert {\rm diag}(\boldsymbol{\nu}) \, 
\big( \boldsymbol{\Lambda} - \boldsymbol{\Lambda}_{\omega^1} \big) \, 
{\rm diag}(\boldsymbol{\nu}) \Vert_{\rm F}
\leq \Bigg( 
\sum \limits_{i = 1}^{M_D} \sum \limits_{j = 1}^{M_D} 
\nu^2_i \, \nu^2_j \, \mu_{\mathbf{x}_i^D}(\Gamma_1)^2 \,
\big( {\rm osc}_{M_1, \omega^1} \big)_i^2 
\Bigg)^{1/2}\\[4pt]
&= \Bigg( 
\sum \limits_{i = 1}^{M_D} \nu^2_i \, \mu_{\mathbf{x}_i^D}(\Gamma_1)^2 \, 
\big( {\rm osc}_{M_1, \omega^1} \big)_i^2 
\Bigg)^{1/2} 
= \delta^\nu_{D, M_D, \omega^1}.
\end{split}
\end{equation}

\medskip

\noindent \textbf{Step III.} To prove relation \eqref{eq:estimate-TSVD}, we first note that
\begin{align*}
\Big\vert u^{(r)}(\mathbf{x})
- u^{(r)}_{\omega^1}(\mathbf{x}) \Big\vert \leq 
\Bigg\vert u^{(r)}(\mathbf{x}) - 
\sum \limits_{i=1}^{M_D} \nu_i 
\big[ \mathbf{u}^{(r)} \big]_i 
\rho_{\mathbf{x}^D_i,\omega^1}(\mathbf{x}) \Bigg\vert + 
\sum \limits_{i=1}^{M_D} \nu_i 
\left\vert \big[ \mathbf{u}^{(r)} \big]_i - 
\big[ \mathbf{u}^{(r)}_{\omega^1} \big]_i \right\vert \, \Big\vert \rho_{\mathbf{x}^D_i, \omega^1}(\mathbf{x}) \Big\vert 
\end{align*}
and by {\bf Step I}, on the one hand, it follows that
\begin{align*}
&\sup \limits_{\mathbf{x} \in \Gamma_1} 
\bigg\vert u^{(r)}(\mathbf{x}) 
- \sum \limits_{i=1}^{M_D} \nu_i \, 
\big[ \mathbf{u}^{(r)} \big]_i 
\sum \limits_{j=1}^{M_1} \sigma\big( \omega^1_j \big)^{-1} \, 
\mu_{\mathbf{x}_i^D}\big( \omega^1_j \big) \, 
\omega^1_j(\mathbf{x}) \bigg\vert\\[4pt]
&\leq \sum \limits_{i=1}^{M_D} \nu_i \, 
\big\vert \big[ \mathbf{u}^{(r)} ]_i \big\vert \, 
{\big( {\rm osc}_{M_1,\omega} \big)}_i 
\leq \big\Vert \mathbf{u}^{(r)} \big\Vert \, 
\Big\Vert {\rm diag}(\boldsymbol{\nu}) {\rm osc}_{M_1,\omega} \Big\Vert
\leq \dfrac{\big\Vert \mathbf{b}^{\boldsymbol{\nu}} \big\Vert \, \big\Vert {{\rm diag}(\boldsymbol{\nu}) \; \rm osc}_{M_1,\omega} \big\Vert}{\lambda_r\big( \boldsymbol{\Lambda}^{\boldsymbol{\nu}} \big)}.
\end{align*}
Also, in view of relation \eqref{eq:boundary_weights}, one obtains 
\begin{align*}
&\sum \limits_{i=1}^{M_D} \nu_i 
\left\vert \big[ \mathbf{u}^{(r)} \big]_i - 
\big[ \mathbf{u}^{(r)}_{\omega^1} \big]_i \right\vert \, 
\Bigg\vert \sum \limits_{j=1}^{M_1} 
\sigma\big( \omega^1_j \big)^{-1} \, 
\mu_{\mathbf{x}_i^D}\big( \omega^1_j \big) \, \omega^1_j(\mathbf{x}) \Bigg\vert \leq 
\sum \limits_{i=1}^{M_D} \nu_i 
\left\vert \big[ \mathbf{u}^{(r)} \big]_i - 
\big[ \mathbf{u}^{(r)}_{\omega^1} \big]_i \right\vert \, 
\bigg\vert \sup \limits_{\mathbf{x} \in \Gamma_1} \rho_{\mathbf{x}_i^D}(\mathbf{x}) \bigg\vert \\
& \leq \Big\Vert \mathbf{u}^{(r)} - \mathbf{u}_{\omega^1}^{(r)} \Big\Vert \, 
\Bigg\Vert 
{\rm diag}(\boldsymbol{\nu}) 
\Bigg( \sup \limits_{\mathbf{x} \in \Gamma_1} 
\rho_{\mathbf{x}_i^D}(\mathbf{x}) \Bigg)_{i = \overline{1,M_D}} 
\Bigg\Vert,
\end{align*}
and thus
\begin{align*}
\sup \limits_{\mathbf{x} \in \Gamma_1} 
\left\vert u^{(r)}(\mathbf{x})
- u^{(r)}_{\omega^1}(\mathbf{x}) \right\vert \leq 
\dfrac{\big\Vert \mathbf{b}^{\boldsymbol{\nu}} \big\Vert \, \big\Vert {{\rm diag}(\boldsymbol{\nu}) \; \rm osc}_{M_1,\omega} \big\Vert}{\lambda_r\big( \boldsymbol{\Lambda}^{\boldsymbol{\nu}} \big)} + 
\Big\Vert \mathbf{u}^{(r)} - \mathbf{u}_{\omega^1}^{(r)} \Big\Vert \, 
\Bigg\Vert 
{\rm diag}(\boldsymbol{\nu}) 
\Bigg( \sup \limits_{\mathbf{x} \in \Gamma_1} 
\rho_{\mathbf{x}_i^D}(\mathbf{x}) \Bigg)_{i = \overline{1,M_D}} 
\Bigg\Vert.
\end{align*}

\medskip

\noindent \textbf{Step IV.}
Next, we set $\mathbf{E} \coloneqq \boldsymbol{\Lambda}^{\boldsymbol{\nu}} - \boldsymbol{\Lambda}^{\boldsymbol{\nu}}_{\omega^1}$, $\mathbf{e} \coloneqq \mathbf{0}$, $k_r \coloneqq \lambda_1\big( \boldsymbol{\Lambda}^{\boldsymbol{\nu}} \big) / \lambda_r\big( \boldsymbol{\Lambda}^{\boldsymbol{\nu}} \big)$, $\eta_r \coloneqq \vert \mathbf{E} \vert / \lambda_r\big( \boldsymbol{\Lambda}^{\boldsymbol{\nu}} \big)$ and $\gamma_r \coloneqq \lambda_{r+1}\big( \boldsymbol{\Lambda}^{\boldsymbol{\nu}} \big) / \lambda_{r}\big( \boldsymbol{\Lambda}^{\boldsymbol{\nu}} \big)$, 
and since 
\[
\Vert \mathbf{E} \Vert \leq 
\dfrac{1}{2} \big[ \lambda_r\big( \boldsymbol{\Lambda}^{\boldsymbol{\nu}} \big) -\lambda_{r+1}\big( \boldsymbol{\Lambda}^{\boldsymbol{\nu}} \big) \big], 
\]
one can apply \Cref{thm:AHansen} to deduce the bound
\begin{equation*}
\begin{split}
\dfrac{\Big\Vert \mathbf{u}^{(r)} - \mathbf{u}_{\omega^1}^{(r)} \Big\Vert}
{\big\Vert \mathbf{u}^{(r)} \big\Vert}
&\leq \dfrac{k_r}{1 - \eta_r} 
\left( 
\dfrac{\Vert \mathbf{E} \Vert}{\lambda_1\big( \boldsymbol{\Lambda}^{\boldsymbol{\nu}} \big)} 
+ \dfrac{\eta_r}{1 - \eta_r - \gamma_r} 
\dfrac{\Big\Vert \boldsymbol{\Lambda}^{\boldsymbol{\nu}} \mathbf{u}^{(r)} - \mathbf{b}^{\boldsymbol{\nu}} \Big\Vert}
{\Vert \mathbf{b}^{\boldsymbol{\nu}} \Vert} 
\right) 
+ \dfrac{\eta_r}{1 - \eta_r - \gamma_r}\\[4pt]
&\leq \Vert \mathbf{E} \Vert
\left( 
\dfrac{2}{\lambda_r\big( \boldsymbol{\Lambda}^{\boldsymbol{\nu}} \big)} 
+ \dfrac{4\lambda_1\big( \boldsymbol{\Lambda}^{\boldsymbol{\nu}} \big) \, 
\Big\Vert \boldsymbol{\Lambda}^{\boldsymbol{\nu}} \mathbf{u}^{(r)} - \mathbf{b}^{\boldsymbol{\nu}} \Big\Vert}
{\lambda_r\big( \boldsymbol{\Lambda}^{\boldsymbol{\nu}} \big) \, 
\big[ \lambda_r\big( \boldsymbol{\Lambda}^{\boldsymbol{\nu}} \big) 
- \lambda_{r+1}\big( \boldsymbol{\Lambda}^{\boldsymbol{\nu}} \big) \big] \,
\Vert \mathbf{b}^{\boldsymbol{\nu}} \Vert} 
+ \dfrac{2}{\lambda_r\big( \boldsymbol{\Lambda}^{\boldsymbol{\nu}} \big)
- \lambda_{r+1}\big( \boldsymbol{\Lambda}^{\boldsymbol{\nu}} \big)} 
\right).
\end{split}
\end{equation*}
Since $\Big\Vert \boldsymbol{\Lambda}^{\boldsymbol{\nu}} \mathbf{u}^{(r)} - \mathbf{b}^{\boldsymbol{\nu}} \Big\Vert \leq \Vert \mathbf{b}^{\boldsymbol{\nu}} \Vert$ and $\big\Vert \mathbf{u}^{(r)} \big\Vert \leq \Vert \mathbf{b}^{\boldsymbol{\nu}} \Vert / \lambda_r\big( \boldsymbol{\Lambda}^{\boldsymbol{\nu}} \big)$, it follows that
\begin{align*}
\Big\Vert \mathbf{u}^{(r)} - \mathbf{u}_{\omega^1}^{(r)} \Big\Vert 
&\leq \Vert \mathbf{E} \Vert \, 
\dfrac{4 \lambda_1\big( \boldsymbol{\Lambda}^{\boldsymbol{\nu}} \big) 
+ 4 \lambda_r\big( \boldsymbol{\Lambda}^{\boldsymbol{\nu}} \big) 
- 2 \lambda_{r+1}\big( \boldsymbol{\Lambda}^{\boldsymbol{\nu}} \big)}{\lambda_r\big( \boldsymbol{\Lambda}^{\boldsymbol{\nu}} \big)^2 \, 
\big[ \lambda_r\big( \boldsymbol{\Lambda}^{\boldsymbol{\nu}} \big) 
- \lambda_{r+1}\big( \boldsymbol{\Lambda}^{\boldsymbol{\nu}} \big) \big]} \, 
\big\Vert \mathbf{b}^\nu \big\Vert
\end{align*}
and hence one finally obtains estimate \eqref{eq:estimate-TSVD}.
\end{proof}
\subsection{From the probabilistic representation of the elliptic measure to the Monte Carlo TSVD} 
\label{s:3}

In addition to hypotheses \ref{Hyp_D}--\ref{eq:Hq}, we assume that \ref{eq:H4} holds, i.e. the anisotropic material occupying the domain $D \subset \mathbb{R}^d$ is homogeneous. 

Let $(B_t)_{t \geq 0}$ be a standard $d-$dimensional Brownian motion, starting from zero, on a filtered probability space $\big(\Omega, \mathcal{F}, (\mathcal{F}_t)_{t \geq 0}, \mathbb{P}\big)$ satisfying the usual hypotheses, and set
\begin{equation} \label{eq:Y}
Y_t^{\mathbf{x}} \coloneqq \mathbf{K}^{1/2}B_t + \mathbf{x}, \quad t \geq 0.
\end{equation}
Further, consider the $\big(\mathcal{F}_t\big)-$stopping time
\begin{equation}\label{eq:stoppingtime}
\tau_{\partial{D}}^\mathbf{x} \coloneqq
\inf \big\{ t \geq 0 \: \big| \: Y_t^\mathbf{x} \in \partial{D} \big\}, \quad \mathbf{x} \in \mathbb{R}^d.
\end{equation}
It is then well-known (see e.g \cite[Chapter 9]{Oksendal} or \cite{ChenZhao95}) that
\begin{equation} \label{eq:probabilisticrepresentation}
\mu_{\mathbf{x}}(A) = \mathbb{E} \left[ 1_A\left( Y^\mathbf{x}_{\tau^{\mathbf{x}}_{\partial{D}}} \right) \right], 
\quad A \in \mathcal{B}(\partial D), 
\quad \mathbf{x} \in D.
\end{equation}
In particular, using the notations introduced in \Cref{ss:discret}, see \eqref{eq:boundary_weights} and \eqref{eq:AM matrix components}, it follows that
\begin{equation}\label{eq:mc}
\left[ \mathbf{A}_{\omega_1} \right]_{ij} = 
\mathbb{E} \left[ \omega^1_{j} \left( Y^{\mathbf{x}_i^D}_{\tau_{\partial{D}}} \right) \right], 
\quad i = \overline{1,M_D}, 
\quad j = \overline{1,M_1}.
\end{equation}
Due to the homogeneity of $\mathbf{K}$, instead of employing a plain Monte Carlo simulation to approximate the expectation in relation \eqref{eq:mc}, it is much more efficient to rely on the ellipsoid version of the well-known walk-on-spheres algorithm due to \cite{Muller}, which we briefly present in what follows. 
Without any loss of generality, we further assume that $1$ is the largest eigenvalue of matrix $\mathbf{K}$. 
Let $\mathbf{x} \in D$ and consider the walk-on-ellipsoids (WoE) Markov chain on $\overline{D}$ constructed as
\begin{equation} \label{eq:wos chain}
X^\mathbf{x}_0 = \mathbf{x} \in \overline{D}, 
\quad X^\mathbf{x}_{i+1} = X^\mathbf{x}_i 
+ {\rm dist}\big( X^{\mathbf{x}}_i, \partial D \big) \mathbf{K}^{1/2} \mathbf{U}_i, 
\quad i \geq 0,
\end{equation}
where $\big( \mathbf{U}_i \big)_{i \geq 0}$ are i.i.d. random variables on a probability space $\big( \widetilde{\Omega}, \widetilde{\mathcal{F}}, \widetilde{\mathbb{P}} \big)$, uniformly distributed on the $(d-1)-$dimensional unit sphere in $\mathbb{R}^d$ centered at the origin, denoted by $\mathbb{S}^{d-1}(\mathbf{0},1)$. 
Consider the first iteration when the process enters $\overline{D}_\varepsilon \coloneqq \big\{ \mathbf{x} \in \overline{D} \: \big| \: {\rm dist}(x,\partial D) \leq \varepsilon \big\}$,
\begin{equation}\label{eq:N epsilon x}
\mathrm{N}^\mathbf{x}_\varepsilon \coloneqq 
\min \big\{i \geq 0 \: \big| \: {\rm dist}\big( X^\mathbf{x}_i, \partial D \big) \leq \varepsilon \big\}, \quad \varepsilon > 0.
\end{equation}
Further, for $\mathbf{x} \in D$ and $\varepsilon > 0$, we consider
\begin{align}\label{eq:Y_chain}
\tau^\mathbf{x}_1 & 
\coloneqq \inf \big\{t \geq 0 \: \big| \: Y^\mathbf{x}_t \in \mathbf{x} + \mathbf{K}^{1/2} \mathbb{S}^{d-1}\big( \mathbf{0}, {\rm dist}(\mathbf{x}, \partial D)\big) \big\},\\
\tau^\mathbf{x}_n & 
\coloneqq \inf \big\{t \geq \tau^\mathbf{x}_{n-1} \: \big| \: Y^\mathbf{x}_t \in Y^\mathbf{x}_{\tau^\mathbf{x}_{n-1}} + \mathbf{K}^{1/2} \mathbb{S}^{d-1}\big(\mathbf{0}, {\rm dist}(Y^\mathbf{x}_{\tau^\mathbf{x}_{n-1}}, \partial D) \big) \big\}, \quad n \geq 2,\\ 
\widetilde{\mathrm{N}}^\mathbf{x}_\varepsilon  & \coloneqq \inf \big\{n \geq 0 \: \big| \: {\rm dist}(Y^\mathbf{x}_{\tau^\mathbf{x}_n}, \partial{D}) \leq \varepsilon \big\}, \quad
\tau^\mathbf{x}_\varepsilon   
\coloneqq \inf \big\{t \geq 0 \: \big| \: {\rm dist}(Y^\mathbf{x}_t, \partial{D}) \leq \varepsilon \big\},
\end{align}
Since the chains $\big( X^\mathbf{x}_n \big)_{n \geq 0}$ and $\big( Y^\mathbf{x}_{\tau^\mathbf{x}_n} \big)_{n \geq 0}$ have the same law, it follows, in particular, that 
\begin{equation}\label{eq:equal_distribution}
X^\mathbf{x}_{\mathrm{N}^\mathbf{x}_\varepsilon} 
\ \mbox{ and } \  Y^\mathbf{x}_{\tau^\mathbf{x}_{\widetilde{\mathrm{N}}^\mathbf{x}_\varepsilon}} \ \mbox{ are equal in distribution}.
\end{equation}

\subsubsection{Monte Carlo approximation of matrix $\mathbf{A}_{\omega^1}$}
Let $\big( X^\mathbf{x}_n \big)_{n\geq 0}$, $\mathbf{x} \in \overline{D}$, be the WoE constructed by relation \eqref{eq:wos chain}, $\varepsilon > 0$, and also recall relation \eqref{eq:mc}. 
As an approximation for $\mathbf{A}_{\omega^1}$ ,we define the matrix $\mathbf{A}_{\omega^1,\varepsilon}$ as
\begin{equation}\label{eq:WoE_matrix}
\left[ \mathbf{A}_{\omega^1, \varepsilon} \right]_{ij} \coloneqq \mathbb{E} \left[ \omega^{1, \varepsilon}_{j}\left( X^{\mathbf{x}_i^D}_{\mathrm{N}^{\mathbf{x}_i^D}_\varepsilon} \right) \right], 
\quad i = \overline{1,M_D}, 
\quad j = \overline{1,M_1},
\end{equation}
as well as its Monte Carlo estimator $\mathbf{A}_{\omega^1, \varepsilon, N}$ which is the random matrix given by
\begin{equation}\label{eq:WoE_matrix_MC}
\left[ \mathbf{A}_{\omega^1, \varepsilon, N} \right]_{ij} \coloneqq 
\dfrac{1}{N} 
\displaystyle \sum_{\ell = 1}^N 
\omega^{1, \varepsilon}_j\left( X^{\mathbf{x}_i^D, \ell}_{\mathrm{N}^{\mathbf{x}_i^D, \ell}_\varepsilon} \right),
\quad i = \overline{1,M_D}, 
\quad j = \overline{1,M_1},
\end{equation}
where for $\mathbf{x}\in D$, $X^{\mathbf{x}, 1}_{\mathrm{N}^{\mathbf{x}, 1}_\varepsilon}, X^{\mathbf{x}, 2}_{\mathrm{N}^{\mathbf{x}, 2}_\varepsilon}, \ldots, X^{\mathbf{x}, N}_{\mathrm{N}^{\mathbf{x}, N}_\varepsilon}$ are $N \geq 1$ i.i.d. copies of $X^\mathbf{x}_{\mathrm{N}^\mathbf{x}_\varepsilon}$.

Analogous to \eqref{eq:Lambda_hat_nu}, we also set
\begin{equation}
\label{eq:Lambda_hat_eps_nu}
\boldsymbol{\Lambda}_{\omega^1, \varepsilon} 
\coloneqq \mathbf{A}_{\omega^1, \varepsilon} \; 
\left[ {\rm diag}\big( \sigma(\omega^1) \big) \right]^{-1} \; 
\mathbf{A}_{\omega^1, \varepsilon}^{\sf T}, 
\quad \boldsymbol{\Lambda}^{\boldsymbol{\nu}}_{\omega^1, \varepsilon} 
\coloneqq {\rm diag}(\boldsymbol{\nu}) \; \boldsymbol{\Lambda}_{\omega^1, \varepsilon} \; 
{\rm diag}(\boldsymbol{\nu}) 
\end{equation}
and
\begin{equation}
\label{eq:Lambda_tilde_nu}
\boldsymbol{\Lambda}_{\omega^1, \varepsilon, N} 
\coloneqq \mathbf{A}_{\omega^1, \varepsilon, N} \; 
\left[ {\rm diag}\big( \sigma(\omega^1) \big) \right]^{-1} \; 
\mathbf{A}_{\omega^1, \varepsilon, N}^{\sf T}, 
\quad \boldsymbol{\Lambda}^{\boldsymbol{\nu}}_{\omega^1, \varepsilon, N} 
\coloneqq {\rm diag}(\boldsymbol{\nu}) \; 
\boldsymbol{\Lambda}_{\omega^1, \varepsilon, N} \; 
{\rm diag}(\boldsymbol{\nu}).
\end{equation}

\subsubsection{Monte Carlo solution to the inverse problem \eqref{eq:insidemeasurements}}
We further define
\begin{equation}\label{eq:rho omega1 MC}
\rho_{\mathbf{x}_i^D, \omega^1, \varepsilon, N}(\mathbf{x}) 
\coloneqq \sum \limits_{j=1}^{M_1} \sigma\big( \omega^1_j \big)^{-1} \left[ \mathbf{A}_{\omega^1, \varepsilon, N} \right]_{ij} \omega^1_j(\mathbf{x}), 
\quad \mathbf{x}\in \Gamma_1, 
\quad i = \overline{1,M_D}.
\end{equation}
We also consider the {\it $r-$TSVD random solution} $\mathbf{u}^{(r)}_{\omega^1, \varepsilon, N}$ defined as
\begin{equation} \label{eq:u_r Lambda omega1 MC}
\mathbf{u}_{\omega^1,\varepsilon,N}^{(r)} \coloneqq 
\big(\boldsymbol{\Lambda}^\nu_{\omega^1,\varepsilon,N}\big)^{\dag}_r \; \mathbf{b}^\nu \in \mathbb{R}^{M_D},
\end{equation}
which corresponds to the finite-dimensional linear system 
$\boldsymbol{\Lambda}^{\boldsymbol{\nu}}_{\omega^1, \varepsilon, N} \mathbf{u} = \mathbf{b}^\nu$. 
Finally, we define the Monte Carlo version of \eqref{eq:u_omega_M1_r} by introducing the {\it $(\omega^1, \varepsilon, N, r)-$Monte Carlo--TSVD approximate solution} $u^{(r)}_{\omega^1, \varepsilon, N}$ of the operator equation \eqref{eq:TTh} as
\begin{align}\label{eq:u_omega_M1_r_MC}
u_{\omega^1, \varepsilon, N}^{(r)}(\mathbf{x}) 
& \coloneqq \sum \limits_{i=1}^{M_D} \nu_i \left[ \mathbf{u}_{\omega^1, \varepsilon, N}^{(r)} \right]_i \rho_{\mathbf{x}_i^D, \omega^1, \varepsilon, N}(\mathbf{x}), 
\quad \mathbf{x} \in \Gamma_1,
\end{align}
which may also be recast as
\begin{equation}\label{eq:u_omega_M1_r_MC A}
u^{(r)}_{\omega^1, \varepsilon, N}(\mathbf{x}) 
\coloneqq \sum \limits_{i=1}^{M_1} \left[ \mathbf{A}^{\boldsymbol{\nu}, \dag, r}_{\omega^1, \varepsilon, N} \mathbf{b}^{\boldsymbol{\nu}} \right]_i \omega_i^1(\mathbf{x}), 
\quad \mathbf{x} \in \Gamma_1,
\end{equation}
where
\begin{equation}\label{eq: A_r_MC}
\mathbf{A}^{\nu, \dag, r}_{\omega^1,\varepsilon, N} 
\coloneqq \left[ {\rm diag}\big( \sigma(\omega^1) \big) \right]^{-1/2} 
\Big( {\rm diag}(\boldsymbol{\nu}) \; 
\mathbf{A}_{\omega^1, \varepsilon, N} \; 
\left[ {\rm diag}\big( \sigma(\omega^1) \big) \right]^{-1/2} \Big)^{\dag}_r 
\in \mathbb{R}^{M_1 \times M_D}.
\end{equation}






\section{Analysis of Monte Carlo errors: Tail estimates and convergence}\label{section:error}
Throughout this section, we assume that hypotheses \ref{Hyp_D}--\ref{eq:H4} are satisfied. 
For each bounded and measurable boundary data $f$ that is in fact prescribed on or extended to  $\overline{D}_\varepsilon$, we define
\begin{equation}\label{eq:u epsilon}
u^\varepsilon_f (\mathbf{x}) \coloneqq 
{\mathbb{E}} [ f (X^\mathbf{x}_{\mathrm{N}^\mathbf{x}_\varepsilon}) ], 
\quad \mathbf{x} \in D_\varepsilon,
\end{equation}
and this is going to be the main quantity of interest in the first part of the next subsection. 
Moreover, we assume that all points of $\partial D$ are regular, whilst $\pi$ denotes any measurable selection of the projection on $\partial D$, see, e.g., \cite[Appendix]{CiGrMaI} for more details.

\subsection{Error estimates for the direct problem}

Firstly, we clarify some global H\"older estimates for the solution $u$ to the Dirichlet problem \eqref{eq:dirichlet bvp}. 
Such properties have been studied intensively in the literature by various tools, yet our aim here is to derive some explicit estimates that could be used to quantify the (in)stability of the numerical schemes analysed herein.

\paragraph{Regularity estimates for the solution to the Dirichlet problem \eqref{eq:dirichlet bvp}.}
Let $\lambda_{\mathbf{K}} > 0$ be the smallest eigenvalue of $\mathbf{K}$ and recall that, without any loss of generality, the largest one is assumed to be $1$. 

Following \cite{Ai02} (see, e.g., \cite{Su99} for the planar case), for $\alpha \in (0,1]$, we consider the following \textit{local harmonic measure decay condition} for the domain $D$:
\begin{enumerate}[leftmargin=50pt]
\item[\mylabel{LHMD}{$\sf (LHMD_\alpha)$}] 
\begin{equation} \label{eq:LHMD}
\begin{split}
&\exists~M = M(\alpha) > 0,
~~\exists~r_0 = r_0(\alpha) >0,\\
&\mu_\mathbf{x}^{D\cap B(\mathbf{a},r)}\big( D \cap S(\mathbf{a},r) \big) \leq 
M \left( \dfrac{\vert \mathbf{x} - \mathbf{a} \vert}{r} \right)^\alpha, 
~~\forall~\mathbf{a} \in \partial D, 
~~\forall~r \in (0, r_0],
~~\forall~\mathbf{x} \in D \cap B(\mathbf{a},r),
\end{split}
\end{equation}
\end{enumerate}
where $\mu_\mathbf{x}^{D \cap B(\mathbf{a},r)}(\cdot)$ denotes the harmonic measure, i.e. corresponding to $\mathbf{K} = \mathbf{I}_d$, over the domain $D \cap B(\mathbf{a},r)$ with pole at $\mathbf{x} \in D \cap B(\mathbf{a},r)$.

\begin{rem}\label{rem:holder}

\begin{enumerate}[label={\rm (}{\it \roman*}{\rm )}]
\item Note that by \cite[Theorem 3]{Ai02}, if $\sf (LHMD_{\alpha^\prime})$ holds for the domain $D$, then, for all $\alpha \in (0, \alpha^\prime)$, if $f$ is $\alpha-$H\"older on $\partial D$ it follows that the solution $u$ to \eqref{eq:dirichlet bvp} is also $\alpha-$H\"older on $D$. 
This is, in general, not true for $\alpha = \alpha^\prime$, see \cite[Theorem 2]{Ai02} for the case $\alpha^\prime = 1$.

\item According to \cite[Remark 2 and Theorem 2]{Ai02}, if $D$ is a $C^1-$domain, then $\sf (LHMD_{\alpha})$ holds for all $0\alpha \in (0, 1)$ and it is not difficult to check that if $D$ satisfies the uniform exterior ball condition, then $\sf (LHMD_{1})$ holds with a constant $M$ that can be made explicit in terms of the radii of the exterior balls.
\end{enumerate}
\end{rem}

For $\alpha \in (0,1]$ and a subset $E \subset \mathbb{R}^d$, we define
\begin{equation*}
\vert f \vert_{\alpha, E} \coloneqq \sup \limits_{\substack{\mathbf{x} \neq \mathbf{y} \in E\\ \vert \mathbf{x} - \mathbf{y} \vert leq 1}} \dfrac{\vert f(\mathbf{x}) - f(\mathbf{y}) \vert}{\vert \mathbf{x} - \mathbf{y} \vert^\alpha}, \quad 
\Vert f \Vert_{\alpha,E} \coloneqq 
\vert f \vert_{\alpha, \partial D} + 
\sup \limits_{\mathbf{x} \in E} \vert f(\mathbf{x}) \vert.
\end{equation*}

\begin{prop}\label{prop:xlogx}
Assume that \ref{LHMD} holds for the domain $\mathbf{K}^{-1/2} D$ for some $\alpha \in (0, 1]$ and $M_{\mathbf{K}}$ instead of $M$. 
Then for any function $f$ that is $\alpha-$H\"older on $\partial D$ and every $\mathbf{x}, \mathbf{y}\in \overline{D}$, $\vert \mathbf{K}^{-1/2}(\mathbf{x} - \mathbf{y}) \vert < 1$, the solution $u_f$ of the Dirichlet problem \eqref{eq:dirichlet bvp} satisfies the following inequality
\begin{equation*} \label{eq:xlogx}
\vert u_f(\mathbf{x}) - u_f(\mathbf{y}) \vert \leq 
\Vert f \Vert_{\alpha, \partial D_{\mathbf{K}}} \psi(\mathbf{x} - \mathbf{y}),
\end{equation*}
where
$\psi(\mathbf{z}) \coloneqq 
\vert \mathbf{K}^{-1/2}\mathbf{z} \vert^\alpha 
10\left(3 + \dfrac{8M_\mathbf{K}}{\mathrm{e} \log{2}} \big\vert \log{\vert \mathbf{K}^{-1/2} \mathbf{z} \vert} \big\vert \right), 
\quad \mathbf{z} \in \mathbb{R}^d$.
\end{prop}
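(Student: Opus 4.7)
\textbf{Proof proposal for Proposition~\ref{prop:xlogx}.}

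The first move is to reduce to the harmonic case by the change of variables $\tilde{\mathbf{x}} := \mathbf{K}^{-1/2}\mathbf{x}$. Setting $\tilde{D} := \mathbf{K}^{-1/2}D$, $\tilde{u}(\tilde{\mathbf{x}}) := u_f(\mathbf{K}^{1/2}\tilde{\mathbf{x}})$ and $\tilde{f}(\tilde{\mathbf{y}}) := f(\mathbf{K}^{1/2}\tilde{\mathbf{y}})$, a direct computation shows that $\tilde{u}$ is harmonic in $\tilde{D}$ with boundary data $\tilde{f}$. Since $\lambda_{\max}(\mathbf{K}) = 1$ gives $\|\mathbf{K}^{1/2}\| \leq 1$, one has $|\tilde{f}|_{\alpha,\partial\tilde{D}} \leq |f|_{\alpha,\partial D}$ and $\|\tilde{f}\|_\infty = \|f\|_\infty$. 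With $r := |\mathbf{K}^{-1/2}(\mathbf{x} - \mathbf{y})| = |\tilde{\mathbf{x}} - \tilde{\mathbf{y}}| < 1$, the problem becomes: prove that $|\tilde{u}(\tilde{\mathbf{x}}) - \tilde{u}(\tilde{\mathbf{y}})| \leq \|\tilde{f}\|_{\alpha,\partial\tilde{D}} \cdot 10\,r^\alpha\bigl(3 + \tfrac{8M_\mathbf{K}}{e\log 2}|\log r|\bigr)$, where $\tilde{D}$ satisfies $\mathsf{(LHMD_\alpha)}$ with constant $M_\mathbf{K}$.

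The central estimate is a boundary-to-point bound: for $\tilde{\mathbf{z}} \in \tilde{D}$ and $\mathbf{a} \in \partial\tilde{D}$ with $\rho := |\tilde{\mathbf{z}} - \mathbf{a}|$, show
\[
|\tilde{u}(\tilde{\mathbf{z}}) - \tilde{f}(\mathbf{a})| \leq \|\tilde{f}\|_{\alpha,\partial\tilde{D}}\,\rho^\alpha\bigl[C_1 + C_2 M_\mathbf{K} |\log\rho|\bigr].
\]
The idea is to use the harmonic-measure representation $\tilde{u}(\tilde{\mathbf{z}}) - \tilde{f}(\mathbf{a}) = \int_{\partial\tilde{D}}(\tilde{f} - \tilde{f}(\mathbf{a}))\,d\mu_{\tilde{\mathbf{z}}}^{\tilde{D}}$, decompose $\partial\tilde{D}$ into $B(\mathbf{a},\rho)$ and the dyadic annuli $A_k := \partial\tilde{D}\cap\bigl(B(\mathbf{a},2^k\rho)\setminus B(\mathbf{a},2^{k-1}\rho)\bigr)$ for $k=1,\ldots,N$, where $N$ is the smallest integer with $2^N\rho \geq 1$. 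The key tool is the comparison principle $\mu_{\tilde{\mathbf{z}}}^{\tilde{D}}\bigl(\partial\tilde{D}\setminus B(\mathbf{a},R)\bigr) \leq \mu_{\tilde{\mathbf{z}}}^{\tilde{D}\cap B(\mathbf{a},R)}\bigl(\tilde{D}\cap S(\mathbf{a},R)\bigr) \leq M_\mathbf{K}(\rho/R)^\alpha$, obtained by comparing the harmonic measure on the larger domain with that on the smaller one and invoking $\mathsf{(LHMD_\alpha)}$. On $A_k$, $|\tilde{f} - \tilde{f}(\mathbf{a})| \leq |\tilde{f}|_\alpha (2^k\rho)^\alpha$ while $\mu_{\tilde{\mathbf{z}}}(A_k) \leq M_\mathbf{K}\,2^{-\alpha(k-1)}$, so each annulus contributes $\leq 2^\alpha M_\mathbf{K}|\tilde{f}|_\alpha\rho^\alpha$; summing the $N \leq 1 + |\log\rho|/\log 2$ annuli produces the logarithmic factor, while the ball $B(\mathbf{a},\rho)$ and the remainder beyond $2^N\rho$ yield only bounded (in $\rho^{-1}$) contributions.

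It remains to combine the boundary estimate with an interior argument. Let $\mathbf{a} := \pi(\tilde{\mathbf{x}}) \in \partial\tilde{D}$ and $\rho := d(\tilde{\mathbf{x}},\partial\tilde{D})$. If $\rho \leq r$, then $|\tilde{\mathbf{y}} - \mathbf{a}| \leq 2r$, and the triangle inequality with the boundary estimate applied at $\tilde{\mathbf{x}}$ and at $\tilde{\mathbf{y}}$ directly yields the desired bound. If $\rho > r$, the segment $[\tilde{\mathbf{x}},\tilde{\mathbf{y}}]$ lies in $B(\tilde{\mathbf{x}},r)\subset B(\tilde{\mathbf{x}},\rho)\subset\tilde{D}$; applying the standard interior gradient estimate for harmonic functions to $\tilde{u} - \tilde{f}(\mathbf{a})$ on balls $B(\tilde{\mathbf{w}},(\rho-r)/2)$ for $\tilde{\mathbf{w}}$ on the segment, and bounding the right-hand side by the already-established boundary estimate (valid at distance $\lesssim \rho$ from $\mathbf{a}$), gives $|\nabla\tilde{u}(\tilde{\mathbf{w}})| \lesssim \rho^{-1}\cdot\|\tilde{f}\|_{\alpha}\rho^\alpha|\log\rho|$. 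Integration along the segment yields $|\tilde{u}(\tilde{\mathbf{x}}) - \tilde{u}(\tilde{\mathbf{y}})| \lesssim r\,\rho^{\alpha-1}\|\tilde{f}\|_{\alpha}|\log\rho| \leq r^\alpha\|\tilde{f}\|_{\alpha}|\log r|$, since $\alpha \leq 1$ and $r < \rho$.

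The main obstacle is the second paragraph: tracking the explicit constants $10$, $3$, and $\tfrac{8}{e\log 2}$ through the annular summation and the far-field bound requires careful bookkeeping. The factor $\log 2$ arises naturally from the dyadic scaling $N \sim |\log\rho|/\log 2$, while the $e$ likely enters through an elementary optimization (e.g.\ $\sup_{t>0}\, t\,e^{-t} = e^{-1}$) used when handling the borderline term at $k = N$ or when trading the $L^\infty$-bound against the H\"older modulus. After proving the estimate for $\tilde{u}$ in terms of $r$, returning to the original variables via $r = |\mathbf{K}^{-1/2}(\mathbf{x} - \mathbf{y})|$ yields the claimed form of $\psi$.
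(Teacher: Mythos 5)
Your proposal is correct in strategy but follows a genuinely different route from the paper. After the same change of variables $D_{\mathbf K}=\mathbf K^{-1/2}D$ (which the paper performs at the end rather than at the start), the paper does not re-derive boundary regularity at all: it quotes Aikawa \cite{Ai02} (Theorem 3, with the constants read off from the proofs of his Propositions 2 and 4), which gives, for \emph{every} $\beta\in(0,\alpha)$, the bound $|u_f(\mathbf x)-u_f(\mathbf y)|\le 10\bigl(3+\tfrac{4M}{1-2^{\beta-\alpha}}\bigr)\Vert f\Vert_{\alpha,\partial D}\,|\mathbf x-\mathbf y|^{\beta}$, and then optimises over $\beta$, taking $\beta^{\ast}=\alpha-1/\bigl|\log|\mathbf x-\mathbf y|\bigr|$; the factors $\mathrm e$ and $\log 2$ in $\psi$ arise precisely from this optimisation (via $1-2^{\beta-\alpha}\ge(\alpha-\beta)\log 2$ and the fact that $t^{\pm 1/\log t}$ is an absolute constant), not, as you guess, from a borderline dyadic annulus. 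You instead re-prove the boundary estimate directly at the critical exponent $\alpha$ by the classical harmonic-measure argument -- dyadic annuli around a boundary point, the maximum-principle comparison $\mu^{\tilde D}_{\tilde{\mathbf z}}(\partial\tilde D\setminus B(\mathbf a,R))\le\mu^{\tilde D\cap B(\mathbf a,R)}_{\tilde{\mathbf z}}(\tilde D\cap S(\mathbf a,R))$, and $\mathsf{(LHMD_\alpha)}$ -- so the logarithm appears as the number of annuli, and you then propagate the bound inside via interior gradient estimates. Your route is self-contained and does not require inspecting Aikawa's proofs, which is a genuine advantage; what it cannot deliver without a full constant-chase is the specific numbers $10$, $3$, $8/(\mathrm e\log 2)$ in $\psi$: you obtain the same functional form $r^{\alpha}\bigl(c_1+c_2M_{\mathbf K}|\log r|\bigr)$ with different, unspecified constants, i.e.\ a version of the proposition rather than the literal statement (harmless for its use later in the paper, but it should be said).

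Two points in your third paragraph need patching, both routine. First, with balls of radius $(\rho-r)/2$ the interior gradient estimate produces a factor $(\rho-r)^{-1}$, which is only of order $\rho^{-1}$ when $r$ stays away from $\rho$; split the cases at $\rho\le 2r$ versus $\rho>2r$ (and treat $\rho\ge 1$ separately, using the trivial bound by $2\Vert f\Vert_{\infty}$), so that $(\rho-r)/2\ge\rho/4$. Second, in the near-boundary case the quantity $\rho^{\alpha}\bigl(c_1+c_2M_{\mathbf K}|\log\rho|\bigr)$ is not monotone in $\rho$ near $1$, so the bound at $\tilde{\mathbf x}$ with $\rho\le r$ does not immediately convert into the same expression in $r$; the clean fix is to run your annular decomposition around $\mathbf a=\pi(\tilde{\mathbf x})$ at scale $r$ instead of $\rho$, using $\mu_{\tilde{\mathbf x}}\bigl(\partial\tilde D\setminus B(\mathbf a,2^{k}r)\bigr)\le M_{\mathbf K}2^{-k\alpha}$ (valid since $\rho\le r$), which yields the estimate directly in terms of $r$.
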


\begin{rem}
Clearly, if $\vert \mathbf{K}^{-1/2}(\mathbf{x} - \mathbf{y}) \vert \geq 1$, by the maximum principle, we immediately obtain 
\begin{equation*}
\vert u_f(\mathbf{x}) - u_f(\mathbf{y}) \vert \leq 
2 \Vert u \Vert_{\infty,D} \leq 
2 \Vert f \Vert_{\infty,\partial D} 
\vert \mathbf{K}^{-1/2}(\mathbf{x} - \mathbf{y}) \vert, 
\quad \forall~\mathbf{x}, \mathbf{y} \in \overline{D}.
\end{equation*}
\end{rem}

\begin{proof}[Proof of \Cref{prop:xlogx}]
We split the proof in two cases.

\medskip

\noindent{\bf Case~I: $\mathbf{K} = \mathbf{I}_d$.} 
By \cite[Theorem 3]{Ai02}, it follows that $u$ is $\beta-$H\"older for all $\beta \in (0, \alpha)$.
Inspecting the proofs of \cite[Proposition 4 and Proposition 2, (B)]{Ai02}, we obtain 
\begin{equation}\label{eq:holder_Ai02}
\vert u_f(\mathbf{x}) - u_f(\mathbf{y}) \vert \leq 
10 \left(3 + \dfrac{4M}{1 - 2^{\beta - \alpha}}\right) 
\Vert f \Vert_{\alpha, \partial D} 
\vert \mathbf{x} - \mathbf{y} \vert^{\beta}, 
\quad \forall~\vert \mathbf{x} - \mathbf{y} \vert \leq 1, \quad \forall~\beta \in (0, \alpha).
\end{equation}
Now, the idea is to minimise with respect to $\beta$ the right-hand side of \eqref{eq:holder_Ai02}, hence to minimise the function
\begin{align*}
\phi(\beta) 
&\coloneqq \dfrac{1}{1 - 2^{\beta - \alpha}} 
\vert \mathbf{x} - \mathbf{y} \vert^{\beta} 
\leq \dfrac{1}{(\alpha - \beta)\log{2}} 
\vert \mathbf{x} - \mathbf{y} \vert^{\beta} 
\eqqcolon \widetilde{\phi}(\beta), 
\quad \beta \in (0,\alpha).
\end{align*}
Minimising $\widetilde{\phi}$ for $\vert \mathbf{x} - \mathbf{y} \vert < 1$ by looking at its critical points, we deduce that the minimal value of $\widetilde{\phi}$ is attained at $\beta^\ast$ that satisfies
\begin{equation*}
\dfrac{1}{\alpha - \beta^\ast} = 
\big\vert \log{\vert \mathbf{x} - \mathbf{y} \vert} \big\vert 
\quad \mbox{equivalently to} \quad 
\beta^\ast = 
\alpha - \dfrac{1}{\big\vert \log{\vert \mathbf{x} - \mathbf{y} \vert} \big\vert}.
\end{equation*}
Hence $\phi \leq \dfrac{2 \big\vert \log{\vert \mathbf{x} - \mathbf{y} \vert} \big\vert}{\log{2}} 
\vert \mathbf{x} - \mathbf{y} \vert^{\alpha - \dfrac{1}{\big\vert \log{\vert \mathbf{x} - \mathbf{y} \vert} \big\vert}}$ and since $t^{- \dfrac{1}{\big\vert \log{ \vert t \vert} \big\vert}} = \mathrm{e}^{-1}$ for $t \in (0, 1)$, we obtain
\begin{align*}
\phi 
\leq \dfrac{2}{\mathrm{e} \log{2}} 
\vert \mathbf{x} - \mathbf{y} \vert^\alpha 
\big\vert \log{\vert \mathbf{x} - \mathbf{y} \vert} \big\vert.
\end{align*}
Consequently, if $\vert \mathbf{x} - \mathbf{y} \vert < 1$, then
\begin{align}\label{eq:holder_new}
\vert u_f(\mathbf{x}) - u_f(\mathbf{y}) \vert 
&\leq \Vert f \Vert_{\alpha, \partial D}
\vert \mathbf{x} - \mathbf{y} \vert^\alpha 
10\left(3 + \dfrac{8 M}{\mathrm{e} \log{2}} 
\big\vert \log{\vert \mathbf{x} - \mathbf{y} \vert} \big\vert \right).
\end{align}

\medskip

\noindent{\bf Case~II: $\mathbf{K} \in \mathbb{R}^{d \times d}$.} 
The first step is to use the standard change of variables, see, e.g., \cite{BCPZ}, namely
\begin{equation*}\label{eq:change_variables}
D_\mathbf{K} \coloneqq \mathbf{K}^{-1/2} D, 
\quad v(\mathbf{x}) = u_f\big( \mathbf{K}^{1/2} \mathbf{x} \big),~\mathbf{x} \in D_\mathbf{K} \quad 
g(\mathbf{x}) = f\big( \mathbf{K}^{1/2}\mathbf{x} \big),~\mathbf{x}\in \partial D_\mathbf{K},     
\end{equation*}
and note that $v$ solves the following problem
\[
\Delta v(\mathbf{x}) = 0,~\mathbf{x} \in D_\mathbf{K}, 
\quad v(\mathbf{x}) = g(\mathbf{x}),~\mathbf{x} \in \partial D_\mathbf{K}.
\]
Since the \ref{LHMD} condition \eqref{eq:LHMD} holds for $D_\mathbf{K}$ instead of $D$ and the constant $M_{\mathbf{K}}$ instead of $M$, we obtain
\begin{align*}
&\vert u_f(\mathbf{x}) - u_f(\mathbf{y}) \vert 
= \vert v\big( \mathbf{K}^{-1/2} \mathbf{x} \big) - 
v\big( \mathbf{K}^{-1/2} \mathbf{y}) \vert\\
&\leq \Vert f \Vert_{\alpha, \partial D_{\mathbf{K}}} 
\vert \mathbf{K}^{-1/2}(\mathbf{x} - \mathbf{y}) \vert^\alpha 
10\left(3 + \dfrac{8M_\mathbf{K}}{\mathrm{e} \log{2}} 
\big\vert \log{\vert \mathbf{K}^{-1/2 }(\mathbf{x} -  \mathbf{y}) \vert} \big\vert \right), 
\quad \mathbf{x}, \mathbf{y} \in D,
\end{align*}
where the inequality follows by \eqref{eq:holder_new}.
\end{proof}

\paragraph{Estimates for the $\varepsilon-$shell approximation error.}
In this paragraph we analyse the errors in the approximation of the solution $u_f$ to problem \eqref{eq:dirichlet bvp} by the probabilistic representation given by \eqref{eq:u epsilon}.
More precisely, the following key H\"older estimates are obtained.

\begin{thm} \label{thm:error estimates} 
Assume that condition \ref{LHMD} holds for the domain $\mathbf{K}^{-1/2}D$, some $\alpha \in (0, 1)$ and $M_{\mathbf{K}}$ instead of $M$. 
Let $\varepsilon > 0$, $u_f$ be the solution to the Dirichlet problem \eqref{eq:dirichlet bvp} with $f \in C^\alpha(\overline{D}_\varepsilon)$ and $u_g^\varepsilon$ be defined by \eqref{eq:u epsilon} with $g \in L^\infty(\overline{D}_\varepsilon)$. 
Then 
\begin{align*} 
&\vert u_f (\mathbf{x}) - u_g^\varepsilon (\mathbf{x}) \vert 
\leq \varepsilon^\alpha 
\lambda_{\mathbf{K}}^{-\alpha/2} 
\Vert f \Vert_{\alpha, \partial D} 
\left[ 30 + 
\dfrac{80 M_{\mathbf{K}}}{\mathrm{e} \log{2}} 
\left( \dfrac{1}{2} \vert \log{\lambda_{\mathbf{K}}} \vert + 
\vert \log{\varepsilon} \vert \right) \right] + 
\vert g - f \circ \pi \vert_{\infty, \overline{D}_\varepsilon}, 
\quad \mathbf{x} \in D.
\end{align*}
In particular,
\begin{align}\label{eq:main_estimate_f}
&\vert u_f (\mathbf{x}) - u_f^\varepsilon (\mathbf{x}) \vert 
\leq \varepsilon^\alpha 
\lambda_{\mathbf{K}}^{-\alpha/2} 
\Vert f \Vert_{\alpha, D_\varepsilon} 
\left[ 31 + 
\dfrac{80 M_{\mathbf{K}}}{\mathrm{e} \log{2}} 
\left( \dfrac{1}{2} \vert \log{\lambda_{\mathbf{K}}} \vert + 
\vert \log{\varepsilon} \vert \right) \right], 
\quad \mathbf{x} \in D.
\end{align}
\end{thm}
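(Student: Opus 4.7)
The plan is to reduce the estimate to a pointwise comparison on the $\varepsilon$-shell via the strong Markov property, and then apply the quantitative H\"older modulus bound of \Cref{prop:xlogx}. Concretely, since $u_f$ solves $\div(\mathbf{K}\nabla u_f)=0$ and is bounded by the maximum principle, the process $\big(u_f(Y^{\mathbf{x}}_t)\big)_{t\leq \tau^{\mathbf{x}}_{\partial D}}$ is a bounded martingale, so optional stopping at the bounded-below stopping time $\tau^{\mathbf{x}}_{\widetilde{\mathrm{N}}^{\mathbf{x}}_\varepsilon}\leq \tau^{\mathbf{x}}_{\partial D}$ gives $u_f(\mathbf{x}) = \mathbb{E}\big[u_f\big(Y^{\mathbf{x}}_{\tau^{\mathbf{x}}_{\widetilde{\mathrm{N}}^{\mathbf{x}}_\varepsilon}}\big)\big]$. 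Combined with the equal-distribution identity \eqref{eq:equal_distribution}, which rewrites $u_g^\varepsilon(\mathbf{x})=\mathbb{E}\big[g\big(Y^{\mathbf{x}}_{\tau^{\mathbf{x}}_{\widetilde{\mathrm{N}}^{\mathbf{x}}_\varepsilon}}\big)\big]$, subtraction yields
\begin{equation*}
u_f(\mathbf{x})-u_g^\varepsilon(\mathbf{x}) \;=\; \mathbb{E}\big[\,u_f(Z)-g(Z)\,\big], \qquad Z:=Y^{\mathbf{x}}_{\tau^{\mathbf{x}}_{\widetilde{\mathrm{N}}^{\mathbf{x}}_\varepsilon}}\in \overline{D}_\varepsilon,
\end{equation*}
so the task is reduced to a uniform pointwise control of $u_f-g$ on $\overline{D}_\varepsilon$.

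I would then decompose the integrand as $u_f(Z)-g(Z) = \big[u_f(Z)-u_f(\pi(Z))\big] + \big[f(\pi(Z))-g(Z)\big]$, using the regularity of every boundary point to identify $u_f(\pi(Z))=f(\pi(Z))$. The second bracket is controlled by $|g-f\circ\pi|_{\infty,\overline{D}_\varepsilon}$. For the first bracket I apply \Cref{prop:xlogx} to the pair $(Z,\pi(Z))$: because $|Z-\pi(Z)|\leq \varepsilon$ and $\lambda_{\mathbf{K}}$ is the smallest eigenvalue of $\mathbf{K}$, one has $|\mathbf{K}^{-1/2}(Z-\pi(Z))|\leq \lambda_{\mathbf{K}}^{-1/2}\varepsilon$, which is strictly less than $1$ in the regime of interest; consequently $|\mathbf{K}^{-1/2}(Z-\pi(Z))|^\alpha \leq \lambda_{\mathbf{K}}^{-\alpha/2}\varepsilon^\alpha$ and the log factor obeys $\big|\log|\mathbf{K}^{-1/2}(Z-\pi(Z))|\big|\leq \tfrac{1}{2}|\log\lambda_{\mathbf{K}}|+|\log\varepsilon|$. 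Moreover, since the largest eigenvalue of $\mathbf{K}$ is $1$, the change of variables $\mathbf{x}\mapsto \mathbf{K}^{1/2}\mathbf{x}$ is $1$-Lipschitz, so the H\"older norm on $\partial D_{\mathbf{K}}$ that appears in \Cref{prop:xlogx} is dominated by $\|f\|_{\alpha,\partial D}$. Substituting these three estimates into the explicit form of $\psi$ and taking the expectation produces the first announced inequality, with the coefficient $30$ arising as $10\cdot 3$ and the log coefficient as $10\cdot \tfrac{8 M_{\mathbf{K}}}{\mathrm{e}\log 2}$.

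The specialised case $g=f$ in \eqref{eq:main_estimate_f} follows by combining the previous bound (with $g=f$) with the direct estimate $|f\circ\pi-f|_{\infty,\overline{D}_\varepsilon}\leq \|f\|_{\alpha,\overline{D}_\varepsilon}\,\varepsilon^\alpha$, which itself uses only that $|Z-\pi(Z)|\leq \varepsilon$ and $f\in C^\alpha(\overline{D}_\varepsilon)$; since $\lambda_{\mathbf{K}}\leq 1$, this extra term is absorbed into the principal one by promoting $30$ to $31$. The main technical point to handle carefully is the strong Markov reduction, namely justifying optional stopping at $\tau^{\mathbf{x}}_{\widetilde{\mathrm{N}}^{\mathbf{x}}_\varepsilon}$; this requires verifying that $\tau^{\mathbf{x}}_{\widetilde{\mathrm{N}}^{\mathbf{x}}_\varepsilon}<\infty$ $\mathbb{P}$-a.s. for every $\varepsilon>0$ (a standard walk-on-spheres fact, since otherwise the chain would be trapped away from $\partial D$) and combining it with the maximum-principle bound on $u_f$ so that a bounded martingale stopping theorem applies. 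Once this is in place, the remaining work is purely quantitative book-keeping of the constants produced by \Cref{prop:xlogx}.
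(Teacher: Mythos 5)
Your proposal is correct and follows essentially the same route as the paper: optional stopping of the bounded martingale $u_f\big(Y^{\mathbf{x}}_{t\wedge\tau^{\mathbf{x}}_{\partial D}}\big)$ at $\tau^{\mathbf{x}}_{\widetilde{\mathrm{N}}^{\mathbf{x}}_\varepsilon}$, the equal-distribution identity \eqref{eq:equal_distribution}, the split into $u_f(Z)-u_f(\pi(Z))$ plus $f(\pi(Z))-g(Z)$, and then \Cref{prop:xlogx} with $\vert \mathbf{K}^{-1/2}(Z-\pi(Z))\vert\leq \lambda_{\mathbf{K}}^{-1/2}\varepsilon$ to produce the constants $30$, $80M_{\mathbf{K}}/(\mathrm{e}\log 2)$ and, for $g=f$, the extra $\varepsilon^\alpha\vert f\vert_{\alpha,D_\varepsilon}$ term absorbed via $\lambda_{\mathbf{K}}\leq 1$ to yield $31$. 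Your extra care in justifying $\tau^{\mathbf{x}}_{\widetilde{\mathrm{N}}^{\mathbf{x}}_\varepsilon}<\infty$ a.s.\ is a welcome refinement of a step the paper treats as standard, but does not change the argument.
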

\begin{proof}
The key idea is to employ the fact that the process
$u_f\left(Y_{t\wedge \tau^{\mathbf{x}}_D}^{\mathbf{x}}\right)$, $t \geq 0$, is a continuous and uniformly bounded martingale for every $\mathbf{x} \in D$, where $Y$ is the process given by \eqref{eq:Y}. 
Hence, using the notations introduced in \eqref{eq:Y_chain}, by Doob's stopping theorem, the mean value property is given by
\begin{equation*}
u_f(\mathbf{x}) = 
\mathbb{E}\left\{ u_f\left( Y_{\tau^{\mathbf{x}}_{\widetilde{N}_\varepsilon^\mathbf{x}}}^{\mathbf{x}} \right) \right\}, 
\quad \mathbf{x} \in D, 
\quad \varepsilon > 0.
\end{equation*}
Let $\pi$ be a measurable selection of the projection onto the boundary, see \cite[Appendix]{CiGrMaI} for details. Then using \eqref{eq:equal_distribution} and \Cref{prop:xlogx}, the following estimate is obtained
\begin{align*}
&\vert u_f(\mathbf{x}) - u_g^\varepsilon(\mathbf{x}) \vert 
= \left\vert 
\mathbb{E}\left\{ 
u_f\left( Y_{\tau^{\mathbf{x}}_{\widetilde{N}_\varepsilon^\mathbf{x}}}^{\mathbf{x}} \right) - 
g\left( Y_{\tau^{\mathbf{x}}_{\widetilde{N}_\varepsilon^\mathbf{x}}}^{\mathbf{x}} \right) \right\} 
\right\vert\\
&\leq \mathbb{E}\left\{
\left\vert u_f\left( Y_{\tau^{\mathbf{x}}_{\widetilde{N}_\varepsilon^\mathbf{x}}}^{\mathbf{x}} \right) - 
u_f\left( \pi\left( Y_{\tau^{\mathbf{x}}_{\widetilde{N}_\varepsilon^\mathbf{x}}}^{\mathbf{x}} \right) \right) \right\vert 
\right\} + 
\mathbb{E}\left\{ 
\left\vert g\left( Y_{\tau^{\mathbf{x}}_{\widetilde{N}_\varepsilon^\mathbf{x}}}^{\mathbf{x}} \right) - 
f\left( \pi\left( Y_{\tau^{\mathbf{x}}_{\widetilde{N}_\varepsilon^\mathbf{x}}}^{\mathbf{x}} \right) \right) \right\vert 
\right\}\\
&\leq \varepsilon^\alpha 
\lambda_{\mathbf{K}}^{-\alpha/2} 
\Vert f \Vert_{\alpha, \partial D} 
\left[ 30 + 
\dfrac{80 M_{\mathbf{K}}}{\mathrm{e}\log{2}} 
\left( \dfrac{1}{2} \vert \log{\lambda_{\mathbf{K}}} \vert + 
\vert \log{\varepsilon} \vert \right) 
\right] + 
\vert g - f \circ \pi \vert_{\infty, \overline{D}_\varepsilon}, 
\quad \mathbf{x} \in D, 
\quad \varepsilon > 0.
\end{align*}
In particular, this proves \eqref{eq:main_estimate_f} since $\vert f - f \circ \pi \vert_{\infty, \overline{D}_\varepsilon} \leq \varepsilon^\alpha \vert f \vert_{\alpha, D_\varepsilon}$ and $\lambda_{\mathbf{K}} \leq 1$. 
\end{proof}

A direct consequence of \Cref{thm:error estimates} provides one with an estimate for the error between the matrices $\mathbf{A}_{\omega^1}$ and $\mathbf{A}_{\omega^1, \varepsilon}$ given by relations \eqref{eq:AM matrix components} and \eqref{eq:WoE_matrix}, respectively, when the weight functions $\omega^{1,\varepsilon}_j$ are H\"older continuous, e.g., for linear weights or the inverse distance weights defined in \cite[Appendix]{CiGrMaI}. 
However, it should be emphasised that the regularity of a weight function - more precisely, its H\"older constant - will naturally blow up as its support on $\partial D$ becomes smaller and smaller and this has to be carefully compensated by choosing $\varepsilon$ to be sufficiently small. Nevertheless, note that as discussed in \cite{CiGrMaI}, $\varepsilon$ can be taken extremely small due to the fact that the number of steps in the WoE algorithm typically increases only logarithmically in $1/\varepsilon$.

\begin{coro}\label{coro: error estimates}
Assume that condition \ref{LHMD} holds for the domain $\mathbf{K}^{-1/2}D$, some $\alpha \in (0, 1]$ and $M_{\mathbf{K}}$ instead of $M$. 
Let $\mathbf{A}_{\omega^1}$ and $\mathbf{A}_{\omega^1,\varepsilon}$ be the matrices given by relations \eqref{eq:AM matrix components} and \eqref{eq:WoE_matrix}, respectively. 
Then 
\begin{equation*}
\left\vert 
\left[ \mathbf{A}_{\omega^1} \right]_{ij} - 
\left[ \mathbf{A}_{\omega^1,\varepsilon} \right]_{ij} \right\vert \leq 
\varepsilon^\alpha 
\lambda_{\mathbf{K}}^{-\alpha/2} 
\Vert \omega^{1, \varepsilon}_j \Vert_{\alpha, D_\varepsilon} 
\left[ 31 + 
\dfrac{80 M_{\mathbf{K}}}{\mathrm{e}\log{2}} 
\left( \dfrac{1}{2} \vert \log{\lambda_{\mathbf{K}}} \vert + 
\vert \log{\varepsilon} \vert \right) \right], 
\quad i = \overline{1,M_D}, 
\quad j = \overline{1,M_1}.
\end{equation*}
\end{coro}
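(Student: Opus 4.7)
The plan is to recognise this bound as a direct corollary of \Cref{thm:error estimates}, applied pointwise to each index pair $(i,j)$. My first step is to identify both matrix entries $[\mathbf{A}_{\omega^1}]_{ij}$ and $[\mathbf{A}_{\omega^1,\varepsilon}]_{ij}$ with quantities of the form $u_f(\mathbf{x}_i^D)$ and $u_f^\varepsilon(\mathbf{x}_i^D)$, respectively, for the single choice $f = \omega^{1,\varepsilon}_j$, which by \eqref{eq:extended_weights_1} is defined on $\overline{D}_\varepsilon$ and in particular on $\partial D \subset \overline{D}_\varepsilon$. For the $\varepsilon$-version this is immediate by comparing the definition \eqref{eq:WoE_matrix} of $\mathbf{A}_{\omega^1,\varepsilon}$ with the definition \eqref{eq:u epsilon} of $u_f^\varepsilon$.

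For $[\mathbf{A}_{\omega^1}]_{ij}$, I would use the probabilistic representation \eqref{eq:probabilisticrepresentation} of the elliptic measure, which gives
\begin{equation*}
u_{\omega^{1,\varepsilon}_j}(\mathbf{x}_i^D) = \mathbb{E}\Big[\omega^{1,\varepsilon}_j\big(Y^{\mathbf{x}_i^D}_{\tau^{\mathbf{x}_i^D}_{\partial D}}\big)\Big] = \int_{\partial D} \omega^{1,\varepsilon}_j\big|_{\partial D}\, \mathrm{d}\mu_{\mathbf{x}_i^D}.
\end{equation*}
Since $\omega^1_j$ is supported in $\Gamma_1$ and $\omega^{1,\varepsilon}_j\big|_{\Gamma_1} = \omega^1_j$ by \eqref{eq:extended_weights_1}, with the natural extension by zero on $\Gamma_0$ (compatible with the setup \eqref{eq:boundary_weights}), this integral collapses to $\mu_{\mathbf{x}_i^D}(\omega^1_j) = [\mathbf{A}_{\omega^1}]_{ij}$, exactly as in \eqref{eq:AM matrix components}.

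Once these two identifications are in place, the second and final step is to apply estimate \eqref{eq:main_estimate_f} of \Cref{thm:error estimates} to $f = \omega^{1,\varepsilon}_j$ at the point $\mathbf{x} = \mathbf{x}_i^D \in D$. The only hypothesis required is condition \ref{LHMD} for $\mathbf{K}^{-1/2}D$ with constant $M_{\mathbf{K}}$, which is assumed. The conclusion delivers the advertised right-hand side verbatim, with $\Vert f \Vert_{\alpha,D_\varepsilon} = \Vert \omega^{1,\varepsilon}_j \Vert_{\alpha,D_\varepsilon}$. There is no substantive obstacle: the whole work was done in the proof of \Cref{thm:error estimates}, and the corollary is pure bookkeeping. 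The only mildly delicate point is ensuring that $\omega^{1,\varepsilon}_j$ serves consistently as both boundary data for the Dirichlet problem (so that $u_{\omega^{1,\varepsilon}_j}$ recovers the $\mu_{\mathbf{x}_i^D}$-integral of $\omega^1_j$) and as the function evaluated at the $\varepsilon$-stopped WoE, which is precisely the content of \eqref{eq:extended_weights_1}.
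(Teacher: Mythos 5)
Your proposal is correct and follows exactly the route the paper intends: the corollary is stated as a direct consequence of \Cref{thm:error estimates}, obtained by identifying $[\mathbf{A}_{\omega^1}]_{ij}=\mu_{\mathbf{x}_i^D}(\omega^1_j)=u_{\omega^{1,\varepsilon}_j}(\mathbf{x}_i^D)$ and $[\mathbf{A}_{\omega^1,\varepsilon}]_{ij}=u^\varepsilon_{\omega^{1,\varepsilon}_j}(\mathbf{x}_i^D)$ and then applying \eqref{eq:main_estimate_f} with $f=\omega^{1,\varepsilon}_j$ at $\mathbf{x}=\mathbf{x}_i^D$. Your remark that the extension must vanish on $\Gamma_0$ (so that the $\mu_{\mathbf{x}_i^D}$-integral of $\omega^{1,\varepsilon}_j\big|_{\partial D}$ reduces to $\mu_{\mathbf{x}_i^D}(\omega^1_j)$) is the same tacit convention the paper uses, so there is no gap.
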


\begin{rem}
It is important to mention that, in principle, $\Vert \omega^{1, \varepsilon}_j \Vert_{\alpha, \partial D}$ does not depend on $\varepsilon$. 
For example, this is the case if $\omega^{1,\varepsilon}_j$ is chosen to be the extension of an $\alpha-$H\"older continuous weight function $\omega_j^1 \colon \partial D \longrightarrow [0,1]$ by setting $\omega^{1,\varepsilon}_j = \omega_j^1 \circ \pi$ on some boundary shell $\overline{D}_\varepsilon$, where the projection on the boundary $\pi$ is assumed to be H\"older continuous. 
Another situation when the independence of $\varepsilon$ is easily seen, is if $\omega^{1,\varepsilon}_j$ are the inverse distance weights described in \cite[Appendix]{CiGrMaI}.
\end{rem}

We emphasise that \Cref{coro: error estimates} is useful only when the weights $\omega_j^{1,\varepsilon}$ are H\"older continuous.
Although the choice of these weights is at our disposal, \Cref{coro: error estimates} does not apply in the important case of, e.g., the extrinsic Voronoi weights as defined in \Cref{ex:A.2} since in this situation $\omega_j^{1, \varepsilon}$ is a characteristic function and hence discontinuous. 
We note that the extrinsic Voronoi weights are the ones used for all the simulations performed in \cite{CiGrMaI}. 
To cover this important case, in the remaining part of the paragraph, we thoroughly analyse the general case of weights given by characteristic functions. 
This is done by regularisation and, to this end, the following class of {\it regular} subsets in $\partial D$ needs to be introduced which are, in fact, subsets of $\mathcal{B}(\mathbb{R}^d)$, whose $(d-1)-$dimensional upper Minkowski content can be bounded in terms of their diameter.

\begin{defi}
For a set $B \subset \partial D$ and $r > 0$, define 
\begin{equation}\label{eq:B^c_r}
B^c_r \coloneqq 
\big\{ \mathbf{x} \in \partial D \setminus B \: | \: {\rm dist}(\mathbf{x}, B) \leq r \big\} \cup 
\big\{ \mathbf{x} \in B \: | \: {\rm dist}(\mathbf{x}, \partial D \setminus B) \leq r \big\}.
\end{equation}  
A subset $A \in \mathcal{B}(\partial D)$ is said to have a {\rm bounded $(d-1)-$dimensional upper Minkowski content} if \begin{equation}\label{eq:regular_A}
\exists~C_A > 0: \quad 
\sigma(A^c_r) \leq C_A r, 
\quad r \geq 0,
\end{equation}
\end{defi}

Furthermore, consider the following bound for the elliptic measure with respect to the surface measure:

\medskip

\noindent{$\mathbf{\left(H_{\mu/\sigma}\right)}$} 
$\exists~q \in (0, 1]$, $\forall~\mathbf{x} \in D$, $\forall~A \in \mathcal{B}(\partial D)$, $\exists~C\big( \mathbf{x}, {\rm dist}(\mathbf{x},A) \big) \coloneqq C\big( \mathbf{x}, {\rm dist}(\mathbf{x}, A), d, \mathbf{K}, D \big)$ non-decreasing w.r.t. ${\rm dist}(\mathbf{x},A)$:
\begin{equation}\label{eq:mu-regularity}
\mu_{\mathbf{x}}(A) \leq C\big( \mathbf{x}, {\rm dist}(\mathbf{x}, A) \big) \; \sigma(A)^q. 
\end{equation}

\begin{rem}\label{rem:regular}
If $D \subset \mathbb{R}^d$, $d \geq 3$, is $C^{1,\alpha}$, it follows from \cite[Theorem 3.5]{GrWi82} that the elliptic measure $\mu_\mathbf{x}$ has a density $\rho_\mathbf{x}$ w.r.t. $\sigma$ on $\partial D$ and, moreover,
\begin{equation*}
\exists~c(d, \mathbf{K}, D) > 0: \quad 
\rho_\mathbf{x}(\mathbf{y}) \leq 
c(d,\mathbf{K}, D) \, 
{\rm dist}(\mathbf{x}, \partial D) \, 
\vert \mathbf{x} - \mathbf{y} \vert^{-d}, 
\quad \mathbf{x} \in D, 
\quad \mathbf{y} \in \partial D.
\end{equation*}
Therefore,
\begin{equation*}
\mu_\mathbf{x}(A) 
\leq c(d, \mathbf{K}, D) \, 
{\rm dist}(\mathbf{x}, \partial D) \, 
{\rm dist}(\mathbf{x}, A)^{-d} \, \sigma(A), 
\quad \mathbf{x} \in D,
\end{equation*}
so that condition \eqref{eq:mu-regularity} holds with $C\big( \mathbf{x}, {\rm dist}(\mathbf{x}, A) \big) \coloneqq c(d, \mathbf{K},D) \, {\rm dist}(\mathbf{x}, \partial D) \, {\rm dist}(\mathbf{x}, A)^{-d}$ and $q = 1$.
\end{rem}

\begin{coro}\label{coro:mu - mu epsilon}
Assume condition \ref{LHMD} holds for the domain $\mathbf{K}^{-1/2}D$, some $\alpha \in (0, 1]$ and $M_{\mathbf{K}}$ instead of $M$. 
Then, for any $\mathbf{x} \in D$, $A \in \mathcal{B}(\partial D)$, $\varepsilon > 0$ and $r>0$, the following estimate holds
\begin{equation}\label{eq:error_A}
\left\vert \mu_\mathbf{x} (A) - \mu_\mathbf{x}^\varepsilon (A) \right\vert \leq 
\mu_\mathbf{x}(A^c_r) + 
\varepsilon \lambda_{\mathbf{K}}^{-1/2}(1 + r^{-1}) 
\left[32 + 
\dfrac{80 M_{\mathbf{K}}}{\mathrm{e} \log{2}} 
\left( \dfrac{1}{2} \vert \log{\lambda_{\mathbf{K}}} \vert + 
\vert \log{\varepsilon} \vert \right) \right].
\nonumber
\end{equation}
Furthermore, assume that condition $\mathbf{\left(H_{\mu/\sigma}\right)}$ holds and $A$ has a bounded $(d-1)-$dimensional upper Minkowski content, and set
\begin{align*}\label{eq:tilde}
&\widetilde{\varepsilon} \coloneqq 
\varepsilon \lambda_{\mathbf{K}}^{-1/2}
\left[ 31 + 
\dfrac{80 M_{\mathbf{K}}}{\mathrm{e} \log{2}} 
\left( \dfrac{1}{2} \vert \log{\lambda_{\mathbf{K}}} \vert + 
\vert \log{\varepsilon} \vert \right) \right],\\
&\widetilde{C}(\mathbf{x}, d, \mathbf{K}, D, A, q) \coloneqq 
2 \, C\big( \mathbf{x}, {\rm dist}(\mathbf{x}, A)/2 \big)^{1/2} \, C_A^{q/2},
\end{align*}
where $C_{A}$ is given by \eqref{eq:regular_A}. 
Then, for every $\varepsilon > 0$ sufficiently small such that
\begin{equation}\label{eq:r_ast}
r^\ast \leq {\rm dist}(\mathbf{x}, A)/2,  
\end{equation}
where
\begin{equation*}
r^\ast \coloneqq 
\begin{cases}
\widetilde{\varepsilon}^{1/(2q)} 
C\big( \mathbf{x}, {\rm dist}(\mathbf{x}, A) \big)^{-1/(2q)} 
C_A^{-1/2} & 
\textrm{ if \ } 
\widetilde{\varepsilon}^{1/(2q)} C\big( \mathbf{x}, {\rm dist}(\mathbf{x}, A) \big)^{-1/(2q)} C_A^{-1/2} \leq 1\\
\widetilde{\varepsilon}^{1/2} 
C\big( \mathbf{x}, {\rm dist}(\mathbf{x}, A) \big)^{-1/2} 
C_A^{-q/2} & \textrm{ otherwise,} 
\end{cases}
\end{equation*}
the following estimate holds
\begin{equation}\label{eq:explicit_A}
\left\vert \mu_\mathbf{x} (A) - \mu_\mathbf{x}^\varepsilon (A) \right\vert \leq 
\widetilde{\varepsilon} + 
\widetilde{\varepsilon}^{1/2} 
\widetilde{C}(\mathbf{x}, d, \mathbf{K}, D, A, q) 
\in \mathcal{O}(\varepsilon^{1/2} \vert \log\varepsilon \vert).
\end{equation}
\end{coro}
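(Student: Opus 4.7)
The plan is to prove the two assertions separately: first a ``triangle-inequality'' style bound involving $\mu_\mathbf{x}(A^c_r)$, then an explicit rate after optimization in $r$ under the extra regularity hypothesis.

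\textbf{Part I: Basic estimate.} My plan is to handle the discontinuity of $1_A$ by sandwiching it between two $1/r$-Lipschitz envelopes $f_r \leq 1_A \leq g_r$ on $\partial D$, built from truncated distance functions (roughly, $f_r(\mathbf{y}) = 1_A(\mathbf{y})\min\{1, r^{-1}\mathrm{dist}(\mathbf{y}, \partial D \setminus A)\}$ and $g_r(\mathbf{y}) = 1_A(\mathbf{y}) + 1_{\partial D\setminus A}(\mathbf{y})\max\{0, 1 - r^{-1}\mathrm{dist}(\mathbf{y}, A)\}$). By construction $\Vert f_r \Vert_{1,\partial D}, \Vert g_r \Vert_{1,\partial D} \leq 1 + 1/r$ and $g_r - f_r \leq 1_{A^c_r}$. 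Extending both to $\overline{D}_\varepsilon$ by composition with the boundary projection $\pi$ makes the $\vert g - f\circ \pi \vert_\infty$ term in \Cref{thm:error estimates} vanish, so that theorem (applied with $\alpha = 1$) gives
\begin{equation*}
\vert \mu_\mathbf{x}(f_r) - \mu_\mathbf{x}^\varepsilon(f_r) \vert + \vert \mu_\mathbf{x}(g_r) - \mu_\mathbf{x}^\varepsilon(g_r) \vert \lesssim \varepsilon\,\lambda_\mathbf{K}^{-1/2}\,(1 + 1/r)\,[31 + \text{log-correction}].
\end{equation*}
The closing step is a one-sided monotone sandwich: using $\mu_\mathbf{x}^\varepsilon(A) \geq \mu_\mathbf{x}^\varepsilon(f_r)$ together with $\mu_\mathbf{x}(A) \leq \mu_\mathbf{x}(f_r) + \mu_\mathbf{x}(A^c_r)$ yields the upper bound $\mu_\mathbf{x}(A) - \mu_\mathbf{x}^\varepsilon(A) \leq \vert \mu_\mathbf{x}(f_r) - \mu_\mathbf{x}^\varepsilon(f_r) \vert + \mu_\mathbf{x}(A^c_r)$, and the symmetric argument with $g_r$ in place of $f_r$ produces the matching lower bound. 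This gives the first claim up to the precise value of the constant.

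\textbf{Part II: Explicit rate.} The plan here is to combine Part I with $\mathbf{(H_{\mu/\sigma})}$ and the bounded Minkowski content of $A$. First, the triangle inequality $\mathrm{dist}(\mathbf{x}, A^c_r) \geq \mathrm{dist}(\mathbf{x}, A) - r$, valid because $A^c_r \subset A \cup \{\mathbf{y} \in \partial D : \mathrm{dist}(\mathbf{y}, A) \leq r\}$, yields $\mathrm{dist}(\mathbf{x}, A^c_r) \geq \mathrm{dist}(\mathbf{x}, A)/2$ whenever $r \leq \mathrm{dist}(\mathbf{x}, A)/2$. Combined with the monotonicity of $C(\mathbf{x}, \cdot)$ in its distance argument (read as non-increasing, as in the canonical example of \Cref{rem:regular}) and the Minkowski bound $\sigma(A^c_r) \leq C_A r$, I obtain $\mu_\mathbf{x}(A^c_r) \leq C\big(\mathbf{x}, \mathrm{dist}(\mathbf{x}, A)/2\big)\,C_A^q\,r^q$. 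Substituting into Part I produces a total bound of the shape $c_1 r^q + \widetilde{\varepsilon}(1 + 1/r)$, and choosing $r = r^\ast$ to balance the contribution $c_1 r^q$ against $\widetilde{\varepsilon}/r$ (under the constraint $r^\ast \leq \mathrm{dist}(\mathbf{x}, A)/2$ imposed in \eqref{eq:r_ast}) yields the piecewise formula for $r^\ast$ in the statement. Inserting the expression for $\widetilde{\varepsilon}$ from \Cref{thm:error estimates} delivers the announced $\mathcal{O}(\varepsilon^{1/2}\vert\log \varepsilon\vert)$ rate.

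\textbf{Main obstacle.} The principal subtlety is in Part I: a naïve triangle inequality with the Lipschitz approximations would introduce both $\mu_\mathbf{x}(A^c_r)$ and $\mu_\mathbf{x}^\varepsilon(A^c_r)$, the latter being essentially what we are trying to estimate. The one-sided monotone sandwich using $f_r$ for the upper bound and $g_r$ for the lower bound is what reduces the boundary-layer error to the unperturbed $\mu_\mathbf{x}(A^c_r)$ alone. A secondary tension is between the $1/r$ blow-up of the Lipschitz norm of the envelopes and the simultaneous $r^q$ decay of $\mu_\mathbf{x}(A^c_r)$; their competition, together with the logarithmic factor already present in \Cref{thm:error estimates}, is precisely what produces the rate $\varepsilon^{1/2}\vert\log\varepsilon\vert$ rather than $\varepsilon$ itself.
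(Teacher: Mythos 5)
Your proposal is correct and follows essentially the same route as the paper: your envelopes $f_r, g_r$ are exactly the paper's $\varphi_r, \psi_r$, the boundary-layer error is reduced to $\mu_\mathbf{x}(A^c_r)$ by the same sandwiching observation $g_r - f_r \leq 1_{A^c_r}$, the $1/r$-Lipschitz envelopes are fed into \Cref{thm:error estimates} just as in the paper, and Part~II (the bound $\mathrm{dist}(\mathbf{x},A^c_r)\geq \mathrm{dist}(\mathbf{x},A)-r$, the Minkowski-content estimate $\sigma(A^c_r)\leq C_A r$, and the choice $r=r^\ast$ under \eqref{eq:r_ast}) coincides with the paper's optimization. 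The only cosmetic deviations are organizational: the paper bounds $\vert\mu_\mathbf{x}(A)-\mu_\mathbf{x}^\varepsilon(A)\vert$ by a maximum of two ``crossed'' differences and a three-term triangle inequality (incurring an extra $\varepsilon/r$, whence the constant $32$), whereas you use one-sided sandwiches and insert $f_r\circ\pi$ directly as the data $g$, which kills the $\vert g-f\circ\pi\vert_\infty$ term and gives a marginally smaller constant; your reading of the monotonicity of $C(\mathbf{x},\cdot)$ as non-increasing in the distance is precisely how the paper actually uses hypothesis $\left(\mathbf{H}_{\mu/\sigma}\right)$ in its proof.
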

\begin{proof} 
For every $r > 0$, define the following functions 
\begin{align*}
&\varphi_r \colon \partial D \longrightarrow [0,1], \qquad 
\varphi_r(\mathbf{y}) = \big(r^{-1} \, {\rm dist}(\mathbf{y}, \partial D \setminus A) \big) \wedge 1,\\
&\psi_r \colon \partial D \longrightarrow [0,1], \qquad 
\psi_r(\mathbf{y}) = 1 - \big( r^{-1} \, {\rm dist}(\mathbf{y}, A) \big) \wedge 1.
\end{align*}
For simplicity, whenever required, we tacitly assume $\varphi_r, \psi_r \colon \overline{D} \longrightarrow [0,1]$ defined by the same formulae.

Note that $\varphi_r \leq 1_{A} \leq \psi_r$ and, in particular,
\begin{align}\label{eq:max bounds}
\left\vert \mu_\mathbf{x} (A) - \mu_\mathbf{x}^\varepsilon (A) \right\vert \leq 
\displaystyle \max \big( 
\vert u_{\psi_r}(\mathbf{x}) - u^\varepsilon_{\varphi_r \circ \pi}(\mathbf{x}) \vert, 
\vert u_{\varphi_r}(\mathbf{x}) - u^\varepsilon_{\psi_r \circ \pi}(\mathbf{x}) \vert
\big), \quad r > 0.
\end{align}
The first term of the max function in the right-hand side of \eqref{eq:max bounds} satisfies the following inequality
\begin{equation} \label{eq:triangle ineq}
\vert u_{\psi_r}(\mathbf{x}) - u^\varepsilon_{\varphi_r \circ \pi}(\mathbf{x}) \vert \leq 
\vert u_{\psi_r}(\mathbf{x}) - u_{\varphi_r}(\mathbf{x}) \vert + 
\vert u_{\varphi_r}(\mathbf{x}) - u^\varepsilon_{\varphi_r}(\mathbf{x}) \vert + 
\vert u^\varepsilon_{\varphi_r \circ \pi}(\mathbf{x}) - u^\varepsilon_{\varphi_r}(\mathbf{x}) \vert.
\end{equation}
Since $\varphi_r$ has the Lipschitz constant $r^{-1}$, the third term in the right-hand side of \eqref{eq:triangle ineq} can be estimated as
\begin{equation*}
\vert u^\varepsilon_{\varphi_r \circ \pi}(\mathbf{x}) - u^\varepsilon_{\varphi_r}(\mathbf{x}) \vert 
\leq \varepsilon/r.
\end{equation*}
Since $\varphi_r$ has the Lipschitz constant $r^{-1}$ and using \Cref{thm:error estimates}, the second term in the right-hand side of inequality \eqref{eq:triangle ineq} can be estimated as
\begin{align*}
\vert u_{\varphi_r}(\mathbf{x}) - u^\varepsilon_{\varphi_r}(\mathbf{x}) \vert \leq 
\varepsilon \lambda_{\mathbf{K}}^{-1/2} 
\big( 1 + r^{-1} \big)
\left[ 31 + 
\dfrac{80 M_{\mathbf{K}}}{\mathrm{e} \log{2}} 
\left( \dfrac{1}{2} \vert \log{\lambda_{\mathbf{K}}} \vert +
\vert \log{\varepsilon} \vert \right) 
\right].
\end{align*}
As far as the first term in the right-hand side of inequality \eqref{eq:triangle ineq} is concerned, we set 
\begin{equation*}\label{eq:B}
B_r \coloneqq \big\{ \mathbf{y} \in A \: | \: {\rm dist}(\mathbf{y}, \partial D \setminus A) > r \big\}.
\end{equation*}
Then, 
\begin{align*}
&\vert u_{\psi_r}(\mathbf{x}) - u_{\varphi_r}(\mathbf{x}) \vert = 
\bigg| 
\int_{\partial D \setminus A} 
\big[ \psi_r(\mathbf{y}) - \varphi_r(\mathbf{y}) \big] 
\mu_\mathbf{x}(\mathrm{d} \mathbf{y}) + 
\int_{A} 
\big[ \psi_r(\mathbf{y}) - \varphi_r(\mathbf{y}) \big] 
\mu_\mathbf{x}(\mathrm{d} \mathbf{y}) 
\bigg| \\
&= \bigg| 
\int_{\partial D \setminus A} 
\psi_r(\mathbf{y}) \mu_\mathbf{x}(\mathrm{d} \mathbf{y}) + 
\int_{A \setminus B_r} 
\big[ \psi_r(\mathbf{y}) - \varphi_r(\mathbf{y}) \big] 
\mu_\mathbf{x}(\mathrm{d} \mathbf{y}) + 
\int_{B_r} 
\big[ \psi_r(\mathbf{y}) - \varphi_r(\mathbf{y}) \big] 
\mu_\mathbf{x}(\mathrm{d} \mathbf{y}) 
\bigg|
%
\ \big( \varphi_r = 0 \textrm{ on } \partial D \setminus A \big)\\
&\leq \int_{\partial D \setminus A} 
\psi_r (\mathbf{y}) \mu_\mathbf{x}(\mathrm{d} \mathbf{y}) + 
\mu_\mathbf{x}(A\setminus B_r) 
\quad \big( \psi_r = \varphi_r = 1 \textrm{ on } B_r \big)\\
&= \mu_\mathbf{x}(A^c_r) 
\quad \big( \psi_r = 0 \textrm{ on } \big\{ \mathbf{x} \in \partial D \setminus A \: \big| \: {\rm dist}(\mathbf{x}, A) > r\} \big). 
\end{align*}

The same procedure can be applied to the second term of the max function in the right-hand side of \eqref{eq:max bounds} and, therefore, relation \eqref{eq:error_A} is obtained.

If \eqref{eq:mu-regularity} holds, then by the previous estimates, one obtains
\begin{align*}
\vert \mu_\mathbf{x} (A) - \mu^\varepsilon_\mathbf{x}(A) \vert \leq 
&\varepsilon \lambda_{\mathbf{K}}^{-1/2} 
\big( 1 + r^{-1} \big)
\left[31 + 
\frac{80 M_{\mathbf{K}}}{\mathrm{e} \log{2}} 
\left( \dfrac{1}{2} \vert \log{\lambda_{\mathbf{K}}} \vert + 
\vert \log{\varepsilon} \vert \right) 
\right] \\
&+  C\big( \mathbf{x}, {\rm dist}(\mathbf{x}, A^c_r) \big) \, \sigma(A^c_r)^q, 
\quad \mathbf{x} \in D, 
\quad r > 0.
\end{align*}
Since ${\rm dist}(\mathbf{x}, A^c_r) \geq {\rm dist}(\mathbf{x}, A) - r$, $C(\cdot,\cdot)$ is assumed to be non-decreasing w.r.t. the second variable and $A$ has a bounded $(d-1)-$dimensional upper Minkowski content, i.e. satisfies condition \eqref{eq:regular_A}, the above estimate entails the following one
\begin{align*}
\vert \mu_\mathbf{x} (A) - \mu^\varepsilon_\mathbf{x}(A) \vert 
&\leq \displaystyle \inf \limits_{r > 0} \left[\widetilde{\varepsilon} 
\big( 1 + r^{-1} \big) + 
C\big( \mathbf{x}, {\rm dist}(\mathbf{x}, A) - r \big) \, C_A^qr^q\right].\\
&\leq \min\left\{\inf\limits_{r\leq 1} \left[\widetilde{\varepsilon}(1+r^{-q})+C(\mathbf{x}, {\sf dist}(\mathbf{x},A)-r)C_A^q r^q\right]\right.,\\
&\phantom{\leq \min\;\;\;} \left.\inf\limits_{r>1} \left[\widetilde{\varepsilon}(1+r^{-1})+C(\mathbf{x}, {\sf dist}(\mathbf{x},A)-r)C_A^q r\right]\right\}.
\end{align*}
Further, we simply rely on the upper bound obtained by choosing $r = r^\ast$, where $r^\ast$ is given in \Cref{coro:mu - mu epsilon}. 
This choice together with \eqref{eq:r_ast} lead to
\begin{align*}
\vert \mu_\mathbf{x} (A) - \mu^\varepsilon_\mathbf{x}(A) \vert \leq 
\widetilde{\varepsilon} + 
2 \widetilde{\varepsilon}^{1/2} \, 
C\big( \mathbf{x}, {\rm dist}(\mathbf{x}, A)/2 \big)^{1/2} \, C_A^{q/2}
\end{align*}
and this completes the proof.
\end{proof}

\begin{defi} \label{def:extension_A_epsilon}
Let $A \in \mathcal{B}(\partial D)$ and $A_\varepsilon\subset \mathcal{B}(\overline{D}_\varepsilon)$, $\varepsilon \in (0, \varepsilon_0)$. 
For every $\varepsilon \in (0, \varepsilon_0)$, set $A_\pi^\varepsilon \coloneqq \pi\big( 
\left(A_\varepsilon\setminus \pi^{-1}(A)\right) 
\cup \left(\pi^{-1}(A) \setminus A_\varepsilon\right) \big)$, where $\pi$ is a fixed measurable selection of the projection onto the boundary. 
We say that $(A_\varepsilon)_{\varepsilon>0}$ is an extension of $A$ with a uniformly bounded $(d-1)-$dimensional upper Minkowski content if 
\begin{equation}\label{eq:extension_Minkowski}
\exists~\mathcal{C}_A > 0, \quad 
\forall~\varepsilon \in (0, \varepsilon_0), \quad 
\forall~r>0, \quad 
\sigma(A^c_r) + \sigma\big( \left(A_\pi^\varepsilon\right)^c_r \big) \leq 
\mathcal{C}_A r
\quad \mbox{ and } 
\quad A_\pi^\varepsilon \subset A^c_{\varepsilon\mathcal{C}_A},
\end{equation}
where $A^c_r$ and $\left(A_\pi^\varepsilon\right)^c_r$ are given by \eqref{eq:B^c_r}.
\end{defi}

\begin{coro}\label{coro:mu - mu eepsilon}
Assume that condition \ref{LHMD} holds for the domain $\mathbf{K}^{-1/2}D$, some $\alpha \in (0, 1]$ and $M_{\mathbf{K}}$ instead of $M$, and condition $\mathbf{\left(H_{\mu/\sigma}\right)}$ is also satisfied. 
Let $\mathbf{x} \in D$, $A\in \mathcal{B}(\partial D)$ and $A_\varepsilon \subset \mathcal{B}(\overline{D}_\varepsilon)$, $\varepsilon \in (0, \varepsilon_0)$, be an extension of $A$ with a uniformly bounded $(d-1)-$dimensional upper Minkowski content, see, e.g., \Cref{def:extension_A_epsilon}. 
Let $\widetilde{\varepsilon}$, $\widetilde{C}(\mathbf{x}, d, \mathbf{K}, D, A, q)$ and $r^\ast$ be as in \Cref{coro:mu - mu epsilon}. 
Then, for any $\varepsilon > 0$ sufficiently small such that
\begin{equation*}
{\rm dist}(\mathbf{x}, A) \geq 
2r^\ast + 2 \varepsilon \mathcal{C}_A, 
\end{equation*}
the following estimate holds
\begin{align*}
\Big\vert \mu_\mathbf{x} (A) - 
\mathbb{E}\left[ 1_{A^\varepsilon} \left(
X^\mathbf{x}_{\mathrm{N}_\varepsilon^\mathbf{x}} \right) \right] \Big\vert &\leq 
2 \widetilde{\varepsilon} + 
2 \widetilde{\varepsilon}^{1/2} \, 
\widetilde{C}(\mathbf{x}, d, \mathbf{K}, D, A) + 
\varepsilon^q \, \mathcal{C}_A^{1+q} \, 
C\big(\mathbf{x}, {\rm dist}(\mathbf{x}, A)/2 \big)\nonumber \\
&\in \mathcal{O}\Big( \varepsilon^{(1/2) \wedge q} \vert \log \varepsilon \vert \Big).
\end{align*}
\end{coro}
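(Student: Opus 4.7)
The plan is to reduce the estimate to two successive applications of \Cref{coro:mu - mu epsilon} via a double triangle inequality, using the auxiliary quantity $\mu_{\mathbf{x}}^\varepsilon(\cdot)$ (the distribution of the projected endpoint $\pi(X^{\mathbf{x}}_{\mathrm{N}^{\mathbf{x}}_\varepsilon})$) as the intermediate bridge. First I would write
$$\bigl\vert \mu_{\mathbf{x}}(A) - \mathbb{E}[1_{A^\varepsilon}(X^{\mathbf{x}}_{\mathrm{N}^{\mathbf{x}}_\varepsilon})]\bigr\vert \leq \bigl\vert \mu_{\mathbf{x}}(A) - \mu_{\mathbf{x}}^\varepsilon(A)\bigr\vert + \bigl\vert \mu_{\mathbf{x}}^\varepsilon(A) - \mathbb{E}[1_{A^\varepsilon}(X^{\mathbf{x}}_{\mathrm{N}^{\mathbf{x}}_\varepsilon})]\bigr\vert,$$
and control the first term directly by \Cref{coro:mu - mu epsilon} applied to $A$, whose bounded $(d-1)$-dimensional upper Minkowski content with constant $\mathcal{C}_A$ is part of hypothesis \eqref{eq:extension_Minkowski}, producing the contribution $\widetilde{\varepsilon} + \widetilde{\varepsilon}^{1/2}\widetilde{C}(\mathbf{x},d,\mathbf{K},D,A)$.

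For the second term, since the proof of \Cref{coro:mu - mu epsilon} uses the extension via projection, we have $\mu_{\mathbf{x}}^\varepsilon(A) = \mathbb{E}[1_A(\pi(X^{\mathbf{x}}_{\mathrm{N}^{\mathbf{x}}_\varepsilon}))]$, so the difference is bounded by
$$\mathbb{P}\bigl(X^{\mathbf{x}}_{\mathrm{N}^{\mathbf{x}}_\varepsilon} \in A^\varepsilon \triangle \pi^{-1}(A)\bigr) \leq \mathbb{P}\bigl(\pi(X^{\mathbf{x}}_{\mathrm{N}^{\mathbf{x}}_\varepsilon}) \in A^\varepsilon_\pi\bigr) = \mu_{\mathbf{x}}^\varepsilon(A^\varepsilon_\pi),$$
where the inclusion is immediate from the definition $A^\varepsilon_\pi = \pi\bigl((A_\varepsilon \setminus \pi^{-1}(A))\cup(\pi^{-1}(A)\setminus A_\varepsilon)\bigr)$. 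I would then apply the triangle inequality a second time, writing $\mu_{\mathbf{x}}^\varepsilon(A^\varepsilon_\pi) \leq \mu_{\mathbf{x}}(A^\varepsilon_\pi) + \bigl\vert \mu_{\mathbf{x}}^\varepsilon(A^\varepsilon_\pi) - \mu_{\mathbf{x}}(A^\varepsilon_\pi)\bigr\vert$. For the deterministic part, the inclusion $A^\varepsilon_\pi \subset A^c_{\varepsilon \mathcal{C}_A}$ from \eqref{eq:extension_Minkowski}, combined with $\mathbf{(H_{\mu/\sigma})}$ and $\sigma(A^c_{\varepsilon\mathcal{C}_A}) \leq \mathcal{C}_A \cdot \varepsilon\mathcal{C}_A$, yields
$$\mu_{\mathbf{x}}(A^\varepsilon_\pi) \leq C\bigl(\mathbf{x},\operatorname{dist}(\mathbf{x},A)/2\bigr)\,(\mathcal{C}_A^2\varepsilon)^q,$$
the monotonicity of $C(\mathbf{x},\cdot)$ in its second argument being what guarantees we may replace $\operatorname{dist}(\mathbf{x}, A^c_{\varepsilon\mathcal{C}_A})$ by $\operatorname{dist}(\mathbf{x},A)/2$, and the bound $\mathcal{C}_A^{2q} \leq \mathcal{C}_A^{1+q}$ (for $q \in (0,1]$ and $\mathcal{C}_A \geq 1$) giving the exponent in the statement.

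It remains to bound the perturbation term $\bigl\vert \mu_{\mathbf{x}}^\varepsilon(A^\varepsilon_\pi) - \mu_{\mathbf{x}}(A^\varepsilon_\pi)\bigr\vert$, and for this I would again appeal to \Cref{coro:mu - mu epsilon}, now with the set $A^\varepsilon_\pi$ in place of $A$; the uniform assumption $\sigma\bigl((A^\varepsilon_\pi)^c_r\bigr) \leq \mathcal{C}_A r$ provides exactly the required bounded upper Minkowski content, and this will produce another $\widetilde{\varepsilon} + \widetilde{\varepsilon}^{1/2}\widetilde{C}$ contribution. Summing the three pieces gives the claimed inequality, and the final statement on the rate $\mathcal{O}(\varepsilon^{(1/2)\wedge q}\vert\log\varepsilon\vert)$ follows directly from the definition of $\widetilde{\varepsilon}$.

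The main obstacle will be the careful verification of the hypotheses of \Cref{coro:mu - mu epsilon} when it is re-applied to $A^\varepsilon_\pi$: one must show that the threshold $r^\ast$ condition \eqref{eq:r_ast} is still met for the new set and that the constant $\widetilde{C}$ associated to $A^\varepsilon_\pi$ can be dominated by $\widetilde{C}(\mathbf{x},d,\mathbf{K},D,A)$. This is precisely where the smallness assumption $\operatorname{dist}(\mathbf{x},A) \geq 2r^\ast + 2\varepsilon\mathcal{C}_A$ enters: it ensures the geometric estimate $\operatorname{dist}(\mathbf{x},A^\varepsilon_\pi) \geq \operatorname{dist}(\mathbf{x},A) - \varepsilon\mathcal{C}_A \geq 2r^\ast + \varepsilon\mathcal{C}_A$, which is enough to run the second application of \Cref{coro:mu - mu epsilon} and to retain $C(\mathbf{x},\operatorname{dist}(\mathbf{x},A)/2)$ as a uniform upper bound in all the constants involved.
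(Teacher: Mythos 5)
Your proposal is correct and follows essentially the same route as the paper's proof: the same three-term decomposition $\vert\mu_\mathbf{x}(A)-\mu_\mathbf{x}^\varepsilon(A)\vert+\vert\mu_\mathbf{x}(A_\pi^\varepsilon)-\mu_\mathbf{x}^\varepsilon(A_\pi^\varepsilon)\vert+\mu_\mathbf{x}(A_\pi^\varepsilon)$ via the projected symmetric difference, with \Cref{coro:mu - mu epsilon} applied to both $A$ and $A_\pi^\varepsilon$ and the last term controlled through $A_\pi^\varepsilon\subset A^c_{\varepsilon\mathcal{C}_A}$, $\mathbf{\left(H_{\mu/\sigma}\right)}$ and the monotonicity of $C(\mathbf{x},\cdot)$. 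You are in fact slightly more explicit than the paper about checking the hypotheses of \Cref{coro:mu - mu epsilon} for the set $A_\pi^\varepsilon$, which is a welcome refinement rather than a deviation.
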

\begin{proof}
The following relations hold
\begin{align*}
&\Big\vert \mu_\mathbf{x} (A) - 
\mathbb{E}\left[ 1_{A^\varepsilon} \left(
X^\mathbf{x}_{\mathrm{N}_\varepsilon^\mathbf{x}} \right) \right] \Big\vert 
\leq \big\vert \mu_\mathbf{x}(A) - \mu_\mathbf{x}^\varepsilon A) \big\vert + 
\Big\vert \mathbb{E}\left[ 1_{A} \circ \pi \left( X^\mathbf{x}_{\mathrm{N}_\varepsilon^\mathbf{x}} \right) \right] - \mathbb{E}\left[ 1_{A^\varepsilon}\left( X^\mathbf{x}_{\mathrm{N}_\varepsilon^\mathbf{x}} \right) \right] \Big\vert \\
&\leq \big\vert \mu_\mathbf{x}(A) - \mu_\mathbf{x}^\varepsilon(A) \big\vert + 
\mathbb{E}\left[ 1_{\left( A_\varepsilon\setminus \pi^{-1}(A) \right) \cup \left( \pi^{-1}(A) \setminus A_\varepsilon \right)}\left( X^\mathbf{x}_{\mathrm{N}_\varepsilon^\mathbf{x}} \right) \right]\\
&\leq \big\vert \mu_\mathbf{x}(A) - \mu_\mathbf{x}^\varepsilon(A) \big\vert + 
\big\vert \mu_\mathbf{x}(A_\pi^\varepsilon) - \mu_\mathbf{x}^\varepsilon(A_\pi^\varepsilon) \big\vert + 
\mu_\mathbf{x}(A_\pi^\varepsilon) 
\quad \big( \pi^{-1}\left( A_\pi^\varepsilon \right) \supset \left( A_\varepsilon\setminus \pi^{-1}(A) \right) \cup \left( \pi^{-1}(A) \setminus A_\varepsilon \right) \big)\\
&\leq \big\vert \mu_\mathbf{x}(A) - \mu_\mathbf{x}^\varepsilon(A) \big\vert + 
\big\vert \mu_\mathbf{x}(A_\pi^\varepsilon) - \mu_\mathbf{x}^\varepsilon(A_\pi^\varepsilon) \big\vert + \mu_\mathbf{x}(A_{\varepsilon \mathcal{C}_A}^c), 
\quad \big( \textrm{cf.} \eqref{eq:extension_Minkowski} \big).
\end{align*}
Now, \eqref{eq:explicit_A} and \eqref{eq:mu-regularity} can be applied for the first two and the last term above, respectively, to obtain
\begin{align*}
\Big\vert \mu_\mathbf{x} (A) - 
\mathbb{E}\left[ 1_{A^\varepsilon} \left(
X^\mathbf{x}_{\mathrm{N}_\varepsilon^\mathbf{x}} \right) \right] \Big\vert
&\leq  
2 \widetilde{\varepsilon} + 
2 \widetilde{\varepsilon}^{1/2} \, 
\widetilde{C}(\mathbf{x}, d, \mathbf{K}, D, A) + 
C\big( \mathbf{x}, {\rm dist}(\mathbf{x}, A^c_{\varepsilon \mathcal{C}_A}) \big) \, 
\sigma(A^c_{\varepsilon \mathcal{C}_A})^q\\
&\leq 
2 \widetilde{\varepsilon} + 
2 \widetilde{\varepsilon}^{1/2} \, 
\widetilde{C}(\mathbf{x}, d, \mathbf{K}, D, A) + 
\varepsilon^q \, \mathcal{C}_A^{1+q} \, 
C\big( \mathbf{x}, {\rm dist}(\mathbf{x}, A^c_{\varepsilon \mathcal{C}_A}) \big)\\
&\leq 
2 \widetilde{\varepsilon} + 
2 \widetilde{\varepsilon}^{1/2} \, 
\widetilde{C}(\mathbf{x}, d, \mathbf{K}, D, A) + 
\varepsilon^q \, \mathcal{C}_A^{1+q} \, 
C\big( \mathbf{x}, {\rm dist}(\mathbf{x}, A) - 
\varepsilon \mathcal{C}_A \big)\\
&\leq 
2 \widetilde{\varepsilon} + 
2 \widetilde{\varepsilon}^{1/2} \, 
\widetilde{C}(\mathbf{x}, d, \mathbf{K}, D, A) + 
\varepsilon^q \, \mathcal{C}_A^{1+q} \, 
C\big( \mathbf{x}, {\rm dist}(\mathbf{x}, A)/2 \big)
\end{align*}
and this concludes the proof.
\end{proof}

We conclude this paragraph with the following direct consequence of \Cref{coro:mu - mu eepsilon}.
\begin{coro}\label{coro:mu - mu eepsilon-Voronoi}
Assume that condition \ref{LHMD} holds for the domain $\mathbf{K}^{-1/2}D$, some $\alpha \in (0, 1]$ and $M_{\mathbf{K}}$ instead of $M$, and condition $\mathbf{\left(H_{\mu/\sigma}\right)}$ is also satisfied. 
Further, assume that the weight functions $\boldsymbol{\omega}^1$ and their extensions $\boldsymbol{\omega}^{1, \varepsilon}$ given by \eqref{eq:boundary_weights} and \eqref{eq:extended_weights_1}, respectively, are binary weights given by \eqref{eq:binary wieghts}, such that $\big( V_i^\varepsilon \big)_{\varepsilon \in (0, \varepsilon_0)}$ is an extension of $V_i$ with a uniformly bounded $(d-1)-$dimensional upper Minkowski content, for $i = \overline{1,M_1}$ (see, e.g., \Cref{def:extension_A_epsilon} with $A$ replaced by $V_i$). 
Let $\widetilde{\varepsilon}$, $\widetilde{C}_j(\mathbf{x}, d, \mathbf{K}, D, V_j, q)$ and $r^\ast_j$, $j = \overline{1,M_1}$, be as in \Cref{coro:mu - mu epsilon}, with $A$ replaced by $V_j$, $j = \overline{1,M_1}$.
Further, let $\mathbf{A}_{\omega^1}$ and $\mathbf{A}_{\omega^1,\varepsilon}$ be the matrices given by \eqref{eq:AM matrix components} and \eqref{eq:WoE_matrix}, respectively. 
Then, 
\begin{equation*}
\forall~\varepsilon \in (0,\varepsilon_0) \textrm{ such that} \quad 
{\rm dist}(x_i^D,V_j) \geq 
2 r^\ast_j + 2 \varepsilon \mathcal{C}_{V_j}, 
\quad i = \overline{1,M_D}, 
\quad j = \overline{1,M_1}, 
\end{equation*}
the following estimate holds
\begin{align*}
\Big\vert \left[ \mathbf{A}_{\omega^1} \right]_{ij} - \left[ \mathbf{A}_{\omega^1,\varepsilon} \right]_{ij} \Big\vert 
&\leq 
2 \widetilde{\varepsilon} + 
2 \widetilde{\varepsilon}^{1/2} \, 
\widetilde{C}(x_i^D, d, \mathbf{K}, D, V_j) + 
\varepsilon^q \, \mathcal{C}_{V_j}^{1+q} \, 
C\big( x_i^D, {\rm dist}(x_i^D, A)/2 \big)\nonumber \\
&\in \mathcal{O}\Big( \varepsilon^{(1/2) \wedge q} \vert \log \varepsilon \vert \Big), 
\quad i = \overline{1,M_D}, 
\quad j = \overline{1,M_1}.
\end{align*}
\end{coro}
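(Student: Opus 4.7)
The plan is to recognise that \Cref{coro:mu - mu eepsilon-Voronoi} is essentially a re-indexed application of \Cref{coro:mu - mu eepsilon}, so the strategy will be entirely reductive: identify the matrix entries with the quantities appearing there, check that all hypotheses carry over, and then quote the estimate term-by-term for each $(i,j)$ pair.

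First I would write out the two matrix entries in probabilistic form. For $\omega^1_j = \mathbf{1}_{V_j}$ and $\omega^{1,\varepsilon}_j = \mathbf{1}_{V_j^\varepsilon}$, the definitions in \eqref{eq:AM matrix components} and the probabilistic representation \eqref{eq:probabilisticrepresentation} of the elliptic measure yield
\[
[\mathbf{A}_{\omega^1}]_{ij} = \mu_{\mathbf{x}_i^D}(V_j),
\qquad
[\mathbf{A}_{\omega^1,\varepsilon}]_{ij} = \mathbb{E}\left[\mathbf{1}_{V_j^\varepsilon}\!\left(X^{\mathbf{x}_i^D}_{N^{\mathbf{x}_i^D}_\varepsilon}\right)\right],
\]
which is exactly the pair whose difference is controlled in \Cref{coro:mu - mu eepsilon} with the substitutions $\mathbf{x} \leftarrow \mathbf{x}_i^D$, $A \leftarrow V_j$ and $A_\varepsilon \leftarrow V_j^\varepsilon$.

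Next I would verify that the hypotheses of \Cref{coro:mu - mu eepsilon} are inherited from those assumed in the statement. The $\sf (LHMD_\alpha)$ condition on $\mathbf{K}^{-1/2}D$ and hypothesis $\mathbf{(H_{\mu/\sigma})}$ are imposed globally, so they hold verbatim. The only nontrivial point is the extension hypothesis \Cref{def:extension_A_epsilon}: this is precisely the assumption made on $(V_j^\varepsilon)_{\varepsilon\in(0,\varepsilon_0)}$ for each $j = \overline{1,M_1}$, with constants $\mathcal{C}_{V_j}$ playing the role of $\mathcal{C}_A$. Correspondingly, the quantities $\widetilde{\varepsilon}$, $r^*_j$ and $\widetilde{C}_j(\mathbf{x}_i^D, d, \mathbf{K}, D, V_j, q)$ defined in the statement are exactly the ones produced by \Cref{coro:mu - mu epsilon}--\ref{coro:mu - mu eepsilon} for the pair $(\mathbf{x}_i^D, V_j)$.

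Finally, under the smallness condition $\mathrm{dist}(\mathbf{x}_i^D, V_j) \geq 2r_j^* + 2\varepsilon\mathcal{C}_{V_j}$, \Cref{coro:mu - mu eepsilon} applies and gives the desired entrywise bound for each $(i,j)$, together with the asymptotic order $\mathcal{O}\bigl(\varepsilon^{(1/2)\wedge q}\,|\log\varepsilon|\bigr)$. Taking the maximum of the hypothesis over the finitely many pairs $(i,j)\in\overline{1,M_D}\times\overline{1,M_1}$ ensures that the bound holds uniformly, completing the argument. There is no genuine obstacle here: the work is bookkeeping, and the only thing to be careful about is making the substitutions $A\leftarrow V_j$, $\mathbf{x}\leftarrow \mathbf{x}_i^D$ consistently when importing the constants $\widetilde{C}_j$, $r_j^*$ and $\mathcal{C}_{V_j}$.
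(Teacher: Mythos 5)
Your proposal is correct and matches the paper's intent exactly: the paper gives no separate argument for this statement, presenting it as a direct consequence of \Cref{coro:mu - mu eepsilon} obtained by the very substitutions you make ($\mathbf{x}\leftarrow\mathbf{x}_i^D$, $A\leftarrow V_j$, $A_\varepsilon\leftarrow V_j^\varepsilon$, using that binary weights turn the matrix entries into $\mu_{\mathbf{x}_i^D}(V_j)$ and $\mathbb{E}\bigl[\mathbf{1}_{V_j^\varepsilon}\bigl(X^{\mathbf{x}_i^D}_{\mathrm{N}^{\mathbf{x}_i^D}_\varepsilon}\bigr)\bigr]$). The hypothesis check and the uniformity over the finitely many pairs $(i,j)$ are exactly the bookkeeping the paper leaves implicit.
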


\paragraph{Tail estimates for the Monte Carlo approximation error.}
For $\mathbf{x}_1, \mathbf{x}_2, \ldots, \mathbf{x}_n \in \partial D$ and $(\omega_i^\varepsilon)_{i = \overline{1,n}}$ some associated weight functions defined on $\overline{D}_\varepsilon$, see, e.g., in relations \eqref{eq:boundary_weights} and \eqref{eq:extended_weights_1}, 
and $f \in b\mathcal{B}(\partial D)$, we define, in accordance with \eqref{eq:u epsilon},
\begin{equation*}\label{eq:u epsilon omega}
u^\varepsilon_{f_{\omega^\varepsilon}}(\mathbf{x}) \coloneqq {\mathbb{E}}\big[ f_{\omega^\varepsilon} (X^\mathbf{x}_{\mathrm{N}^\mathbf{x}_\varepsilon}) \big], 
\quad \mathbf{x} \in D,
\end{equation*}
where $f_{\omega^\varepsilon}$ is the $\omega^\varepsilon-$interpolant of $f$ given by \eqref{eq:interpolant}. 
Further, for $\mathbf{x}\in D$ and $N \geq 1$, let $X^{\mathbf{x},1}_{\mathrm{N}^{\mathbf{x},1}_\varepsilon}, X^{\mathbf{x},2}_{\mathrm{N}^{\mathbf{x},2}_\varepsilon}, \ldots, X^{\mathbf{x},N}_{\mathrm{N}^{\mathbf{x},N}_\varepsilon}$ be i.i.d. copies of $X^\mathbf{x}_{\mathrm{N}^\mathbf{x}_\varepsilon}$ and define the {\it Monte Carlo estimator} of $u^\varepsilon_{f_{\omega^\varepsilon}}$ by
\begin{equation*}\label{eq:monte carlo snx}
S^\mathbf{x}_{\omega, \varepsilon, N}(f) \coloneqq 
\dfrac{ f_{\omega^\varepsilon}  \left( X^{\mathbf{x},1}_{\mathrm{N}^{\mathbf{x},1}_\varepsilon} \right) + 
f_{\omega^\varepsilon} \left( X^{\mathbf{x},2}_{\mathrm{N}^{\mathbf{x},2}_\varepsilon} \right) + \ldots + 
f_{\omega^\varepsilon} \left( X^{\mathbf{x},N}_{\mathrm{N}^{\mathbf{x},N}_\varepsilon} \right) }{N}, 
\quad N \geq 1.
\end{equation*}

\begin{coro} \label{coro:final error} 
Assume that condition \ref{LHMD} holds for the domain $\mathbf{K}^{-1/2}D$ for some $\alpha \in (0, 1]$ and $M_{\mathbf{K}}$ instead of $M$, and suppose $f \in C^\alpha(\partial D)$. 
Then, the following estimate holds 
\begin{equation*}\label{eq:final error estimate}
\mathbb{P} \{ \vert u_f (\mathbf{x}) - S^\mathbf{x}_{\omega, \varepsilon, N}(f)  \vert \geq \gamma \} \leq 2 \, \mathrm{e}^{-N \frac{\gamma(f,\varepsilon)^2}{\max\limits_{1 \leq i \leq n} \vert f(\mathbf{x}_i) \vert^2}}, \quad \mathbf{x} \in D,
\end{equation*}
for every $\varepsilon, \gamma > 0$ such that
\begin{equation*}
0 \leq \gamma(f, \varepsilon) \coloneqq 
\gamma - 
\varepsilon^\alpha \lambda_{\mathbf{K}}^{-\alpha/2} 
\Vert f \Vert_{\alpha,\partial D}
\left[ 30 + 
\dfrac{80 M_{\mathbf{K}}}{\mathrm{e} \log{2}} 
\left( \dfrac{1}{2} \vert \log{\lambda_{\mathbf{K}}} \vert + 
\vert \log{\varepsilon} \vert \right) \right] - 
\vert f_{\omega^\varepsilon} - f\circ\pi \vert_{\infty,\overline{D}_\varepsilon}.
\end{equation*} 
\end{coro}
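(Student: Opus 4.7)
\textbf{Proof plan for \Cref{coro:final error}.} The plan is to decompose the error into a deterministic bias coming from the $\varepsilon$-shell/interpolation approximation and a random Monte Carlo fluctuation around the (unknown but deterministic) expectation, and then estimate each piece separately. Concretely, writing $\mu_\varepsilon(\mathbf{x}) \coloneqq u^\varepsilon_{f_{\omega^\varepsilon}}(\mathbf{x}) = \mathbb{E}\big[f_{\omega^\varepsilon}(X^\mathbf{x}_{\mathrm{N}^\mathbf{x}_\varepsilon})\big]$, the triangle inequality yields
\begin{equation*}
\big|u_f(\mathbf{x}) - S^\mathbf{x}_{\omega,\varepsilon,N}(f)\big| \leq \big|u_f(\mathbf{x}) - \mu_\varepsilon(\mathbf{x})\big| + \big|\mu_\varepsilon(\mathbf{x}) - S^\mathbf{x}_{\omega,\varepsilon,N}(f)\big|.
\end{equation*}

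First, I would control the deterministic bias $|u_f(\mathbf{x}) - \mu_\varepsilon(\mathbf{x})|$ by a direct application of \Cref{thm:error estimates} with the choice $g \coloneqq f_{\omega^\varepsilon}$ (which is bounded and measurable on $\overline{D}_\varepsilon$, so the hypothesis $g \in L^\infty(\overline{D}_\varepsilon)$ is fulfilled). This delivers exactly the quantity
\begin{equation*}
B(f,\varepsilon) \coloneqq \varepsilon^\alpha \lambda_{\mathbf{K}}^{-\alpha/2}\Vert f \Vert_{\alpha,\partial D}\!\left[30 + \dfrac{80 M_{\mathbf{K}}}{\mathrm{e}\log 2}\!\left(\tfrac{1}{2}|\log \lambda_{\mathbf{K}}| + |\log\varepsilon|\right)\right] + \big|f_{\omega^\varepsilon} - f\circ\pi\big|_{\infty,\overline{D}_\varepsilon}
\end{equation*}
as an upper bound, which is precisely what the definition of $\gamma(f,\varepsilon)$ subtracts from $\gamma$. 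Consequently, on the event $\{|u_f(\mathbf{x}) - S^\mathbf{x}_{\omega,\varepsilon,N}(f)| \geq \gamma\}$ one necessarily has $|\mu_\varepsilon(\mathbf{x}) - S^\mathbf{x}_{\omega,\varepsilon,N}(f)| \geq \gamma - B(f,\varepsilon) = \gamma(f,\varepsilon) \geq 0$, so by a single inclusion of events it suffices to control the tail of the Monte Carlo fluctuation at level $\gamma(f,\varepsilon)$.

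Next, I would apply Hoeffding's inequality to the i.i.d.\ sample $Z_\ell \coloneqq f_{\omega^\varepsilon}\big(X^{\mathbf{x},\ell}_{\mathrm{N}^{\mathbf{x},\ell}_\varepsilon}\big)$, $\ell = \overline{1,N}$, whose common mean is $\mu_\varepsilon(\mathbf{x})$ and whose average is $S^\mathbf{x}_{\omega,\varepsilon,N}(f)$. The crucial observation is that, because $(\omega^\varepsilon_i)_i$ are non-negative and satisfy $\sum_i \omega^\varepsilon_i \leq 1$ (they form a partition of unity on $\partial D$, extended to $\overline{D}_\varepsilon$ with values in $[0,1]$), the interpolant $f_{\omega^\varepsilon} = \sum_i \omega^\varepsilon_i f(\mathbf{x}_i)$ is bounded pointwise by $M \coloneqq \max_{1 \leq i \leq n} |f(\mathbf{x}_i)|$, so each $Z_\ell$ takes values in $[-M, M]$. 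Hoeffding's inequality then gives
\begin{equation*}
\mathbb{P}\big\{|\mu_\varepsilon(\mathbf{x}) - S^\mathbf{x}_{\omega,\varepsilon,N}(f)| \geq \gamma(f,\varepsilon)\big\} \leq 2\,\mathrm{e}^{-N\gamma(f,\varepsilon)^2/M^2},
\end{equation*}
which combined with the deterministic reduction of the previous step yields the claim.

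There is no real obstacle in this argument: the regularity input (\Cref{thm:error estimates}) is already proved, and the concentration step is a textbook application of Hoeffding's inequality to bounded i.i.d.\ random variables. The only point requiring mild care is the boundedness constant $M$ for $f_{\omega^\varepsilon}$, which follows from the partition-of-unity property of the weights stated in \eqref{eq:boundary_weights}; once this is noted, the two estimates combine cleanly to produce the stated tail bound.
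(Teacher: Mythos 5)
Your proposal is correct and follows essentially the same route as the paper: Hoeffding's inequality applied to the i.i.d.\ bounded variables $f_{\omega^\varepsilon}\big(X^{\mathbf{x},\ell}_{\mathrm{N}^{\mathbf{x},\ell}_\varepsilon}\big)$, combined with \Cref{thm:error estimates} (with $g=f_{\omega^\varepsilon}$) to absorb the deterministic $\varepsilon$-shell/interpolation bias into the shifted threshold $\gamma(f,\varepsilon)$. The only difference is cosmetic — you split via the triangle inequality before invoking the event inclusion, whereas the paper states the inclusion directly and then bounds the bias — so the two arguments coincide.
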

\begin{proof}
By applying Hoeffding's inequality \cite[Theorem 2]{Ho63} for the random variables $f_{\omega^\varepsilon}  \left(X^{\mathbf{x},i}_{\mathrm{N}^{\mathbf{x},i}_\varepsilon}\right)$, it follows that, for every $\mathbf{x} \in D$ and $\eta > 0$,
\begin{equation}\label{eq:aux_tail}
\mathbb{P} \{ \vert u^\varepsilon_{f_{\omega^\varepsilon}}(\mathbf{x}) - S^\mathbf{x}_{\omega, \varepsilon, N} \vert \geq \eta \} \leq 2 \, \mathrm{e}^{\frac{-N \eta^2}{ \max\limits_{i = \overline{1, n}} \vert f(\mathbf{x}_i) \vert^2 }}.
\end{equation}
Since
\begin{equation*}
\mathbb{P} \{ \vert u_f (\mathbf{x}) - S^\mathbf{x}_{\omega,\varepsilon,N} \vert \geq \gamma \} \leq \mathbb{P} \{ \vert u^\varepsilon_{f_{\omega^\varepsilon}}(\mathbf{x}) - S^\mathbf{x}_{\omega,\varepsilon,N} \vert \geq 
\gamma - \vert u_f(\mathbf{x}) - u^\varepsilon_{f_{\omega^\varepsilon}}(\mathbf{x}) \vert \},
\end{equation*}
the result follows immediately  from \eqref{eq:aux_tail} for $\eta \coloneqq \gamma - \vert u_f(\mathbf{x}) - u^\varepsilon_{f_{\omega^\varepsilon}}(\mathbf{x}) \vert$ and \Cref{thm:error estimates}. 
\end{proof}

We further let $\mathbf{A}_{\omega^1}$, $\mathbf{A}_{\omega^1,\varepsilon}$ and $\mathbf{A}_{\omega^1,\varepsilon,N}$ be the matrices given by relations \eqref{eq:AM matrix components}, \eqref{eq:WoE_matrix} and \eqref{eq:WoE_matrix_MC}, respectively. 
The following result regarding the approximation of the elliptic densities $\rho_{\mathbf{x}_i^D}$ by $\rho_{\mathbf{x}_i^D, \omega^1,\varepsilon, N}$ \eqref{eq:rho omega1 MC}, $i = \overline{1, M_D}$, is obtained.
\begin{thm}\label{coro:rho_MC_est} 
The following estimate holds 
\begin{equation}\label{eq:tail_hdensity}
\mathbb{P}\left\{\sup \limits_{\mathbf{x} \in \Gamma_1} 
\bigg| \rho_{\mathbf{x}_i^D}(\mathbf{x})
- \rho_{\mathbf{x}_i^D,\omega^1,\varepsilon, N}(\mathbf{x}) \bigg|\geq \gamma\right\} \leq 2M_1 \mathrm{e}^{-N\left(\delta_{\rho,i}(\gamma)\right)^2}, 
\quad i = \overline{1, M_D}, 
\quad \gamma > 0,
\end{equation}
where
\begin{align}
\delta_{\rho,i}(\gamma) \coloneqq 
\left(\gamma - {\big( {\rm osc}_{M_1, \omega^1} \big)}_i - \max_{j = \overline{1, M_1}} \left\vert 
\left[ \mathbf{A}_{\omega^1} \right]_{ij} -\left[ \mathbf{A}_{\omega^1,\varepsilon} \right]_{ij} \right\vert 
\sigma\big( \omega^1_j\big)^{-1} \right)^{+} 
\min_{j = \overline{1, M_1}} \sigma(\omega_j^1), 
\label{eq:delta_rho_i} 
\end{align}
$\big({\rm osc}_{M_1,\omega^1}\big)_i$ given by \eqref{eq:notation.a} and $x^+ \coloneqq \max(x, 0)$.

In particular, if the elliptic densities $\rho_{\mathbf{x}_i^D}$, $i = \overline{1, M_D}$, are continuous on $\overline{\Gamma_1}$ and the assumptions from \Cref{coro: error estimates} or \Cref{coro:mu - mu eepsilon-Voronoi} are fulfilled, then 
\begin{equation}\label{eq:convergence_hdensity}
\lim_{{\rm diam}(\omega^1) \to 0} 
\lim_{\varepsilon \to 0} 
\lim_{N \to \infty} 
\sup \limits_{\mathbf{x} \in \Gamma_1} 
\left\vert \rho_{\mathbf{x}_i^D}(\mathbf{x}) - 
\rho_{\mathbf{x}_i^D,\omega^1,\varepsilon, N}(\mathbf{x}) \right\vert = 0, 
\quad \mathbb{P}\mbox{--a.s. and in } L^1(\mathbb{P}), 
\end{equation}
where ${\rm diam}(\omega^1) \coloneqq \max \limits_{i=\overline{1, M_1}} {\rm diam}\big({\rm supp}(\omega^1_i) \big)$.
\end{thm}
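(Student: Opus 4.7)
The plan is to prove the tail bound \eqref{eq:tail_hdensity} by splitting the pointwise error via two intermediate approximations and a triangle inequality, and then deduce the convergence statement from Borel--Cantelli together with the $\varepsilon$-shell and oscillation estimates from \Cref{thm:main1} and Corollaries~\ref{coro: error estimates}--\ref{coro:mu - mu eepsilon-Voronoi}.

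First I would introduce the intermediate $\varepsilon$-shell interpolant $\rho_{\mathbf{x}_i^D,\omega^1,\varepsilon}(\mathbf{x}) := \sum_{j=1}^{M_1} \sigma(\omega^1_j)^{-1} [\mathbf{A}_{\omega^1,\varepsilon}]_{ij}\,\omega^1_j(\mathbf{x})$, by analogy with \eqref{eq:rho omega1}--\eqref{eq:rho omega1 MC}, and decompose the total error as the sum of three pieces through $\rho_{\mathbf{x}_i^D,\omega^1}$ and $\rho_{\mathbf{x}_i^D,\omega^1,\varepsilon}$. The first gap is deterministic and controlled by the first bound in \Cref{thm:main1}, giving $\sup_{\mathbf{x}\in\Gamma_1}|\rho_{\mathbf{x}_i^D}-\rho_{\mathbf{x}_i^D,\omega^1}| \leq ({\rm osc}_{M_1,\omega^1})_i$. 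The second gap is also deterministic, and since $\omega^1_j \in [0,1]$ with $\sum_j \omega^1_j = 1$, a convex-combination argument majorises it pointwise by $\max_j \sigma(\omega^1_j)^{-1}|[\mathbf{A}_{\omega^1}]_{ij} - [\mathbf{A}_{\omega^1,\varepsilon}]_{ij}|$. Subtracting both deterministic errors from $\gamma$ defines the threshold $\gamma'$ that remains for the stochastic piece.

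For the Monte Carlo gap I would use that each entry $[\mathbf{A}_{\omega^1,\varepsilon,N}]_{ij}$ in \eqref{eq:WoE_matrix_MC} is the empirical mean of $N$ i.i.d.\ copies of $\omega^{1,\varepsilon}_j\bigl(X^{\mathbf{x}_i^D}_{\mathrm{N}^{\mathbf{x}_i^D}_\varepsilon}\bigr) \in [0,1]$ with mean $[\mathbf{A}_{\omega^1,\varepsilon}]_{ij}$, so Hoeffding's inequality yields $\mathbb{P}\bigl(|[\mathbf{A}_{\omega^1,\varepsilon,N}]_{ij}-[\mathbf{A}_{\omega^1,\varepsilon}]_{ij}| \geq s\bigr) \leq 2\mathrm{e}^{-2Ns^2}$ entrywise. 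Applying the convex-combination bound once more and taking a union bound over $j = 1,\ldots,M_1$ shows that the event in \eqref{eq:tail_hdensity} forces some $j$ with $|[\mathbf{A}_{\omega^1,\varepsilon,N}]_{ij}-[\mathbf{A}_{\omega^1,\varepsilon}]_{ij}| \geq \gamma'\sigma(\omega^1_j) \geq \gamma'\min_k \sigma(\omega^1_k) = \delta_{\rho,i}(\gamma)$, which produces the claimed tail $2M_1\mathrm{e}^{-N\delta_{\rho,i}(\gamma)^2}$ (with a factor $2$ in the exponent absorbed for notational parsimony).

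For the convergence statement \eqref{eq:convergence_hdensity} I would invoke the Borel--Cantelli lemma: for fixed $\omega^1, \varepsilon$ and any $\gamma$ strictly larger than $({\rm osc}_{M_1,\omega^1})_i + \max_j \sigma(\omega^1_j)^{-1}|[\mathbf{A}_{\omega^1}]_{ij}-[\mathbf{A}_{\omega^1,\varepsilon}]_{ij}|$, the tail probabilities form a summable series in $N$, so $\limsup_N \sup_\mathbf{x}|\rho_{\mathbf{x}_i^D}-\rho_{\mathbf{x}_i^D,\omega^1,\varepsilon,N}|$ is almost surely bounded by that deterministic quantity. Sending $\varepsilon \to 0$ annihilates the $\varepsilon$-shell entry-wise error by \Cref{coro: error estimates} or \Cref{coro:mu - mu eepsilon-Voronoi}, and sending ${\rm diam}(\omega^1) \to 0$ annihilates $({\rm osc}_{M_1,\omega^1})_i$ by uniform continuity of $\rho_{\mathbf{x}_i^D}$ on the compact set $\overline{\Gamma_1}$. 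The $L^1(\mathbb{P})$ version follows from the a.s.\ one by dominated convergence, since $[\mathbf{A}_{\omega^1,\varepsilon,N}]_{ij} \in [0,1]$ implies the deterministic and $N$-independent bound $\|\rho_{\mathbf{x}_i^D}\|_{L^\infty(\Gamma_1)} + \max_j \sigma(\omega^1_j)^{-1}$ on the pointwise error, and the outer two limits are deterministic. The only delicate point is the bookkeeping of the three error sources across the iterated limits (explaining, in particular, why these limits are not interchangeable); no individual step presents a genuine obstacle.
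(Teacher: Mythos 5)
Your proposal is correct and follows essentially the same route as the paper: the same three-term decomposition through $\rho_{\mathbf{x}_i^D,\omega^1}$ and the $\varepsilon$-shell matrix $\mathbf{A}_{\omega^1,\varepsilon}$, the same convex-combination bounds from Step~I of \Cref{thm:main1}, and the same union bound plus Hoeffding argument over $j=\overline{1,M_1}$ yielding exactly $2M_1\mathrm{e}^{-N\left(\delta_{\rho,i}(\gamma)\right)^2}$. The only deviations are cosmetic and harmless: you obtain the $\mathbb{P}$--a.s.\ limit via Borel--Cantelli rather than invoking the law of large numbers, and the $L^1(\mathbb{P})$ limit via bounded convergence (using the $N$-independent bound $\sup_{\mathbf{x}\in\Gamma_1}\rho_{\mathbf{x}_i^D}(\mathbf{x})+\max_{j}\sigma(\omega^1_j)^{-1}$) instead of integrating the tail estimate as the paper does -- both arguments are valid.
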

\begin{proof}
The following relations hold
\begin{align*}
&\sup \limits_{\mathbf{x} \in \Gamma_1} 
\left| \rho_{\mathbf{x}_i^D}(\mathbf{x})-\rho_{\mathbf{x}_i^D,\omega^1,\varepsilon, N}(\mathbf{x}) \right|\\
&\leq \sup \limits_{\mathbf{x} \in \Gamma_1} 
\left| \rho_{\mathbf{x}_i^D}(\mathbf{x})
- \sum \limits_{j=1}^{M_1} \sigma\big(\omega^1_j\big)^{-1} \mu_{\mathbf{x}_i^D}\big(\omega^1_j\big) \omega^1_j(\mathbf{x}) \right| 
+ \sup \limits_{\mathbf{x} \in \Gamma_1} 
\sum \limits_{j=1}^{M_1} \sigma\big(\omega^1_j\big)^{-1} \left|\left[\mathbf{A}_{\omega^{1}}\right]_{ij}-\left[\mathbf{A}_{\omega^1,\varepsilon}\right]_{ij}\right| \omega^1_j(\mathbf{x})\\
 &\quad + \sup \limits_{\mathbf{x} \in \Gamma_1} 
\bigg|\sum \limits_{j=1}^{M_1} \sigma\big(\omega^1_j\big)^{-1} \left[\mathbf{A}_{\omega^1,\varepsilon}\right]_{ij} \omega^1_j(\mathbf{x}) - \rho_{\mathbf{x}_i^D,\omega^1,\varepsilon, N}(\mathbf{x})
 \bigg|\\
&\leq {\big({\rm osc}_{M_1,\omega^1}\big)}_i + 
\max_{j=\overline{1, M_1}} \left|\left[\mathbf{A}_{\omega^1}\right]_{ij } -\left[\mathbf{A}_{\omega^1,\varepsilon}\right]_{ij}\right|\sigma\big(\omega^1_j\big)^{-1} + 
\max_{j=\overline{1, M_1}} \left|\left[\mathbf{A}_{\omega^1,\varepsilon}\right]_{ij } -\left[\mathbf{A}_{\omega^1,\varepsilon, N}\right]_{ij}\right|\sigma\big(\omega^1_j\big)^{-1}.
\end{align*}
Then, using the union bound inequality and then Hoeffding's inequality \cite[Theorem 2]{Ho63}, one obtains
\begin{align*}
\mathbb{P}\left\{\sup \limits_{\mathbf{x} \in \Gamma_1} 
\bigg| \rho_{\mathbf{x}_i^D}(\mathbf{x})
- \rho_{\mathbf{x}_i^D,\omega^1,\varepsilon, N}(\mathbf{x}) \bigg|\geq \gamma\right\} &\leq  \mathbb{P}\left\{\max_{1\leq j\leq M_1}\left|\left[\mathbf{A}_{\omega^1,\varepsilon}\right]_{ij } -\left[\mathbf{A}_{\omega^1,\varepsilon, N}\right]_{ij}\right|\geq \delta_{\rho,i}(\gamma)\right\}\\
&\leq 2M_1\mathrm{e}^{-N\left(\delta_{\rho,i}(\gamma)\right)^2}.
\end{align*}
Finally, the $\mathbb{P}-$a.s. convergence \eqref{eq:convergence_hdensity} follows by the law of large numbers, whilst the $L^1(\mathbb{P})-$convergence can be proved as follows. If we denote
\begin{equation*}
\mathcal{E} \coloneqq \sup \limits_{\mathbf{x} \in \Gamma_1} 
\bigg| \rho_{\mathbf{x}_i^D}(\mathbf{x})
- \rho_{\mathbf{x}_i^D,\omega^1,\varepsilon, N}(\mathbf{x}) \bigg|,
\end{equation*}
then, by \eqref{eq:tail_hdensity}, it follows that
\begin{align*}
\mathbb{E}\left\{\mathcal{E}\right\}=\int_0^\infty \mathbb{P}\left\{\mathcal{E}\geq t\right\}\;\mathrm{d}t\leq \gamma + \int_\gamma^\infty \mathbb{P}\left\{\mathcal{E}\geq t\right\}\;\mathrm{d}t\leq \gamma + 2M_1\int_\gamma^\infty  \mathrm{e}^{-N\left(\delta_{\rho,i}(t)\right)^2} \;\mathrm{d}t.
\end{align*}
Without writing down all of the details, under the assumptions made, it can easily be seen that fixing and arbitrary $\gamma > 0$, i.e., by making sufficiently small first ${\rm diam}(\omega^1)$ and then $\varepsilon$, one can ensure that, for some constant $c(\gamma) >$, the following relation holds
$
\left( \delta_{\rho,i}(t) \right)^2 \geq c(\gamma) t^2, \quad t\geq \gamma, 
$
and, for such choices, one obtains
\begin{align*}
\mathbb{E}\left\{\mathcal{E}\right\}\leq \gamma + 2M_1\int_\gamma^\infty  \mathrm{e}^{-Nc(\gamma) t^2} \;\mathrm{d}t.
\end{align*}
Letting $N \to \infty$, it follows that $\limsup\limits_N\mathbb{E}\left\{ \mathcal{E}\right\}\leq \gamma$ and hence 
$
\limsup\limits_{{\rm diam}(\omega^1)\to 0 }\limsup\limits_{\varepsilon\to 0}\limsup\limits_N\mathbb{E}\left\{ \mathcal{E}\right\}=0.$ 

\qedhere
\end{proof}

Let $\boldsymbol{\Lambda}^{\boldsymbol{\nu}}$, $\boldsymbol{\Lambda}^{\boldsymbol{\nu}}_{\omega^1}$, $\boldsymbol{\Lambda}^{\boldsymbol{\nu}}_{\omega^1,\varepsilon}$, $\boldsymbol{\Lambda}^{\boldsymbol{\nu}}_{\omega^1,\varepsilon, N}$ be defined by \eqref{eq:Lambda}, \eqref{eq:Lambda_hat_nu}, \eqref{eq:Lambda_hat_eps_nu}, \eqref{eq:Lambda_tilde_nu}, respectively. The following result regarding the matrix $\boldsymbol{\Lambda}^{\boldsymbol{\nu}}$ is obtained.

\begin{coro}\label{coro:MCmain}
Assume that condition \ref{LHMD} holds for the domain $\mathbf{K}^{-1/2}D$, some $\alpha \in (0, 1]$ and $M_{\mathbf{K}}$ instead of $M$, and condition $\mathbf{\left(H_{\mu/\sigma}\right)}$ is also satisfied. 
For each $\varepsilon > 0$, set 
\begin{equation*}\label{eq:Gamma_omega1_varepsilon}
\Gamma_{\omega^{1,\varepsilon}} \coloneqq 
\bigcup \limits_{i = 1}^{M_1} {\rm supp} \big( \omega^{1, \varepsilon}_i \big),    
\end{equation*}
and suppose that $\big( \Gamma_{\omega^{1, \varepsilon}} \big)_{\varepsilon>0}$ is an extension of $\Gamma_1$ with a uniform bounded $(d-1)-$dimensional upper Minkowski content with the constant $\mathcal{C}_{\Gamma_1} < \infty$, i.e. \eqref{eq:extension_Minkowski} holds for $A = \Gamma_1$.
Let $\widetilde{\varepsilon}$, $\widetilde{C}(\mathbf{x},d,\mathbf{K},D,\Gamma_1,q)$ and $r^\ast$ be as given in \Cref{coro:mu - mu epsilon} and set
\begin{align*}
&\mathcal{C}_1(\mathbf{x}) \coloneqq 
2 \, \widetilde{\varepsilon} +
2 \, \widetilde{\varepsilon}^{1/2} \, 
\widetilde{C}(\mathbf{x},d,\mathbf{K},D,\Gamma_1) + 
\varepsilon^q \, 
\mathcal{C}_{\Gamma_1}^{1+q} \, 
C\big( \mathbf{x}, {\rm dist}(\mathbf{x}, \Gamma_1)/2 \big)\\
&\mathcal{C}_2 \coloneqq 
\left[ \sum_{i=1}^{M_D} \nu_i^2 \, 
\max_{j = \overline{1,M_1}} \left|\left[\mathbf{A}_{\omega^1}\right]_{ij} -\left[\mathbf{A}_{\omega^1,\varepsilon}\right]_{ij}\right|^2\sigma(\omega_j^1)^{-2}\bigg( \mu_{\mathbf{x}^D_i}(\Gamma_1) + \sum_{k=1}^{M_D} \nu_k^2 \, \mu_{\mathbf{x}^D_k}(\Gamma_1) + \sum_{k=1}^{M_D} \nu_k^2 \, \mathcal{C}_1(\mathbf{x}^D_k)\bigg)^2 \right]^{1/2}\\
&{\gamma_\Lambda}=\gamma_\Lambda\big( \varepsilon,M_1,M_D,\omega^1,\delta^{\boldsymbol{\nu}}_{D,M_D,\omega^1} \big) \coloneqq \big( \gamma-\delta^{\boldsymbol{\nu}}_{D,M_D,\omega^1}-\mathcal{C}_2 \big)^{+}.
\end{align*}
Then, for any $\varepsilon > 0$ sufficiently small such that
\begin{equation*}
{\rm dist}(\mathbf{x}_i^D,\Gamma_1) \geq 
2 r^\ast + 2 \varepsilon\mathcal{C}_{\Gamma_1}, 
\quad i = \overline{1,M_D},
\end{equation*}
and any $\gamma > 0$, the following estimate holds
\begin{equation*}
\mathbb{P}\left\{ 
\Vert \boldsymbol{\Lambda}^{\boldsymbol{\nu}} - 
\boldsymbol{\Lambda}^{\boldsymbol{\nu}}_{\omega^1,\varepsilon, N} \Vert_{\rm F} \geq \gamma \right\} \leq \delta_\Lambda(\gamma),
\end{equation*}
where
\begin{align}\label{eq:delta_Lambda}
\delta_\Lambda(\gamma) & \coloneqq 2M_1M_D \bigg[ \mathrm{e}^{-N {\gamma_\Lambda}^2/9 \min\limits_{j=\overline{1,M_1}} \sigma(\omega_j^1)^2 
\left(2\max\limits_{i=\overline{1,M_D}} \mu_{\mathbf{x}^D_i}(\Gamma_1) + 
2\max\limits_{i=\overline{1,M_D}}
\mathcal{C}_1(\mathbf{x}^D_i)\right)^{-2}} + \mathrm{e}^{-2N {\gamma_\Lambda}/3 \, 
\min\limits_{j=\overline{1,M_1}} 
\sigma(\omega_j^1)^2}\bigg] \nonumber \\
&\quad + 2 M_1 \mathrm{e}^{-2N{\gamma_\Lambda}/3}.
\end{align}

In particular, if the elliptic densities $\rho_{\mathbf{x}_i^D}$, $i=\overline{1,M_D}$, are continuous on $\overline{\Gamma_1}$ and the assumptions from \Cref{coro: error estimates} or \Cref{coro:mu - mu eepsilon-Voronoi} are fulfilled, then
\begin{equation}\label{eq:convergence_Lambda}
\lim_{{\rm diam}(\omega^1) \to 0} 
\lim_{\varepsilon \to 0} 
\lim_{N \to \infty} 
\Vert \boldsymbol{\Lambda}^{\boldsymbol{\nu}} - 
\boldsymbol{\Lambda}^{\boldsymbol{\nu}}_{\omega^1,\varepsilon, N} \Vert_{\rm F} = 0, 
\quad \mathbb{P}\mbox{--a.s. and in } L^1(\mathbb{P}).
\end{equation}
\end{coro}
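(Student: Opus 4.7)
The plan is to decompose the total error by the triangle inequality into three pieces corresponding to the three distinct approximation steps, namely
\begin{align*}
\| \boldsymbol{\Lambda}^{\boldsymbol{\nu}} - \boldsymbol{\Lambda}^{\boldsymbol{\nu}}_{\omega^1,\varepsilon,N} \|_{\rm F}
\le \| \boldsymbol{\Lambda}^{\boldsymbol{\nu}} - \boldsymbol{\Lambda}^{\boldsymbol{\nu}}_{\omega^1} \|_{\rm F}
+ \| \boldsymbol{\Lambda}^{\boldsymbol{\nu}}_{\omega^1} - \boldsymbol{\Lambda}^{\boldsymbol{\nu}}_{\omega^1,\varepsilon} \|_{\rm F}
+ \| \boldsymbol{\Lambda}^{\boldsymbol{\nu}}_{\omega^1,\varepsilon} - \boldsymbol{\Lambda}^{\boldsymbol{\nu}}_{\omega^1,\varepsilon,N} \|_{\rm F}.
\end{align*}
The first piece is already handled by \Cref{thm:main1} and contributes the deterministic quantity $\delta^{\boldsymbol{\nu}}_{D,M_D,\omega^1}$. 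For the second piece, I would expand entry-wise via the factorisation \eqref{eq:Lambda_hat_eps_nu} and apply the algebraic identity $ab-cd=(a-c)b+c(b-d)$ to reduce to the scalar differences $|[\mathbf{A}_{\omega^1}]_{ik}-[\mathbf{A}_{\omega^1,\varepsilon}]_{ik}|$; these are controlled by $\mathcal{C}_1(\mathbf{x}_i^D)$ through \Cref{coro:mu - mu eepsilon} applied with $A$ equal to the trace on $\Gamma_1$ of the supports of the weights, since the sum of the weights equals $\mathbf{1}$ on the relevant region. After collecting factors $\nu_i\nu_j$, $\sigma(\omega_k^1)^{-1}$, and $\mu_{\mathbf{x}_i^D}(\Gamma_1)$, this recovers precisely the constant $\mathcal{C}_2$.

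The third piece is the genuinely stochastic one and is where Bernstein's inequality comes in. Each $[\mathbf{A}_{\omega^1,\varepsilon,N}]_{ij}$ is an empirical mean of $N$ i.i.d.\ Bernoulli-type variables $\omega^{1,\varepsilon}_j(X^{\mathbf{x}_i^D,\ell}_{\mathrm{N}^{\mathbf{x}_i^D,\ell}_\varepsilon})\in[0,1]$. I would apply Bernstein's inequality entrywise --- this produces the characteristic two-term shape $\mathrm{e}^{-Nt^2/\mathrm{(var)}}+\mathrm{e}^{-Nt/\mathrm{(bd)}}$ seen in $\delta_\Lambda(\gamma)$ --- then use the $ab-cd$ identity once more to convert the product structure $\mathbf{A}_{\omega^1,\varepsilon,N}[\mathrm{diag}(\sigma(\omega^1))]^{-1}\mathbf{A}_{\omega^1,\varepsilon,N}^{\sf T}$ into a sum of terms controlled by a single entrywise deviation times an a priori bound on the other factor; the worst-case factor, after accounting for the already-controlled deterministic error $\mathcal{C}_1$, is exactly $\max_i\mu_{\mathbf{x}_i^D}(\Gamma_1)+\max_i\mathcal{C}_1(\mathbf{x}_i^D)$, which is how this quantity enters the variance proxy in the first exponential in \eqref{eq:delta_Lambda}. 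A union bound over the $M_1 M_D$ entries (plus the auxiliary $M_1$ block arising from the transpose pairing) yields the prefactors $2M_1M_D$ and $2M_1$.

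The a.s.\ and $L^1$ convergence in \eqref{eq:convergence_Lambda} then follows by sending the three parameters to their limits in the prescribed order: the SLLN gives $\mathbf{A}_{\omega^1,\varepsilon,N}\to \mathbf{A}_{\omega^1,\varepsilon}$ $\mathbb{P}$-a.s.\ as $N\to\infty$; \Cref{coro: error estimates}/\Cref{coro:mu - mu eepsilon-Voronoi} gives $\boldsymbol{\Lambda}^{\boldsymbol{\nu}}_{\omega^1,\varepsilon}\to\boldsymbol{\Lambda}^{\boldsymbol{\nu}}_{\omega^1}$ as $\varepsilon\to 0$; and the continuity of $\rho_{\mathbf{x}_i^D}$ on $\overline{\Gamma_1}$ forces $({\rm osc}_{M_1,\omega^1})_i\to 0$, hence $\delta^{\boldsymbol{\nu}}_{D,M_D,\omega^1}\to 0$, as $\mathrm{diam}(\omega^1)\to 0$. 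The $L^1(\mathbb{P})$ convergence is obtained by the same tail-integration argument used in the last paragraph of the proof of \Cref{coro:rho_MC_est}: fix an arbitrary $\gamma>0$, choose $\mathrm{diam}(\omega^1)$ and then $\varepsilon$ small enough that $\gamma_\Lambda(t)^2\ge c(\gamma)t^2$ for $t\ge\gamma$, and then let $N\to\infty$. I expect the main obstacle to be purely bookkeeping: the product $\mathbf{A}\,D^{-1}\mathbf{A}^{\sf T}$ couples the three sources of error, so tracking the correct variance proxy and boundedness constant in Bernstein's inequality so that they reproduce \emph{exactly} the three exponentials of \eqref{eq:delta_Lambda} --- in particular the asymmetric pair involving $\sigma(\omega_j^1)^2$ versus not --- requires careful splitting of the cross terms before the union bound is applied.
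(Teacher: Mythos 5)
Your overall architecture is the paper's: triangle inequality into the three pieces, \Cref{thm:main1} (i.e.\ \eqref{eq:norm leq delta}) giving $\delta^{\boldsymbol{\nu}}_{D,M_D,\omega^1}$ for the first, an entrywise $ab-cd$ splitting for the second with the row sums $\sum_k[\mathbf{A}_{\omega^1,\varepsilon}]_{jk}\le \mathbb{E}\big[1_{\Gamma_{\omega^{1,\varepsilon}}}\big(X^{\mathbf{x}^D_j}_{\mathrm{N}^{\mathbf{x}^D_j}_\varepsilon}\big)\big]\le \mu_{\mathbf{x}^D_j}(\Gamma_1)+\mathcal{C}_1(\mathbf{x}^D_j)$ controlled via \Cref{coro:mu - mu eepsilon} applied with $A=\Gamma_1$, $A_\varepsilon=\Gamma_{\omega^{1,\varepsilon}}$, a union bound plus concentration for the third, and the same tail-integration argument as in \Cref{coro:rho_MC_est} for the $L^1(\mathbb{P})$ convergence. (One imprecision: $\mathcal{C}_1$ does not control the differences $\big|[\mathbf{A}_{\omega^1}]_{ik}-[\mathbf{A}_{\omega^1,\varepsilon}]_{ik}\big|$ themselves --- those remain untouched inside $\mathcal{C}_2$ --- it only corrects the replacement of the expectation of $1_{\Gamma_{\omega^{1,\varepsilon}}}$ by $\mu_{\cdot}(\Gamma_1)$ in the companion factor.)

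The genuine gap is in the stochastic piece. The mixed shape of $\delta_\Lambda(\gamma)$ --- one exponential quadratic in $\gamma_\Lambda$, two linear in $\gamma_\Lambda$ --- is not a Bernstein phenomenon, and an entrywise Bernstein bound will not reproduce \eqref{eq:delta_Lambda}: its exponent has the deviation in both numerator and denominator and does not split into the three stated terms, and the quantity $\min_j\sigma(\omega_j^1)^2\big(2\max_i\mu_{\mathbf{x}^D_i}(\Gamma_1)+2\max_i\mathcal{C}_1(\mathbf{x}^D_i)\big)^{-2}$ is not a variance proxy of the $[0,1]$-valued summands; it enters as a deterministic multiplier of the entrywise deviation. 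What the paper actually does in bounding $\Vert\boldsymbol{\Lambda}^{\boldsymbol{\nu}}_{\omega^1,\varepsilon}-\boldsymbol{\Lambda}^{\boldsymbol{\nu}}_{\omega^1,\varepsilon,N}\Vert_{\rm F}$ is: the $ab-cd$ splitting leaves a product of two \emph{random} deviations (the entrywise deviation of $\mathbf{A}_{\omega^1,\varepsilon,N}$ times $S^{\mathbf{x}^D_j}_{\varepsilon,N}\big(1_{\Gamma_{\omega^{1,\varepsilon}}}\big)$); one writes $S=\mathbb{E}+(S-\mathbb{E})$ and decouples the product of the two deviations via $ab\le\tfrac12 a^2+\tfrac12 b^2$, producing one term linear in the entrywise deviation (with the factor $2\max_i\mu_{\mathbf{x}^D_i}(\Gamma_1)+2\max_i\mathcal{C}_1(\mathbf{x}^D_i)$) and two terms quadratic in deviations; the threshold $\gamma_\Lambda$ is then split three ways, the quadratic terms receiving thresholds of order $\sqrt{2\gamma_\Lambda/3}$ (with or without the factor $\min_j\sigma(\omega_j^1)$), and plain Hoeffding for bounded variables is applied to each event after the union bound --- the square-root thresholds are precisely what generate the exponentials linear in $\gamma_\Lambda$. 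Without this mean-plus-deviation split and Young-type decoupling, your plan either bounds the random factor crudely by $1$ (yielding a bound of a different form than the stated $\delta_\Lambda$) or is left with a product of dependent deviations that a single entrywise Bernstein application cannot handle; so you should replace the Bernstein step by this splitting followed by Hoeffding to land exactly on \eqref{eq:delta_Lambda}.
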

\begin{proof}
Note that by \eqref{eq:norm leq delta}, 
\begin{equation}\label{eq:0bound}
\Vert \boldsymbol{\Lambda}^{\boldsymbol{\nu}} - 
\boldsymbol{\Lambda}^{\boldsymbol{\nu}}_{\omega^{1}} \Vert_{\rm F} \leq \delta^{\boldsymbol{\nu}}_{D,M_D,\omega^1}.
\end{equation}
Furthermore, the following identity holds
\begin{align*}
&\left\Vert \boldsymbol{\Lambda}^{\boldsymbol{\nu}}_{\omega^{1}} - 
\boldsymbol{\Lambda}^{\boldsymbol{\nu}}_{\omega^1, \varepsilon} \right\Vert_{\rm F}
= \Bigg( 
\sum \limits_{i = 1}^{M_D} \sum \limits_{j = 1}^{M_D} 
\nu^2_i\nu^2_j \, \left|\left[\mathbf{\Lambda}_{\omega^1}\right]_{ij}-\left[\mathbf{\Lambda}_{\omega^1, \varepsilon}\right]_{ij}\right|^2
\Bigg)^{1/2}.
\end{align*}
On the other hand, the following relations hold
\begin{align*}
& \left\vert 
\left[\mathbf{\Lambda}_{\omega^1}\right]_{ij} - 
\left[\mathbf{\Lambda}_{\omega^1, \varepsilon}\right]_{ij} \right\vert = 
\left\vert 
\sum_{k = 1}^{M_1} 
\left[ \mathbf{A}_{\omega^1} \right]_{ik} 
\sigma(\omega_k^1)^{-1} 
\left[ \mathbf{A}_{\omega^1} \right]_{jk } - 
\sum_{k = 1}^{M_1}  
\left[ \mathbf{A}_{\omega^1,\varepsilon} \right]_{ik} 
\sigma(\omega_k^1)^{-1} 
\left[ \mathbf{A}_{\omega^1,\varepsilon} \right]_{jk} 
\right\vert\\
&\leq \left\vert 
\sum_{k = 1}^{M_1} \left[\mathbf{A}_{\omega^1}\right]_{ik} 
\sigma(\omega_k^1)^{-1} 
\left( \left[\mathbf{A}_{\omega^1}\right]_{jk } -\left[\mathbf{A}_{\omega^1,\varepsilon}\right]_{jk} \right) \right\vert + 
\left\vert 
\sum_{k = 1}^{M_1}  
\left( \left[\mathbf{A}_{\omega^1}\right]_{ik} - \left[\mathbf{A}_{\omega^1,\varepsilon}\right]_{ik} \right) 
\sigma(\omega_k^1)^{-1} 
\left[\mathbf{A}_{\omega^1,\varepsilon}\right]_{jk} 
\right\vert\\
&\leq \max_{k=\overline{1,M_1}} 
\left|\left[\mathbf{A}_{\omega^1}\right]_{jk } -\left[\mathbf{A}_{\omega^1,\varepsilon}\right]_{jk} \right|\sigma(\omega_k^1)^{-1} 
\sum_{k=1}^{M_1} 
\left[\mathbf{A}_{\omega^1}\right]_{ik} + 
\max_{k=\overline{1,M_1}}
\left|\left[\mathbf{A}_{\omega^1}\right]_{ik } -\left[\mathbf{A}_{\omega^1,\varepsilon}\right]_{ik} \right|\sigma(\omega_k^1)^{-1} 
\sum_{k=1}^{M_1} 
\left[\mathbf{A}_{\omega^1,\varepsilon}\right]_{jk}\\
&\leq \max_{k=\overline{1,M_1}} 
\left|\left[\mathbf{A}_{\omega^1}\right]_{jk } -\left[\mathbf{A}_{\omega^1,\varepsilon}\right]_{jk} \right|\sigma(\omega_k^1)^{-1}\mu_{\mathbf{x}^D_i}(\Gamma_1)\\
&+ \max_{k=\overline{1,M_1}} 
\left|\left[\mathbf{A}_{\omega^1}\right]_{ik } -\left[\mathbf{A}_{\omega^1,\varepsilon}\right]_{ik} \right| \sigma(\omega_k^1)^{-1}\mathbb{E}
\left[ \sum_{k=1}^{M_1} \omega_k^{1,\varepsilon}\left(X^{\mathbf{x}_j^D}_{N_{\varepsilon}^{\mathbf{x}_j^D}}\right)\right]\\
&\leq \max_{k=\overline{1,M_1}} 
\left|\left[\mathbf{A}_{\omega^1}\right]_{jk } -\left[\mathbf{A}_{\omega^1,\varepsilon}\right]_{jk} \right|\sigma(\omega_k^1)^{-1}\left\{\mu_{\mathbf{x}^D_i}(\Gamma_1)+\mathbb{E}\left[1_{\Gamma_{\omega^{1,\varepsilon}}}\left(X^{\mathbf{x}_j^D}_{N_{\varepsilon}^{\mathbf{x}_j^D}}\right)\right]\right\}\\
&\leq \max_{k=\overline{1,M_1}} 
\left|\left[\mathbf{A}_{\omega^1}\right]_{jk } -\left[\mathbf{A}_{\omega^1,\varepsilon}\right]_{jk} \right|\sigma(\omega_k^1)^{-1}\left[\mu_{\mathbf{x}^D_i}(\Gamma_1)+\mu_{\mathbf{x}^D_j}(\Gamma_1)+\mathcal{C}_1(\mathbf{x}^D_j)\right],
\end{align*}
where the last inequality follows from \Cref{coro:mu - mu eepsilon}.
Therefore, since $\displaystyle \sum_{i=1}^{M_D} \nu_i^2 = 1$,
\begin{equation}\label{eq:first bound}
\left\Vert
\boldsymbol{\Lambda}^{\boldsymbol{\nu}}_{\omega^{1}} - \boldsymbol{\Lambda}^{\boldsymbol{\nu}}_{\omega^1, \varepsilon} \right\Vert_{\rm F} 
\leq \mathcal{C}_2.
\end{equation}
By similar computations to the ones above, one obtains
\begin{align*}
\left\Vert 
\boldsymbol{\Lambda}^{\boldsymbol{\nu}}_{\omega^1, \varepsilon} - \boldsymbol{\Lambda}^{\boldsymbol{\nu}}_{\omega^1,\varepsilon, N} 
\right\Vert_{\rm F}
\leq \max\limits_{i,j=\overline{1,M_D}} \left|\left[\mathbf{\Lambda}_{\omega^1, \varepsilon}\right]_{ij}-\left[\mathbf{\Lambda}_{\omega^1, \varepsilon,N}\right]_{ij}\right|,
\end{align*}
as well as
\begin{align*}
\left\vert 
\left[ \boldsymbol{\Lambda}_{\omega^1,\varepsilon} \right]_{ij} - 
\left[ \boldsymbol{\Lambda}_{\omega^1, \varepsilon,N} \right]_{ij} 
\right\vert
&\leq \max_{k=\overline{1,M_1}} 
\left\vert 
\left[ \mathbf{A}_{\omega^1,\varepsilon} \right]_{jk } - 
\left[ \mathbf{A}_{\omega^1,\varepsilon, N} \right]_{jk} 
\right\vert 
\sigma(\omega_k^1)^{-1} 
\sum_{k=1}^{M_1}
\mathbb{E}\left[ 1_{\Gamma_{\omega^{1,\varepsilon}}}\left(X^{\mathbf{x}_i^D}_{N_{\varepsilon}^{\mathbf{x}_i^D}}\right) \right]\\
& \quad  + \max_{k=\overline{1,M_1}} 
\left\vert 
\left[ \mathbf{A}_{\omega^1,\varepsilon} \right]_{ik } - 
\left[ \mathbf{A}_{\omega^1,\varepsilon, N} \right]_{ik} 
\right\vert 
\sigma(\omega_k^1)^{-1} S^{\mathbf{x}^D_j}_{\varepsilon,N}\left(1_{\Gamma_{\omega^{1,\varepsilon}}}\right)\\
&\leq \max_{k=\overline{1,M_1}} 
\left\vert 
\left[ \mathbf{A}_{\omega^1,\varepsilon} \right]_{jk } - 
\left[ \mathbf{A}_{\omega^1,\varepsilon, N} \right]_{jk} 
\right\vert 
\sigma(\omega_k^1)^{-1} \mathbb{E}\left[1_{\Gamma_{\omega^{1,\varepsilon}}}\left(X^{\mathbf{x}_i^D}_{N_{\varepsilon}^{\mathbf{x}_i^D}}\right)\right]\\
& \quad  + \max_{k=\overline{1,M_1}} 
\left\vert 
\left[ \mathbf{A}_{\omega^1,\varepsilon} \right]_{ik} - 
\left[ \mathbf{A}_{\omega^1,\varepsilon, N} \right]_{ik} 
\right\vert 
\sigma(\omega_k^1)^{-1} \mathbb{E}\left[1_{\Gamma_{\omega^{1,\varepsilon}}}\left(X^{\mathbf{x}_j^D}_{N_{\varepsilon}^{\mathbf{x}_j^D}}\right)\right]\\
& \quad  + \max_{k=\overline{1,M_1}} 
\left\vert 
\left[ \mathbf{A}_{\omega^1,\varepsilon} \right]_{ik} - 
\left[ \mathbf{A}_{\omega^1,\varepsilon, N} \right]_{ik} 
\right\vert 
\sigma(\omega_k^1)^{-1} 
\left\vert S^{\mathbf{x}^D_j}_{\varepsilon,N}\left(1_{\Gamma_{\omega^{1,\varepsilon}}}\right)-\mathbb{E}\left[1_{\Gamma_{\omega^{1,\varepsilon}}}\left(X^{\mathbf{x}_j^D}_{N_{\varepsilon}^{\mathbf{x}_j^D}}\right)\right] \right\vert,
\end{align*}
where $S^\mathbf{x}_{\varepsilon, N}(f) \coloneqq \dfrac{1}{N} \sum\limits_{i=1}^N f \left(X^{\mathbf{x},i}_{\mathrm{N}^{\mathbf{x},i}_\varepsilon} \right)$ and $X^{\mathbf{x},i}_{\mathrm{N}^{\mathbf{x},i}_\varepsilon}$, $i=\overline{1,N}$, are independent random variables identically distributed with $X^\mathbf{x}_{\mathrm{N}^\mathbf{x}_\varepsilon}$. Therefore, one obtains
\begin{align*}
&\left\|\boldsymbol{\Lambda}^{\boldsymbol{\nu}}_{\omega^1, \varepsilon} - \boldsymbol{\Lambda}^{\boldsymbol{\nu}}_{\omega^1,\varepsilon, N}\right\|_{\rm F} \\
& \leq \max_{\substack{i=\overline{1,M_D}\\ j=\overline{1,M_1}}} 
\left|\left[\mathbf{A}_{\omega^1,\varepsilon}\right]_{ij } -\left[\mathbf{A}_{\omega^1,\varepsilon, N}\right]_{ij} \right|\sigma(\omega_j^1)^{-1}\\
&\quad \times \left(2\max_{i=\overline{1,M_D}} 
\mu_{\mathbf{x}^D_i}(\Gamma_1) + 
2 \max_{i=\overline{1,M_D}} \mathcal{C}_1(\mathbf{x}^D_i) + 
\max_{i=\overline{1,M_D}} 
\left|S^{\mathbf{x}^D_i}_{\varepsilon,N}\left(1_{\Gamma_{\omega^{1,\varepsilon}}}\right)-\mathbb{E}\left[1_{\Gamma_{\omega^{1,\varepsilon}}}\left(X^{\mathbf{x}_i^D}_{N_{\varepsilon}^{\mathbf{x}_i^D}}\right)\right]\right|\right)\\
&\leq \max_{\substack{i=\overline{1,M_D}\\ j=\overline{1,M_1}}} 
\left|\left[\mathbf{A}_{\omega^1,\varepsilon}\right]_{ij } -\left[\mathbf{A}_{\omega^1,\varepsilon, N}\right]_{ij} \right|\sigma(\omega_j^1)^{-1} 
\left(2 \max_{i=\overline{1,M_D}} \mu_{\mathbf{x}^D_i}(\Gamma_1) + 
2\max_{i=\overline{1,M_D}} \mathcal{C}_1(\mathbf{x}^D_i)\right)\\
&\quad +\frac{1}{2} 
\max_{\substack{i=\overline{1,M_D}\\ j=\overline{1,M_1}}}
\left|\left[\mathbf{A}_{\omega^1,\varepsilon}\right]_{ij } -\left[\mathbf{A}_{\omega^1,\varepsilon, N}\right]_{ij} \right|^2\sigma(\omega_j^1)^{-2}
+\frac{1}{2}\max_{i=\overline{1,M_D}} 
\left|S^{\mathbf{x}^D_i}_{\varepsilon,N}\left(1_{\Gamma_{\omega^{1,\varepsilon}}}\right)-\mathbb{E}\left[1_{\Gamma_{\omega^{1,\varepsilon}}}\left(X^{\mathbf{x}_i^D}_{N_{\varepsilon}^{\mathbf{x}_i^D}}\right)\right]\right|^2.
\end{align*}
Coupling the previous bound with relations \eqref{eq:0bound} and \eqref{eq:first bound} yields
\begin{align*}
&\left\Vert
\boldsymbol{\Lambda}^{\boldsymbol{\nu}} - \boldsymbol{\Lambda}^{\boldsymbol{\nu}}_{\omega^1,\varepsilon, N}\right \Vert_{\rm F} 
\leq \delta^{\boldsymbol{\nu}}_{D,M_D,\omega^1} + \mathcal{C}_2\\
&\quad + \max_{\substack{i=\overline{1,M_D}\\ j=\overline{1,M_1}}} \left|\left[\mathbf{A}_{\omega^1,\varepsilon}\right]_{ij } -\left[\mathbf{A}_{\omega^1,\varepsilon, N}\right]_{ij} \right|\sigma(\omega_j^1)^{-1}
\left(2 \max_{i=\overline{1,M_D}} \mu_{\mathbf{x}^D_i}(\Gamma_1) + 
2 \max_{i=\overline{1,M_D}} \mathcal{C}_1(\mathbf{x}^D_i)\right)\\
&\quad + \dfrac{1}{2} 
\max_{\substack{i=\overline{1,M_D}\\ j=\overline{1,M_1}}} 
\left|\left[\mathbf{A}_{\omega^1,\varepsilon}\right]_{ij } -\left[\mathbf{A}_{\omega^1,\varepsilon, N}\right]_{ij} \right|^2\sigma(\omega_j^1)^{-2}
+\frac{1}{2} \max_{i=\overline{1,M_D}} \left|S^{\mathbf{x}^D_i}_{\varepsilon,N}\left(1_{\Gamma_{\omega^{1,\varepsilon}}}\right)-\mathbb{E}\left[1_{\Gamma_{\omega^{1,\varepsilon}}}\left(X^{\mathbf{x}_i^D}_{N_{\varepsilon}^{\mathbf{x}_i^D}}\right)\right]\right|^2.
\end{align*}
Finally, using the union bound inequality, Hoeffding's inequality \cite[Theorem 2]{Ho63} and \eqref{eq:extended_weights_1}, one obtains 
\begin{align*}
&\mathbb{P}\left\{\left\|\boldsymbol{\Lambda}^{\boldsymbol{\nu}} - 
\boldsymbol{\Lambda}^{\boldsymbol{\nu}}_{\omega^1,\varepsilon, N}\right\|_{\rm F}\geq \gamma\right\}\\
&\leq \mathbb{P}\left\{
\max_{\substack{i=\overline{1,M_D}\\ j=\overline{1,M_1}}} \left|\left[\mathbf{A}_{\omega^1,\varepsilon}\right]_{ij } -\left[\mathbf{A}_{\omega^1,\varepsilon, N}\right]_{ij} \right|\geq \frac{\gamma_\Lambda}{3} 
\min_{j=\overline{1,M_1}} 
\sigma(\omega_j^1)
\left(2 \max_{i=\overline{1,M_D}} \mu_{\mathbf{x}^D_i}(\Gamma_1) + 
2\max_{i=\overline{1,M_D}} \mathcal{C}_1(\mathbf{x}^D_i)\right)^{-1}\right\}\\
&\quad +\mathbb{P}\left\{
\max_{\substack{i=\overline{1,M_D}\\ j=\overline{1,M_1}}} \left|\left[\mathbf{A}_{\omega^1,\varepsilon}\right]_{ij } -\left[\mathbf{A}_{\omega^1,\varepsilon, N}\right]_{ij} \right|\geq \sqrt{2{\gamma_\Lambda}/3} 
\min_{j=\overline{1,M_1}} 
\sigma(\omega_j^1)\right\}\\
&\quad + \mathbb{P}\left\{
\max_{i=\overline{1,M_D}} \left|S^{\mathbf{x}^D_i}_{\varepsilon,N}\left(1_{\Gamma_{\omega^{1,\varepsilon}}}\right)-\mathbb{E}\left[1_{\Gamma_{\omega^{1,\varepsilon}}}\left(X^{\mathbf{x}_i^D}_{N_{\varepsilon}^{\mathbf{x}_i^D}}\right)\right]\right|\geq \sqrt{2{\gamma_\Lambda}/3}\right\}\\
&\leq M_1M_D 
\max\limits_{\substack{i=\overline{1,M_D}\\ j=\overline{1,M_1}}} 
\mathbb{P}\left\{ \left|\left[\mathbf{A}_{\omega^1,\varepsilon}\right]_{ij } -\left[\mathbf{A}_{\omega^1,\varepsilon, N}\right]_{ij} \right|\geq \dfrac{\gamma_\Lambda}{3} 
\min_{j=\overline{1,M_1}} 
\sigma(\omega_j^1) 
\left(2 \max_{i=\overline{1,M_D}} \mu_{\mathbf{x}^D_i}(\Gamma_1) + 
2 \max_{i=\overline{1,M_D}} 
\mathcal{C}_1(\mathbf{x}^D_i)\right)^{-1}\right\}\\
&\quad +M_1 M_D 
\max\limits_{\substack{i=\overline{1,M_D}\\ j=\overline{1,M_1}}} 
\mathbb{P}\left\{ \left|\left[\mathbf{A}_{\omega^1,\varepsilon}\right]_{ij } -\left[\mathbf{A}_{\omega^1,\varepsilon, N}\right]_{ij} \right|\geq \sqrt{2{\gamma_\Lambda}/3} 
\min_{j=\overline{1,M_1}} \sigma(\omega_j^1)\right\}\\
&\quad + M_1 \max_{i=\overline{1,M_D}} 
\mathbb{P}\left\{ \left|S^{\mathbf{x}^D_i}_{\varepsilon,N}\left(1_{\Gamma_{\omega^{1,\varepsilon}}}\right)-\mathbb{E}\left[1_{\Gamma_{\omega^{1,\varepsilon}}}\left(X^{\mathbf{x}_i^D}_{N_{\varepsilon}^{\mathbf{x}_i^D}}\right)\right]\right|\geq \sqrt{2{\gamma_\Lambda}/3}\right\}\\
&\leq 2M_1M_D\bigg[ \mathrm{e}^{-N {\gamma_\Lambda}^2/9 
\min\limits_{j=\overline{1,M_1}} 
\sigma(\omega_j^1)^2 
\left(2 \max\limits_{i=\overline{1,M_D}} 
\mu_{\mathbf{x}^D_i}(\Gamma_1) + 
2 \max\limits_{i=\overline{1,M_D}} 
\mathcal{C}_1(\mathbf{x}^D_i)\right)^{-2}} + \mathrm{e}^{-2N {\gamma_\Lambda}/3 \, 
\min\limits_{j=\overline{1,M_1}} 
\sigma(\omega_j^1)^2}\bigg] + 
2 M_1 \mathrm{e}^{-2N{\gamma_\Lambda}/3}.
\end{align*}
We complete the proof by noting that the convergence \eqref{eq:convergence_Lambda} follows similarly to the corresponding one from \Cref{coro:rho_MC_est}.

\end{proof}

\begin{thm}\label{coro:tail_lambda}
Under the assumptions from \Cref{coro:MCmain}, we let $k \coloneqq {\rm dim}\big( {\rm Ker}(T_{\boldsymbol{\nu}}^\ast T_{\boldsymbol{\nu}}) \big)^{\perp} \leq M_D$, hence $k$ is the number of nonzero eigenvalues $\lambda_1 \geq \lambda_2 \geq \ldots \geq \lambda_k > 0$ of the operator $T_{\boldsymbol{\nu}}^\ast T_{\boldsymbol{\nu}}$ given by \eqref{eq:Th}-\eqref{eq:Th*} and set $\boldsymbol\lambda^{\boldsymbol{\nu}} \coloneqq (\lambda_1, \dots, \lambda_2, \dots, \lambda_k, 0, \ldots, 0) \in \mathbb{R}^{M_D}$. 
Furthermore, let $\widetilde{\lambda}_1 \geq \widetilde{\lambda}_2 \geq \ldots \geq \widetilde{\lambda}_{M_D}$ be the (possibly zero) eigenvalues of the matrix $\boldsymbol{\Lambda}^{\boldsymbol{\nu}}_{\omega^1,\varepsilon, N}$ given by \eqref{eq:Lambda_tilde_nu} and set $\widetilde{\boldsymbol\lambda}^{\boldsymbol{\nu}} \coloneqq \big( \widetilde{\lambda}_1, \widetilde{\lambda}_2, \ldots, \widetilde{\lambda}_{M_D} \big) \in \mathbb{R}^{M_D}$. 
For a symmetric matrix $\mathbf{Q}$ its $i-$th gap ${\rm gap}_i(\mathbf{Q})$ is defined by \eqref{eq:gap}, whilst ${\rm\bf gap}(\mathbf{Q})$ denotes the vector whose entries are ${\rm gap}_i(Q)$, $i = \overline{1,k}$.

Then, for all $\gamma > 0$, the following estimates hold
\begin{align}
&\mathbb{P}\left\{ 
\vert \boldsymbol\lambda^{\boldsymbol{\nu}} - \widetilde{\boldsymbol\lambda}^{\boldsymbol{\nu}} \vert \geq \gamma \right\} 
\leq \delta_\Lambda(\gamma),
\label{eq:tail_lambda}\\
&\mathbb{P}\left\{ 
\left\vert {\rm gap}_i \big( \boldsymbol{\Lambda}^{\boldsymbol{\nu}}_{\omega^1, \varepsilon, N}) - 
{\rm gap}_i \big( T_{\boldsymbol{\nu}}^\ast T_{\boldsymbol{\nu}} \big) \right\vert \geq \gamma \right\} 
\leq \delta_\Lambda(\gamma/\sqrt{3}), \quad i=\overline{1,k},
\label{eq:tail_gap_i}\\
&\mathbb{P}\left\{ 
\left\vert {\rm \bf gap} \big( \boldsymbol{\Lambda}^{\boldsymbol{\nu}}_{\omega^1,\varepsilon, N} \big) - 
{\rm \bf gap} \big( T_{\boldsymbol{\nu}}^\ast T_{\boldsymbol{\nu}} \big) \right\vert \geq \gamma \right\} 
\leq \delta_\Lambda(\gamma/3),
\label{eq:tail_gap},
\end{align}
where $\delta_\Lambda$ is given by \eqref{eq:delta_Lambda}.

In particular, if the elliptic densities $\rho_{\mathbf{x}_i^D}$, $i = \overline{1,M_D}$ are continuous on $\overline{\Gamma_1}$ and the assumptions from \Cref{coro: error estimates} or \Cref{coro:mu - mu eepsilon-Voronoi} are fulfilled, then  
\begin{equation}\label{eq:convergence_eigenvalues}
\lim_{{\rm diam}(\omega^1) \to 0} 
\lim_{\varepsilon \to 0} 
\lim_{N \to\infty} \left\{ 
\vert \boldsymbol\lambda^{\boldsymbol{\nu}} - 
\widetilde{\boldsymbol\lambda}^{\boldsymbol{\nu}} \vert + 
\left\vert 
{\rm \bf gap} \big( \boldsymbol{\Lambda}^{\boldsymbol{\nu}}_{\omega^1,\varepsilon, N} \big) - 
{\rm \bf gap} \big( T_{\boldsymbol{\nu}}^\ast T_{\boldsymbol{\nu}} \big) \right\vert 
\right\} = 0 
\quad \mathbb{P}\mbox{--a.s. and in } L^1(\mathbb{P}).
\end{equation}
\end{thm}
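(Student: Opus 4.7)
The plan is to reduce the statement entirely to the finite-dimensional spectral comparison between the deterministic matrix $\boldsymbol{\Lambda}^{\boldsymbol{\nu}}$ and the random matrix $\boldsymbol{\Lambda}^{\boldsymbol{\nu}}_{\omega^1,\varepsilon,N}$, and then invoke the Frobenius tail bound already provided by \Cref{coro:MCmain}. The crucial observation is \Cref{coro:B-Lambda}: the non-zero spectra of $T_{\boldsymbol{\nu}}^\ast T_{\boldsymbol{\nu}}$ and $\boldsymbol{\Lambda}^{\boldsymbol{\nu}} = T_{\boldsymbol{\nu}} T_{\boldsymbol{\nu}}^\ast$ coincide, so, with the zero-padding convention used in the statement, $\boldsymbol{\lambda}^{\boldsymbol{\nu}}$ is exactly the full decreasing vector of eigenvalues of the symmetric matrix $\boldsymbol{\Lambda}^{\boldsymbol{\nu}}$, and $\widetilde{\boldsymbol{\lambda}}^{\boldsymbol{\nu}}$ is the analogous vector for $\boldsymbol{\Lambda}^{\boldsymbol{\nu}}_{\omega^1,\varepsilon,N}$. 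Both are real symmetric $M_D \times M_D$ matrices, so standard perturbation theory applies.

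For \eqref{eq:tail_lambda}, I would apply the Wielandt--Hoffman inequality, which yields
\begin{equation*}
\bigl|\boldsymbol{\lambda}^{\boldsymbol{\nu}} - \widetilde{\boldsymbol{\lambda}}^{\boldsymbol{\nu}}\bigr|
\leq \bigl\Vert \boldsymbol{\Lambda}^{\boldsymbol{\nu}} - \boldsymbol{\Lambda}^{\boldsymbol{\nu}}_{\omega^1,\varepsilon,N} \bigr\Vert_{\rm F}
\end{equation*}
pathwise, and the tail estimate follows immediately from \Cref{coro:MCmain}.

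For the gap estimates \eqref{eq:tail_gap_i} and \eqref{eq:tail_gap}, I would combine Weyl's inequality (applied eigenvalue by eigenvalue) with the triangle inequality. Presuming the definition \eqref{eq:gap} of ${\rm gap}_i(\mathbf{Q})$ involves the three consecutive eigenvalues $\lambda_{i-1}, \lambda_i, \lambda_{i+1}$ (e.g.\ as $\min(\lambda_{i-1}-\lambda_i, \lambda_i-\lambda_{i+1})$), the triangle inequality gives
\begin{equation*}
\bigl|{\rm gap}_i(A) - {\rm gap}_i(B)\bigr|
\leq \sum_{j=i-1}^{i+1}\bigl|\lambda_j(A) - \lambda_j(B)\bigr|,
\end{equation*}
and then Cauchy--Schwarz together with Wielandt--Hoffman provides the factor $\sqrt{3}$ in \eqref{eq:tail_gap_i}. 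For the vector bound \eqref{eq:tail_gap}, squaring and summing over $i$ produces a factor $9$ since each eigenvalue difference appears in at most three gaps, which after taking the square root yields the factor $3$ and hence the argument $\gamma/3$ for $\delta_\Lambda$. In both cases, the final tail bound is obtained by composing with \Cref{coro:MCmain}.

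Finally, the $\mathbb{P}$-a.s.\ and $L^1(\mathbb{P})$ convergence in \eqref{eq:convergence_eigenvalues} would be derived exactly along the lines of the last paragraph of the proof of \Cref{coro:rho_MC_est}: the a.s.\ convergence comes from the strong law of large numbers applied to the entries of $\mathbf{A}_{\omega^1,\varepsilon,N}$, combined with the deterministic bounds of \Cref{thm:main1} and \Cref{coro: error estimates}/\Cref{coro:mu - mu eepsilon-Voronoi} under the iterated limits; the $L^1$ convergence follows via the layer-cake formula applied to the Gaussian-type tail $\delta_\Lambda$, splitting the integral at an arbitrary $\gamma > 0$ and using that $\delta_\Lambda(\gamma)$ decays super-polynomially in $N$ once ${\rm diam}(\omega^1)$ and $\varepsilon$ have been fixed small enough to ensure positivity of $\gamma_\Lambda$. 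The main (minor) obstacle is bookkeeping the precise numerical constants in the gap inequality so that the factors $\sqrt{3}$ and $3$ in the statement come out correctly; everything else is a direct assembly of previously established tools.
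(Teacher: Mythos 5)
Your handling of \eqref{eq:tail_lambda} and of the a.s.\ and $L^1(\mathbb{P})$ convergence coincides with the paper's proof: via \Cref{coro:B-Lambda} the padded vector $\boldsymbol{\lambda}^{\boldsymbol{\nu}}$ is the ordered spectrum of $\boldsymbol{\Lambda}^{\boldsymbol{\nu}}$, the Hoffman--Wielandt inequality (\Cref{thm:Hoffman-Wielandt}) gives the pathwise bound by $\Vert\boldsymbol{\Lambda}^{\boldsymbol{\nu}}-\boldsymbol{\Lambda}^{\boldsymbol{\nu}}_{\omega^1,\varepsilon,N}\Vert_{\rm F}$, the tail then follows from \Cref{coro:MCmain}, and the $L^1$ statement is obtained by the same layer-cake argument as at the end of \Cref{coro:rho_MC_est}.

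The gap estimates, however, contain a genuine hole in your proposal. The definition \eqref{eq:gap} is ${\rm gap}_i(\mathbf{A})=\min_{\alpha_j\neq\alpha_i}\vert\alpha_i-\alpha_j\vert$, i.e.\ the distance from $\alpha_i$ to the nearest eigenvalue of a \emph{different value}, not the minimum of the two adjacent-index differences you assumed. Under the actual definition the minimizer may sit at an arbitrary index (this matters precisely when eigenvalues repeat), and your inequality $\vert{\rm gap}_i(A)-{\rm gap}_i(B)\vert\leq\sum_{j=i-1}^{i+1}\vert\lambda_j(A)-\lambda_j(B)\vert$ fails: for symmetric matrices with spectra $(2,1,1)$ and $(2,1+\epsilon,1)$ one has ${\rm gap}_2(A)=1$ but ${\rm gap}_2(B)=\epsilon$, while every eigenvalue difference is at most $\epsilon$. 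Hence the adjacency-based route, and with it your bookkeeping of the constants $\sqrt{3}$ and $3$, does not carry over. The paper proceeds differently: it introduces the (random) minimizing indices $i_\ast$ and $\widetilde{i}_\ast$ realizing ${\rm gap}_i(T_{\boldsymbol{\nu}}^\ast T_{\boldsymbol{\nu}})$ and ${\rm gap}_i(\boldsymbol{\Lambda}^{\boldsymbol{\nu}}_{\omega^1,\varepsilon,N})$, exploits minimality on both sides ($\vert\widetilde\lambda_i-\widetilde\lambda_{\widetilde i_\ast}\vert\leq\vert\widetilde\lambda_i-\widetilde\lambda_{i_\ast}\vert$ and $\vert\lambda_i-\lambda_{i_\ast}\vert\leq\vert\lambda_i-\lambda_{\widetilde i_\ast}\vert$) to bound the gap difference by three terms, and then applies Cauchy--Schwarz to obtain the $\sqrt{3}$ in \eqref{eq:tail_gap_i} and, after the vector step, the $3$ in \eqref{eq:tail_gap}, before composing with \eqref{eq:tail_lambda}. (Even this argument tacitly requires the competitor eigenvalues to remain of different value after perturbation, so degenerate spectra are delicate; but the missing idea in your write-up is the minimality-index device, not adjacency of indices.)
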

\begin{proof}
According to \eqref{coro:B-Lambda}, $\mathbf{\Lambda}^{\boldsymbol{\nu}}$ and $T_{\boldsymbol{\nu}}^\ast T_{\boldsymbol{\nu}}$ share the same eigenvalues. 
Therefore, the estimate \eqref{eq:tail_lambda} follows directly from \Cref{thm:Hoffman-Wielandt} and \Cref{coro:MCmain}, and hence \eqref{eq:convergence_eigenvalues} as well.
Further, for $i = \overline{1,k}$, we let $i_\ast, \widetilde{i}_\ast \in \overline{1,M_D}$ be such that 
\begin{equation*}
{\rm gap}_i\big( \boldsymbol{\Lambda}^{\boldsymbol{\nu}}_{\omega^1,\varepsilon, N} \big) = 
\left\vert \widetilde{\lambda}_i - \widetilde{\lambda}_{\widetilde{i}_\ast} \right\vert 
\quad \mbox{and} \quad 
{\rm gap}_i\big( T_{\boldsymbol{\nu}}^\ast T_{\boldsymbol{\nu}} \big) = 
\big\vert \lambda_i - \lambda_{i_\ast} \big\vert.
\end{equation*}
Note that $\widetilde{i}_\ast$ is random. 
Nevertheless, since $\left\vert \widetilde{\lambda}_i - \widetilde{\lambda}_{\widetilde{i}_\ast} \right\vert \leq 
\left\vert \widetilde{\lambda}_i - \widetilde{\lambda}_{i_\ast} \right\vert$ and $\big\vert \lambda_i - \lambda_{i_\ast} \big\vert \leq \big\vert \lambda_i - \lambda_{\widetilde{i}_\ast} \big\vert$, it follows easily that
\begin{align*}
\left\vert {\rm gap}_i\big( \boldsymbol{\Lambda}^{\boldsymbol{\nu}}_{\omega^1,\varepsilon, N} \big) - 
{\rm gap}_i\big (T_{\boldsymbol{\nu}}^\ast T_{\boldsymbol{\nu}} \big) \right\vert 
&\leq \left\vert \widetilde{\lambda}_i-\lambda_i \right\vert + 
\left\vert \widetilde{\lambda}_{\widetilde{i}_\ast} - \widetilde{\lambda}_{i_\ast} \right\vert + 
\left\vert \widetilde{\lambda}_{i_\ast} - \lambda_{i_\ast} \right\vert\\
&\leq\sqrt{3} \left( 
\left\vert \widetilde{\lambda}_i-\lambda_i \right\vert^2 + 
\left\vert \widetilde{\lambda}_{\widetilde{i}_\ast} - \widetilde{\lambda}_{i_\ast} \right\vert^2 + 
\left\vert \widetilde{\lambda}_{i_\ast} - \lambda_{i_\ast} \right\vert^2
\right)^{1/2}
\end{align*}
and hence relations \eqref{eq:tail_gap_i} and \eqref{eq:tail_gap} follow from \eqref{eq:tail_lambda}. 

Finally, the convergence \eqref{eq:convergence_eigenvalues} follows analogously to that corresponding to \Cref{coro:rho_MC_est}.

\end{proof}

\begin{thm}\label{coro:tail_eigenvectors}
Under the assumptions from \Cref{coro:MCmain} and \Cref{coro:tail_lambda}, we let $\widetilde{\lambda}_1 \geq \widetilde{\lambda}_2 \geq \ldots \geq \widetilde{\lambda}_{M_D}$ be the (possibly zero) eigenvalues of the random matrix $\boldsymbol{\Lambda}^{\boldsymbol{\nu}}_{\omega^1,\varepsilon, N}$ given by \eqref{eq:Lambda_tilde_nu}, $\big\{ \mathbf{\widetilde{u}}_1, \mathbf{\widetilde{u}}_2, \ldots, \mathbf{\widetilde{u}}_{M_D} \big\} \subset \mathbb{R}^{M_D}$ be a corresponding orthonormal basis consisting of eigenvectors of $\boldsymbol{\Lambda}^{\boldsymbol{\nu}}_{\omega^1,\varepsilon, N}$ and define
\begin{equation}\label{eq:eigenfunctions_MC}
\widetilde{u}_j(\mathbf{x}) \coloneqq  
\displaystyle \big( \widetilde{\lambda}_j \big)^{-1/2} 
\sum \limits_{i=1}^{M_D} \nu_i 
\big[ \mathbf{\widetilde{u}}_j \big]_i 
\rho_{\mathbf{x}_i^D, \omega^1, \varepsilon, N}(\mathbf{x}), 
\quad j = \overline{1, k},    
\end{equation} 
Then there exists an orthonormal basis $\big\{ u_1, u_2, \ldots, u_k \big\} \subset {\rm Ker}\big( T_{\boldsymbol{\nu}}^\ast T_{\boldsymbol{\nu}} \big)^{\perp} \subset L^2(\Gamma_1)$ consisting of eigenfunctions of $T_{\boldsymbol{\nu}}^\ast T_{\boldsymbol{\nu}}$ such that 
\begin{equation*}
\mathbb{P}\left\{ {\rm gap}_j\big( \boldsymbol{\Lambda}^{\boldsymbol{\nu}}_{\omega^1,\varepsilon, N} \big) 
\widetilde{\lambda}_j^{1/2} 
\displaystyle \sup_{\mathbf{x} \in \Gamma_1} 
\big\vert u_j(\mathbf{x}) - \widetilde{u}_j(\mathbf{x}) \big\vert \geq \gamma \right\} 
\leq \widetilde{\delta}_{u_j}(\gamma), 
\quad j = \overline{1, k}, 
\quad \gamma > 0,
\end{equation*}
where 
\begin{align*}
\widetilde{\delta}_{u_j}(\gamma) \coloneqq  
\delta_\Lambda(\gamma) 
&+ \delta_\Lambda \big( \gamma \Vert {\rm diag}(\boldsymbol{\nu} \big) \overline{\rho}_{\Gamma_1} \Vert^{-1} 
\big( 4 \sqrt{2} \big)^{-1/2}) + 
\delta_\Lambda\Big( \gamma^2 
\big( \gamma + 
\Vert \boldsymbol{\Lambda}^{\boldsymbol{\nu}} \Vert_{\rm F} \big)^{-2} 
\Vert {\rm diag}(\boldsymbol{\nu}) \overline{\rho}_{\Gamma_1} \Vert^{-2} \lambda_j \Big) 
\nonumber \\
&+ 2M_1 \displaystyle \sum_{i=1}^{M_D} 
\mathrm{e}^{-N \big[ \delta_{\rho,i} 
\big( \gamma (\gamma + 
\Vert \boldsymbol{\Lambda}^{\boldsymbol{\nu}} \Vert_{\rm F} \big)^{-1}) \big]^2},
\end{align*}
whilst $\delta_\lambda$ and $\delta_{\rho,i}$ are defined by relations \eqref{eq:delta_Lambda} and \eqref{eq:delta_rho_i}, respectively.

In particular, if the elliptic densities $\rho_{\mathbf{x}_i^D}$, $i = \overline{1,M_D}$, are continuous on $\overline{\Gamma_1}$,and the assumptions from \Cref{coro: error estimates} or \Cref{coro:mu - mu eepsilon-Voronoi} are fulfilled, then 
\begin{equation*}
\lim_{{\rm diam}(\omega^1) \to 0} 
\lim_{\varepsilon \to 0} 
\lim_{N \to \infty}\left\{ 
{\rm gap}_j\left( \boldsymbol{\Lambda}^{\boldsymbol{\nu}}_{\omega^1,\varepsilon, N}\right) \widetilde{\lambda}_j^{1/2} 
\displaystyle \sup_{\mathbf{x}\in \Gamma_1} 
\big\vert u_j(\mathbf{x}) - \widetilde{u}_j(\mathbf{x}) \big\vert| \right\} = 0 
\quad \mathbb{P}\mbox{--a.s. and in } L^1(\mathbb{P}), 
\quad j = \overline{1,k}.
\end{equation*}
\end{thm}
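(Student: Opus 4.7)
The plan is to exploit the explicit correspondence in \Cref{coro:B-Lambda}: for any eigenvector $\mathbf{u}_j$ of $\boldsymbol{\Lambda}^{\boldsymbol{\nu}}$ associated with a non-zero eigenvalue $\lambda_j$, the function $u_j = \lambda_j^{-1/2} \sum_i \nu_i [\mathbf{u}_j]_i \rho_{\mathbf{x}_i^D}$ is a corresponding eigenfunction of $T_{\boldsymbol{\nu}}^\ast T_{\boldsymbol{\nu}}$ and together these form an orthonormal basis of $\mathrm{Ker}(T_{\boldsymbol{\nu}}^\ast T_{\boldsymbol{\nu}})^\perp$. Given the random basis $\{\widetilde{\mathbf{u}}_j\}$ of $\boldsymbol{\Lambda}^{\boldsymbol{\nu}}_{\omega^1, \varepsilon, N}$, I would select $\{\mathbf{u}_j\}$ measurably so as to make a Davis--Kahan $\sin\Theta$-type bound
\begin{equation*}
\|\mathbf{u}_j - \widetilde{\mathbf{u}}_j\| \;\leq\; \frac{2\sqrt{2}\;\|\boldsymbol{\Lambda}^{\boldsymbol{\nu}} - \boldsymbol{\Lambda}^{\boldsymbol{\nu}}_{\omega^1, \varepsilon, N}\|_{\mathrm F}}{\mathrm{gap}_j(\boldsymbol{\Lambda}^{\boldsymbol{\nu}}_{\omega^1, \varepsilon, N})}
\end{equation*}
hold in its formulation involving the \emph{estimator's} gap, so that no conditioning on control of the true gap is needed.

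Then I would decompose the pointwise error additively:
\begin{align*}
u_j(\mathbf{x}) - \widetilde{u}_j(\mathbf{x}) \;&=\; \underbrace{\lambda_j^{-1/2}\sum_{i=1}^{M_D}\nu_i[\mathbf{u}_j - \widetilde{\mathbf{u}}_j]_i\,\rho_{\mathbf{x}_i^D}(\mathbf{x})}_{E_1(\mathbf{x})}\\
&\quad+ \underbrace{\bigl(\lambda_j^{-1/2} - \widetilde{\lambda}_j^{-1/2}\bigr)\sum_{i=1}^{M_D}\nu_i[\widetilde{\mathbf{u}}_j]_i\,\rho_{\mathbf{x}_i^D}(\mathbf{x})}_{E_2(\mathbf{x})}\\
&\quad+ \underbrace{\widetilde{\lambda}_j^{-1/2}\sum_{i=1}^{M_D}\nu_i[\widetilde{\mathbf{u}}_j]_i\,\bigl[\rho_{\mathbf{x}_i^D}(\mathbf{x})-\rho_{\mathbf{x}_i^D,\omega^1,\varepsilon,N}(\mathbf{x})\bigr]}_{E_3(\mathbf{x})}
\end{align*}
and use Cauchy--Schwarz, together with $\|\widetilde{\mathbf{u}}_j\|=1$ and $\sum_i\nu_i^2 = 1$, to obtain the pointwise bounds $|E_1|\leq \lambda_j^{-1/2}\|\mathbf{u}_j - \widetilde{\mathbf{u}}_j\|\,\|\mathrm{diag}(\boldsymbol{\nu})\overline{\rho}_{\Gamma_1}\|$, $|E_2|\leq |\lambda_j^{-1/2}-\widetilde{\lambda}_j^{-1/2}|\,\|\mathrm{diag}(\boldsymbol{\nu})\overline{\rho}_{\Gamma_1}\|$, and $|E_3|\leq \widetilde{\lambda}_j^{-1/2}\max_i\sup_{\mathbf{y}\in\Gamma_1}|\rho_{\mathbf{x}_i^D}(\mathbf{y})-\rho_{\mathbf{x}_i^D,\omega^1,\varepsilon,N}(\mathbf{y})|$.

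The third step is to multiply these bounds by the weight $\mathrm{gap}_j(\boldsymbol{\Lambda}^{\boldsymbol{\nu}}_{\omega^1,\varepsilon,N})\widetilde{\lambda}_j^{1/2}$ and collect. For $E_1$, the gap in the Davis--Kahan bound cancels the prefactor, leaving a quantity of order $\widetilde{\lambda}_j^{1/2}\lambda_j^{-1/2}\|\widetilde{\boldsymbol{\Lambda}}^{\boldsymbol{\nu}} - \boldsymbol{\Lambda}^{\boldsymbol{\nu}}\|_{\mathrm F}\|\mathrm{diag}(\boldsymbol{\nu})\overline{\rho}_{\Gamma_1}\|$; on the event $\{\|\widetilde{\boldsymbol{\Lambda}}^{\boldsymbol{\nu}} - \boldsymbol{\Lambda}^{\boldsymbol{\nu}}\|_{\mathrm F}\leq\lambda_j\}$ the ratio $\widetilde{\lambda}_j^{1/2}/\lambda_j^{1/2}\leq \sqrt{2}$ by \Cref{thm:Hoffman-Wielandt}, and this accounts for the $\delta_\Lambda\bigl(\gamma\|\mathrm{diag}(\boldsymbol{\nu})\overline{\rho}_{\Gamma_1}\|^{-1}(4\sqrt{2})^{-1/2}\bigr)$ term. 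For $E_2$, I use the identity $\widetilde{\lambda}_j^{1/2}(\lambda_j^{-1/2}-\widetilde{\lambda}_j^{-1/2}) = \lambda_j^{-1/2}(\lambda_j^{1/2}-\widetilde{\lambda}_j^{1/2})$, the H\"older-type estimate $|\sqrt{a}-\sqrt{b}|^2 \leq |a-b|$ applied via Hoffman--Wielandt to $|\widetilde{\lambda}_j-\lambda_j|\leq\|\widetilde{\boldsymbol{\Lambda}}^{\boldsymbol{\nu}} - \boldsymbol{\Lambda}^{\boldsymbol{\nu}}\|_{\mathrm F}$, and the a priori bound $\mathrm{gap}_j(\widetilde{\boldsymbol{\Lambda}}^{\boldsymbol{\nu}}) \leq \|\boldsymbol{\Lambda}^{\boldsymbol{\nu}}\|_{\mathrm F} + \|\widetilde{\boldsymbol{\Lambda}}^{\boldsymbol{\nu}} - \boldsymbol{\Lambda}^{\boldsymbol{\nu}}\|_{\mathrm F}$ (valid on $\{\|\widetilde{\boldsymbol{\Lambda}}^{\boldsymbol{\nu}} - \boldsymbol{\Lambda}^{\boldsymbol{\nu}}\|_{\mathrm F}\leq\gamma\}$), yielding the $\delta_\Lambda\bigl(\gamma^2(\gamma+\|\boldsymbol{\Lambda}^{\boldsymbol{\nu}}\|_{\mathrm F})^{-2}\|\mathrm{diag}(\boldsymbol{\nu})\overline{\rho}_{\Gamma_1}\|^{-2}\lambda_j\bigr)$ term. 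For $E_3$, the same bound on $\mathrm{gap}_j(\widetilde{\boldsymbol{\Lambda}}^{\boldsymbol{\nu}})$ reduces the estimate to one on the density sup-norm errors at each pole, which is handled by \Cref{coro:rho_MC_est} via a union bound and accounts for the summation term in $\widetilde{\delta}_{u_j}(\gamma)$.

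A final union bound over the event $\{\|\widetilde{\boldsymbol{\Lambda}}^{\boldsymbol{\nu}} - \boldsymbol{\Lambda}^{\boldsymbol{\nu}}\|_{\mathrm F}\leq\gamma\}$ (which contributes the leading $\delta_\Lambda(\gamma)$) together with the three events for $E_1, E_2, E_3$ above gives the announced tail $\widetilde{\delta}_{u_j}(\gamma)$. The $\mathbb{P}$-a.s.\ and $L^1(\mathbb{P})$ convergences then follow by exactly the same Borel--Cantelli plus tail-integration argument used at the end of the proofs of \Cref{coro:rho_MC_est} and \Cref{coro:tail_lambda}, taking the iterated limits in the prescribed order so that each $\delta_\Lambda$-contribution and the density sum decay to zero. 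The main obstacle is the careful bookkeeping required to express every estimate in terms of the random gap $\mathrm{gap}_j(\widetilde{\boldsymbol{\Lambda}}^{\boldsymbol{\nu}})$ rather than the deterministic one (since the deterministic gap can vanish while the random one stays controlled, and conversely), together with the measurable selection of the orthonormal basis $\{\mathbf{u}_j\}$ when true eigenvalues coalesce, which dictates the specific form of Davis--Kahan one has to invoke.
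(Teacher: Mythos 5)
Your proposal follows essentially the same route as the paper's proof: apply the Davis--Kahan-type inequality with the \emph{estimator's} gap to select the basis $\{\mathbf{u}_j\}$, split the weighted error into an eigenvalue-ratio term, an eigenvector-difference term and a density-approximation term, bound ${\rm gap}_j\big(\boldsymbol{\Lambda}^{\boldsymbol{\nu}}_{\omega^1,\varepsilon,N}\big)$ by $\gamma+\Vert\boldsymbol{\Lambda}^{\boldsymbol{\nu}}\Vert_{\rm F}$ on the event $\big[\Vert\boldsymbol{\Lambda}^{\boldsymbol{\nu}}-\boldsymbol{\Lambda}^{\boldsymbol{\nu}}_{\omega^1,\varepsilon,N}\Vert_{\rm F}\le\gamma\big]$, convert $\big|\widetilde{\lambda}_j^{1/2}-\lambda_j^{1/2}\big|$ into $\big|\widetilde{\lambda}_j-\lambda_j\big|$ via $|\sqrt{a}-\sqrt{b}|^2\le|a-b|$ and Hoffman--Wielandt, and finish with a union bound and the same tail-integration argument for the $L^1$ convergence. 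The only caveat is bookkeeping: the paper multiplies by $\widetilde{\lambda}_j^{1/2}$ \emph{before} decomposing, so its eigenvector-difference term carries no eigenvalue ratio, whereas your placement of the factor $\widetilde{\lambda}_j^{1/2}\lambda_j^{-1/2}$ on $E_1$ forces the extra event $\{\Vert\boldsymbol{\Lambda}^{\boldsymbol{\nu}}-\boldsymbol{\Lambda}^{\boldsymbol{\nu}}_{\omega^1,\varepsilon,N}\Vert_{\rm F}\le\lambda_j\}$ and a spurious $\sqrt{2}$ (and note the Davis--Kahan constant is $\sqrt{4\sqrt{2}}$, not $2\sqrt{2}$), so to land exactly on the stated $\widetilde{\delta}_{u_j}(\gamma)$ you should regroup the terms as the paper does.
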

\begin{proof}
According to \Cref{thm:Davis-Kahan}, there exists $\big\{ \mathbf{u}_1, \mathbf{u}_2, \ldots, \mathbf{u}_k\big\} \subset {\rm Ker}\big( \boldsymbol{\Lambda}^{\boldsymbol{\nu}} \big)^\perp \subset \mathbb{R}^{M_D}$ an orthonormal basis consisting of eigenvectors of $\boldsymbol{\Lambda}^{\boldsymbol{\nu}}$ such that
\begin{equation*}
\Vert \mathbf{\widetilde u}_i - \mathbf{u}_i \Vert \leq 
\sqrt{4 \sqrt{2}} 
\dfrac{\big\Vert \boldsymbol{\Lambda}^{\boldsymbol{\nu}}_{\omega^1,\varepsilon, N} - 
\boldsymbol{\Lambda}^{\boldsymbol{\nu}} \big\Vert_{\rm F}} {{\rm gap}_i \left( \boldsymbol{\Lambda}^{\boldsymbol{\nu}}_{\omega^1,\varepsilon, N} \right)}, 
\quad i = \overline{1,k}.
\end{equation*}
Further, in view of \eqref{eq:Th*}, define
\begin{equation*}
u_j \coloneqq T_{\boldsymbol{\nu}}^\ast \mathbf{u}_j 
= \displaystyle \left( \lambda_j \right)^{-1/2} 
\sum \limits_{i=1}^{M_D} \nu_i 
\big[ \mathbf{u}_j \big]_i \rho_{\mathbf{x}_i^D}, 
\quad j = \overline{1, k},
\end{equation*}
so that, by \Cref{coro:B-Lambda}, relation \eqref{eq:basisB}, $\big( u_i \big)_{i=\overline{1,k}}$ forms an orthonormal basis of ${\rm Ker}\big( T_{\boldsymbol{\nu}}^\ast T_{\boldsymbol{\nu}} \big)^{\perp} \subset L^2(\Gamma_1)$ consisting of eigenfunctions of $T_{\boldsymbol{\nu}}^\ast T_{\boldsymbol{\nu}}$.

On the one hand, for $\mathbf{x} \in \Gamma_1$ and $\overline{\rho}_{\Gamma_1}$ given by \eqref{eq:notation.d}, the following relations hold
\begin{align*}
&\big( \widetilde{\lambda}_j \big)^{1/2} 
\big\vert u_j(\mathbf{x}) - \widetilde{u}_j(\mathbf{x}) \big\vert\\
&\leq 
\left\vert \big( \widetilde{\lambda}_j \big)^{1/2} 
\big( \lambda_j \big)^{-1/2} - 1 \right\vert 
\left\vert \displaystyle \sum \limits_{i=1}^{M_D} \nu_i 
\big[ \mathbf{u}_j \big]_i 
\rho_{\mathbf{x}_i^D}(\mathbf{x}) \right\vert 
+ \left\vert 
\displaystyle \sum \limits_{i=1}^{M_D} \nu_i 
\big[ \mathbf{u}_j \big]_i 
\rho_{\mathbf{x}_i^D}(\mathbf{x}) - 
\displaystyle \sum \limits_{i=1}^{M_D} \nu_i 
\big[ \mathbf{\widetilde u}_j \big]_i \rho_{\mathbf{x}_i^D,\omega^1,\varepsilon, N}(\mathbf{x}) 
\right\vert\\
&\leq \Vert {\rm diag}(\boldsymbol{\nu}) \overline{\rho}_{\Gamma_1} \Vert 
\left\vert \big( \widetilde{\lambda}_j \big)^{1/2} 
\big( \lambda_j \big)^{-1/2} - 1 \right\vert\\
&\quad + \sum \limits_{i=1}^{M_D} \nu_i \left|[\mathbf{u}_j]_{i} -[\mathbf{\tilde u}_j]_{i}\right|\rho_{\mathbf{x}_i^D}(\mathbf{x})+\sum \limits_{i=1}^{M_D} \nu_i \left| [\mathbf{\tilde u}_j]_{i} \right| \left|\rho_{\mathbf{x}_i^D}(\mathbf{x})-\rho_{\mathbf{x}_i^D,\omega^1,\varepsilon, N}(\mathbf{x})\right|\\
&\leq \Vert {\rm diag}(\boldsymbol{\nu}) \overline{\rho}_{\Gamma_1} \Vert 
\left( 
\left\vert \big( \widetilde{\lambda}_j \big)^{1/2} 
\big( \lambda_j \big)^{-1/2} - 1 \right\vert + 
\left\Vert \mathbf{u}_j - \mathbf{\widetilde{u}}_j \right\Vert 
\right)
+ displaystyle \sup_{i = \overline{1,M_D}} 
\displaystyle \sup_{\mathbf{x} \in \Gamma_1} 
\left\vert \rho_{\mathbf{x}_i^D}(\mathbf{x}) - 
\rho_{\mathbf{x}_i^D,\omega^1,\varepsilon, N}(\mathbf{x}) \right\vert.
\end{align*}
On the other hand, according to \Cref{thm:Weyl} and relation \eqref{eq:gap},
\begin{equation*}
{\rm gap}_j\left( \boldsymbol{\Lambda}_{\omega^1,\varepsilon, N}\right) \leq 
\widetilde\lambda_1 \leq 
\vert \tilde\lambda_1 - \lambda_1 \vert + \lambda_1 \leq 
\Vert \boldsymbol{\Lambda}^{\boldsymbol{\nu}} - \boldsymbol{\Lambda}^{\boldsymbol{\nu}}_{\omega^1,\varepsilon, N} \Vert_{\rm F} + 
\Vert \boldsymbol{\Lambda}^{\boldsymbol{\nu}} \Vert_{\rm F}
\end{equation*}
and hence, using \Cref{thm:Davis-Kahan}, the following relations are obtained on the event $\left[ \Vert \boldsymbol{\Lambda}^{\boldsymbol{\nu}} - \boldsymbol{\Lambda}^{\boldsymbol{\nu}}_{\omega^1,\varepsilon, N} \Vert_{\rm F} \leq \gamma \right]$
\begin{align*}
&{\rm gap}_j\left( 
\boldsymbol{\Lambda}^{\boldsymbol{\nu}}_{\omega^1,\varepsilon, N} \right) 
\big( \widetilde{\lambda}_j \big)^{1/2} 
\displaystyle \sup_{\mathbf{x} \in \Gamma_1} 
\big\vert u_j(\mathbf{x}) - \widetilde{u}_j(\mathbf{x}) \big\vert\\
&\quad \leq \big( \gamma + 
\Vert \boldsymbol{\Lambda}^{\boldsymbol{\nu}} \Vert_{\rm F} \big) 
\left( \Vert {\rm diag}(\boldsymbol{\nu}) \overline{\rho}_{\Gamma_1} \Vert 
\left\vert \big( \widetilde{\lambda}_j \big)^{1/2} 
\big( \lambda_j \big)^{-1/2} - 1 \right\vert + 
\displaystyle \sup_{i=\overline{1,M_D}} 
\displaystyle \sup_{\mathbf{x} \in \Gamma_1} 
\left\vert \rho_{\mathbf{x}_i^D}(\mathbf{x}) - \widetilde\rho_{\mathbf{x}_i^D,\omega^1,\varepsilon,N}(\mathbf{x}) \right\vert \right)\\
&\qquad + {\rm gap}_j \left(
\boldsymbol{\Lambda}^{\boldsymbol{\nu}}_{\omega^1,\varepsilon, N}\right) \, 
\Vert {\rm diag}(\boldsymbol{\nu}) \overline{\rho}_{\Gamma_1} \Vert 
\left\Vert \mathbf{u}_j - \mathbf{\widetilde u}_j \right\Vert\\
&\quad \leq \big( \gamma + 
\Vert \boldsymbol{\Lambda}^{\boldsymbol{\nu}} \Vert_{\rm F} \big) 
\left( \Vert {\rm diag}(\boldsymbol{\nu}) \overline{\rho}_{\Gamma_1} \Vert 
\left\vert \big( \widetilde{\lambda}_j \big)^{1/2} 
\big( \lambda_j \big)^{-1/2} - 1 \right\vert + 
\displaystyle \sup_{i=\overline{1,M_D}} 
\displaystyle \sup_{\mathbf{x} \in \Gamma_1} 
\left\vert \rho_{\mathbf{x}_i^D}(\mathbf{x}) - \widetilde\rho_{\mathbf{x}_i^D,\omega^1,\varepsilon,N}(\mathbf{x}) \right\vert \right)\\
&\qquad + \sqrt{4\sqrt{2}} \, 
\Vert {\rm diag}(\boldsymbol{\nu}) \overline{\rho}_{\Gamma_1} \Vert \,  
\big\Vert \boldsymbol{\Lambda}^\nu - \boldsymbol{\Lambda}^\nu_{\omega^1,\varepsilon,N} \big\Vert_{\rm F}.
\end{align*}
By employing \Cref{coro:rho_MC_est}, \Cref{coro:MCmain}, and \Cref{coro:tail_lambda}, it follows that
\begin{align*}
\mathbb{P}&\left\{
{\rm gap}_j\left( 
\boldsymbol{\Lambda}^{\boldsymbol{\nu}}_{\omega^1,\varepsilon, N} \right) 
\big( \widetilde{\lambda}_j \big)^{1/2} 
\displaystyle \sup_{\mathbf{x} \in \Gamma_1} 
\big\vert u_j(\mathbf{x}) - \widetilde{u}_j(\mathbf{x}) \big\vert \geq \gamma \right\}\\
&\leq \mathbb{P}\left\{ 
\big\Vert \boldsymbol{\Lambda}^{\boldsymbol{\nu}} - \boldsymbol{\Lambda}^{\boldsymbol{\nu}}_{\omega^1,\varepsilon,N} \big\Vert_{\rm F} > \gamma \right\} + 
\mathbb{P}\left\{ 
\left\vert \big( \widetilde{\lambda}_j \big)^{1/2} 
\big( \lambda_j \big)^{-1/2} - 1 \right\vert 
\geq \gamma \left(\gamma + 
\big\Vert \boldsymbol{\Lambda}^{\boldsymbol{\nu}} \big\Vert_{\rm F} \right)^{-1} 
\Vert {\rm diag}(\boldsymbol{\nu}) \overline{\rho}_{\Gamma_1} \Vert^{-1} \right\}\\
&\quad +\mathbb{P}\left\{ 
\big\Vert \boldsymbol{\Lambda}^{\boldsymbol{\nu}} - \boldsymbol{\Lambda}^{\boldsymbol{\nu}}_{\omega^1,\varepsilon,N} \big\Vert_{\rm F} \geq 
\gamma \Big( 4\sqrt{2} \Big)^{-1/2} \, 
\Vert {\rm diag}(\boldsymbol{\nu}) \overline{\rho}_{\Gamma_1} \Vert^{-1} \right\}\\
&\quad + \displaystyle \sum_{i=1}^{M_D} 
\mathbb{P}\left\{ 
\displaystyle \sup_{\mathbf{x} \in \Gamma_1} 
\left\vert \rho_{\mathbf{x}_i^D}(\mathbf{x}) - \widetilde\rho_{\mathbf{x}_i^D,\omega^1,\varepsilon,N}(\mathbf{x}) \right\vert \geq 
\gamma \Big( \gamma + 
\big\Vert \boldsymbol{\Lambda}^{\boldsymbol{\nu}} \big\Vert_{\rm F} \Big)^{-1} \right\}\\
&\leq \delta_\Lambda(\gamma) + 
\mathbb{P}\left\{ 
\left\vert \big( \widetilde{\lambda}_j \big)^{1/2} - 
\big( \lambda_j \big)^{1/2} \right\vert \geq 
\gamma \Big( \gamma + 
\big\Vert \boldsymbol{\Lambda}^{\boldsymbol{\nu}} \big\Vert_{\rm F} \Big)^{-1} 
\Vert {\rm diag}(\boldsymbol{\nu}) \overline{\rho}_{\Gamma_1} \Vert^{-1} 
\big( \lambda_j \big)^{1/2} \right\}\\
& \quad + \delta_\Lambda 
\left[ \gamma \Big( 4\sqrt{2} \Big)^{-1/2} \, 
\Vert {\rm diag}(\boldsymbol{\nu}) \overline{\rho}_{\Gamma_1} \Vert^{-1} \right] + 
2 M_1 \displaystyle \sum_{i=1}^{M_D} 
\mathrm{e}^{-N \{ \delta_{\rho,i} 
[\gamma (\gamma + 
\Vert \boldsymbol{\Lambda}^{\boldsymbol{\nu}} \Vert_{\rm F})^{-1}] \}^2}.
\end{align*}
Finally, relation \eqref{eq:tail_lambda} from \Cref{coro:tail_lambda} yields
\begin{align*}
&\mathbb{P}\left\{ 
\left\vert \big( \widetilde{\lambda}_j \big)^{1/2} - 
\big( \lambda_j \big)^{1/2} \right\vert \geq 
\gamma \Big( \gamma + 
\big\Vert \boldsymbol{\Lambda}^{\boldsymbol{\nu}} \big\Vert_{\rm F} \Big)^{-1} 
\Vert {\rm diag}(\boldsymbol{\nu}) \overline{\rho}_{\Gamma_1} \Vert^{-1} 
\big( \lambda_j \big)^{1/2} \right\}\\
&\leq \mathbb{P}\left\{ 
\big\vert \widetilde{\lambda}_j - \lambda_j \big\vert \geq 
\gamma^2 \Big( \gamma + 
\big\Vert \boldsymbol{\Lambda}^{\boldsymbol{\nu}} \big\Vert_{\rm F} \Big)^{-2} 
\Vert {\rm diag}(\boldsymbol{\nu}) \overline{\rho}_{\Gamma_1} \Vert^{-2} 
\lambda_j \right\}
\leq 
\delta_\Lambda \left[ \gamma^2 \Big( \gamma + 
\big\Vert \boldsymbol{\Lambda}^{\boldsymbol{\nu}} \big\Vert_{\rm F} \Big)^{-2} 
\Vert {\rm diag}(\boldsymbol{\nu}) \overline{\rho}_{\Gamma_1} \Vert^{-2} 
\lambda_j \right]
\end{align*}
which proves the claimed tail bound. The convergence in expectation \eqref{eq:convergence_eigenvalues} is proven analogously to that of the corresponding one from \Cref{coro:rho_MC_est}.

\end{proof}

\subsection{Tail error estimates for the inverse problem}
Now, we are in the position to tackle the ultimate theoretical aim of this work, namely to analyse the reconstruction of a truncated spectrum of solutions to the inverse problem \eqref{eq:TTh}.
To do this, we let $u^{(r)}$, $u^{(r)}_{\omega^1,\varepsilon, N}$ and $\Upupsilon_{r}\big( \boldsymbol{\Lambda}^{\boldsymbol{\nu}} \big)$ be given by \eqref{eq: truncated solutions relation}, \eqref{eq:u_omega_M1_r_MC} and \eqref{eq:notation.b}, respectively, and set
\begin{equation*}
\widetilde{\Upupsilon}_{r}\big( \boldsymbol{\Lambda}^{\boldsymbol{\nu}} \big) \coloneqq 
\dfrac{2 \lambda_1\big( \boldsymbol{\Lambda}^{\boldsymbol{\nu}} \big) + 
2 \lambda_r\big( \boldsymbol{\Lambda}^{\boldsymbol{\nu}} \big) - 
\lambda_{r+1}\big( \boldsymbol{\Lambda}^{\boldsymbol{\nu}} \big)}{\lambda_r\big( \boldsymbol{\Lambda}^{\boldsymbol{\nu}} \big)^2}.
\end{equation*}

\begin{thm}\label{coro:tail_solutions}
Under the assumptions from \Cref{coro:MCmain} and \Cref{coro:tail_lambda}, Further, for a fixed $1\leq $ we set
\begin{align*}
&\gamma_1 \coloneqq \gamma \, \dfrac{\lambda_r\big( \boldsymbol{\Lambda}^\nu{\boldsymbol{\nu}} \big)}{4 \, \Vert \mathbf{b}^{\boldsymbol{\nu}} \Vert}, \quad 
\gamma_2 \coloneqq \dfrac{\lambda_r\big( \boldsymbol{\Lambda}^{\boldsymbol{\nu}}) - \lambda_{r+1}(\boldsymbol{\Lambda}^{\boldsymbol{\nu}} \big)}{2} \wedge  \dfrac{\gamma}{2 \Upupsilon_{r}\big( \boldsymbol{\Lambda}^{\boldsymbol{\nu}} \big) \, 
\big\Vert \mathbf{b}^{\boldsymbol{\nu}} \big\Vert \, 
\big\Vert {\rm diag}(\boldsymbol{\nu}) \overline{\rho}_{\Gamma_1} \big\Vert}, \quad 
\gamma_3 \coloneqq \dfrac{\lambda_{r}\big( \mathbf{\Lambda}^{\boldsymbol{\nu}} \big)}{2} \wedge \gamma, 
\quad r = \overline{1,k}.
\end{align*}
Then, the following estimate holds
\begin{equation*}
\mathbb{P}\left\{ 
\displaystyle \sup \limits_{\mathbf{x} \in \Gamma_1} 
\Big\vert u^{(r)}(\mathbf{x}) - u^{(r)}_{\omega^1,\varepsilon, N}(\mathbf{x}) \Big\vert 
\geq \gamma \right\} \leq 
2 M_1 \displaystyle \sum_{i=1}^{M_D} 
\mathrm{e}^{-N \left[\delta_{\rho,i} \left(\gamma_1\right)\right]^2} +  \delta_\Lambda\left(\gamma_2\right) + 
\delta_\Lambda\left(\gamma_3\right), 
\quad r = \gamma \in (0, 1),
\end{equation*}
where $\delta_{\rho,i}$ and $\delta_\Lambda$ are given by relations \eqref{eq:delta_rho_i} and \eqref{eq:delta_Lambda}, respectively.

In particular, if the elliptic densities $\rho_{\mathbf{x}_i^D}$, $i = \overline{1,M_D}$, are continuous on $\overline{\Gamma_1}$, the assumptions in \Cref{coro: error estimates} or \Cref{coro:mu - mu eepsilon-Voronoi} are fulfilled and $\gamma_2 > 0$, then
\begin{equation}\label{eq:convergence truncated inverse}
\lim_{{\rm diam}(\omega^1) \to 0} 
\lim_{\varepsilon \to 0} 
\lim_{N \to \infty} 
\left\{ 
\sup \limits_{\mathbf{x} \in \Gamma_1} 
\Big\vert u^{(r)}(\mathbf{x})
- u^{(r)}_{\omega^1,\varepsilon, N}(\mathbf{x}) \Big\vert 
\right\} = 0 
\quad \mathbb{P}\mbox{--a.s. and in } L^1(\mathbb{P}).
\end{equation}
\end{thm}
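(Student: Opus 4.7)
The strategy is to mirror Step III of the proof of \Cref{thm:main1}, but with the deterministic quantities $\rho_{\mathbf{x}_i^D,\omega^1}$ and $\mathbf{u}^{(r)}_{\omega^1}$ replaced by their Monte Carlo counterparts $\rho_{\mathbf{x}_i^D,\omega^1,\varepsilon,N}$ and $\mathbf{u}^{(r)}_{\omega^1,\varepsilon,N}$, and then to convert each deterministic bound into a tail estimate using \Cref{coro:rho_MC_est} and \Cref{coro:MCmain}. Specifically, using \Cref{coro:representation_u(r)} and \eqref{eq:u_omega_M1_r_MC}, I would write the pointwise decomposition
\begin{equation*}
\bigl\vert u^{(r)}(\mathbf{x}) - u^{(r)}_{\omega^1,\varepsilon,N}(\mathbf{x}) \bigr\vert
\leq \underbrace{\Bigl\vert u^{(r)}(\mathbf{x}) - \sum_{i=1}^{M_D}\nu_i [\mathbf{u}^{(r)}]_i \rho_{\mathbf{x}_i^D,\omega^1,\varepsilon,N}(\mathbf{x}) \Bigr\vert}_{\eqqcolon\,\mathcal{E}_1(\mathbf{x})} + \underbrace{\sum_{i=1}^{M_D}\nu_i \bigl\vert [\mathbf{u}^{(r)}]_i - [\mathbf{u}^{(r)}_{\omega^1,\varepsilon,N}]_i \bigr\vert \, \rho_{\mathbf{x}_i^D,\omega^1,\varepsilon,N}(\mathbf{x})}_{\eqqcolon\,\mathcal{E}_2(\mathbf{x})},
\end{equation*}
and treat the two summands separately.

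For $\mathcal{E}_1(\mathbf{x})$, using $u^{(r)} = \sum_i \nu_i [\mathbf{u}^{(r)}]_i \rho_{\mathbf{x}_i^D}$ and the Cauchy--Schwarz inequality together with the \textit{a priori} bound $\Vert \mathbf{u}^{(r)}\Vert \leq \Vert \mathbf{b}^{\boldsymbol{\nu}}\Vert/\lambda_r(\boldsymbol{\Lambda}^{\boldsymbol{\nu}})$, one has
\begin{equation*}
\sup_{\mathbf{x}\in\Gamma_1} \mathcal{E}_1(\mathbf{x}) \leq \frac{\Vert \mathbf{b}^{\boldsymbol{\nu}}\Vert}{\lambda_r(\boldsymbol{\Lambda}^{\boldsymbol{\nu}})} \, \max_{i=\overline{1,M_D}} \sup_{\mathbf{x}\in\Gamma_1} \bigl\vert \rho_{\mathbf{x}_i^D}(\mathbf{x}) - \rho_{\mathbf{x}_i^D,\omega^1,\varepsilon,N}(\mathbf{x}) \bigr\vert,
\end{equation*}
so the event $\{\sup_\mathbf{x} \mathcal{E}_1(\mathbf{x}) \geq \gamma/2\}$ is controlled by the tail bound \eqref{eq:tail_hdensity} applied at threshold $\gamma_1 = \gamma\lambda_r(\boldsymbol{\Lambda}^{\boldsymbol{\nu}})/(4\Vert \mathbf{b}^{\boldsymbol{\nu}}\Vert)$ (the factor $4$ absorbing the split with $\mathcal{E}_2$ later), giving a union bound of $2M_1\sum_{i=1}^{M_D} \exp\bigl(-N[\delta_{\rho,i}(\gamma_1)]^2\bigr)$ via \Cref{coro:rho_MC_est}.

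The main obstacle, and the source of the thresholds $\gamma_2$ and $\gamma_3$, lies in $\mathcal{E}_2(\mathbf{x})$. Applying Cauchy--Schwarz with $\rho_{\mathbf{x}_i^D,\omega^1,\varepsilon,N} \leq \overline{\rho}_{\Gamma_1}$ (up to the density tail event already handled above) reduces the problem to bounding $\Vert \mathbf{u}^{(r)} - \mathbf{u}^{(r)}_{\omega^1,\varepsilon,N}\Vert$. Here I invoke the TSVD perturbation result (\Cref{thm:AHansen} from the appendix, as used in Step IV of the proof of \Cref{thm:main1}) with $\mathbf{E} = \boldsymbol{\Lambda}^{\boldsymbol{\nu}} - \boldsymbol{\Lambda}^{\boldsymbol{\nu}}_{\omega^1,\varepsilon,N}$, which is applicable provided $\Vert \mathbf{E}\Vert_{\rm F} \leq (\lambda_r(\boldsymbol{\Lambda}^{\boldsymbol{\nu}}) - \lambda_{r+1}(\boldsymbol{\Lambda}^{\boldsymbol{\nu}}))/2$ and yields
\begin{equation*}
\Vert \mathbf{u}^{(r)} - \mathbf{u}^{(r)}_{\omega^1,\varepsilon,N}\Vert \leq \Vert \mathbf{E}\Vert_{\rm F} \, \Upupsilon_r(\boldsymbol{\Lambda}^{\boldsymbol{\nu}}) \, \Vert \mathbf{b}^{\boldsymbol{\nu}}\Vert.
\end{equation*}
Hence the event $\{\sup_\mathbf{x}\mathcal{E}_2(\mathbf{x}) \geq \gamma/2\}$ is included in $\{\Vert \mathbf{E}\Vert_{\rm F} \geq \gamma_2\} \cup \{\Vert \mathbf{E}\Vert_{\rm F} \geq \gamma_3\}$, where the cap at $(\lambda_r - \lambda_{r+1})/2$ in $\gamma_2$ enforces applicability of the perturbation theorem, and the cap $\lambda_r/2$ in $\gamma_3$ is needed to ensure, through Weyl's inequality, that $\lambda_r(\boldsymbol{\Lambda}^{\boldsymbol{\nu}}_{\omega^1,\varepsilon,N}) \geq \lambda_r(\boldsymbol{\Lambda}^{\boldsymbol{\nu}})/2$ so that the pseudoinverse used in defining $\mathbf{u}^{(r)}_{\omega^1,\varepsilon,N}$ is comparable to $\lambda_r(\boldsymbol{\Lambda}^{\boldsymbol{\nu}})^{-1}$ on this event. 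Both probabilities are then bounded by \eqref{eq:delta_Lambda} via \Cref{coro:MCmain}.

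Combining the three events through a union bound yields the desired tail estimate. For the convergence statement \eqref{eq:convergence truncated inverse}, the hypothesis $\gamma_2 > 0$ ensures $\lambda_r > \lambda_{r+1}$, and letting successively $N\to\infty$, $\varepsilon\to 0$, $\mathrm{diam}(\omega^1)\to 0$, each of $\delta_{\rho,i}(\gamma_1)$, $\delta_\Lambda(\gamma_2)$ and $\delta_\Lambda(\gamma_3)$ vanishes by the corresponding convergence statements in \Cref{coro:rho_MC_est} and \Cref{coro:MCmain}. The $L^1(\mathbb{P})$ convergence follows from the tail bound by integrating $\mathbb{P}\{\sup_\mathbf{x} |u^{(r)} - u^{(r)}_{\omega^1,\varepsilon,N}| \geq t\}\,\mathrm{d}t$ over $t\in(0,\infty)$, splitting at a small fixed $\gamma$, and exploiting the (sub-)Gaussian decay of $\delta_\Lambda$ and $\delta_{\rho,i}$ in $N$, exactly as in the concluding argument of the proof of \Cref{coro:rho_MC_est}.
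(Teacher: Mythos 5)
Your overall architecture (decompose pointwise, control the coefficient error via the TSVD perturbation bound of \Cref{thm:AHansen} on a good event for $\Vert \boldsymbol{\Lambda}^{\boldsymbol{\nu}}-\boldsymbol{\Lambda}^{\boldsymbol{\nu}}_{\omega^1,\varepsilon,N}\Vert_{\rm F}$, control the density error via \Cref{coro:rho_MC_est}, then union bound and conclude the a.s.\ and $L^1$ convergence as in \Cref{coro:rho_MC_est}) is the paper's strategy. However, your decomposition is swapped relative to the paper's, and this creates a genuine gap. The paper pairs the coefficient difference $[\mathbf{u}^{(r)}]_i-[\mathbf{u}^{(r)}_{\omega^1,\varepsilon,N}]_i$ with the \emph{exact} densities $\rho_{\mathbf{x}_i^D}$ (deterministically bounded by $\overline{\rho}_{\Gamma_1}$), and pairs the density error with the \emph{Monte Carlo} coefficients $\mathbf{u}^{(r)}_{\omega^1,\varepsilon,N}$, whose norm is bounded by $\Vert\mathbf{b}^{\boldsymbol{\nu}}\Vert/\lambda_r\big(\boldsymbol{\Lambda}^{\boldsymbol{\nu}}_{\omega^1,\varepsilon,N}\big)\leq 2\Vert\mathbf{b}^{\boldsymbol{\nu}}\Vert/\lambda_r\big(\boldsymbol{\Lambda}^{\boldsymbol{\nu}}\big)$ precisely on the event controlled by $\gamma_3$ — this is where the factor $4$ in $\gamma_1$ and the third event come from. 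In your decomposition $\mathcal{E}_2$ carries the random factor $\rho_{\mathbf{x}_i^D,\omega^1,\varepsilon,N}(\mathbf{x})$, which is \emph{not} bounded by $\overline{\rho}_{\Gamma_1}$; consequently your claimed inclusion $\{\sup_\mathbf{x}\mathcal{E}_2\geq\gamma/2\}\subset\{\Vert\mathbf{E}\Vert_{\rm F}\geq\gamma_2\}\cup\{\Vert\mathbf{E}\Vert_{\rm F}\geq\gamma_3\}$ is false as stated. Intersecting, as you suggest parenthetically, with the density-good event only gives $\rho_{\mathbf{x}_i^D,\omega^1,\varepsilon,N}\leq(\overline{\rho}_{\Gamma_1})_i+\gamma_1$, which leaves the cross term $\Vert\mathbf{u}^{(r)}-\mathbf{u}^{(r)}_{\omega^1,\varepsilon,N}\Vert\,\gamma_1\leq\gamma\gamma_1/\big(2\Vert{\rm diag}(\boldsymbol{\nu})\overline{\rho}_{\Gamma_1}\Vert\big)$; since your $\mathcal{E}_1$ already uses up $\gamma/4$ of the budget, the total exceeds $\gamma$ unless $\gamma_1\leq\Vert{\rm diag}(\boldsymbol{\nu})\overline{\rho}_{\Gamma_1}\Vert/2$, which is not guaranteed (e.g.\ when $\Vert\mathbf{b}^{\boldsymbol{\nu}}\Vert$ is small). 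So with the stated thresholds $\gamma_1,\gamma_2,\gamma_3$ your argument does not close; it would only yield the bound with re-tuned thresholds, not the quantitative estimate being proved.

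A related symptom of the swap: in your scheme $\mathcal{E}_1$ uses the deterministic bound $\Vert\mathbf{u}^{(r)}\Vert\leq\Vert\mathbf{b}^{\boldsymbol{\nu}}\Vert/\lambda_r\big(\boldsymbol{\Lambda}^{\boldsymbol{\nu}}\big)$, so the eigenvalue-comparability event you invoke for $\gamma_3$ (via Weyl) is never actually used, and the factor $4$ in $\gamma_1$ is unexplained slack — whereas in the paper these two ingredients are exactly what make the second term work. The fix is to revert to the paper's pairing: write $u^{(r)}-u^{(r)}_{\omega^1,\varepsilon,N}=\sum_i\nu_i\big([\mathbf{u}^{(r)}]_i-[\mathbf{u}^{(r)}_{\omega^1,\varepsilon,N}]_i\big)\rho_{\mathbf{x}_i^D}+\sum_i\nu_i[\mathbf{u}^{(r)}_{\omega^1,\varepsilon,N}]_i\big(\rho_{\mathbf{x}_i^D}-\rho_{\mathbf{x}_i^D,\omega^1,\varepsilon,N}\big)$, bound the first term by $\gamma/2$ on the event $\{\Vert\mathbf{E}\Vert_{\rm F}\leq\gamma_2\}$ via \Cref{thm:AHansen}, and bound the second by $\gamma/2$ on the intersection of the density event at level $\gamma_1$ with the eigenvalue event at level $\gamma_3$ (which guarantees $\lambda_r\big(\boldsymbol{\Lambda}^{\boldsymbol{\nu}}_{\omega^1,\varepsilon,N}\big)\geq\lambda_r\big(\boldsymbol{\Lambda}^{\boldsymbol{\nu}}\big)/2$). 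Your treatment of the a.s.\ and $L^1$ convergence is fine once the tail bound is established.
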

\begin{proof}
The following identities hold
\begin{align*}
&\Big\vert u^{(r)}(\mathbf{x})
- u^{(r)}_{\omega^1,\varepsilon, N}(\mathbf{x}) \Big\vert = 
\Bigg\vert 
\displaystyle \sum \limits_{i=1}^{M_D} \nu_i 
\left[ \mathbf{u}^{(r)} \right]_i 
\rho_{\mathbf{x}^D_i}(\mathbf{x}) - 
\displaystyle \sum \limits_{i=1}^{M_D} \nu_i 
\left[ \mathbf{u}^{(r)}_{\omega^1,\varepsilon, N} \right]_i 
\rho_{\mathbf{x}^D_i,\omega^1,\varepsilon, N}(\mathbf{x}) 
\Bigg\vert \\
&\leq \displaystyle \sum \limits_{i=1}^{M_D} \nu_i 
\left\vert \left[ \mathbf{u}^{(r)} \right]_i - 
\left[ \mathbf{u}^{(r)}_{\omega^1,\varepsilon, N} \right]_i \right\vert \, 
\left\vert \rho_{\mathbf{x}^D_i}(\mathbf{x}) \right\vert + 
\displaystyle \sum \limits_{i=1}^{M_D} \nu_i 
\left\vert \left[ \mathbf{u}^{(r)}_{\omega^1,\varepsilon, N} \right]_i \right\vert \, 
\left\vert \rho_{\mathbf{x}^D_i}(\mathbf{x}) - \rho_{\mathbf{x}^D_i,\omega^1,\varepsilon, N}(\mathbf{x}) \right\vert.
\end{align*}
Hence, with $\overline{\rho}_{\Gamma_1}$ given by \eqref{eq:notation.d}, one obtains
\begin{align*}
\displaystyle \sup \limits_{\mathbf{x} \in \Gamma_1} 
\Big\vert u^{(r)}(\mathbf{x})
- u^{(r)}_{\omega^1,\varepsilon, N}(\mathbf{x}) \Big\vert 
&\leq \big\Vert \mathbf{u}^{(r)} -  \mathbf{u}^{(r)}_{\omega^1,\varepsilon, N} \big\Vert \, 
\big\Vert {\rm diag}(\boldsymbol{\nu}) \overline{\rho}_{\Gamma_1} \big\Vert\\
&+ \big\Vert \mathbf{u}^{(r)}_{\omega^1,\varepsilon, N} \big\Vert \, 
\displaystyle \sup \limits_{i = \overline{1,M_D}} 
\displaystyle \sup \limits_{\mathbf{x} \in \Gamma_1}  
\left\vert \rho_{\mathbf{x}^D_i}(\mathbf{x}) - 
\rho_{\mathbf{x}^D_i,\omega^1,\varepsilon, N}(\mathbf{x}) \right\vert.
\end{align*}
Note that $\big\Vert \mathbf{u}^{(r)}_{\omega^1,\varepsilon, N} \big\Vert \leq \dfrac{\big\Vert \mathbf{b}^{\boldsymbol{\nu}} \big\Vert}{\lambda_r\big( \boldsymbol{\Lambda}^{\boldsymbol{\nu}}_{\omega^1,\varepsilon, N} \big)}$. 

On the event 
\begin{equation*}
E_1 \coloneqq \left[ 
\Big\Vert \boldsymbol{\Lambda}^{\boldsymbol{\nu}} - \boldsymbol{\Lambda}^{\boldsymbol{\nu}}_{\omega^1,\varepsilon, N} \Big\Vert_{\rm F} \leq 
\dfrac{\lambda_r\big( \boldsymbol{\Lambda}^{\boldsymbol{\nu}}) - \lambda_{r+1}(\boldsymbol{\Lambda}^{\boldsymbol{\nu}} \big)}{2} \wedge  \dfrac{\gamma}{2 \Upupsilon_{r}\big( \boldsymbol{\Lambda}^{\boldsymbol{\nu}} \big) \, 
\big\Vert \mathbf{b}^{\boldsymbol{\nu}} \big\Vert \, 
\big\Vert {\rm diag}(\boldsymbol{\nu}) \overline{\rho}_{\Gamma_1} \big\Vert}
\right],    
\end{equation*}
similarly to {\bf Step IV} in the proof of \Cref{thm:main1}, the following relations hold
\begin{equation*}
\big\Vert \mathbf{u}^{(r)} -  \mathbf{u}^{(r)}_{\omega^1,\varepsilon, N} \big\Vert \leq 
\Upupsilon_{r}\big( \boldsymbol{\Lambda}^{\boldsymbol{\nu}} \big) 
\left\Vert \boldsymbol{\Lambda}^{\boldsymbol{\nu}} - \boldsymbol{\Lambda}^{\boldsymbol{\nu}}_{\omega^1,\varepsilon, N} \right\Vert \, 
\Vert \mathbf{b}^{\boldsymbol{\nu}} \Vert \leq 
\dfrac{\gamma}{2 \big\Vert {\rm diag}(\boldsymbol{\nu}) \overline{\rho}_{\Gamma_1} \big\Vert},
\end{equation*}
and, consequently,
\begin{align*}
&\displaystyle \sup \limits_{\mathbf{x} \in \Gamma_1} 
\Big\vert u^{(r)}(\mathbf{x})
- u^{(r)}_{\omega^1,\varepsilon, N}(\mathbf{x}) \Big\vert 
\leq \dfrac{\gamma}{2} + 
\dfrac{\big\Vert \mathbf{b}^{\boldsymbol{\nu}} \big\Vert}{\lambda_r\big( \boldsymbol{\Lambda}^{\boldsymbol{\nu}}_{\omega^1,\varepsilon, N} \big)} 
\displaystyle \sup \limits_{\mathbf{x} \in \Gamma_1}  
\left\vert \rho_{\mathbf{x}^D_i}(\mathbf{x}) - 
\rho_{\mathbf{x}^D_i,\omega^1,\varepsilon, N}(\mathbf{x}) \right\vert.
\end{align*}
Also, note that 
$\lambda_r\big( \boldsymbol{\Lambda}^{\boldsymbol{\nu}})  \leq 
\big\vert \lambda_{r}\big( \mathbf{\Lambda}^{\boldsymbol{\nu}} c\big) - 
\lambda_{r}\big( \mathbf{\Lambda}^{\boldsymbol{\nu}}_{\omega^1,\varepsilon, N} \big) \big\vert + 
\lambda_{r}\big( \mathbf{\Lambda}^{\boldsymbol{\nu}}_{\omega^1,\varepsilon, N} \big)$ and with the notations used in \Cref{coro:tail_lambda}, on the event
\[ 
E_2 \coloneqq \left[ 
\big\Vert \boldsymbol\lambda^{\boldsymbol{\nu}} - \widetilde{\boldsymbol\lambda}^{\boldsymbol{\nu}} \big\Vert \leq 
\dfrac{\lambda_{r}\big( \mathbf{\Lambda}^{\boldsymbol{\nu}} \big)}{2} \wedge \gamma \right],
\]
the following inequality holds 
\[
\lambda_{r}\big( \mathbf{\Lambda}^{\boldsymbol{\nu}}_{\omega^1,\varepsilon, N} \big) \geq 
\lambda_{r}\big( \mathbf{\Lambda}^{\boldsymbol{\nu}} \big) - 
\big\Vert \boldsymbol\lambda^{\boldsymbol{\nu}} - \widetilde{\boldsymbol\lambda}^{\boldsymbol{\nu}} \big\Vert.
\]
Therefore, on the event $E_1 \cap E_2$, one obtains 
\begin{equation*}
\sup \limits_{\mathbf{x} \in \Gamma_1} 
\left\vert u^{(r)}(\mathbf{x})
- u^{(r)}_{\omega^1,\varepsilon, N}(\mathbf{x}) \right\vert 
\leq \dfrac{\gamma}{2} + 
\dfrac{2 \, \big\Vert \mathbf{b}^{\boldsymbol{\nu}} \big\Vert}{\lambda_r\big( \boldsymbol{\Lambda}^{\boldsymbol{\nu}} \big)} 
\displaystyle \sup \limits_{i = \overline{1,M_D}} 
\displaystyle \sup \limits_{\mathbf{x} \in \Gamma_1}  
\left\vert \rho_{\mathbf{x}^D_i}(\mathbf{x}) - 
\rho_{\mathbf{x}^D_i,\omega^1,\varepsilon, N}(\mathbf{x}) \right\vert,
\end{equation*}
hence
\begin{align*}
& \mathbb{P}\left\{ 
\sup \limits_{\mathbf{x} \in \Gamma_1} 
\left\vert u^{(r)}(\mathbf{x}) - 
u^{(r)}_{\omega^1,\varepsilon, N}(\mathbf{x}) \right\vert \geq \gamma \right\} 
\leq \mathbb{P}\left\{ 
\displaystyle \sup \limits_{i = \overline{1,M_D}} 
\displaystyle \sup \limits_{\mathbf{x} \in \Gamma_1}  
\left\vert \rho_{\mathbf{x}^D_i}(\mathbf{x}) - 
\rho_{\mathbf{x}^D_i,\omega^1,\varepsilon, N}(\mathbf{x}) \right\vert 
\geq \gamma \, 
\dfrac{\lambda_r\big( \boldsymbol{\Lambda}^{\boldsymbol{\nu}} \big)}{4 \, \big\Vert \mathbf{b}^{\boldsymbol{\nu}} \big\Vert} 
\right\}\\
& \quad + \mathbb{P}\left\{ 
\Big\Vert \boldsymbol{\Lambda}^{\boldsymbol{\nu}} - \boldsymbol{\Lambda}^{\boldsymbol{\nu}}_{\omega^1,\varepsilon, N} \Big\Vert_{\rm F} \geq 
\dfrac{\lambda_r\big( \boldsymbol{\Lambda}^{\boldsymbol{\nu}}) - \lambda_{r+1}(\boldsymbol{\Lambda}^{\boldsymbol{\nu}} \big)}{2} \wedge  \dfrac{\gamma}{2 \Upupsilon_{r}\big( \boldsymbol{\Lambda}^{\boldsymbol{\nu}} \big) \, 
\big\Vert \mathbf{b}^{\boldsymbol{\nu}} \big\Vert \, 
\big\Vert {\rm diag}(\boldsymbol{\nu}) \overline{\rho}_{\Gamma_1} \big\Vert}
\right\}\\
&\quad + \mathbb{P}\left\{ 
\big\Vert \boldsymbol\lambda^{\boldsymbol{\nu}} - \widetilde{\boldsymbol\lambda}^{\boldsymbol{\nu}} \big\Vert \geq 
\dfrac{\lambda_{r}\big( \mathbf{\Lambda}^{\boldsymbol{\nu}} \big)}{2} \wedge \gamma 
\right\},
\end{align*}
and the desired result follows by \Cref{coro:rho_MC_est} and \Cref{coro:MCmain}.

The proof of convergence in expectation \eqref{eq:convergence truncated inverse} follows analogously to that corresponding to \Cref{coro:rho_MC_est}.

\end{proof}

\section{Conclusions} \label{section:conclusions}
In this paper, a comprehensive and systematic convergence and stability analysis of the numerical methods and the corresponding algorithm presented in \cite{CiGrMaI} has been carried out.
The main results obtained herein provides one with explicit tail bounds for the errors between the exact quantities of interest and their corresponding Monte Carlo estimators as follows.
\begin{enumerate}[label={\rm (}{\it \alph*}{\rm )}]
\setlength\itemsep{1pt}
\item \Cref{coro:rho_MC_est} gives explicit upper estimates for the probability that the approximation error between the exact elliptic density $\rho_{\mathbf{x}_i^D}$ on the inaccessible boundary $\Gamma_1 \subset \partial D$ and its Monte Carlo estimator $\rho_{\mathbf{x}_i^D, \omega^1, \varepsilon, N}$ defined by \eqref{eq:rho omega1 MC}, exceeds a given threshold.

\item Further, by considering the matrix operator $\Lambda^{\boldsymbol{\nu}} \coloneqq T_{\boldsymbol{\nu}} T_{\boldsymbol{\nu}}^\ast$ and its Monte Carlo estimator $\boldsymbol{\Lambda}^{\boldsymbol{\nu}}_{\omega^1, \varepsilon, N}$ given by \eqref{eq:Lambda_tilde_nu}, explicit upper estimates for the probability that the approximation error between the exact eigenvalues of the operator $T_{\boldsymbol{\nu}}^\ast T_{\boldsymbol{\nu}}$, defined by \eqref{eq:B Th*Th}, and their Monte Carlo estimators, which are the (random) eigenvalues of $\boldsymbol{\Lambda}^{\boldsymbol{\nu}}_{\omega^1, \varepsilon, N}$, exceeds a given threshold have been derived in \Cref{coro:tail_lambda}.

\item Explicit upper estimates for the probability that the approximation error between the eigenvectors of $T_{\boldsymbol{\nu}}^\ast T_{\boldsymbol{\nu}}$ and their Monte Carlo estimators, expressed in terms of an orthonormal eigenbasis of $\left( {\rm Ker} \big( \boldsymbol{\Lambda}^{\boldsymbol{\nu}}_{\omega^1, \varepsilon, N} \big) \right)^{\perp}$, exceeds a given threshold have been obtained in \Cref{coro:tail_eigenvectors}.

\item Finally, by considering the $r-$TSVD solution to the (direct) equation $T_{\boldsymbol{\nu}} u_1 = {\rm diag}(\boldsymbol{\nu}) \; \mathbf{b}$, $\mathbf{b} \in \mathbb{R}^{M_D}$, defined by \eqref{eq:u_r}, as well as its Monte Carlo estimator $u_{\omega^1, \varepsilon, N}^{(r)}$ defined by \eqref{eq:u_omega_M1_r_MC}--\eqref{eq:u_omega_M1_r_MC A}, for various levels of the truncation parameter $r$, explicit upper estimates for the probability that the approximation error between the exact $r-$TSVD solution and its estimator exceeds a given threshold have been derived in \Cref{coro:tail_solutions}.
\end{enumerate}

The results listed above represent quantitative guarantees that the spectral structure (rank, eigenvalues and eigenvectors) of the direct operator associated with the inverse problem \eqref{eq:ICP} can be accurately captured by the spectrum of the random matrix $\boldsymbol{\Lambda}^{\boldsymbol{\nu}}_{\omega^1, \varepsilon, N}$. 
Moreover, a set of TSVD solutions to the original inverse problem can be obtained, whilst the choice of the truncation parameters can be properly retrieved by computing the spectrum of $\boldsymbol{\Lambda}^{\boldsymbol{\nu}}_{\omega^1, \varepsilon, N}$.

\medskip

Further work related to and developments of the convergence and stability analysis provided in this study are the following:
\begin{enumerate}[label={\rm (}{\it \roman*}{\rm )}]
\setlength\itemsep{1pt}
\item The extension the error analysis to less regular domains, e.g., domains with corners and/or boundary or internal cracks. 
In our opinion, a subtle point here is that it would be very interesting not only to extend the analysis to non-smooth domains, but also to understand the impact of the regularity of the boundary on the stability of the inverse problem. According to our understanding, it is expected that the inverse problem \eqref{eq:ICP} becomes less unstable as the boundary where the measurements are taken is more irregular, whilst the inaccessible portion of the boundary is smoother.
This is justified by the heuristic remark that the heat particle starting in the proximity of an irregular boundary has more chances to move away from it and, eventually, hit the (possibly more regular) inaccessible part of the boundary.

\item The extension of the Monte Carlo schemes and the corresponding error analysis to more general operators, including the case of non-homogeneous or non-symmetric coefficients.
\end{enumerate}

\appendix
\section{Spectral and stability inequalities for linear systems} \label{appendix}
\subsection{Spectral inequalities for matrices}
\begin{thm}{(Weyl's inequality for singular values \cite{Weyl12}, \cite[Corollary 8.6.2]{Golub} )}
\label{thm:Weyl} 
Let $\mathbf{A}, \mathbf{B} \in \mathbb{R}^{m\times n}$ with singular values $\sigma_1 \geq \sigma_2 \geq \ldots \geq \sigma_k \geq 0$ and $\widetilde{\sigma}_1 \geq \widetilde{\sigma}_2 \geq \ldots \geq \widetilde{\sigma}_k \geq 0$, respectively, with $k=\min \{m,n \}$. Then
\begin{equation*}
\vert \sigma_i - \widetilde{\sigma}_i \vert \leq 
\Vert \mathbf{A} - \mathbf{B} \Vert, \quad 
i = \overline{1,k}.
\end{equation*}
\end{thm}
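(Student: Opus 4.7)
\noindent\textbf{Proof plan for \Cref{thm:Weyl}.} The strategy is the classical one based on the Courant--Fischer min-max (equivalently, max-min) variational characterisation of singular values. For any $\mathbf{M} \in \mathbb{R}^{m \times n}$ and any $i \in \overline{1,k}$, one has the identities
\begin{equation*}
\sigma_i(\mathbf{M}) = \min_{\substack{V \subset \mathbb{R}^n \\ \dim V = n-i+1}} \max_{\substack{x \in V \\ \Vert x \Vert = 1}} \Vert \mathbf{M} x \Vert = \max_{\substack{W \subset \mathbb{R}^n \\ \dim W = i}} \min_{\substack{x \in W \\ \Vert x \Vert = 1}} \Vert \mathbf{M} x \Vert.
\end{equation*}
Once this is available, the proof is essentially a one-line reduction to the triangle inequality, so the main content of the plan is to invoke (or briefly recall) the min-max formulae.

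The key step is then to note that, for every unit vector $x \in \mathbb{R}^n$, the triangle inequality gives
\begin{equation*}
\bigl\vert \Vert \mathbf{A} x \Vert - \Vert \mathbf{B} x \Vert \bigr\vert \leq \Vert (\mathbf{A} - \mathbf{B}) x \Vert \leq \Vert \mathbf{A} - \mathbf{B} \Vert,
\end{equation*}
so that $\Vert \mathbf{A} x \Vert \leq \Vert \mathbf{B} x \Vert + \Vert \mathbf{A} - \mathbf{B} \Vert$ uniformly in $x$. Substituting this into the min-max formula for $\sigma_i(\mathbf{A})$ and taking min/max on both sides preserves the additive constant $\Vert \mathbf{A} - \mathbf{B} \Vert$, yielding $\sigma_i(\mathbf{A}) \leq \sigma_i(\mathbf{B}) + \Vert \mathbf{A} - \mathbf{B} \Vert$. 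Exchanging the roles of $\mathbf{A}$ and $\mathbf{B}$ gives the reverse inequality, and combining the two bounds yields the desired estimate $\vert \sigma_i - \widetilde{\sigma}_i \vert \leq \Vert \mathbf{A} - \mathbf{B} \Vert$.

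There is no real obstacle here: the result is a standard consequence of the variational characterisation, and once that tool is granted, the argument is purely algebraic. The only mildly delicate point is to make sure the min-max formula is stated with a consistent convention (either in terms of subspaces of dimension $n-i+1$ or of dimension $i$) so that the substitution goes through cleanly. Since the statement is classical and appears verbatim in the cited references \cite{Weyl12,Golub}, one could alternatively simply invoke it and skip the proof altogether in the appendix.
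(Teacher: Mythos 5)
Your argument is correct: the Courant--Fischer min-max characterisation of singular values combined with the uniform bound $\bigl\vert \Vert \mathbf{A}x \Vert - \Vert \mathbf{B}x \Vert \bigr\vert \leq \Vert \mathbf{A}-\mathbf{B} \Vert$ for unit vectors $x$ gives $\vert \sigma_i - \widetilde{\sigma}_i \vert \leq \Vert \mathbf{A}-\mathbf{B} \Vert$ exactly as you describe, and this is the standard proof of Weyl's inequality. The paper itself offers no proof of \Cref{thm:Weyl} --- it is recalled in the appendix purely by citation to \cite{Weyl12} and \cite[Corollary 8.6.2]{Golub} --- so your sketch is a faithful reconstruction of the classical argument behind the cited result, and your closing remark that one may simply invoke the references matches what the authors actually do.
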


\begin{thm}[Hoffman-Wielandt inequality \cite{HoWi53}] \label{thm:Hoffman-Wielandt} 
Let $\mathbf{A}, \mathbf{B} \in \mathbb{R}^{n\times n}$ be two normal matrices with the eigenvalues $\alpha_1, \alpha_2, \ldots, \alpha_n$ and $\beta_1, \beta_2, \ldots, \beta_n$, respectively. Then
\begin{equation*}
\exists~\pi \in S_n \colon \quad 
\displaystyle \sum_{i = 1}^{n}  
\vert \alpha_i - \beta_{\pi(i)} \vert^2 \leq 
\Vert \mathbf{A} - \mathbf{B} \Vert_{\rm F}^2.
\end{equation*}
If $\mathbf{A}, \mathbf{B} \in \mathbb{R}^{n\times n}$ are symmetric and their corresponding eigenvalues are indexed in decreasing order, then by the rearrangement inequality, the permutation $\pi \in S_n$ can be taken to be the identity.
\end{thm}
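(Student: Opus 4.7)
The plan is to reduce the inequality to a linear optimization problem over the Birkhoff polytope of doubly stochastic matrices, then invoke the Birkhoff--von Neumann theorem to locate the optimum at a permutation matrix. The normality hypothesis enters only to guarantee the existence of unitary diagonalizations; everything else is essentially combinatorial once the unitary invariance of $\Vert \cdot \Vert_{\rm F}$ has been used.

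First, I would write $\mathbf{A} = \mathbf{U} \mathbf{D}_\alpha \mathbf{U}^\ast$ and $\mathbf{B} = \mathbf{V} \mathbf{D}_\beta \mathbf{V}^\ast$, where $\mathbf{D}_\alpha = \mathrm{diag}(\alpha_1, \ldots, \alpha_n)$, $\mathbf{D}_\beta = \mathrm{diag}(\beta_1, \ldots, \beta_n)$, and $\mathbf{U}, \mathbf{V}$ are unitary (here one uses that normal matrices are unitarily diagonalizable). Setting $\mathbf{W} \coloneqq \mathbf{U}^\ast \mathbf{V}$, the unitary invariance of the Frobenius norm gives
\begin{equation*}
\Vert \mathbf{A} - \mathbf{B} \Vert_{\rm F}^2
= \Vert \mathbf{D}_\alpha - \mathbf{W} \mathbf{D}_\beta \mathbf{W}^\ast \Vert_{\rm F}^2
= \sum_{i=1}^n |\alpha_i|^2 + \sum_{j=1}^n |\beta_j|^2
- 2 \Re \, \mathrm{tr}\big(\mathbf{D}_\alpha \mathbf{W} \mathbf{D}_\beta \mathbf{W}^\ast \big).
\end{equation*}
A direct expansion of the cross term yields $\mathrm{tr}(\mathbf{D}_\alpha \mathbf{W} \mathbf{D}_\beta \mathbf{W}^\ast) = \sum_{i,j} \alpha_i \overline{\beta_j}\, p_{ij}$, where $p_{ij} \coloneqq |W_{ij}|^2$. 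The crucial observation is that $\mathbf{P} = (p_{ij})$ is doubly stochastic: rows and columns of $\mathbf{W}$ have unit Euclidean norm because $\mathbf{W}$ is unitary.

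For any permutation $\pi \in S_n$, the target expression rearranges as
\begin{equation*}
\sum_{i=1}^n |\alpha_i - \beta_{\pi(i)}|^2
= \sum_{i=1}^n |\alpha_i|^2 + \sum_{j=1}^n |\beta_j|^2
- 2 \Re \sum_{i=1}^n \alpha_i \overline{\beta_{\pi(i)}},
\end{equation*}
so the desired inequality reduces to the existence of $\pi \in S_n$ such that
$\Re \sum_{i} \alpha_i \overline{\beta_{\pi(i)}} \geq \Re \sum_{i,j} \alpha_i \overline{\beta_j} p_{ij}$. Equivalently, the linear functional $\mathbf{Q} \mapsto \Re \sum_{i,j} \alpha_i \overline{\beta_j} Q_{ij}$ on the convex set of doubly stochastic matrices must attain a value at some permutation matrix that dominates its value at $\mathbf{P}$. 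By the Birkhoff--von Neumann theorem, this set is the convex hull of the $n!$ permutation matrices, and since a linear function on a compact convex polytope attains its maximum at an extreme point, there exists a permutation $\pi^\ast$ with $\Re \sum_i \alpha_i \overline{\beta_{\pi^\ast(i)}} \geq \Re \sum_{i,j} \alpha_i \overline{\beta_j} p_{ij}$, which gives the first claim.

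For the symmetric case, all eigenvalues are real, and the problem reduces to showing that $\sum_i \alpha_i \beta_{\pi(i)}$ is maximized by the permutation that pairs the decreasingly-indexed $\alpha_i$ with the decreasingly-indexed $\beta_j$. This is the classical rearrangement inequality, so the identity permutation achieves the bound and the sharper statement follows. The only mild obstacle I anticipate is keeping track of the complex conjugation bookkeeping in the general normal case (so that $\Re$ appears consistently and the doubly-stochastic structure is correctly identified); beyond that, once Birkhoff's theorem is invoked, the argument is almost automatic.
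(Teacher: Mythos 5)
Your proof is correct. Note that the paper does not prove this statement at all: it is quoted in the appendix as a classical result with a citation to Hoffman and Wielandt (1953), so there is no in-paper argument to compare against; your route via unitary diagonalization of the normal matrices, unitary invariance of $\Vert\cdot\Vert_{\rm F}$, the doubly stochastic matrix $p_{ij}=|W_{ij}|^2$, and the Birkhoff--von Neumann theorem is precisely the standard proof of the Hoffman--Wielandt inequality, and your reduction of the symmetric, decreasingly ordered case to the rearrangement inequality is also the usual one. The only blemish is notational: in the cross term the adjoint should carry through, i.e.\ the expansion produces $-2\,\Re\,\mathrm{tr}\big(\mathbf{D}_\alpha \mathbf{W}\mathbf{D}_\beta^{\ast}\mathbf{W}^{\ast}\big)=-2\,\Re\sum_{i,j}\alpha_i\overline{\beta_j}\,p_{ij}$, which is the expression you in fact use, so the argument itself is unaffected.
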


The following result is essential \cite[Lemma 4]{ElBeWa18} and is a version of the well-known Davis-Kahan inequality concerning eigenspaces perturbation for symmetric matrices.
\begin{thm}[A Davis-Kahan-type inequality \cite{ElBeWa18}]
\label{thm:Davis-Kahan} 
Let $\mathbf{A}, \mathbf{B} \in \mathbb{R}^{n\times n}$ be two symmetric matrices with the eigenvalues $\alpha_1 \geq \alpha_2 \geq \ldots \geq \alpha_n$ and $\beta_1 \geq \beta_2 \geq \ldots \geq \beta_n$, respectively, and $\big\{ \mathbf{v}_1, \mathbf{v}_2, \ldots, \mathbf{v}_n \big\} \subset \mathbb{R}^n$ be an orthonormal basis such that $A \mathbf{v}_i = \alpha_i \mathbf{v}_i$, $i = \overline{1,n}$.
We further set 
\begin{equation}\label{eq:gap}
{\rm gap}_i(\mathbf{A}) \coloneqq 
\displaystyle \min_{\alpha_j \neq \alpha_i} 
\vert \alpha_i - \alpha_j \vert, 
\quad i = \overline{1,n}.    
\end{equation}
Then there exists an orthonormal basis $\big\{ \mathbf{w}_1, \mathbf{w}_2, \ldots, \mathbf{w}_n \big\} \subset \mathbb{R}^n$ such that $\mathbf{B} \mathbf{w}_i = \beta_i \mathbf{w}_i$, $i = \overline{1,n}$, and
\begin{equation*}\label{eq:v-w}
\Vert \mathbf{v}_i - \mathbf{w}_i \Vert \leq 
\sqrt{4 \sqrt{2}} \dfrac{\Vert \mathbf{A} - \mathbf{B} \Vert_{\rm F}}{{\rm gap}_i(\mathbf{A})}, 
\quad i = \overline{1,n}.
\end{equation*}
\end{thm}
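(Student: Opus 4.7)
}

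The plan is to reduce the eigenvector-wise bound to the classical Davis-Kahan $\sin\Theta$ theorem on invariant subspaces, and then to extract a matching orthonormal basis of $\mathbf{B}$ by a Procrustes / polar-decomposition alignment within each eigenspace of $\mathbf{A}$. The multiplicity issue makes the individual eigenvectors $\mathbf{w}_i$ non-unique, so the whole point is to choose them wisely; this is exactly what the classical $\sin\Theta$ bound on a subspace together with a best-rotation alignment buys us.

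First, I would group the eigenvectors of $\mathbf{A}$ according to the value of ${\rm gap}_i(\mathbf{A})$, which is the same for every index $i$ lying in one fixed eigenspace. Let $\mu_1 > \mu_2 > \ldots > \mu_p$ be the distinct eigenvalues of $\mathbf{A}$ with multiplicities $m_k = |I_k|$, let $E_k \coloneqq {\rm span}\{\mathbf{v}_i \: :\: i \in I_k\}$, and let $\mathbf{V}_k \in \mathbb{R}^{n \times m_k}$ be the matrix whose columns are $\mathbf{v}_i$, $i \in I_k$. On the $\mathbf{B}$ side, let $F_k$ be the invariant subspace of $\mathbf{B}$ spanned by those $m_k$ eigenvectors of $\mathbf{B}$ whose eigenvalues are closest (with respect to interlacing and the usual Wielandt-style choice) to $\mu_k$; Weyl's inequality (\Cref{thm:Weyl}) together with the definition of ${\rm gap}_i(\mathbf{A})$ ensures that these eigenvalues are all separated from the remaining spectrum of $\mathbf{B}$ by at least ${\rm gap}_i(\mathbf{A}) - \|\mathbf{A}-\mathbf{B}\|_{\rm F}$, so the selection is well defined whenever the right-hand side is positive (and the inequality is trivial otherwise since the unit eigenvectors always satisfy $\|\mathbf{v}_i-\mathbf{w}_i\|\leq 2$).

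Next, I would invoke the Davis-Kahan $\sin\Theta$ theorem in its Frobenius-norm, symmetric-matrix version (Davis-Kahan 1970, see also Bhatia, \emph{Matrix Analysis}, Ch.~VII) to compare the invariant subspaces $E_k$ and $F_k$. Letting $\mathbf{W}_k \in \mathbb{R}^{n\times m_k}$ be any orthonormal basis matrix of $F_k$ and $\mathbf{P}_{E_k}$, $\mathbf{P}_{F_k}$ the corresponding orthogonal projections, this yields
\[
\|\mathbf{P}_{E_k} - \mathbf{P}_{F_k}\|_{\rm F} \;\leq\; \sqrt{2}\,\frac{\|\mathbf{A}-\mathbf{B}\|_{\rm F}}{{\rm gap}_i(\mathbf{A})}, \qquad i \in I_k,
\]
equivalently $\|\sin\Theta(E_k,F_k)\|_{\rm F}$ admits the same bound.

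The crucial step is to convert this subspace bound into a vector-wise bound with a carefully chosen basis of $F_k$. For each $k$, consider the cross-Gram matrix $\mathbf{M}_k \coloneqq \mathbf{W}_k^{\sf T} \mathbf{V}_k \in \mathbb{R}^{m_k \times m_k}$ and its polar decomposition $\mathbf{M}_k = \mathbf{O}_k \mathbf{S}_k$, with $\mathbf{O}_k$ orthogonal and $\mathbf{S}_k$ symmetric positive semidefinite; then I define the matched basis by
\[
\widetilde{\mathbf{W}}_k \coloneqq \mathbf{W}_k \mathbf{O}_k,
\]
and set $\mathbf{w}_i$, $i \in I_k$, to be the columns of $\widetilde{\mathbf{W}}_k$. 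This is the standard orthogonal Procrustes solution, which minimises $\|\mathbf{V}_k - \mathbf{W}_k\mathbf{O}\|_{\rm F}$ over orthogonal $\mathbf{O}$. A straightforward computation using $\widetilde{\mathbf{W}}_k^{\sf T}\mathbf{V}_k = \mathbf{S}_k$ and $\mathbf{S}_k^2 = \mathbf{V}_k^{\sf T}\mathbf{P}_{F_k}\mathbf{V}_k = \mathbf{I} - \mathbf{V}_k^{\sf T}(\mathbf{I}-\mathbf{P}_{F_k})\mathbf{V}_k$ converts the subspace bound into
\[
\sum_{i \in I_k} \|\mathbf{v}_i - \mathbf{w}_i\|^2 \;=\; 2\,{\rm tr}(\mathbf{I}-\mathbf{S}_k) \;\leq\; 2\,{\rm tr}(\mathbf{I}-\mathbf{S}_k^2)^{1/2} \cdot \sqrt{m_k} \cdot (\ldots),
\]
and the pointwise bound $\|\mathbf{v}_i-\mathbf{w}_i\|^2 \leq 2(1-s_{i,\min})$ is dominated by $\sqrt{2}\,\|\sin\Theta(E_k,F_k)\|_{\rm F}^2$ times an absolute factor. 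Combining with the Davis-Kahan bound gives the stated constant $\sqrt{4\sqrt{2}}$; the numerical value should pop out after carefully tracking that $\|\mathbf{I}-\mathbf{S}_k\|_{\rm F} \leq \sqrt{2}\,\|\mathbf{I}-\mathbf{S}_k^2\|_{\rm F}^{1/2}\cdot\|\mathbf{I}-\mathbf{S}_k^2\|_{\rm op}^{1/2}$ (or the analogous elementary inequality $1-s \leq \sqrt{2}\sqrt{1-s^2}$ for $s\in[0,1]$), which is precisely where the fourth root appears.

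The main obstacle is the constant-tracking in the last paragraph: the move from a bound on $\|\mathbf{P}_{E_k}-\mathbf{P}_{F_k}\|_{\rm F}$ to an entry-wise bound on $\|\mathbf{v}_i-\mathbf{w}_i\|$ passes through the Procrustes identity and an elementary but slightly delicate scalar inequality relating $1-s$ to $\sqrt{1-s^2}$; matching the exact constant $\sqrt{4\sqrt{2}}$ that appears in \cite{ElBeWa18} requires choosing the right norm pairing (operator vs.~Frobenius) at the right step, and this is the place where sloppy accounting would produce a worse constant such as $2$ or $2\sqrt{2}$.
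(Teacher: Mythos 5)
The paper itself does not prove this statement — it is quoted verbatim from \cite{ElBeWa18} — so your argument has to stand on its own, and it has a structural gap at the Procrustes step. The matched basis $\widetilde{\mathbf{W}}_k=\mathbf{W}_k\mathbf{O}_k$ consists of eigenvectors of $\mathbf{B}$ only if all the eigenvalues $\beta_i$, $i\in I_k$, coincide. In general the $\beta_i$ inside one $\mathbf{A}$-group are distinct: the subspace $F_k$ is $\mathbf{B}$-invariant but is not a single eigenspace, and the Procrustes rotation mixes eigenvectors of $\mathbf{B}$ belonging to different $\beta_i$, so the columns of $\mathbf{W}_k\mathbf{O}_k$ violate the requirement $\mathbf{B}\mathbf{w}_i=\beta_i\mathbf{w}_i$. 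The only freedom the conclusion allows on the $\mathbf{B}$ side is a rotation inside each true eigenspace of $\mathbf{B}$ (plus signs), and that is exactly where the difficulty of the lemma lives: when the $\beta_i$ in a cluster are simple, the $\mathbf{w}_i$ are forced up to sign, and the subspace $\sin\Theta$ bound says nothing about how they pair with the individual prescribed $\mathbf{v}_i$. Indeed, with the gap defined as in \eqref{eq:gap} and the basis $\{\mathbf{v}_i\}$ fixed in advance this pairing can genuinely fail: for $\mathbf{A}={\rm diag}(1,1,0)$, $\mathbf{v}_i=\mathbf{e}_i$, and $\mathbf{B}$ obtained by putting $\varepsilon$ in the $(1,2)$ and $(2,1)$ entries, every admissible $\mathbf{w}_1$ equals $\pm(\mathbf{e}_1+\mathbf{e}_2)/\sqrt{2}$, so $\Vert\mathbf{v}_1-\mathbf{w}_1\Vert\geq\sqrt{2-\sqrt{2}}$, while $\Vert\mathbf{A}-\mathbf{B}\Vert_{\rm F}/{\rm gap}_1(\mathbf{A})=\sqrt{2}\,\varepsilon\to 0$. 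So any correct proof must keep careful track of which matrix carries the rotational freedom and of how the gap treats (nearly) repeated eigenvalues, as is done in \cite{ElBeWa18}; your construction bypasses precisely this point, so it does not prove the statement.

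Even in the favourable case where the Procrustes move is legitimate (all $\beta_i$ equal within the group), the constant does not come out as sketched. The inequality you quote, $\Vert\mathbf{P}_{E_k}-\mathbf{P}_{F_k}\Vert_{\rm F}\leq\sqrt{2}\,\Vert\mathbf{A}-\mathbf{B}\Vert_{\rm F}/{\rm gap}$, i.e. $\Vert\sin\Theta\Vert_{\rm F}\leq\Vert\mathbf{A}-\mathbf{B}\Vert_{\rm F}/{\rm gap}(\mathbf{A})$, is not the classical Davis--Kahan theorem once the separation is measured inside $\sigma(\mathbf{A})$ alone: the classical statement needs the distance from $\mu_k$ to the $\mathbf{B}$-eigenvalues outside $F_k$, which Weyl only bounds below by ${\rm gap}_k-\Vert\mathbf{A}-\mathbf{B}\Vert$, while the variant using the gap of $\mathbf{A}$ only (Yu--Wang--Samworth) carries an extra factor $2$. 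Your Procrustes accounting, once cleaned up (namely $\sum_{i\in I_k}\Vert\mathbf{v}_i-\mathbf{w}_i\Vert^2=2\,{\rm tr}(\mathbf{I}-\mathbf{S}_k)\leq 2\,{\rm tr}(\mathbf{I}-\mathbf{S}_k^2)=2\Vert\sin\Theta\Vert_{\rm F}^2$, using $1-s\leq 1-s^2$ on $[0,1]$; the factor $\sqrt{m_k}$ in your display is neither needed nor affordable), costs another $\sqrt{2}$, so the route yields $2\sqrt{2}$ with the factor-$2$ variant, while the Weyl-corrected denominator combined with the trivial bound $\Vert\mathbf{v}_i-\mathbf{w}_i\Vert\leq\sqrt{2}$ leaves the regime $\Vert\mathbf{A}-\mathbf{B}\Vert_{\rm F}/{\rm gap}\in(1-2^{-3/4},\,2^{-3/4})$ uncovered. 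Hence the stated constant $\sqrt{4\sqrt{2}}$ is not obtained by the proposal; both the basis construction and the constant tracking would have to be replaced by the actual argument of \cite{ElBeWa18}.
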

\subsection{Inequalities for the truncated generalized inverse}
Herein some well-known results on the perturbations of the TSVD are recalled following mainly \cite{Ha87}. We also refer the reader to the seminal works \cite{St69} and \cite{We73}, as well as \cite{Me10} for some recent improvements.

Let $\mathbf{A} \in \mathbb{R}^{m \times n}$ and $\mathbf{b} \in \mathbb{R}^m$, and consider the linear system associated with these data, namely
\begin{equation} 
\label{eq:syst_A_b}
\mathbf{A} \mathbf{u} = \mathbf{b}.
\end{equation}

We further consider the SVD of matrix $\mathbf{A}$,
\begin{align*}
\nonumber
&\mathbf{A} = \mathbf{U} \mathbf{\Sigma} \mathbf{V}^{\sf T},
\end{align*}
where $\mathbf{U} \in \mathbb{R}^{m \times m}$ and $\mathbf{V}\in \mathbb{R}^{n \times n}$ are unitary matrices, and $\mathbf{\Sigma} \coloneqq \mathrm{diag}\big( \sigma_1(\mathbf{A}), \sigma_2(\mathbf{A}), \ldots, \sigma_{\min\{m,n\}}(\mathbf{A}) \big) \in \mathbb{R}^{m \times n}$ is a diagonal matrix containing the singular values of $\mathbf{A}$ in descending order. 
For any $r = \overline{1, \mathrm{rank}(\mathbf{A}))}$ a regularisation (truncation) parameter, we consider the $r-$TSVD of matrix $\mathbf{A}$
\begin{align*}
\nonumber
&\mathbf{A}_r \coloneqq \mathbf{U} \mathbf{\Sigma}_r \mathbf{V}^{\sf T}, 
\end{align*}
where $\mathbf{\Sigma}_r \coloneqq \mathrm{diag}\big( \sigma_1(\mathbf{A}), \sigma_2(\mathbf{A}), \ldots, \sigma_r(\mathbf{A}) \big) \in \mathbb{R}^{m \times n}$ is the diagonal matrix containing the first $r$ singular values of $\mathbf{A}$ in descending order. 
The {\it $r-$TSVD solution} to the linear system \eqref{eq:syst_A_b}
\begin{align} 
\label{eq:u1r}
&\mathbf{u}_r \coloneqq 
\mathbf{A}^\dag_r \mathbf{b},
\end{align}
where $\mathbf{A}^\dag_r \coloneqq \mathbf{V} \mathbf{\Sigma}_r^{-1} \mathbf{U}^{\sf T}$ is the so-called {\it $r-$TSVD pseudoinverse {\rm /}Moore–Penrose inverse} of $\mathbf{A}$, whilst $\mathbf{\Sigma}_r^{-1} \coloneqq \mathrm{diag}\big( 1/\sigma_1(\mathbf{A}), 1/\sigma_2(\mathbf{A}), \ldots, 1/\sigma_r(\mathbf{A}) \big) \in \mathbb{R}^{n \times m}$.

\begin{thm}{(\cite[Theorems 3.2 and 3.4]{Ha87})}
\label{thm:AHansen}
Let $\mathbf{A}, \widetilde{\mathbf{A}} \in \mathbb{R}^{m \times n}$, $\mathbf{b}, \widetilde{\mathbf{b}} \in \mathbb{R}^m$, and consider $\mathbf{E} \coloneqq \mathbf{A} - \widetilde{\mathbf{A}}$ and $\mathbf{e} \coloneqq \widetilde{\mathbf{b}}-\mathbf{b}$. Further, let $r \in \overline{1,  \mathrm{rank}(\mathbf{A})}$ be a regularisation (truncation) parameter and define 
\begin{equation*}
    k_r \coloneqq \sigma_1(\mathbf{A})/\sigma_r(\mathbf{A}), \quad \eta_r \coloneqq |\mathbf{E}|/\sigma_r(\mathbf{A}), \quad \gamma_r \coloneqq \sigma_{r+1}(\mathbf{A})/\sigma_{r}(\mathbf{A})
\end{equation*}

If $\vert \mathbf{E} \vert < \sigma_r(\mathbf{A}) - \sigma_{r+1}(\mathbf{A})$, then the following estimates hold
\begin{align}
&\dfrac{\big\vert \mathbf{A}^\dag_r - \widetilde{\mathbf{A}}^\dag_r \big\vert}
{\big\vert \mathbf{A}^\dag_r \big\vert} \leq 
\dfrac{3 k_r}{(1 - \eta_r) (1 - \eta_r -\gamma_r)} \dfrac{\vert \mathbf{E} \vert}{\sigma_1(\mathbf{A})}\\
&\dfrac{\vert \widetilde{\mathbf{u}}_r - \mathbf{u}_r \vert}{\vert \mathbf{u}_r \vert} \leq 
\dfrac{k_r}{1 - \eta_r} 
\left( 
\dfrac{\vert \mathbf{E} \vert}{\sigma_1(\mathbf{A})} + \dfrac{\vert \mathbf{e} \vert}{\vert \mathbf{b} \vert} + \dfrac{\eta_r}{1 - \eta_r - \gamma_r} 
\dfrac{\vert \mathbf{A} \mathbf{u}_r - \mathbf{b} \vert}{\vert \mathbf{b} \vert} 
\right) + 
\dfrac{\eta_r}{1 - \eta_r - \gamma_r},
\end{align}
where ${\mathbf{A}}^\dag_r, \widetilde{\mathbf{A}}^\dag_r $ are the  $r-$TSVD of ${\mathbf{A}}$ and $\widetilde{\mathbf{A}}$, respectively, and ${\mathbf{u}}_r, \widetilde{\mathbf{u}}_r$ are the {\it $r-$TSVD solutions} to the linear systems ${\mathbf{A}} {\mathbf{u}} = {\mathbf{b}}$ and $\widetilde{\mathbf{A}} \widetilde{\mathbf{u}} = \widetilde{\mathbf{b}}$, respectively.
\end{thm}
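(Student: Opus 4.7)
\noindent\textbf{Proof proposal for Theorem \ref{thm:AHansen}.} The plan is to follow the classical Wedin/Stewart perturbation analysis for pseudoinverses, specialised to the truncated SVD setting as originally carried out by Hansen. The argument is purely linear-algebraic and makes no use of any of the probabilistic/analytic machinery developed earlier in the paper; only Weyl's inequality (Theorem \ref{thm:Weyl}) and the $\sin\Theta$-type subspace perturbation theorems are needed.

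First, I would use Weyl's inequality to propagate bounds from $\mathbf{A}$ to $\widetilde{\mathbf{A}}$: for every index $j$, $|\sigma_j(\widetilde{\mathbf{A}}) - \sigma_j(\mathbf{A})| \leq |\mathbf{E}|$. The hypothesis $|\mathbf{E}| < \sigma_r(\mathbf{A}) - \sigma_{r+1}(\mathbf{A})$ is exactly $\eta_r + \gamma_r < 1$, and it implies
\[
\sigma_r(\widetilde{\mathbf{A}}) \geq \sigma_r(\mathbf{A})(1 - \eta_r) > 0, \qquad \sigma_r(\widetilde{\mathbf{A}}) - \sigma_{r+1}(\widetilde{\mathbf{A}}) \geq \sigma_r(\mathbf{A})(1 - \eta_r - \gamma_r) > 0.
\]
In particular the $r$-TSVD pseudoinverse $\widetilde{\mathbf{A}}^\dag_r$ is well-defined and $|\widetilde{\mathbf{A}}^\dag_r| \leq [\sigma_r(\mathbf{A})(1-\eta_r)]^{-1}$. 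These two inequalities are the backbone of every subsequent estimate; the numbers $(1-\eta_r)$ and $(1-\eta_r-\gamma_r)$ in the claimed bounds trace back precisely to them.

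Second, I would establish a three-term Wedin decomposition for the truncated pseudoinverse. Denoting by $\mathbf{P}_r, \widetilde{\mathbf{P}}_r$ (respectively $\mathbf{Q}_r, \widetilde{\mathbf{Q}}_r$) the orthogonal projections onto the top-$r$ left (respectively right) singular subspaces of $\mathbf{A}$ and $\widetilde{\mathbf{A}}$, a direct calculation with the SVD gives an identity of the form
\[
\widetilde{\mathbf{A}}^\dag_r - \mathbf{A}^\dag_r = -\widetilde{\mathbf{A}}^\dag_r\, \mathbf{E}\, \mathbf{A}^\dag_r + \widetilde{\mathbf{A}}^\dag_r(\widetilde{\mathbf{P}}_r - \mathbf{P}_r) + (\widetilde{\mathbf{Q}}_r - \mathbf{Q}_r)\mathbf{A}^\dag_r.
\]
Each subspace perturbation is then estimated by Wedin's $\sin\Theta$ theorem, which, thanks to the singular-value gap from step one, yields $|\widetilde{\mathbf{P}}_r - \mathbf{P}_r|, |\widetilde{\mathbf{Q}}_r - \mathbf{Q}_r| \leq |\mathbf{E}|/[\sigma_r(\mathbf{A})(1 - \eta_r - \gamma_r)]$. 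Substituting and summing the three contributions, each of size at most $k_r|\mathbf{E}|/[\sigma_1(\mathbf{A})(1-\eta_r)(1-\eta_r-\gamma_r)]$, delivers the first claimed bound with the constant $3k_r$.

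Third, for the solution estimate I would start from the identity
\[
\widetilde{\mathbf{u}}_r - \mathbf{u}_r = \widetilde{\mathbf{A}}^\dag_r\, \mathbf{e} + (\widetilde{\mathbf{A}}^\dag_r - \mathbf{A}^\dag_r)\mathbf{b},
\]
insert the three-term decomposition from step two, and split $\mathbf{b} = \mathbf{A}\mathbf{u}_r + (\mathbf{b} - \mathbf{A}\mathbf{u}_r)$. The key cancellation is that in the "range" subspace contribution $(\widetilde{\mathbf{Q}}_r - \mathbf{Q}_r)\mathbf{A}^\dag_r \mathbf{b}$, the vector $\mathbf{A}^\dag_r \mathbf{A}\mathbf{u}_r$ equals $\mathbf{u}_r$ up to harmless projection factors, so only the residual $\mathbf{b} - \mathbf{A}\mathbf{u}_r$ survives, producing the ratio $|\mathbf{A}\mathbf{u}_r - \mathbf{b}|/|\mathbf{b}|$ in the bound. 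Bounding $|\mathbf{u}_r| \leq |\mathbf{A}^\dag_r|\,|\mathbf{b}|$, dividing by $|\mathbf{u}_r|$, and collecting the remaining subspace contribution into the additive term $\eta_r/(1-\eta_r-\gamma_r)$ then gives the second claimed estimate.

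The main obstacle I expect is the bookkeeping in step two: because $\mathbf{A}$ need not itself have rank $r$, the projections $\mathbf{P}_r, \mathbf{Q}_r$ are not the standard range/nullspace projectors of $\mathbf{A}$, and the Wedin identity for pseudoinverses must be rederived with this in mind. Moreover, to obtain the sharp constant $3$ one must ensure that each of the three addends is used only once and that the $\sin\Theta$-type inequality is applied with the separation $\sigma_r(\mathbf{A}) - \sigma_{r+1}(\widetilde{\mathbf{A}})$ (a gap between two singular subspaces) rather than with a naive separation from zero; a less careful bound immediately introduces an additional factor of $k_r$ or replaces $(1-\eta_r-\gamma_r)$ by a smaller quantity.
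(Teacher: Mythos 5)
Your Steps I--II are essentially sound, and they follow the only route available for comparison: the paper itself does not prove Theorem~\ref{thm:AHansen} --- it is quoted from Hansen \cite{Ha87} (Theorems 3.2 and 3.4) --- and Hansen's own argument is precisely the Stewart--Wedin pseudoinverse perturbation theory applied to the rank-$r$ truncations $\mathbf{A}_r$, $\widetilde{\mathbf{A}}_r$, which is what you set up. Weyl's inequality indeed gives $\sigma_r(\widetilde{\mathbf{A}})\geq\sigma_r(\mathbf{A})(1-\eta_r)$ and the separation $\sigma_r(\mathbf{A})(1-\eta_r-\gamma_r)$; your three-term identity is a correct rewriting of the Wedin identity for $B=\mathbf{A}_r$, $\widetilde B=\widetilde{\mathbf{A}}_r$, because $\widetilde{\mathbf{A}}^\dag_r\widetilde{\mathbf{P}}_r=\widetilde{\mathbf{A}}^\dag_r$, $\mathbf{Q}_r\mathbf{A}^\dag_r=\mathbf{A}^\dag_r$, and $\widetilde{\mathbf{A}}^\dag_r(\widetilde{\mathbf{A}}_r-\mathbf{A}_r)\mathbf{A}^\dag_r=-\widetilde{\mathbf{A}}^\dag_r\mathbf{E}\mathbf{A}^\dag_r$ (your sign is off but irrelevant for norms); each of the three terms is bounded by $\vert\mathbf{E}\vert/\big[\sigma_r(\mathbf{A})^2(1-\eta_r)(1-\eta_r-\gamma_r)\big]$, which yields the first estimate with the constant $3k_r$.

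The genuine gap is in Step III, at ``bounding $\vert \mathbf{u}_r\vert \leq \vert\mathbf{A}^\dag_r\vert\,\vert\mathbf{b}\vert$ and dividing by $\vert\mathbf{u}_r\vert$'': that inequality points the wrong way. Your pieces naturally give absolute bounds ($\vert\widetilde{\mathbf{A}}^\dag_r\mathbf{e}\vert\leq\vert\mathbf{e}\vert/[\sigma_r(1-\eta_r)]$, etc.), and to reach the stated relative bound with $\vert\mathbf{b}\vert$ in the denominators you need a \emph{lower} bound $\sigma_1(\mathbf{A})\vert\mathbf{u}_r\vert\geq\vert\mathbf{b}\vert$; but only $\sigma_1(\mathbf{A})\vert\mathbf{u}_r\vert\geq\vert\mathbf{A}\mathbf{u}_r\vert=\vert\mathbf{P}_r\mathbf{b}\vert$ is true, and $\vert\mathbf{P}_r\mathbf{b}\vert$ may be much smaller than $\vert\mathbf{b}\vert$. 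In fact no argument can close this as stated: take $\mathbf{A}=\widetilde{\mathbf{A}}=\mathrm{diag}(1,0)$, $r=1$, $\mathbf{b}=(\delta,1)^{\sf T}$, $\widetilde{\mathbf{b}}=(\delta+\epsilon,1)^{\sf T}$; then the left-hand side of the second estimate is $\epsilon/\delta$ while the right-hand side is $\epsilon/\sqrt{1+\delta^2}$, so the transcribed inequality fails. The bound does go through verbatim along your plan if $\vert\mathbf{b}\vert$ is replaced by $\vert\mathbf{A}\mathbf{u}_r\vert=\vert\mathbf{P}_r\mathbf{b}\vert$ (the projected data, which is how Hansen's Theorem 3.4 is meant), and with that fix note also a bookkeeping correction: the residual term arises from $\widetilde{\mathbf{A}}^\dag_r(\mathbf{I}-\mathbf{P}_r)\mathbf{b}$, since $(\mathbf{I}-\mathbf{P}_r)\mathbf{b}=\mathbf{b}-\mathbf{A}\mathbf{u}_r$, whereas $(\widetilde{\mathbf{Q}}_r-\mathbf{Q}_r)\mathbf{A}^\dag_r\mathbf{b}=-(\mathbf{I}-\widetilde{\mathbf{Q}}_r)\mathbf{u}_r$ is the piece that produces the additive term $\eta_r/(1-\eta_r-\gamma_r)$, not the residual. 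So either prove the version with $\vert\mathbf{A}\mathbf{u}_r\vert$ or add an explicit hypothesis guaranteeing $\vert\mathbf{b}\vert\leq\sigma_1(\mathbf{A})\vert\mathbf{u}_r\vert$; as written, Step III cannot be completed.
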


Note that if $\vert \mathbf{E} \vert < \sigma_r(\mathbf{A}) - \sigma_{r+1}(\mathbf{A})$, then, in particular, $\vert \mathbf{E} \vert < \sigma_r(\mathbf{A})$, and by \Cref{thm:Weyl}, we have that $\mathrm{rank}(\widetilde{\mathbf{A}}) \geq r$.

\bigskip
\noindent \textbf{Acknowledgements.} I.C. acknowledges support from the Ministry of Research, Innovation and Digitization (Romania), grant CF-194-PNRR-III-C9-2023.

\addcontentsline{toc}{section}{\refname}
\bibliographystyle{abbrv}

\end{document}